\documentclass[a4paper,12pt,fleqn]{smfart}
\usepackage[latin1]{inputenc}
\usepackage[francais,english]{babel} \AutoSpaceBeforeFDP
\usepackage[T1]{fontenc}
\usepackage{layout}
\usepackage{amstext,amssymb}
\usepackage{smfthm}
\usepackage{graphicx}
\usepackage{color}
\usepackage{epsfig}
\usepackage[all]{xy}
\usepackage{enumerate}
\usepackage{version}

\newcommand{\C}{\mathcal{C}}
\newcommand{\R}{\mathbb{R}}
\newcommand{\N}{\mathbb{N}}

\newcommand{\Q}{\mathbb{Q}}
\newcommand{\G}{\mathcal{G}}
\newcommand{\s}{\mathrm{SL_3(\mathbb{R})}}
\renewcommand{\ss}{\mathrm{SL_{n+1}(\mathbb{R})}}
\renewcommand{\C}{\mathcal{C}}
\newcommand{\Cc}{\mathrm{C}}
\renewcommand{\H}{\mathcal{H}}

\newcommand{\A}{\mathcal{A}}

\renewcommand{\O}{\Omega}
\newcommand{\g}{\gamma}
\renewcommand{\G}{\Gamma}

\renewcommand{\S}{\mathbb{S}}
\renewcommand{\P}{\mathbb{P}^2}

\newcommand{\PP}{\mathbb{P}}
\newcommand{\Aut}{\textrm{Aut}}

\newcommand{\Ax}{\textrm{Axe}}
\newcommand{\Hom}{\textrm{Hom}}

\newcommand{\Hol}{\textrm{Hol}}

\newcommand{\Quo}{\O/_{\G}}

\renewcommand{\Q}{\mathcal{Q}}

\theoremstyle{plain}
\newtheorem{fait}{Fait}

\theoremstyle{definition}

\theoremstyle{remark}
\newtheorem*{rem}{Remarque}

\newtheorem*{thm}{Théorème}

\setlength{\topmargin}{-1.5cm} 
\setlength{\textheight}{26cm} 
\addtolength{\hoffset}{-2cm}     
\setlength{\textwidth}{17cm} 


\setcounter{tocdepth}{3}
\setcounter{secnumdepth}{4}

\title{Espace de Modules Marqués des Surfaces Projectives Convexes de Volume Fini}
\author{Ludovic Marquis}
\email{ludovic.marquis@umpa.ens-lyon.fr}
\urladdr{www.math.u-psud.fr/~marquis}

\begin{document}
\renewcommand{\labelitemi}{$\bullet$} 

\maketitle

\begin{abstract}
Cet article est la suite de l'article \cite{ludo} dans lequel l'auteur caractérisait le fait d'être de volume fini pour une surface projective convexe. On montre ici que l'espace des modules $\beta_f(\Sigma_{g,p})$ des structures projectives convexes de volume fini sur la surface $\beta_f(\Sigma_{g,p})$ de genre $g$ à $p$ pointes est homéomorphe à $\R^{16g-16+6p}$.

Enfin, on montre que $\beta_f(\Sigma_{g,p})$ s'identifie à une composante connexe de l'espace des représentations du groupe fondamental de $\Sigma_{g,p}$ dans $\s$ qui conservent les paraboliques à conjugaison près.
\end{abstract}

\begin{altabstract}
This article follow the article \cite{ludo} in which the author characterize the fact of being of finite volume for a convex projective surface. We show here that the moduli space $\beta_f(\Sigma_{g,p})$ of convex projective structures on the surface $\Sigma_{g,p}$ of genius $g$ with $p$ punctures is homeomorphic to $\R^{16g-16+6p}$.

Finally, we show that $\beta_f(\Sigma_{g,p})$ can be identify with a connected component of the space of representation of the fundamental group of $\Sigma_{g,p}$ in $\s$ which keep the parabolic modulo conjugaison.
\end{altabstract}


\mainmatter

\section{Introduction}

\subsection{Présentation des résultats}

\par{
Soit $\C$ une partie de l'espace projectif réel $\PP^n=\PP^n(\R)$, on dira que $\C$ est \emph{convexe} lorsque l'intersection de $\C$ avec toute droite de $\PP^n$ est connexe. Une partie convexe $\C$ est dite \emph{proprement convexe} lorsqu'il existe un ouvert affine contenant l'adhérence $\overline{\C}$ de $\C$.}
\\
\par{
Une structure projective proprement convexe sur une variété sans bord $M$ est un homéomorphisme entre $M$ et le quotient d'un ouvert proprement convexe $\Omega$ de $\PP^n$ par un sous-groupe discret de $\ss$ qui préserve $\Omega$. De tels structures sur les variétés compactes ont été étudiées ces dernières années par Benoist, Choi, Goldman, Labourie et Loftin \cite{Beno1,Beno2,Beno5,Beno6,Gold1,ChGo,Lab,Loft1}.
}
\\
\par{
L'ensemble des structures projectives proprement convexes de volume fini à isotopie près sur une surface à bord $S$ possède une topologie naturelle (on fera une hypothèse technique sur la géométrie du bord pour simplifer notre étude). Nous allons étudier l'espace des modules marqués des structures projectives proprement convexes à bord géodésique principal et de volume fini sur $S$, on notera cet espace $\beta_f(S)$. Goldman a montré que si $S$ est une surface sans bord, compact et de genre $g \geqslant 2$ alors l'espace $\beta_f(S)$ est homéomorphe à une boule de dimension $16g-16$. On notera $\Sigma_{g,p}$ la surface de genre $g$ à $p$ pointes. Nous allons montrer le théorème suivant:
}

\begin{thm}[\ref{modulesansbord}]\label{principal1}
Supposons que la surface $\Sigma_{g,p}$ soit de caractéristique d'Euler strictement négative alors l'espace des modules des structures projectives proprement convexes de volume fini sur la surface $\Sigma_{g,p}$ est homéomorphe à une boule de dimension $16g-16+6p$.
\end{thm}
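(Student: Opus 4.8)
The plan is to prove, by induction on the pair $(-\chi(S),g)$ ordered lexicographically, the following statement for every finite-type surface $S$ of negative Euler characteristic with principal geodesic boundary and cusps: $\beta_f(S)$ is homeomorphic to a ball, and the map $h\colon\beta_f(S)\to\prod_{c}\mathcal{E}_c$ recording the conjugacy class of the holonomy of each geodesic boundary component $c$ is a locally trivial fibre bundle with ball fibre. The theorem is the case $S=\Sigma_{g,p}$, which has no boundary, so that $h$ is the trivial map to a point and $\beta_f(\Sigma_{g,p})$ is itself a ball; its dimension will come out of the bookkeeping at the end. The geometric input is \cite{ludo}: for a finite-volume properly convex structure the holonomy of each cusp must be a unipotent parabolic of $\s$, a single conjugacy class and hence rigid, whereas the holonomy of every boundary component and of every non-peripheral simple closed curve is positive hyperbolic, so each factor $\mathcal{E}_c=\{(\lambda_1,\lambda_2,\lambda_3):\lambda_1>\lambda_2>\lambda_3>0,\;\lambda_1\lambda_2\lambda_3=1\}\cong\R^2$ is a $2$-cell. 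This already explains why a cusp will ``cost'' exactly $2$ fewer moduli than a free geodesic boundary.

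\emph{Base case.} The pair $(-\chi(S),g)=(1,0)$ corresponds exactly to the pairs of pants $P$ carrying $k$ cusps and $3-k$ principal geodesic boundary components, $0\leqslant k\leqslant 3$ --- the surfaces admitting no non-peripheral simple closed curve. For $k=0$ this is Goldman's theorem: $\beta_f(P)\cong\R^8$ and $h$ is a trivial fibration over $\mathcal{E}^3\cong\R^6$ with fibre $\R^2$ \cite{Gold1}. For $k\geqslant 1$, I would use the description of cusp ends in \cite{ludo} to show that the finite-volume properly convex structures on $P$ with prescribed hyperbolic eigendata on its $3-k$ geodesic boundary components still form a single $2$-cell --- the ``internal'' parameters already present in the compact case, unaffected by turning a boundary curve into a cusp --- and that these assemble into a trivial fibration $\beta_f(P)\to\mathcal{E}^{3-k}\cong\R^{2(3-k)}$, so that $\beta_f(P)\cong\R^{8-2k}$. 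The case $k=3$, the thrice-punctured sphere, is the genuine root of the induction: there is no classical model to deform from, since the complete hyperbolic structure on a cusped pants is rigid, so the two projective parameters must be constructed and controlled by hand.

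\emph{Inductive step.} Suppose $S$ is not a pair of pants. If $S$ has positive genus, choose a non-peripheral, non-separating simple closed curve $\gamma$; otherwise choose a separating one. Realize $\gamma$ as a principal geodesic and let $S'$ be the surface obtained by cutting $S$ along $\gamma$: in the first case $S'$ is connected with the same number of pants but genus one lower, in the second $S'$ splits as $S_1'\sqcup S_2'$ with strictly fewer pants in each piece; either way the inductive hypothesis applies (using $\beta_f(S_1'\sqcup S_2')=\beta_f(S_1')\times\beta_f(S_2')$). Restriction of structures gives a map $\beta_f(S)\to\beta_f(S')$ whose image is the locus $D$ on which the two new boundary components $\gamma^{+},\gamma^{-}$ of $S'$ carry the same eigendata, and Goldman's twist and bulge flows along $\gamma$ identify its fibres with torsors under the centralizer of $\mathrm{hol}(\gamma)$ in $\s$, that is, with $2$-cells \cite{Gold1}. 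By the inductive hypothesis $D$ is the preimage of a diagonal sub-cell under the trivial fibration $h$ of $S'$, hence a ball over a ball with ball fibre, hence a ball; therefore $\beta_f(S)$ is a locally trivial $\R^2$-bundle over a ball, hence a ball, and tracking the remaining boundary eigendata keeps $h$ a trivial fibration on $S$. Two points need real work here: that the structure obtained by regluing the pieces along the principal geodesic $\gamma$ is again \emph{properly convex and of finite volume} --- a combination argument in the spirit of \cite{Gold1,Beno1}, with proper convexity coming from the fact that the developing maps of the pieces patch, along the lifts of $\gamma$, onto a properly convex domain, and finiteness of volume being a local matter near the (untouched) cusps together with compactness of the thick part --- and that every value of the twist--bulge parameter keeps the glued structure inside $\beta_f(S)$, i.e. compatibly with the rigidity of the cusps.

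\emph{Dimension count and main obstacle.} Unwinding the induction along a pants decomposition of $\Sigma_{g,p}$, each of the $N_P=2g-2+p$ pairs of pants contributes its $\R^2$ of internal parameters, each of the $N_C=3g-3+p$ curves of the decomposition contributes $\R^2$ of eigendata and $\R^2$ of twist--bulge, and each of the $p$ cusps contributes nothing; hence $\dim\beta_f(\Sigma_{g,p})=2N_P+4N_C=2(2g-2+p)+4(3g-3+p)=16g-16+6p$ and $\beta_f(\Sigma_{g,p})\cong\R^{16g-16+6p}$. I expect the real difficulty to lie in the cusped base case --- proving from \cite{ludo} that the finite-volume convex projective structures on a cusped pair of pants with fixed boundary eigendata form exactly a $2$-cell, trivially fibred over that eigendata --- and, running through the whole induction, in the combination theorem across a principal geodesic: proper convexity and finiteness of volume of the reglued object, uniformly in the gluing parameters, is the engine on which the rest depends.
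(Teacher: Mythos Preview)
Your plan coincides with the paper's: fix a pants decomposition, attach $\R^2$ of eigendata and $\R^2$ of twist--bulge to each of the $3g-3+p$ cutting curves (Proposition~\ref{collag}), $\R^2$ of internal parameters to each of the $2g-2+p$ pants (Corollary~\ref{modpant}), and nothing to the cusps; the paper packages this as one global product rather than a one-curve-at-a-time induction, but the content is identical and your dimension count is correct. You also correctly locate the difficulty in the cusped pair of pants and correctly observe that the regluing step goes through exactly as in Goldman, with finite volume read off from the unchanged cusp holonomy via \cite{ludo}.

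What your proposal lacks is the idea that actually resolves the base case you flag. The paper cuts each projective pant along three ideal (spiralling) geodesics into two ideal triangles, so that a fundamental domain is a convex hexagon built from four triangles $T_0,T_1,T_2,T_3$ together with three projective identifications $\g_1,\g_2,\g_3$ conjugate to the prescribed $\delta_i$; this combinatorial datum is the space $\Q_{\delta_1,\delta_2,\delta_3}$, and Proposition~\ref{panta-combi} identifies it with $\beta_f(P_{\delta_1,\delta_2,\delta_3})$. The crux --- and the only place where the non-compact case genuinely diverges from Goldman --- is the converse direction, Proposition~\ref{conv} via Lemma~\ref{coeur}: one must show \emph{by hand} that the union of the $\G$-translates of the hexagon is properly convex, since Goldman's appeal to Koszul's deformation theorem is unavailable when some $\delta_i$ is parabolic or quasi-hyperbolic. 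The paper does this by an open--closed argument on pairs of cells of the abstract triangulated space $X$, the delicate closure step (Lemma~\ref{infini}) using the explicit dynamics of a parabolic or (quasi-)hyperbolic element on two adjacent triangles. Once convexity holds, an explicit coordinate computation (Proposition~\ref{objmod}) yields $\Q_{\delta_1,\delta_2,\delta_3}\cong\R^2$ uniformly in the $\delta_i$. Your appeal to ``the description of cusp ends in \cite{ludo}'' would not by itself produce these two internal parameters nor the proper convexity of the developed image; that hexagon construction and its convexity lemma are the engine you are missing.
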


\par{
Si $S$ est une surface à bord avec un bord non trivial alors l'espace $\beta_f(S)$ n'est pas une variété mais une variété à bord topologique dont nous décomposerons le bord en strates. Le système de coordonnées introduit sur l'espace $\beta_f(S)$ est une généralisation du système de coordonnées utilisé par Goldman pour étudier cet espace dans le cas où la surface $S$ est compacte sans bord. Le système de coordonnées de Goldman étant lui-même une généralisation du système de coordonnées utilisé par Fenchel et Nielsen pour étudier l'espace des modules des structures hyperboliques sur une surface.
}
\\
\par{
Dans l'article \cite{ludo}, l'auteur a donné des caractérisations du fait qu'une structure projective proprement convexe est de volume fini. On a notamment montré qu'une structure projective proprement convexe à bord géodésique sur une surface à bord de type fini $S$ est de volume fini si et seulement si l'holonomie des lacets élémentaires non homotopes à une composante connexe du bord de $S$ est parabolique.
}
\\
\par{
Ainsi, \cite{ludo} permet de faire le lien entre l'espace $\beta_f(\Sigma_{g,p})$ et un espace de représentations du groupe fondamental de $\Sigma_{g,p}$ étudié par Fock et Goncharov. En effet, l'holonomie  fournit une application de l'espace $\beta_f(\Sigma_{g,p})$ vers l'espace des représentations du groupe fondamental de $\Sigma_{g,p}$ suivant:
}

$$
\begin{array}{ccc}
\beta'_{g,p} & = & \left\{
                         \begin{tabular}{c|p{6cm}}
                                            & $\rho$ est fidèle et discrète,\\
$\rho \in \Hom(\pi_1(\Sigma_{g,p}),\s)$     & $\textrm{Im}(\rho)$ préserve un ouvert  proprement convexe $\Omega_{\rho}$,\\
                                            & le quotient $\Omega_{\rho}/_{\G}$ est homéomorphe à $\Sigma_{g,p}$,\\
                                            & l'holonomie des lacets élémentaires de $\pi_1(\Sigma_{g,p})$ est parabolique.
                           \end{tabular}
                   \right\}
\end{array}
$$

\par{
On verra que le groupe $\s$ agit librement et proprement sur $\beta'_{g,p}$, on note $\beta_{g,p}$ l'espace quotient. L'auteur a montré dans \cite{ludo}, que l'application naturelle donnée par l'holonomie de l'espace $\beta_f(\Sigma_{g,p})$ vers l'espace $\beta_{g,p}$ est injective. Nous montrerons que c'est en fait un homéomorphisme. Foch et Goncharov ont montré dans \cite{FoGo} que l'espace $\beta_{g,p}$ est homéomorphe à $\R^{16g-16+6p}$. Nous donnons ici un système de coordonnées "à la Fenchel-Nielsen" sur $\beta_f(\Sigma_{g,p})$ et nous gérons aussi le cas d'une surface à bord.
}
\\
\par{
Nous allons aussi montrer que les structures projectives proprement convexes de volume fini sont des objets naturels. Pour cela, nous montrerons le théorème suivant:
}

\begin{thm}[\ref{comptheo}]
On se donne une surface sans bord $S$ de caractéristique d'Euler strictement négative. L'espace des modules des structures projectives proprement convexe de volume fini sur la surface $S$ s'identifie à l'une des composantes connexes de l'espace des représentations du groupe fondamental de $S$ dans $\s$ dont l'holonomie des lacets élémentaires est parabolique, à conjugaison près.
\end{thm}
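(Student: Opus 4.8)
Le plan est de d\'eduire l'\'enonc\'e de ce qui pr\'ec\`ede en montrant que l'image de l'holonomie est \`a la fois ouverte et ferm\'ee dans l'espace ambiant des repr\'esentations. \'Ecrivons $S=\Sigma_{g,p}$ (le cas $p=0$ \'etant essentiellement le th\'eor\`eme de Goldman) et notons
$$\mathcal{X}=\bigl\{\,\rho\in\Hom(\pi_1(S),\s)\ :\ \rho(c)\text{ est parabolique pour tout lacet \'el\'ementaire }c\,\bigr\}\big/\s,$$
le quotient \'etant pris au sens de Hausdorff ; il conviendra de pr\'eciser qu'on se restreint au lieu des repr\'esentations fortement irr\'eductibles, sur lequel $\s$ agit proprement et librement et qui contient toutes les holonomies de structures projectives proprement convexes de volume fini sur $S$ (voir \cite{ludo}). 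Par construction, l'image $\beta_{g,p}$ de $\beta_f(S)$ par l'holonomie est contenue dans $\mathcal{X}$ ; et comme l'holonomie $\beta_f(S)\to\beta_{g,p}$ est un hom\'eomorphisme (\'etabli pr\'ec\'edemment) et que $\beta_f(S)$ est hom\'eomorphe \`a $\R^{16g-16+6p}$ (Th\'eor\`eme \ref{principal1}), l'ensemble $\beta_{g,p}$ est connexe. Il suffit donc de montrer qu'il est ouvert et ferm\'e dans $\mathcal{X}$ pour conclure que c'est une composante connexe de $\mathcal{X}$.

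Pour l'ouverture, on proc\`ede par les arguments cohomologiques usuels \`a la Weil--Goldman sur les vari\'et\'es de caract\`eres relatives : pour $\rho_0\in\beta_{g,p}$, on montre que $\rho_0$ est un point lisse de $\mathcal{X}$ et que $\mathcal{X}$ est, au voisinage de $\rho_0$, hom\'eomorphe \`a un ouvert de $\R^{16g-16+6p}$. Le comptage attendu est $\dim\Hom(\pi_1(S),\s)=8(2g+p-1)$, dont on retranche $2p$ car imposer qu'un \'el\'ement appartienne \`a la classe de conjugaison de l'unipotent principal est une condition de codimension $2$, puis $8$ pour le passage au quotient, ce qui redonne $16g-16+6p$ ; il reste \`a v\'erifier que ces conditions de parabolicit\'e se coupent transversalement en $\rho_0$ et que l'obstruction dans $H^2$ s'annule, ce qui r\'esulte de la dualit\'e de Poincar\'e--Lefschetz tordue et du fait que $\rho_0$ est fid\`ele, discr\`ete et fortement irr\'eductible. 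Comme $\beta_{g,p}$ est elle-m\^eme, d'apr\`es le Th\'eor\`eme \ref{principal1}, localement hom\'eomorphe \`a des ouverts de $\R^{16g-16+6p}$ et que l'inclusion $\beta_{g,p}\hookrightarrow\mathcal{X}$ est continue et injective, le th\'eor\`eme d'invariance du domaine montre que $\beta_{g,p}$ est ouverte dans $\mathcal{X}$. (De fa\c{c}on \'equivalente, on peut invoquer une version pour les surfaces \`a pointes du th\'eor\`eme d'ouverture de Koszul, la finitude du volume \'etant alors automatique pour une structure proprement convexe voisine d\`es que l'holonomie des lacets \'el\'ementaires reste parabolique, d'apr\`es \cite{ludo}.)

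Pour la fermeture, on prend une suite $(\rho_n)_n$ d'\'el\'ements de $\beta_{g,p}$ convergeant vers $\rho\in\mathcal{X}$ et on montre que $\rho\in\beta_{g,p}$. Chaque $\rho_n$ pr\'eserve un ouvert proprement convexe $\O_n$ avec $\O_n/_{\rho_n(\pi_1(S))}\cong S$. Apr\`es normalisation (compacit\'e de Benz\'ecri de l'espace des ouverts proprement convexes point\'es modulo l'action du groupe projectif) et extraction d'une sous-suite, on dispose d'un convexe limite $\O$ invariant par $\rho(\pi_1(S))$. Le point d\'elicat est de montrer que $\O$ reste \emph{proprement} convexe et que $\O/_{\rho(\pi_1(S))}$ reste hom\'eomorphe \`a $S$ : il faut exclure l'effondrement de $\O$ sur un segment ou un triangle, l'\'echappement d'une pointe \`a l'infini, et l'ouverture d'une pointe en un bord g\'eod\'esique ou sa fermeture en un point conique. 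C'est pr\'ecis\'ement l'hypoth\`ese $\rho\in\mathcal{X}$ -- l'holonomie des lacets \'el\'ementaires demeure parabolique \`a la limite -- qui interdit ces d\'eg\'en\'erescences ; on contr\^ole par ailleurs, \`a l'aide du lemme de Margulis projectif et de la description des voisinages de pointes d'une structure de volume fini, que $\rho$ reste discr\`ete et fid\`ele. Une fois $\O$ proprement convexe, $\rho$ discr\`ete et fid\`ele, et $\O/_{\rho(\pi_1(S))}\cong S$, la caract\'erisation de \cite{ludo} garantit que cette structure est de volume fini, d'o\`u $\rho\in\beta_{g,p}$.

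Ainsi $\beta_{g,p}$ est une partie non vide, connexe, ouverte et ferm\'ee de $\mathcal{X}$ : c'est une composante connexe de $\mathcal{X}$, et l'hom\'eomorphisme $\beta_f(S)\cong\beta_{g,p}$ fourni par l'holonomie donne l'identification annonc\'ee. La difficult\'e principale r\'eside dans l'\'etape de fermeture, et plus pr\'ecis\'ement dans le contr\^ole de la g\'eom\'etrie des pointes \`a la limite -- c'est l\`a que le fait de travailler dans le sous-espace $\mathcal{X}$, et non dans tout l'espace des repr\'esentations, est essentiel.
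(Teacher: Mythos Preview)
Your overall strategy---show that the holonomy image is open and closed in the relative representation space, using invariance of domain for openness---is exactly the paper's. The differences lie in the tools you invoke for the two halves.

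For openness, you propose to compute the local dimension of $\mathcal{X}$ via Weil--Goldman cohomology and Poincar\'e--Lefschetz duality. The paper is more direct: it quotes a lemma of Goldman (Lemma~\ref{rang}, a Fox-calculus computation) stating that the relator map $R:H_P\to\s$ has rank equal to the codimension of the centralizer, so $Hom_P^{irr}$ is a smooth submanifold of the expected dimension and $Rep_P^{irr}$ is a manifold of dimension $16g-16+6p$. Brouwer's invariance of domain then applies immediately, with no further transversality or $H^2$-obstruction analysis needed.

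For closedness, your sketch has the right shape but substitutes vaguer tools for the concrete ones the paper uses. First, preservation of discreteness and faithfulness: you invoke a ``projective Margulis lemma'', but no such result is needed (or standard in this context); the paper simply cites the Goldman--Millson lemma (Lemma~\ref{nil}), valid for any finitely generated group without infinite normal nilpotent subgroups. Second, proper convexity of the limit domain: you say one must ``exclude collapse onto a segment or triangle'', but you do not say how. The paper's mechanism is a trace argument: every element of $\Aut(\Omega)$ has trace $\geqslant 3$ (Lemma~\ref{trsup}), while any non-irreducible faithful discrete representation of a free group contains an element of trace $<1$ (Lemma~\ref{trinf}); hence the limit is irreducible, and an irreducible group cannot preserve a degenerate convex. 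Third, and most substantially, the topology of the quotient: in the noncompact case the fundamental group (a free group) does not determine the surface, so one must argue separately that $\Omega_\infty/\rho_\infty(\pi_1)$ is still $\Sigma_{g,p}$. The paper does this via an ideal-triangulation argument: lift a topological ideal triangulation of $\Sigma_{g,p}$, realize it geodesically in each $\Omega_n$, and show that these geodesic triangulations sub-converge to a geodesic triangulation of $\Omega_\infty$ (using irreducibility of $\rho_\infty$ to rule out collapse of an edge to a boundary point). Your appeal to ``cusp-neighborhood descriptions'' does not supply this step.
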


%
%
%
%
%
%
%
\par{
Terminons cette introduction en donnant le plan de ce texte. La partie \ref{preliminaire} est un rappel des définitions de surfaces projectives, projectives proprement convexes, marquées, à bord géodésique, à bord géodésique principal. On définit ensuite la topologie de l'espace $\beta_f(S)$. On termine cette partie en donnant des conséquences faciles de la classification des automorphismes des ouverts proprement convexes (paragraphe \ref{consfaci}) qui nous seront utiles.
}
\\
\par{
Dans la troisième partie, on démontre les théorèmes \ref{modulesansbord} et \ref{module}. La démonstration se déroule en deux parties. On commence dans la partie \ref{point4} par étudier le recollement d'une surface projective proprement convexe le long d'un lacet. Ce résultat a été montré par Goldman, mais nous reproduisons une preuve pour la commodité du lecteur. Enfin, nous calculons l'espaces des modules des pantalons dont les classes de conjugaison de l'holonomie des bords est fixée. L'idée de départ est dû à Goldman, il s'agit de découper un pantalon à l'aide de deux triangles "en spirales". Cette idée permet de construire une bijection entre les pantalons projectifs proprement convexes et un objet combinatoire. La démonstration de Goldman de ce point nécessite de nouveaux arguments pour passer au cas non compact. Enfin, on termine cette partie en calculant, l'espace des modules de l'objet combinatoire. Cette partie est une généralisation facile de la preuve de Goldman.
}
\\
\par{
Dans la quatrième partie, on démontre le théorème \ref{comptheo}. La démonstration de ce théorème se fait en deux parties. Il faut montrer la fermeture et l'ouverture de l'espace $\beta_f(S)$ dans l'espace des représentations du groupe fondamental de $S$ dont l'holonomie d'un lacet élémentaire est parabolique. La démonstration de la fermeture est très proche du travail de Choi et Goldman qui montre dans \cite{ChGo}, la fermeture de $\beta_f(S)$ dans l'espace des représentations du groupe fondamental de $S$, lorsque $S$ est une surface compacte de genre supérieure ou égale à 2. Pour obtenir l'ouverture de $\beta_f(S)$ dans l'espace des représentations paraboliques, nous utiliserons le théorème d'invariance du domaine de Brouwer (le théorème \ref{modulesansbord} montre que $\beta_f(S)$ est une variété).
}

\section{Préliminaires}\label{preliminaire}

\subsection{Géométrie de Hilbert}\label{geohil2}

\subsubsection{Distance de Hilbert}

Cette partie constitue un rappel de faits connus en géométrie de
Hilbert. Un exposé plus complet peut être trouvé dans les
articles \cite{CVV1, CVV2, Beno3}.

Soit $\Omega$ un ouvert proprement convexe de $\PP^n$. Hilbert a
introduit sur de tels ouverts une distance, la distance de
Hilbert, définie de la façon suivante:

Soient $x \neq y \in \Omega$, on note $p,q$ les points d'intersection
de la droite $(xy)$ et du bord $\partial \Omega$ de $\Omega$ tels que $x$
est entre $p$ et $y$, et $y$ est entre $x$ et $q$ (Voir figure \ref{dis2}). On pose:

\begin{figure}[!h]
\begin{center}
\includegraphics[trim=0cm 12cm 0cm 0cm, clip=true, width=6cm]{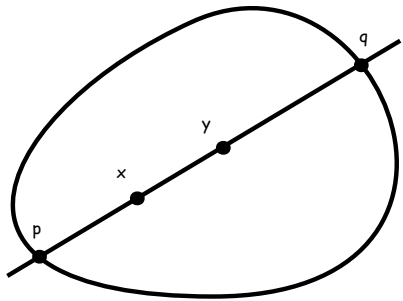}
\caption{La distance de Hilbert} \label{dis2}
\end{center}
\end{figure}

$$
\begin{array}{ccc}
d_{\Omega}(x,y) = \ln ( [p:x:y:q] ) = \ln \Big( \frac{\|p-y \|\cdot \|
q-x\|}{\| p-x \| \cdot \| q-y \|} \Big) & \textrm{et} &
d_{\Omega}(x,x)=0
\end{array}
$$
\begin{itemize}
\item $[p:x:y:q]$ désigne le birapport des points $p,x,y,q$.

\item $\| \cdot \|$ est une norme euclidienne quelconque sur un
ouvert affine $A$ qui contient $\overline{\Omega}$.
\end{itemize}

\begin{rem}
Il est clair que $d_{\Omega}$ ne dépend ni du choix de $A$, ni du
choix de la norme euclidienne sur $A$.
\end{rem}

\begin{fait}
Soit $\Omega$ un ouvert proprement convexe de $\PP^n$.
\begin{itemize}
\item $d_{\Omega}$ est une distance sur $\Omega$.

\item $(\Omega,d_{\Omega})$ est un espace métrique complet.

\item La topologie induite par $d_{\Omega}$ coïncide avec celle
induite par $\PP^n$.

\item Le groupe $\Aut(\Omega)$ des transformations projectives de $\ss$ qui préservent $\Omega$ est un sous-groupe fermé de $\ss$ qui agit par isométrie sur $(\Omega,d_{\Omega})$. Il agit donc proprement sur $\Omega$.
\end{itemize}
\end{fait}

On peut trouver une démonstration de cet énoncé dans \cite{Beno3}.

\subsubsection{La structure finslérienne d'un ouvert proprement convexe}

Soit $\Omega$ un ouvert proprement convexe de $\PP^n$. La métrique de
Hilbert $d_{\Omega}$ est induite par une structure finslérienne sur
l'ouvert $\Omega$. On identifie le fibré tangent $T \Omega$ de $\Omega$ à
$\Omega \times A$.

Soient $x \in \Omega$ et $v \in A$. On note $p^+$ (resp $p^-$) le
point d'intersection de la demi-droite définie par $x$ et $v$
(resp $-v$) avec $\partial \Omega$.

On pose: $\|v\|_x = \Big( \frac{1}{\|x-p^-\|} + \frac{1}{\| x-
p^+\|} \Big) \| v \|$.

\begin{figure}[!h]
\begin{center}
\includegraphics[trim=0cm 12cm 0cm 0cm, clip=true, width=6cm]{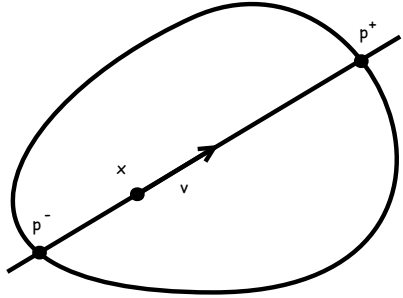}
\caption{La métrique de Hilbert} \label{met2}
\end{center}
\end{figure}

\begin{fait}
Soient $\Omega$ un ouvert proprement convexe de $\P$ et $A$ un ouvert
affine qui contient $\overline{\Omega}$.
\begin{itemize}
\item La distance induite par la métrique finslérienne $\|\cdot \|
_{\cdot}$ est la distance $d_{\Omega}$.

\item Autrement dit on a les formules suivantes:
\begin{itemize}
\item $\displaystyle{\|v\|_x = \left. \frac{d}{dt}  \right| _{t=0} d_{\Omega}(x,x+tv)}$, où $v \in
A$, $t\in \R$ assez petit.

\item $\displaystyle{d_{\Omega}(x,y) = \inf \int_0^1 \| \sigma'(t) \|_{\sigma(t)}
dt}$, où $l'\inf$ est pris sur les chemins $\sigma$ de classe
$\C^1$ tel que $\sigma(0)=x$ et $\sigma(1)=y$.
\end{itemize}
\end{itemize}
\end{fait}

\begin{rem}
$\|v\|_x$ est donc indépendante du choix de $A$ et de $\| \cdot
\|_{A}$.
\end{rem}

\subsubsection{Mesure sur un ouvert proprement convexe ( dite mesure de Busemann )}

Nous allons construire une mesure borélienne $\mu_{\Omega}$ sur $\Omega$,
de la même façon que l'on construit une mesure borélienne sur une
variété riemanienne.

Soit $\Omega$ un ouvert proprement convexe de $\PP^n$. On note:
\begin{itemize}
\item $B_x(1) = \{ v \in T_x \Omega \, | \, \|v\|_x < 1 \}$

\item $\textrm{Vol}$ est la mesure de Lebesgue sur $A$ normalisée
pour avoir $\textrm{Vol}(\{ v \in A \, | \, \|v\| < 1 \})=1$.
\end{itemize}

On peut à présent définir la mesure $\mu_{\Omega}$. Pour tout borélien
$\mathcal{A} \subset \Omega \subset A$, on pose:

$$\mu_{\Omega} (\mathcal{A})= \int_{\mathcal{A}} \frac{dVol(x)}{\textrm{Vol}(B_x(1))}$$

La mesure $\mu_{\Omega}$ est indépendante du choix de $A$ et de $\| \cdot \|$,
car c'est la mesure de Haussdorff de $(\Omega,d_{\Omega})$ (Exemple 5.5.13
\cite{BBI}). ( Pour une introduction aux mesures de Haussdorff, on
pourra regarder \cite{BBI}). La mesure $\mu_{\Omega}$ est donc
$\Aut(\Omega)$-invariante.

\subsection{Dynamique des éléments de $\Aut(\Omega)$}\label{classi_auto}

Dans ce paragraphe nous allons rappeller certaines propositions nécessaires pour la suite de ce texte. Toutes ses propositions sont démontrés dans l'article \cite{ludo}.

\begin{prop}\label{classi2}
Soit $\Omega$ un ouvert proprement convexe. Soit $\g \in \Aut(\Omega)$,
$\g$ fait partie de l'une des six familles
suivantes:

\begin{itemize}
\item La famille des éléments dits $\underline{hyperboliques}$ qui sont conjugués à une matrice de la forme suivante:

$$
\begin{array}{cc}
\left(
\begin{array}{ccc}
\lambda^+ &    0      & 0  \\
 0        & \lambda^0 & 0  \\
 0        &    0      & \lambda^-\\
\end{array}
\right)
&
\begin{tabular}{l}
où, $\lambda^+ > \lambda^0 > \lambda^- > 0$\\
et $\lambda^+ \lambda^0 \lambda^- = 1$.
\end{tabular}
\end{array}
$$

\item La famille des éléments dits $\underline{planaires}$ qui sont conjugués à une matrice de la forme suivante:

$$
\begin{array}{cc}
\left(
\begin{array}{ccc}
\alpha &   0        & 0   \\
 0     & \alpha     & 0    \\
 0     &   0        & \beta\\
\end{array}
\right) &
\begin{tabular}{l}
où, $\alpha, \beta > 0$, $\alpha^2 \beta = 1$\\
et $\alpha,\, \beta \neq 1$.
\end{tabular}
\end{array}
$$

\item La famille des éléments dits
$\underline{quasi-hyperboliques}$ qui sont conjugués à une matrice de la forme suivante:

$$
\begin{array}{cc}
\left(
\begin{array}{ccc}
\alpha &  1     &  0    \\
 0     & \alpha &  0   \\
 0     &  0     & \beta\\
\end{array}
\right)
 &
\begin{tabular}{l}
où, $\alpha, \, \beta > 0$, $\alpha^2 \beta = 1$\\
et $\alpha, \,\beta \neq 1$.
\end{tabular}
\end{array}
$$

\item La famille des eléments dits $\underline{paraboliques}$ qui sont conjugués à la matrice suivante:

$$
\left(
\begin{array}{ccc}
1 & 1 & 0\\
0 & 1 & 1\\
0 & 0 & 1\\
\end{array}
\right)
$$

\item La famille des eléments dits \underline{elliptiques} qui sont conjugués à une matrice de la forme suivante:

$$
\begin{tabular}{cc}
$\left(
\begin{array}{ccc}
1      &   0          & 0\\
0      & \cos(\theta) & -\sin(\theta)\\
0      & \sin(\theta) & \cos(\theta)\\
\end{array}
\right)$ &

Où, $0 < \theta < 2\pi$.

\end{tabular}
$$

\item La famille composée uniquemenent de \underline{L'identité}:
\end{itemize}
\end{prop}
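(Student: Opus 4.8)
The plan is to read the six possibilities off the Jordan form of a lift $\widetilde{\g}\in\s$ of $\g$; the real content is that proper convexity forces the extremal eigenvalues to be positive reals and rules out a couple of Jordan types. First I would lift $\Omega$ to a salient open convex cone $C\subset\R^3$ over it, so that $\widetilde{\g}(C)$ equals $C$ or $-C$. The inputs are: \emph{(i)} a Perron--Frobenius statement for a linear map preserving a salient closed cone --- its spectral radius is an eigenvalue, realised by an eigenvector inside the cone; applied to $\widetilde{\g}$ (or, when $\widetilde{\g}(C)=-C$, to $\widetilde{\g}^{\,2}$) and to the inverse, it yields the eigenvalues of largest and smallest modulus as \emph{positive} reals $\lambda^{+}\geq\lambda^{-}>0$ with $\lambda^{+}\lambda^{0}\lambda^{-}=\det\widetilde{\g}=1$, hence $\lambda^{0}=(\lambda^{+}\lambda^{-})^{-1}>0$ as soon as all eigenvalues are real; \emph{(ii)} the elementary remark that if $\ell$ is a $\g$-invariant projective line, then $\overline{\Omega}\cap\ell$ is a compact segment of $\ell$ (possibly empty or a point), it is $\g|_{\ell}$-invariant, and it cannot be all of $\ell$; \emph{(iii)} properness of the $\Aut(\Omega)$-action.

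Then I would go through the Jordan types of a real $3\times3$ matrix of determinant $1$. If the characteristic polynomial has a non-real root, write the real eigenvalue as $r>0$ and the complex pair as $\rho e^{\pm i\theta}$ with $\theta\notin\pi\Z$, so $r\rho^{2}=1$; on the $\widetilde{\g}$-invariant real plane $V$ spanned by the real and imaginary parts of a complex eigenvector, $\g$ acts on the circle $\PP(V)$ by a rotation of angle $\theta$, which has no invariant proper closed arc, so $\PP(V)\cap\overline{\Omega}=\varnothing$; but the forward orbit of an interior point accumulates on $\PP(V)$ whenever $\rho>r$ and the backward orbit whenever $\rho<r$, forcing $\rho=r$, hence $\rho^{3}=1$, $\rho=r=1$: $\g$ is elliptic. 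If all eigenvalues are real: three distinct ones give, after \emph{(i)}, $\lambda^{+}>\lambda^{0}>\lambda^{-}>0$, the hyperbolic family; a genuine double eigenvalue (the two entries are then positive by \emph{(i)}, apart from the borderline case $\{1,-1,-1\}$ which is the elliptic element of angle $\pi$) gives, according to whether the $2$-dimensional generalised eigenspace is semisimple or carries a single $2\times2$ Jordan block, the planar and the quasi-hyperbolic families; a triple eigenvalue must equal $1$, leaving the identity or a single $2\times2$ or $3\times3$ unipotent block, the latter being the parabolic family.

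Two exclusions remain, and they are where the work is. \emph{First}, a single $2\times2$ unipotent block is impossible: such a $\g$ fixes a whole projective line $\ell_{0}$, hence has an entire pencil of invariant lines through one common point $x_{0}$, on each of which (other than $\ell_{0}$) it acts as a parabolic of $\PP^{1}$ with unique fixed point $x_{0}$; an interior point of $\Omega$ lies on one such line $\ell$, and then by \emph{(ii)} the $\g|_{\ell}$-invariant segment $\overline{\Omega}\cap\ell$ has non-empty interior, yet a parabolic of $\PP^{1}$ has no invariant segment other than $\{x_{0}\}$ or all of $\PP^{1}$, so $\overline{\Omega}\cap\ell=\ell$ --- contradiction. (The genuine $3\times3$ parabolic survives, having only one invariant line, which is disjoint from $\Omega$.) \emph{Second}, one must exclude negative eigenvalues of extremal modulus --- equivalently the case $\widetilde{\g}(C)=-C$ with $\widetilde{\g}^{\,2}\neq\mathrm{id}$, for instance $\widetilde{\g}=\mathrm{diag}(-2,1,-1/2)$: here $\widetilde{\g}^{\,2}$ preserves $C$ and already lies in one of the families, and $\widetilde{\g}$ still fixes the points of $\overline{\Omega}$ produced by Perron--Frobenius for $\widetilde{\g}^{\,2}$; I expect this to be the main obstacle, and would kill it by a zig--zag argument: since $\widetilde{\g}$ flips the cone, the $\g$-orbit of an interior point oscillates between two arcs exchanged by an involution fixing an invariant line pointwise, and the segment joining two consecutive orbit points meets another invariant coordinate line at a point distinct from its fixed points, contradicting \emph{(ii)} together with the loxodromic-with-flip dynamics there. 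What is left once $\widetilde{\g}^{\,2}=\mathrm{id}$ is the involution with eigenvalues $1,-1,-1$, i.e.\ the elliptic element of angle $\pi$. Assembling these facts shows every $\g\in\Aut(\Omega)$ lies in exactly one of the six families; one also records in passing, via a Cartan-type argument using properness and convexity of Hilbert balls, that elliptic elements fix a point of $\Omega$ itself.
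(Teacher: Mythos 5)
The paper itself does not prove this proposition --- it is quoted from \cite{ludo} --- so I can only compare your argument with the standard one. Your route (lift $\Omega$ to a salient cone $C$, use Perron--Frobenius on $C$ and on its image under $\widetilde{\gamma}^{-1}$ to force the extremal eigenvalues to be positive reals, run through the Jordan types, kill the complex case by the rotation on the invariant $\PP^1$, and kill the rank-one unipotent by the pencil of invariant lines through its fixed point) is sound and is essentially the intended proof for every case \emph{except} the one you yourself flag as ``the main obstacle''.

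That step is a genuine gap, and it is not repairable: the cone-swapping case $\widetilde{\gamma}(C)=-C$ with $\widetilde{\gamma}^{\,2}\neq\mathrm{id}$ \emph{cannot} be excluded. Your own test element $\gamma=\mathrm{diag}(-2,1,-1/2)\in\s$ preserves the quadratic form $q(x,y,z)=y^2-4xz$ of signature $(2,1)$, hence preserves the properly convex open set $\Omega=\{[x:y:z]\in\P \mid q<0\}$, the Klein model of the hyperbolic plane, on which it acts as a glide reflection. Concretely, in the affine chart $u=(x-z)/(x+z)$, $v=y/(x+z)$, the set $\Omega$ is the unit disc and $\gamma(u,v)=\bigl((3+5u)/(5+3u),\,-4v/(5+3u)\bigr)$, which one checks preserves $u^2+v^2=1$. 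Your zig-zag argument produces no contradiction here: the segment from $x$ to $\gamma x$ does cross the invariant line $\{y=0\}$ at a non-fixed point, but it does so \emph{inside} the compact invariant segment $\overline{\Omega}\cap\{y=0\}$ joining the two fixed points $[1:0:0]$ and $[0:0:1]$, which is exactly what your observation \emph{(ii)} allows. Since $\mathrm{diag}(-2,1,-1/2)$ has eigenvalues $-2,1,-1/2$, it is conjugate to none of the six listed forms; so no argument can close this case, and the proposition is only correct under the extra hypothesis that $\widetilde{\gamma}$ preserves each of the two cones over $\Omega$ (equivalently, that $\gamma$ preserves an orientation of $\Omega$), which is what actually holds for the holonomy elements used in the rest of the paper. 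A minor further point: in the repeated-eigenvalue discussion you should also dispose of the non-diagonalizable type $J_2(-1)\oplus(1)$; its square is the rank-one unipotent, so your first exclusion applied to $\gamma^2$ handles it.
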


\subsubsection{Dynamique hyperbolique}

Soit $\g$ un élément hyperbolique de $\s$, $\g$ est conjugué à la
matrice

$$
\begin{array}{cc}
\left(
\begin{array}{ccc}
\lambda^+ &   0       & 0  \\
 0        & \lambda^0 & 0  \\
 0        &   0       & \lambda^-\\
\end{array}
\right) &
\begin{tabular}{l}
Où, $\lambda^+ > \lambda^0 > \lambda^- > 0$\\
et $\lambda^+ \lambda^0 \lambda^- = 1$.
\end{tabular}
\end{array}
$$

On note:
\begin{itemize}
\item $p^+_{\g}$ le point propre de $\P$ associé à la valeur
propre $\lambda^+$.

\item $p^0_{\g}$ le point propre de $\P$ associé à la valeur
propre $\lambda^0$.

\item $p^-_{\g}$ le point propre de $\P$ associé à la valeur
propre $\lambda^-$.

\item $D^{+,-}_{\g}$ la droite stable de $\P$ associée aux valeurs
propres $\lambda^+,\lambda^-$.

\item $D^{+,0}_{\g}$ la droite stable de $\P$ associée aux valeurs
propres $\lambda^+,\lambda^0$.

\item $D^{-,0}_{\g}$ la droite stable de $\P$ associée aux valeurs
propres $\lambda^-,\lambda^0$.
\end{itemize}

\begin{rem}
Tout au long de ce texte, l'indice $\g$ pour désigner un espace
stable de $\g$ sera omis si le contexte est clair.
\end{rem}

\begin{prop}
Soient $\Omega$ un ouvert proprement convexe et $\g \in \Aut(\Omega)$ un
élément hyperbolique. Alors, on a $p^+ ,\, p^- \in \partial \Omega$ et $p^0 \notin \Omega$.
\end{prop}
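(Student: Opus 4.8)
The plan is to split the statement into two independent assertions: no fixed point of $\g$ lies in $\O$, and the attracting and repelling fixed points lie in $\overline{\O}$. For the first assertion, note that a hyperbolic $\g$ satisfies $\lambda^+>1>\lambda^-$ (its three eigenvalues are positive, decreasing, and have product $1$), hence $\g$ has infinite order and the cyclic group $\langle\g\rangle$ is unbounded in $\s$ (in a diagonalizing basis $\g^n$ has the entry $(\lambda^+)^n\to\infty$). By the Fait above, $\Aut(\O)$ acts properly on $\O$, so the stabilizer in $\Aut(\O)$ of any point of $\O$ is compact; being bounded, it cannot contain $\g$. Thus $\g$ fixes no point of $\O$. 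Since $\g$ is conjugate to a diagonal matrix with three pairwise distinct eigenvalues, its fixed-point set in $\P$ is exactly $\{p^+,p^0,p^-\}$, so $p^+,p^0,p^-\notin\O$; in particular $p^0\notin\O$.

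For the second assertion, work in a basis in which $\g=\mathrm{diag}(\lambda^+,\lambda^0,\lambda^-)$, so that $p^+=[1:0:0]$, $p^0=[0:1:0]$, $p^-=[0:0:1]$, $D^{-,0}=\{x_1=0\}$ and $D^{+,0}=\{x_3=0\}$. The union of these two lines is nowhere dense in $\P$, so the nonempty open set $\O$ contains a point $x=[x_1:x_2:x_3]$ with $x_1\neq 0$ and $x_3\neq 0$. Dividing the homogeneous coordinates of $\g^n x$ by $(\lambda^+)^n$ and using $\lambda^0,\lambda^-<\lambda^+$ gives $\g^n x\to[1:0:0]=p^+$; dividing the coordinates of $\g^{-n}x$ by $(\lambda^-)^{-n}$ and using $\lambda^-<\lambda^0,\lambda^+$ gives $\g^{-n}x\to[0:0:1]=p^-$. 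As $\O$ is $\g$-invariant, the points $\g^{\pm n}x$ all lie in $\O$, so $p^+,p^-\in\overline{\O}$.

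Combining the two assertions yields $p^+,p^-\in\overline{\O}\setminus\O=\partial\O$ and $p^0\notin\O$, as required. I do not expect a genuine obstacle; the only things deserving a little care are the use of properness (furnished by the Fait above), the remark that a regular diagonal matrix fixes only the three coordinate points of $\P$, and the requirement that $x$ be chosen off the two $\g$-invariant lines — otherwise the forward or backward orbit of $x$ would run into $p^0$ rather than into $p^+$ or $p^-$.
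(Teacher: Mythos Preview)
Your argument is correct. Note, however, that the paper does not actually prove this proposition: the opening of the section on dynamics explicitly states that all propositions there are recalled from \cite{ludo}, so there is no in-paper proof to compare against. Your approach is the standard one and would be the expected proof: properness of the $\Aut(\Omega)$-action on $\Omega$ (the Fait you invoke) forces the fixed points of any element generating an unbounded cyclic subgroup out of $\Omega$, and the north--south dynamics of a hyperbolic element send the forward and backward orbits of a generic point of $\Omega$ to $p^+$ and $p^-$, placing them in $\overline{\Omega}\setminus\Omega=\partial\Omega$. The care you take in choosing $x$ off $D^{-,0}\cup D^{+,0}$ is exactly right; without it the orbit could converge to $p^0$ instead.
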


On peut à présent définir l'axe d'un élément hyperbolique qui agit sur un ouvert proprement convexe.

\begin{defi}
Soient $\Omega$ un ouvert proprement convexe et $\g \in \Aut(\Omega)$ un
élément hyperbolique. \emph{L'axe de $\g$} que l'on notera $\Ax(\g)$ est le segment ouvert de la droite $(p^- p^+)$ qui est inclus dans $\overline{\Omega}$ et dont les extrémités sont $p^-$ et $p^+$.
\end{defi}

La figure \ref{fhyp2} illustre la dynamique d'un élément hyperbolique.

\begin{figure}[!h]
\begin{center}
\includegraphics[trim=0cm 7cm 0cm 0cm, clip=true, width=6cm]{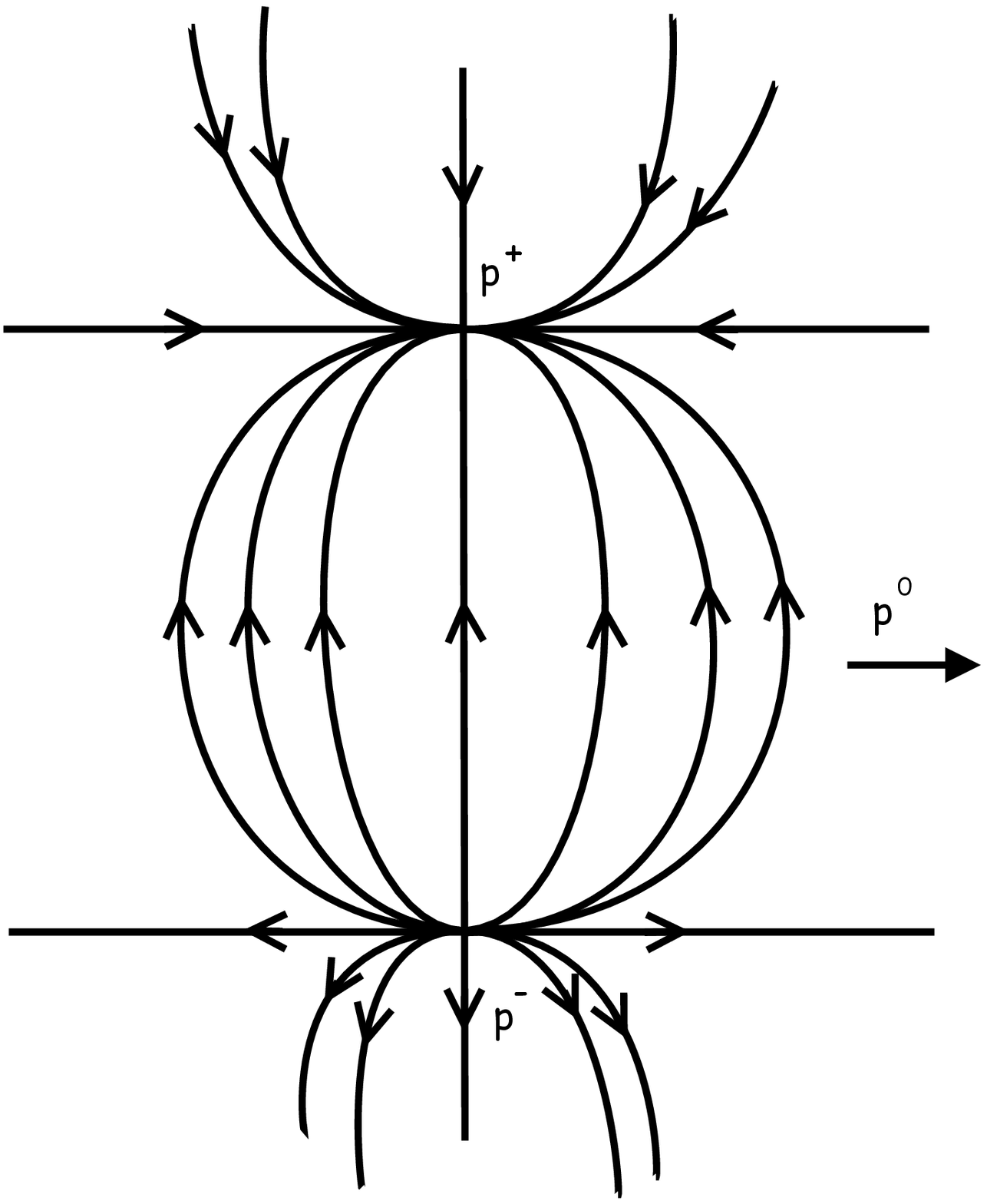}
\label{fhyp2}\caption{Dynamique d'un élément hyperbolique}
\end{center}
\end{figure}

\begin{prop}\label{hyp2}
Soient $\Omega$ un ouvert proprement convexe et $\g \in \Aut(\Omega)$ un
élément hyperbolique.
\begin{itemize}
\item Si $p^0 \notin \overline{\Omega}$ et $\Ax(\g) \subset \Omega$
alors
      \begin{itemize}
      \item $\partial \Omega$ est $\Cc^1$ en $p^+$ et en $p^-$.

      \item $p^0 = T_{p^+} \partial \Omega \cap T_{p^-} \partial \Omega$.
      \end{itemize}

\item Si $p^0 \notin \overline{\Omega}$ et $\Ax(\g) \subset
\partial \Omega$ alors
      \begin{itemize}
      \item $\partial \Omega$ n'est pas $\Cc^1$ en $p^+$ et en $p^-$.
      \item Les demi-tangentes à $\partial \Omega$ en $p^+$ sont $(p^+ p^-)$ et $(p^+ p^0)$.
      \item Les demi-tangentes à $\partial \Omega$ en $p^-$ sont $(p^+ p^-)$ et $(p^- p^0)$.
      \end{itemize}

\item Si $p^0 \in \overline{\Omega}$ et $\Ax(\g) \subset \Omega$ alors
      \begin{itemize}
      \item $[p^+,p^0]$ et $[p^-,p^0] \subset \partial \Omega$.
      \item $\partial \Omega$ est $\Cc^1$ en $p^+$, $p^-$ et $p^0 = T_{p^+} \partial \Omega \cap T_{p^-} \partial
      \Omega$.
      \end{itemize}

\item Si $p^0 \in \overline{\Omega}$ et $\Ax(\g) \subset \partial
\Omega$ alors
      \begin{itemize}
      \item $[p^+,p^0]$ et $[p^-,p^0] \subset \partial \Omega$.
      \item $\Omega$ est un
triangle dont les sommets sont $p^+, \, p^0, \, p^-$.
      \end{itemize}
\end{itemize}
\end{prop}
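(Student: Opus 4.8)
\emph{Esquisse de la preuve proposée.} Quitte à conjuguer, on suppose $\g = \mathrm{diag}(\lambda^+,\lambda^0,\lambda^-)$ avec $\lambda^+ > \lambda^0 > \lambda^- > 0$, et on pose $\ell^0 = (p^+p^-) = D^{+,-}$, $\ell^+ = (p^+p^0) = D^{+,0}$, $\ell^- = (p^-p^0) = D^{-,0}$ : ce sont les trois côtés du triangle $T$ de sommets $p^+, p^0, p^-$. La première remarque est que, pour toute droite fixe $D$ de $\g$, l'ensemble $\overline{\Omega}\cap D$ est un segment compact $\g$-invariant, donc ses extrémités sont des points fixes de $\g$ situés sur $D$. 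Joint au fait déjà acquis que $p^+, p^-\in\partial \Omega$ et $p^0\notin\Omega$, ceci donne $\overline{\Omega}\cap\ell^0 = [p^-,p^+]$, et $\overline{\Omega}\cap\ell^\pm = [p^\pm,p^0]$ si $p^0\in\overline{\Omega}$, tandis que $\overline{\Omega}\cap\ell^\pm = \{p^\pm\}$ sinon. Pour un segment $\overline{\Omega}\cap D=[a,b]$ avec $a\neq b$, l'intervalle ouvert $(a,b)$ est, par convexité, soit tout entier dans $\Omega$ (``$D$ est une corde''), soit tout entier dans $\partial \Omega$ (et alors $\overline{\Omega}$ est d'un seul côté de $D$, qui est une droite d'appui). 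On traduirait ainsi les hypothèses sur l'axe : $\Ax(\g)\subset\Omega$ équivaut à dire que $\ell^0$ est une corde, et $\Ax(\g)\subset\partial \Omega$ que $\ell^0$ est une droite d'appui en $p^+$ et en $p^-$.

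On s'appuierait ensuite sur deux ingrédients dynamiques. D'abord, pour chaque sommet $q\in\{p^+,p^0,p^-\}$, l'action de $\g$ sur le $\PP^1$ des droites passant par $q$ est conjuguée à une transformation de type Nord--Sud dont les deux droites fixes sont les deux côtés de $T$ passant par $q$ : une droite par $q$ est en effet repérée par son intersection avec le côté opposé de $T$, sur lequel $\g$ agit hyperboliquement. Par suite, si $q\in\partial \Omega$, l'ensemble $I_q$ des droites d'appui de $\overline{\Omega}$ en $q$, arc fermé $\g$-invariant de ce $\PP^1$, est soit réduit à l'une des deux droites fixes par $q$, soit égal à l'arc fermé les joignant ; de plus $\partial \Omega$ est $\Cc^1$ en $q$ si et seulement si $I_q$ est réduit à un point (la tangente), et sinon les deux extrémités de $I_q$ sont les deux demi-tangentes de $\partial \Omega$ en $q$. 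Ensuite, pour tout $x\in\PP^2$, on a $\g^n x\to p^0$ seulement si $x\in\ell^- = D^{-,0}$, et $\g^{-n}x\to p^0$ seulement si $x\in\ell^+ = D^{+,0}$ (la valeur propre médiane ne peut dominer que sur sa propre droite propre) ; enfin, pour l'action de $\g$ sur le cercle topologique $\partial \Omega$, le point $p^+$ est attractif et $p^-$ répulsif.

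On conclurait par disjonction de cas. Supposons $\Ax(\g)\subset\Omega$, donc $\ell^0\notin I_{p^+}\cup I_{p^-}$. Si $p^0\notin\overline{\Omega}$, alors $\overline{\Omega}\cap\ell^\pm = \{p^\pm\}$, donc $\ell^\pm$ est une droite d'appui en $p^\pm$ et $I_{p^\pm} = \{\ell^\pm\}$ : ainsi $\partial \Omega$ est $\Cc^1$ en $p^\pm$ de tangente $\ell^\pm$, et $T_{p^+}\partial \Omega\cap T_{p^-}\partial \Omega = \ell^+\cap\ell^- = \{p^0\}$ (premier cas). Si $p^0\in\overline{\Omega}$, donc $p^0\in\partial \Omega$, on considère l'arc de $\partial \Omega$ joignant $p^-$ à $p^0$ sans passer par $p^+$ : $\g$ l'envoie sur lui-même de façon monotone, et comme $p^-$ est répulsif tous ses points tendent vers $p^0$, donc, par le second ingrédient, cet arc est contenu dans $\ell^-$ ; c'est donc le segment $[p^-,p^0]$, qui est dès lors dans $\partial \Omega$. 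De même, en faisant agir $\g^{-1}$ sur l'arc joignant $p^+$ à $p^0$ sans passer par $p^-$, on obtient $[p^+,p^0]\subset\partial \Omega$. Les droites $\ell^\pm$ sont alors des droites d'appui en $p^\pm$, d'où $I_{p^\pm}=\{\ell^\pm\}$, et on conclut comme ci-dessus : $\partial \Omega$ est $\Cc^1$ en $p^+$, en $p^-$, et $p^0 = T_{p^+}\partial \Omega\cap T_{p^-}\partial \Omega$ (troisième cas). Supposons maintenant $\Ax(\g)\subset\partial \Omega$, donc $\ell^0\in I_{p^+}\cap I_{p^-}$. Si $p^0\notin\overline{\Omega}$, on a encore $\overline{\Omega}\cap\ell^\pm=\{p^\pm\}$, donc $\ell^\pm\in I_{p^\pm}$ ; dès lors $I_{p^\pm}$ contient les deux côtés de $T$ passant par $p^\pm$, et c'est donc l'arc fermé tout entier : $\partial \Omega$ n'est pas $\Cc^1$ en $p^\pm$, et ses demi-tangentes sont $(p^+p^-)$ et $(p^+p^0)$ en $p^+$, $(p^+p^-)$ et $(p^-p^0)$ en $p^-$ (deuxième cas). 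Si enfin $p^0\in\overline{\Omega}$, le même argument dynamique que plus haut donne $[p^-,p^0]\subset\partial \Omega$ et $[p^+,p^0]\subset\partial \Omega$ ; avec $[p^-,p^+]\subset\partial \Omega$, les trois côtés de $T$ sont dans $\partial \Omega$, donc $\ell^0,\ell^+,\ell^-$ sont trois droites d'appui de $\overline{\Omega}$ et $\overline{\Omega}$ est contenu dans l'intersection des trois demi-plans fermés qu'elles bordent, c'est-à-dire dans $T$ ; comme $T\subset\overline{\Omega}$, on conclut $\overline{\Omega}=T$, donc $\Omega$ est un triangle de sommets $p^+,p^0,p^-$ (quatrième cas).

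L'étape que j'anticipe comme la plus délicate est le quatrième cas : il faut y exploiter la présence du point fixe $p^0$ \emph{sur} $\partial \Omega$ pour contraindre des arcs entiers du bord à reposer sur les droites propres $D^{+,0}$ et $D^{-,0}$ --- c'est l'observation qu'un arc de $\partial \Omega$ s'accumulant sur $p^0$ est forcément contenu dans l'une de ces droites --- puis en déduire que $\overline{\Omega}$ coïncide exactement avec le triangle $T$. Une fois cet argument en place, tout le reste se ramène à de la géométrie convexe élémentaire combinée à la lecture de l'action de $\g$ sur les faisceaux de droites passant par $p^+$, $p^0$ et $p^-$.
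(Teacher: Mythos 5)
Le texte ne contient pas de d\'emonstration de cette proposition : elle fait partie des \'enonc\'es du paragraphe sur la dynamique des automorphismes pour lesquels l'auteur renvoie int\'egralement \`a \cite{ludo}. Votre argument doit donc \^etre jug\'e pour lui-m\^eme, et il est correct et essentiellement complet. Les trois ingr\'edients que vous isolez suffisent bien : la description de $\overline{\Omega}\cap D$ pour chaque droite fixe $D$ (segment compact $\g$-invariant \`a extr\'emit\'es fixes, d'o\`u la dichotomie corde/droite d'appui), la dynamique Nord--Sud de $\g$ sur le faisceau des droites passant par chaque point fixe (qui force l'ensemble des droites d'appui en $p^{\pm}$ \`a \^etre soit une droite fixe, soit l'arc ferm\'e entre les deux, d'o\`u la lecture du caract\`ere $\Cc^1$ et des demi-tangentes), et l'observation que $\g^n x \to p^0$ n'est possible que pour $x \in D^{-,0}$ (qui contraint l'arc de $\partial \Omega$ joignant $p^-$ \`a $p^0$ \`a reposer sur $D^{-,0}$ lorsque $p^0 \in \overline{\Omega}$). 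Deux impr\'ecisions mineures, sans cons\'equence : d'une part, l'affirmation globale \og $p^+$ attractif et $p^-$ r\'epulsif sur $\partial \Omega$ \fg\ est fausse dans le cas d\'eg\'en\'er\'e du triangle (les points du c\^ot\'e $(p^0,p^-)$ convergent vers $p^0$ et non vers $p^+$) ; mais vous n'utilisez en r\'ealit\'e que la r\'epulsivit\'e \emph{locale} de $p^-$ (resp. de $p^+$ pour $\g^{-1}$), qui d\'ecoule des rapports de valeurs propres $\lambda^+/\lambda^-$, $\lambda^0/\lambda^- > 1$, jointe \`a l'absence de point fixe int\'erieur sur l'arc consid\'er\'e --- il vaudrait mieux le formuler ainsi. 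D'autre part, \og l'arc ferm\'e joignant \fg\ les deux droites fixes du faisceau n'est pas unique (il y en a deux) ; celui qui intervient est d\'etermin\'e par le fait qu'une droite d'appui ne rencontre pas $\Omega$, mais seules ses extr\'emit\'es servent dans la conclusion, donc l'argument tient. Enfin, pour le quatri\`eme cas, une fois acquis que les trois c\^ot\'es de $T$ sont dans $\partial \Omega$, on peut conclure encore plus directement en remarquant que leur r\'eunion est une courbe ferm\'ee simple contenue dans le cercle topologique $\partial \Omega$, donc lui est \'egale.
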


\subsubsection{Dynamique planaire}

Soit $\g$ un élément planaire de $\s$, $\g$ est conjugué à la
matrice

$$
\begin{array}{cc}
\left(
\begin{array}{ccc}
\alpha &  0        & 0   \\
 0     & \alpha    & 0   \\
 0     &  0        & \beta\\
\end{array}
\right) &
\begin{tabular}{l}
où, $\alpha, \beta > 0$, $\alpha^2 \beta = 1$\\
et $\alpha,\, \beta \neq 1$.
\end{tabular}
\end{array}
$$

On note:
\begin{itemize}
\item $p_{\g}$ le point propre de $\P$ associé à la valeur propre
$\beta$.

\item $D_{\g}$ la droite propre de $\P$ associée à la valeur
propre $\alpha$.
\end{itemize}

La dynamique des éléments planaires étant extrément simple, on
obtient facilement la propostion suivante.

\begin{prop}\label{planaire2}
Soit $\Omega$ un ouvert proprement convexe. Soit $\g \in \Aut(\Omega)$ un
élément planaire. Alors, $\Omega$ est un triangle dont l'un des
sommets est $p_{\g}$ et le côté de $\Omega$ opposé à $p_{\g}$ est
inclus dans la droite $D_{\g}$.
\end{prop}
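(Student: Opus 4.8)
\textit{Proof plan.} The plan is to reduce the statement to an elementary fact about homothety–invariant convex subsets of an affine plane. Up to conjugating by an element of $\s$ — which replaces $\Omega$, $p_\g$, $D_\g$ by their images, and under which the assertion is invariant — I may assume $\g = \mathrm{diag}(\alpha,\alpha,\beta)$ with $\alpha^2\beta = 1$ and $\alpha,\beta \neq 1$. Then $\alpha \neq \beta$ (otherwise $\alpha^3 = 1$ would force $\alpha = 1$), so $\lambda := \alpha/\beta \neq 1$; moreover $p_\g = [0:0:1]$ and $D_\g = \{z = 0\}$. I work in the affine chart $A_0 := \P \setminus D_\g$, in which $p_\g$ is the origin $O$ and $\g$ acts as the homothety $v \mapsto \lambda v$ centred at $O$.

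The first step is to observe that $\Omega \subset A_0$. Indeed $p_\g$ and every point of $D_\g$ are fixed by $\g$, whereas $\g$ fixes no point of $\Omega$: the group $\Aut(\Omega)$ acts properly on $\Omega$, so the stabiliser of any point of $\Omega$ is compact, while $\langle \g \rangle$ is not contained in a compact subgroup of $\s$ (some eigenvalue of $\g$ exceeds $1$, so the powers $\g^n$ leave every compact subset of $\s$). Hence $D_\g \cap \Omega = \emptyset$ and $p_\g \notin \Omega$, so $\Omega$ is a convex open subset of $A_0 \cong \R^2$, invariant under the homothety of ratio $\lambda \neq 1$ centred at $O \notin \Omega$.

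The second step is to show that $\Omega$ is an open angular sector with apex $O$. Fix $x \in \Omega$ and let $\ell$ be the line through $O$ and $x$; it is $\g$-invariant, being a line through the centre of the homothety. Thus $\Omega \cap \ell$ is a non-empty interval of $\ell$ (by the very definition of convexity), stable under $\g$, and missing $O$; parametrising the component of $\ell \setminus \{O\}$ containing $x$ as $(0,+\infty)$, the only sub-interval stable under $t \mapsto \lambda t$ (with $\lambda \neq 1$) is $(0,+\infty)$ itself, so $\Omega \cap \ell$ is exactly the open ray from $O$ through $x$. Therefore $\Omega$ is a union of open rays from $O$, i.e. an open convex cone with apex $O$ in $\R^2$. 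Since $\Omega$ is two-dimensional and properly convex, $\overline{\Omega}$ contains no projective line, hence $\Omega$ contains no affine line; this excludes a half-plane (aperture $\pi$) as well as non-convex cones (aperture $> \pi$), leaving an open angular sector of aperture in $(0,\pi)$, that is $\Omega = \{ s v_1 + t v_2 : s,t > 0 \}$ for two linearly independent directions $v_1, v_2$.

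Returning to $\P$, this sector is precisely the interior of the triangle whose vertices are $p_\g = O$ and the two distinct points $q_1 = [v_1]$, $q_2 = [v_2]$ at infinity of $A_0$ in the directions $v_1, v_2$; since $q_1, q_2 \in D_\g$ while $p_\g \notin D_\g$, the three vertices are in general position, $p_\g$ is a vertex, and the side opposite $p_\g$ is $[q_1,q_2] \subset D_\g$ — which is the assertion. I do not expect a genuine obstacle (the proposition is announced as \emph{facile}); the only points that need a little care are the remark that $\Omega$ itself, though not $\overline{\Omega}$, already sits inside the chart $A_0$, and the two elementary convex-geometry facts used: a homothety-invariant interval of a line missing the centre of the homothety is a full ray, and a properly convex open cone of the affine plane is a sector of aperture $<\pi$.
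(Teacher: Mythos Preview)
Your argument is correct. The paper itself does not supply a proof of this proposition: it is part of the block of preliminary results that the author announces as ``d\'emontr\'es dans l'article \cite{ludo}'', and the only justification given in the text is the sentence ``La dynamique des \'el\'ements planaires \'etant extr\^emement simple, on obtient facilement la proposition suivante.'' Your reduction to a homothety acting on the affine chart $\P\setminus D_\g$ is precisely what that sentence is pointing at, and the two elementary convexity facts you isolate (a homothety-invariant open interval missing the centre is a full ray; a properly convex open cone in the plane has aperture $<\pi$) are exactly the content of ``dynamique extr\^emement simple''. So there is no discrepancy of method to discuss---you have written out what the paper leaves implicit.
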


Comme le stabilisateur d'un triangle dans $\P$ est le groupe des matrices diagonales à diagonale strictement positive. On obtient le corollaire suivant:

\begin{coro}
Soient $\Omega$ un ouvert proprement convexe et $\G$ un sous-groupe discret sans torsion de $\Aut(\Omega)$. Si la surface $\Quo$ est de caractéristique d'Euler strictement négative alors $\Omega$ n'est pas un triangle. En particulier, aucun élément du groupe $\G$ n'est planaire.
\end{coro}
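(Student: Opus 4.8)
The plan is to argue by contradiction: suppose $\Omega$ is a triangle $T$. Then $\G$ is a discrete subgroup of $\Aut(\Omega)=\Aut(T)$. By the remark preceding the statement, $\Aut(T)$ contains the group $A$ of diagonal matrices of $\s$ with strictly positive entries, which is isomorphic to $\R^2$, as a subgroup of finite index. Hence $A\cap\G$ has finite index in $\G$, and, being a discrete subgroup of $A\cong\R^2$, it is isomorphic to $\Z^k$ for some $k\leq 2$. Thus $\G$ is finitely generated and virtually $\Z^k$ with $k\leq 2$; in particular $\G$ contains no nonabelian free subgroup.

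Next I would identify $\G$ with $\pi_1(\Quo)$. As recalled in Section~\ref{geohil2}, the group $\Aut(\Omega)$ --- and hence $\G$ --- acts properly on $\Omega$; since $\G$ is discrete and torsion-free, point stabilizers are finite, hence trivial, so the action is free and properly discontinuous. Moreover a properly convex open subset of $\PP^2$ lies as a bounded convex open set inside an affine chart, hence is contractible. Therefore $\Omega\to\Quo$ is a universal covering and $\pi_1(\Quo)\cong\G$. On the other hand, the fundamental group of any surface of strictly negative Euler characteristic contains a nonabelian free group. This contradicts the previous paragraph, so $\Omega$ is not a triangle.

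The ``in particular'' clause then follows at once from Proposition~\ref{planaire2}: if some element of $\G$ were planar, then $\Omega$ would be a triangle, which has just been excluded.

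The one step that takes a moment is the first: $\Aut(T)$ is itself \emph{not} abelian, since it contains the group of permutations of the three vertices of $T$, so one really has to pass through the finite-index abelian subgroup $A$ before concluding that $\G$ is virtually $\Z^k$ with $k\leq 2$. Everything afterwards is soft --- properness of the action, elementary covering-space theory, and the fact that a virtually abelian group contains no nonabelian free subgroup whereas a surface group of negative Euler characteristic does.
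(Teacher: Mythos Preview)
Your argument is correct and follows the same route the paper sketches. The paper dispatches the corollary in one line by observing that the stabilizer of a triangle in $\PP^2$ is the group of positive diagonal matrices, which is abelian of rank~$2$ and hence cannot contain a surface group of negative Euler characteristic; you unfold this into a complete argument. Your care in noting that $\Aut(T)$ is only \emph{virtually} abelian (because of the $S_3$ permuting the vertices) actually improves on the paper's slightly imprecise statement that the stabilizer \emph{equals} the positive diagonals --- your passage to the finite-index subgroup $A\cap\Gamma$ is the right fix, and the remainder (covering-space identification $\pi_1(\Quo)\cong\Gamma$, and the free-subgroup dichotomy) is exactly what the paper is implicitly invoking.
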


\subsubsection{Dynamique quasi-hyperbolique}

Soit $\g$ un élément quasi-hyperbolique de $\s$, $\g$ est conjugué
à la matrice

$$
\begin{array}{cc}
\left(
\begin{array}{ccc}
\alpha &  1     &  0   \\
 0     & \alpha &  0   \\
 0     &  0     & \beta\\
\end{array}
\right)
 &
\begin{tabular}{l}
Où, $\alpha, \, \beta > 0$, $\alpha^2 \beta = 1$\\
et $\alpha, \,\beta \neq 1$.
\end{tabular}
\end{array}
$$

On note:
\begin{itemize}
\item $p^1_{\g}$ le point propre de $\P$ associé à la valeur
propre $\beta$.

\item $p^2_{\g}$ le point propre de $\P$ associé à la valeur
propre $\alpha$.

\item $D_{\g}$ la droite stable de $\P$ associée à la valeur
propre $\alpha$.
\end{itemize}

On peut définir l'axe d'un élément quasi-hyperbolique qui agit sur un ouvert proprement convexe de la même façon que pour un élément hyperbolique. La même démonstration que dans le cas hyperbolique donne la proposition suivante:

\begin{prop}
Soient $\Omega$ un ouvert proprement convexe et $\g \in \Aut(\Omega)$ un
élément quasi-hyperbolique. Alors, on a $p^1 ,\, p^2 \in \partial \Omega$.
\end{prop}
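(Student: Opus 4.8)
Le plan est de reproduire l'argument dynamique employ\'e pour les \'el\'ements hyperboliques. Quitte \`a conjuguer, on peut supposer que $\g$ est la forme normale
$$
\g = \left(
\begin{array}{ccc}
\alpha & 1 & 0 \\
0 & \alpha & 0 \\
0 & 0 & \beta
\end{array}
\right), \qquad \alpha,\,\beta > 0, \quad \alpha^2 \beta = 1, \quad \alpha,\,\beta \neq 1,
$$
de sorte que $p^2 = [1:0:0]$ (le point propre associ\'e \`a $\alpha$) et $p^1 = [0:0:1]$ (le point propre associ\'e \`a $\beta$). En rempla\c{c}ant au besoin $\g$ par $\g^{-1}$, ce qui ne change ni $\Aut(\Omega)$, ni $p^1$, ni $p^2$, on peut de plus supposer $\alpha > 1 > \beta$, et en particulier $\beta < \alpha$.

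On commence par choisir un point base. L'ouvert $\Omega$ \'etant non vide et la r\'eunion des deux droites projectives d'\'equations $x_2 = 0$ et $x_3 = 0$ \'etant d'int\'erieur vide dans $\P$, il existe un point $x = [x_1 : x_2 : x_3] \in \Omega$ avec $x_2 \neq 0$ et $x_3 \neq 0$. On calcule ensuite les limites de l'orbite de $x$ sous $\langle \g \rangle$. Pour tout $n \in \Z$ on a
$$
\g^n = \left(
\begin{array}{ccc}
\alpha^n & n\alpha^{n-1} & 0 \\
0 & \alpha^n & 0 \\
0 & 0 & \beta^n
\end{array}
\right),
$$
donc $\g^n x = [\, \alpha^n x_1 + n\alpha^{n-1} x_2 \, : \, \alpha^n x_2 \, : \, \beta^n x_3 \,]$. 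En divisant ces coordonn\'ees par $n\alpha^{n-1}x_2$ et en utilisant que $\beta^n/\alpha^n \to 0$ puisque $\beta < \alpha$, on obtient $\g^n x \longrightarrow [1:0:0] = p^2$ lorsque $n \to +\infty$. De m\^eme, en divisant par $\beta^n x_3$ et en utilisant que $(\alpha/\beta)^n \to 0$ et $n\alpha^{n-1}/\beta^n \to 0$ lorsque $n \to -\infty$, on obtient $\g^n x \longrightarrow [0:0:1] = p^1$.

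Pour conclure, comme $\overline{\Omega}$ est compact dans $\P$, les points $p^1$ et $p^2$ appartiennent \`a $\overline{\Omega}$. Par ailleurs $\Aut(\Omega)$ agit proprement sur $(\Omega, d_{\Omega})$ (Fait du paragraphe \ref{geohil2}), et la suite $(\g^n)_{n\in\Z}$ quitte tout compact de $\s$ ; l'orbite $(\g^n x)_{n\in\Z}$ ne peut donc s'accumuler en aucun point de $\Omega$, ce qui force $p^1, \, p^2 \in \partial\Omega$. Le seul point demandant un minimum de soin est de garder trace du terme hors-diagonal $n\alpha^{n-1}$ provenant du bloc de Jordan lors du passage \`a la limite dans $\P$ ; \`a cela pr\`es, la d\'emonstration est identique \`a celle du cas hyperbolique.
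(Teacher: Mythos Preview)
Your argument is correct and is precisely the dynamical argument the paper has in mind: the text does not spell out a proof but simply says ``La m\^eme d\'emonstration que dans le cas hyperbolique donne la proposition suivante'', and your orbit-limit computation together with the properness of the $\Aut(\Omega)$-action is exactly that proof, with the Jordan-block term $n\alpha^{n-1}$ handled explicitly. There is nothing to add.
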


\begin{defi}
Soient $\Omega$ un ouvert proprement convexe et $\g \in \Aut(\Omega)$ un
élément quasi-hyperbolique.\emph{ L'axe de $\g$} que l'on notera $\Ax(\g)$ est le segment ouvert de la droite $(p^1 p^2)$ qui est inclus dans $\overline{\Omega}$ et dont les extrémités sont $p^1$ et $p^2$.
\end{defi}

\begin{figure}[!h]
\begin{center}
\includegraphics[trim=0cm 12cm 0cm 0cm, clip=true, width=6cm]{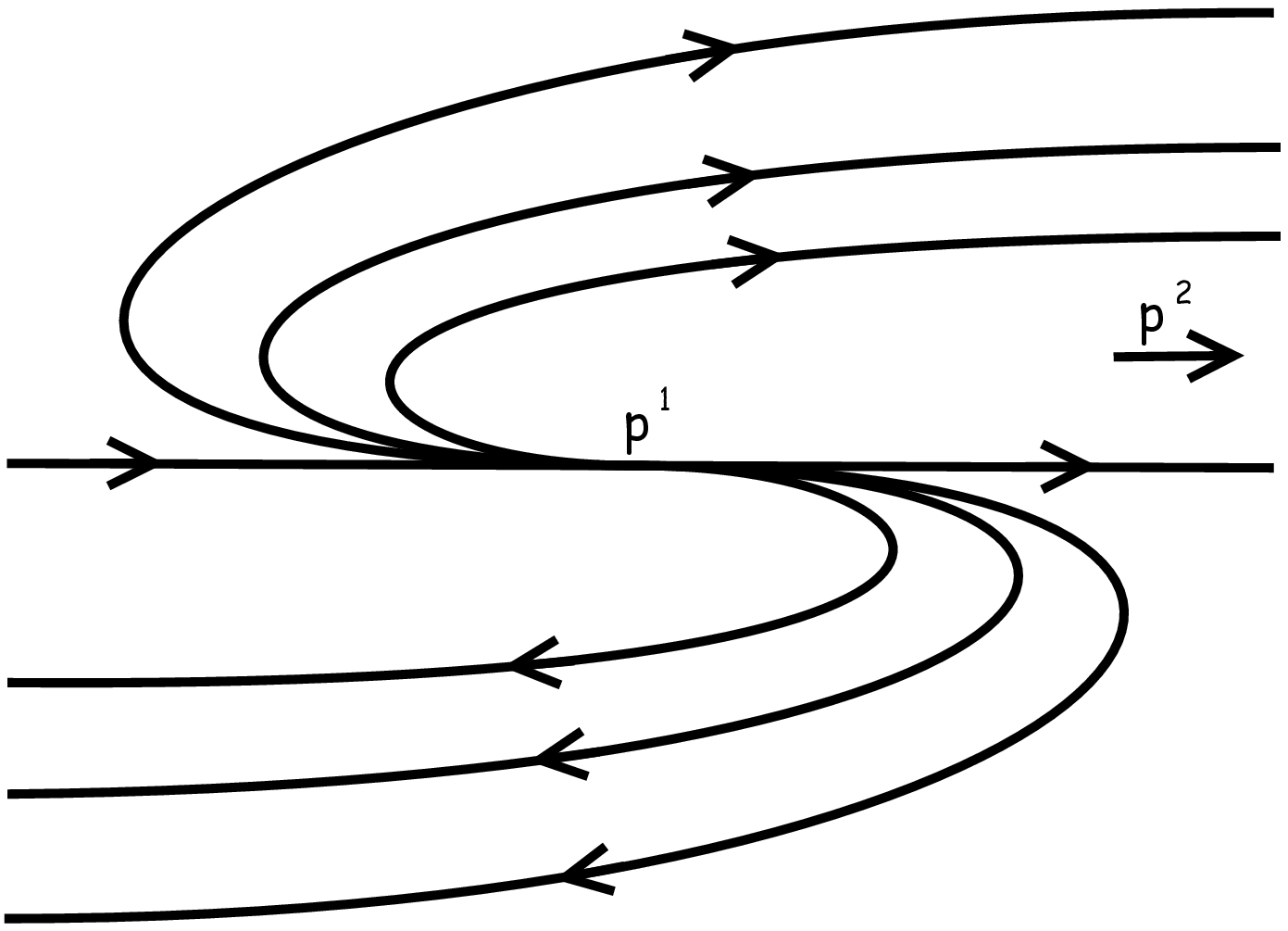}
\label{qhyp2}\caption{Dynamique d'un élément quasi-hyperbolique}
\end{center}
\end{figure}

\begin{prop}\label{quasihyp2}
Soit $\Omega$ un ouvert proprement convexe. Soit $\g \in \Aut(\Omega)$ un
élément quasi-hyperbolique. Alors,

\begin{itemize}
\item $\Ax(\g) \subset \partial \Omega$.

\item $\partial \Omega$ n'est pas $\Cc^1$ en $p^2$ et les
demi-tangentes à $\partial \Omega$ en $p^2$ sont $(p^1 p^2)$ et $D$.

\item $\partial \Omega$ est $\Cc^1$ en $p^1$ et $T_{p^1} \partial \Omega =
(p^1 p^2)$.
\end{itemize}
\end{prop}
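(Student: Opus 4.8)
The plan is to adapt, with no essential change, the supporting‑line argument used for hyperbolic elements (Proposition~\ref{hyp2}), exploiting that a quasi‑hyperbolic $\g$ acts on the pencil of projective lines through each of its fixed points of $\P$ in a very rigid way. Up to replacing $\g$ by $\g^{-1}$ — which changes neither $\Ax(\g)$, nor $p^1$, $p^2$, nor $D$ — one may assume $\alpha>1$, hence $\beta=\alpha^{-2}<1$. Normalising $\g$ to its model matrix, put $p^2=[1:0:0]$, $p^1=[0:0:1]$, $\ell=(p^1p^2)=\{y=0\}$ and $D=\{z=0\}$, and recall that $p^1,p^2\in\partial\O$. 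I shall use the standard dictionary of Hilbert geometry: $\overline{\O}$ is a compact convex body of some affine chart; at each $p\in\partial\O$ the set $\mathcal{T}_p$ of lines of $\P$ supporting $\overline{\O}$ at $p$ is a nonempty, proper, closed connected sub‑arc of the circle of lines through $p$; $\partial\O$ is $\Cc^1$ at $p$ iff $\mathcal{T}_p$ is reduced to a point, and the extreme points of $\mathcal{T}_p$ are the half‑tangents at $p$; a supporting line of $\overline{\O}$ is disjoint from $\O$; and $\mathcal{T}_p$ is invariant under every element of $\Aut(\O)$ fixing $p$.

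\emph{Regularity at $p^1$, and the axis.} A direct computation shows that $\g$ acts on the pencil of lines through $p^1$, written as covectors $(a,b,0)$, by $(a,b)\mapsto(a,\,b-a/\alpha)$; this is a parabolic (unipotent) transformation of that $\PP^1$ whose only fixed line is $\ell$. The only proper closed connected subset of $\PP^1$ invariant under a parabolic transformation is its fixed point: a closed connected invariant set containing a non‑fixed line $m$ contains the whole $\g$‑orbit of $m$ together with $\ell$, hence a subset of the affine line $\PP^1\smallsetminus\{\ell\}$ that is unbounded in both directions, so it must be all of $\PP^1$, contradicting properness. Therefore $\mathcal{T}_{p^1}=\{\ell\}$, i.e. $\partial\O$ is $\Cc^1$ at $p^1$ with $T_{p^1}\partial\O=(p^1p^2)$; this is the third assertion. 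In particular $\ell$ supports $\overline{\O}$, so $\ell\cap\O=\varnothing$; since $\Ax(\g)\subset(p^1p^2)\cap\overline{\O}\subset\partial\O$, this gives the first assertion (and shows that $(p^1p^2)\cap\overline{\O}$ is the closed segment $[p^1,p^2]$, an arc of $\partial\O$).

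\emph{Regularity at $p^2$.} On the pencil of lines through $p^2$, written as covectors $(0,b,c)$, $\g$ acts by $(b,c)\mapsto(b/\alpha,\,c/\beta)$; since $\alpha/\beta>1$ this is a hyperbolic transformation whose only fixed lines are $\ell$ (repelling) and $D$ (attracting). First, $\partial\O$ is not $\Cc^1$ at $p^2$: as $\partial\O$ is a topological circle not contained in the segment $(p^1p^2)\cap\overline{\O}$, pick $b\in\partial\O$ with $b\notin\ell$; because $p^2$ is the attracting fixed point of $\g$ on $\P\smallsetminus\{p^1\}$ one has $\g^nb\to p^2$, and computing in an affine chart centred at $p^2$ one checks that the chords $[p^2,\g^nb]$ converge to the direction of $D$; as the $\g^nb$ lie on $\partial\O$, the line $D$ supports $\overline{\O}$ at $p^2$. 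Together with $\ell$ (already a supporting line of $\overline{\O}$ passing through $p^2$) this exhibits two distinct elements of $\mathcal{T}_{p^2}$. Finally $\mathcal{T}_{p^2}$ is a proper, closed, connected, $\g$‑invariant arc of $\PP^1$ containing the two fixed lines $\ell$ and $D$ of the above hyperbolic transformation; the only such arcs are the two closed arcs bounded by $\ell$ and $D$ (a longer one would be forced by invariance to swallow both complementary arcs and hence be all of $\PP^1$). Thus the half‑tangents of $\partial\O$ at $p^2$ are exactly $(p^1p^2)$ and $D$, which is the second assertion.

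The explicit normal‑form computations — the two pencil actions, the limit $\g^nb\to p^2$, the tangential approach along $D$ — are routine and parallel to the hyperbolic case, so I would only indicate them. The one point that needs genuine care is the book‑keeping with invariant arcs of $\PP^1$: one must use that $\mathcal{T}_p$ is \emph{proper} to collapse it to a point under the parabolic action at $p^1$, and that it contains \emph{both} fixed lines to pin it down between $\ell$ and $D$ under the hyperbolic action at $p^2$. In other words, the exact list of closed invariant arcs of a parabolic, resp. hyperbolic, homeomorphism of $\PP^1$ is what drives the whole argument, and is the place where a careless statement would break the proof.
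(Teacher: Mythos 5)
The paper does not prove this proposition: it is recalled from \cite{ludo} (the whole of \S\ref{classi_auto} is stated without proofs), so your argument can only be judged on its own terms. Your strategy is the right one and is essentially complete in structure: reduce to the normal form with $\alpha>1$, view the set $\mathcal{T}_p$ of supporting lines of $\overline{\O}$ at a fixed point $p\in\partial\O$ as a nonempty, proper, closed, connected, $\g$-invariant arc of the pencil of lines through $p$, and classify such arcs under the induced action on $\PP^1$, which is parabolic at $p^1$ (forcing $\mathcal{T}_{p^1}=\{(p^1p^2)\}$, hence the $\Cc^1$ statement and, since $(p^1p^2)\cap\O=\varnothing$, the inclusion $\Ax(\g)\subset\partial\O$) and hyperbolic with fixed lines $(p^1p^2)$ and $D$ at $p^2$. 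The normal-form computations you defer (the two pencil actions, $\g^nb\to p^2$ with secants converging to $D$) are correct, and the deduction that $D$ supports $\overline{\O}$ from the secants to boundary points is legitimate.

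The one genuine defect is precisely at the step you flag as delicate, and it is a statement that is false as written, not just terse. At $p^1$ you argue that a proper closed connected subset of $\PP^1$ containing $\ell$ and a doubly unbounded subset of the affine line $\PP^1\smallsetminus\{\ell\}$ "must be all of $\PP^1$": the complement of a bounded open interval is a counterexample (a proper closed arc through the point at infinity, unbounded in both affine directions). The parenthetical at $p^2$ ("a longer arc would swallow both complementary arcs") has the same flaw. The repair is short and is best run on the complement: the set of lines through $p$ meeting $\O$ is a nonempty \emph{open} invariant arc $I_p$, and $\mathcal{T}_p=\PP^1\smallsetminus I_p$. At $p^1$, if $\ell\in I_{p^1}$ then $\mathcal{T}_{p^1}$ is a nonempty compact interval of the affine line invariant under the nontrivial translation $u\mapsto u-1/\alpha$, which is impossible; so $I_{p^1}$ is an open interval of that affine line invariant under the translation, hence the whole affine line, and $\mathcal{T}_{p^1}=\{\ell\}$. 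At $p^2$, $I_{p^2}$ avoids the two supporting lines $\ell$ and $D$, hence is a nonempty open sub-interval of one of the two arcs they bound; invariance under $t\mapsto\alpha^3t$ forces it to be that entire open arc, so $\mathcal{T}_{p^2}$ is the complementary closed arc with endpoints exactly $\ell$ and $D$, which are therefore the two half-tangents. With this correction the proof is complete.
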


\subsubsection{Dynamique parabolique}

Soit $\g$ un élément parabolique de $\s$, $\g$ est conjugué à la
matrice

$$
\left(
\begin{array}{ccc}
1  & 1    & 0 \\
0  & 1    & 1\\
0  & 0    & 1\\
\end{array}
\right)
$$

On note:
\begin{itemize}
\item $p_{\g}$ l'unique point fixe de $\g$ sur $\P$.

\item $D_{\g}$ l'unique droite fixe de $\g$ sur $\P$.
\end{itemize}

La figure \ref{para2} illustre la dynamique d'un élément parabolique.

\begin{figure}[!h]
\begin{center}
\includegraphics[trim=0cm 7cm 0cm 0cm, clip=true, width=6cm]{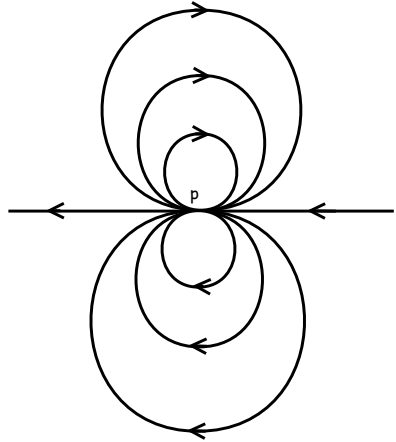}
\label{para2}\caption{Dynamique d'un élément parabolique}
\end{center}
\end{figure}

\begin{prop}\label{ppara2}
Soit $\Omega$ un ouvert proprement convexe. Soit $\g \in \Aut(\Omega)$ un
élément parabolique. Alors,

\begin{itemize}
\item $p \in \partial \Omega$.

\item $\partial \Omega$ est $\Cc^1$ en $p$.

\item $T_p \partial \Omega = D$.

\item $p$ n'appartient pas à un segment non trivial du bord de $\Omega$.
\end{itemize}
\end{prop}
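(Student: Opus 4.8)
The plan is to reduce everything to the elementary dynamics of a parabolic transformation of $\PP^1$. After conjugating we may assume $\g$ is the standard single Jordan block given in Proposition~\ref{classi2}, so that $p=[1:0:0]$ and $D=\{z=0\}$. Two one-dimensional dynamical facts carry the whole argument. First, $\g$ preserves the line $D$ and acts on $D\cong\PP^1$ as a parabolic transformation whose unique fixed point is $p$ (its restriction to $D$ is $(x,y)\mapsto(x+y,y)$). Second, $\g$ acts on the pencil of lines through $p$, which is again a copy of $\PP^1$, and a direct computation (the line $\{ay+bz=0\}$ is sent to $\{ay+(b-a)z=0\}$) shows it acts there as a parabolic transformation whose unique fixed point is the line $D$. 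I would record once, as a preliminary observation, that a parabolic transformation of $\PP^1$ with fixed point $q_0$ admits exactly three closed connected invariant subsets, namely $\emptyset$, $\{q_0\}$ and $\PP^1$: in an affine chart centred away from $q_0$ the map is a non-trivial translation, and no bounded interval, no half-line, and no interval avoiding $q_0$ is translation-invariant.

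First I would show $p\in\partial\O$. Pick $x\in\O\setminus D$, which is possible since $\O$ is open and hence not contained in a line. Using $\g^n(x,y,z)=\big(x+ny+\frac{n(n-1)}{2}z,\;y+nz,\;z\big)$ one sees that $\g^n x\to p$ as $n\to+\infty$ (the first homogeneous coordinate dominates), while $\g^n x\in\O$ for all $n$; hence $p\in\overline{\O}$. If $p$ were in $\O$, then $\g$ would lie in the stabiliser of $p$ in $\Aut(\O)$, which is compact because $\Aut(\O)$ acts properly on $\O$; but a non-trivial unipotent element of $\s$ generates an unbounded subgroup and so lies in no compact subgroup. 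Therefore $p\in\partial\O$.

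The content of the second and third items is the single statement that $D$ is the \emph{unique} supporting line of $\O$ at $p$; indeed, for a convex body, being $\Cc^1$ at a boundary point is equivalent to uniqueness of the supporting hyperplane there, and that hyperplane is then the tangent. The set $\mathcal S$ of lines through $p$ disjoint from $\O$ --- equivalently, of supporting lines of $\O$ at $p$ --- is a closed connected subarc of the $\PP^1$ of lines through $p$; it is non-empty because $p\in\partial\O$, and it is proper because the line joining $p$ to an interior point of $\O$ meets $\O$. Since $\g$ preserves $\O$ and fixes $p$, it preserves $\mathcal S$; by the preliminary observation applied to the parabolic action on the pencil at $p$, $\mathcal S$ is then either all of $\PP^1$ or the single fixed point $\{D\}$, and properness forces $\mathcal S=\{D\}$. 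This gives $\Cc^1$-regularity of $\partial\O$ at $p$ together with $T_p\partial\O=D$.

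For the last item, set $J:=D\cap\overline{\O}$. It is a closed connected subset of $D\cong\PP^1$, non-empty since it contains $p$, and proper since $\overline{\O}$ lies in an affine chart and therefore contains no projective line; moreover $J$ is $\g$-invariant. Applying the preliminary observation to the parabolic action of $\g$ on $D$, whose fixed point is $p$, we get $J=\{p\}$. Now if $\partial\O$ contained a non-trivial segment through $p$, the projective line carrying that segment would be disjoint from $\O$, hence a supporting line at $p$, hence equal to $D$ by the previous step; the segment would then be contained in $D\cap\overline{\O}=\{p\}$, which is absurd. So $p$ lies on no non-trivial segment of $\partial\O$. I do not expect a genuine obstacle here: the only points needing care are the bookkeeping in the preliminary observation and keeping the two copies of $\PP^1$ --- the line $D$ and the pencil of lines at $p$ --- distinct, while the equivalence between $\Cc^1$-smoothness and uniqueness of the supporting hyperplane is classical convex geometry and would simply be cited.
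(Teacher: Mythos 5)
Your argument is correct and complete. Note, however, that this paper does not actually prove Proposition \ref{ppara2}: the beginning of \S\ref{classi_auto} states that all the propositions of that section are established in the companion paper \cite{ludo}, so there is no in-text proof to compare yours against. On its own terms, your reduction works: the classification of the closed connected invariant subsets of $\PP^1$ under a parabolic transformation (only $\emptyset$, the fixed point, and $\PP^1$) is applied once to the pencil of lines at $p$ (where the induced action is parabolic with unique fixed point $D$, as your computation $\{ay+bz=0\}\mapsto\{ay+(b-a)z=0\}$ shows) and once to the line $D$ itself (parabolic with fixed point $p$); together with the properness of the action of $\Aut(\Omega)$ on $\Omega$ to exclude $p\in\Omega$, this yields all four assertions. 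The only steps you leave to classical convex geometry --- that the supporting lines at a boundary point form a non-empty proper closed arc of the pencil, that $\Cc^1$-regularity is equivalent to uniqueness of the supporting line, and that a line carrying a non-trivial boundary segment must miss $\Omega$ --- are all standard and correctly invoked. This is essentially the expected dynamical argument for such statements, so I see no gap.
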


\subsubsection{Dynamique elliptique}

On considère $\G$ un sous-groupe discret de $\s$ qui préserve un ouvert proprement convexe de $\P$ et on suppose que $\G$ est un groupe de surface.

Comme le groupe $\G$ est un sous-groupe discret de $\s$, tout éventuel élément elliptique de $\G$ est d'ordre fini. Mais le groupe $\G$ est un groupe de surface donc sans torsion. Aucun élément du groupe $\G$ n'est donc elliptique.

\subsection{Conséquence facile de la classification des automorphismes des ouverts proprement convexes}\label{consfaci}

Les propositions suivantes sont des conséquences évidentes des propositions du paragraphe \ref{classi_auto}.

\begin{prop}\label{tra}
Soient $\Omega$ un ouvert proprement convexe et $\g \in \Aut(\Omega)$ d'ordre infini, alors, la trace de $\g$ est supérieure ou égale à 3 avec égalité si et seulement si $\g$ est parabolique ou l'élément identité.
\end{prop}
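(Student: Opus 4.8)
Le plan est de ramener l'\'enonc\'e \`a la classification de la proposition~\ref{classi2}, en utilisant que la trace est un invariant de conjugaison. Comme $\g$ est d'ordre infini, ce n'est pas l'identit\'e; d'apr\`es la proposition~\ref{classi2}, $\g$ est donc hyperbolique, planaire, quasi-hyperbolique, parabolique ou elliptique, et il suffit d'\'evaluer la trace sur la forme normale dans chacun de ces cinq cas.

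Les quatre premiers cas sont imm\'ediats. Si $\g$ est hyperbolique, $\mathrm{tr}(\g)=\lambda^{+}+\lambda^{0}+\lambda^{-}$ avec $\lambda^{+}\lambda^{0}\lambda^{-}=1$ et $\lambda^{+}>\lambda^{0}>\lambda^{-}>0$; l'in\'egalit\'e arithm\'etico-g\'eom\'etrique donne $\mathrm{tr}(\g)\geqslant 3\sqrt[3]{\lambda^{+}\lambda^{0}\lambda^{-}}=3$, l'in\'egalit\'e \'etant stricte puisque les trois valeurs propres ne sont pas toutes \'egales. Si $\g$ est planaire ou quasi-hyperbolique, la forme normale a pour trace $2\alpha+\beta$ avec $\alpha,\beta>0$, $\alpha^{2}\beta=1$ et $\alpha\neq 1$ (le coefficient hors de la diagonale du mod\`ele quasi-hyperbolique ne contribue pas \`a la trace); l'in\'egalit\'e arithm\'etico-g\'eom\'etrique appliqu\'ee au triplet $(\alpha,\alpha,\beta)$ donne $2\alpha+\beta\geqslant 3\sqrt[3]{\alpha^{2}\beta}=3$, avec \'egalit\'e seulement si $\alpha=\beta$, c'est-\`a-dire $\alpha^{3}=1$ et $\alpha=1$, ce qui est exclu; l'in\'egalit\'e est donc l\`a encore stricte. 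Enfin, si $\g$ est parabolique, son mod\`ele est unipotent et $\mathrm{tr}(\g)=3$. Comme la trace de l'identit\'e vaut elle aussi $3$, ceci montre que, tant que $\g$ n'est pas elliptique, on a $\mathrm{tr}(\g)\geqslant 3$, l'\'egalit\'e ayant lieu exactement pour les \'el\'ements paraboliques et l'identit\'e.

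Le point d\'elicat, et le seul v\'eritable obstacle, est le cas elliptique : la forme normale elliptique a pour trace $1+2\cos\theta$, qui est strictement inf\'erieure \`a $3$ pour $0<\theta<2\pi$; il faut donc savoir qu'aucun \'el\'ement elliptique d'ordre infini n'intervient. Je m'appuierais pour cela sur le fait qu'un \'el\'ement elliptique d'ordre infini engendre un sous-groupe non discret de $\s$ (l'adh\'erence de ce sous-groupe est un cercle), et n'appartient donc \`a aucun sous-groupe discret; or, dans toutes les situations consid\'er\'ees ici, $\g$ appartient \`a un sous-groupe discret sans torsion --- un groupe de surface ---, lequel ne contient aucun \'el\'ement elliptique (voir le paragraphe sur la dynamique elliptique). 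Ce cas \'ecart\'e, les calculs pr\'ec\'edents fournissent exactement la dichotomie annonc\'ee : $\mathrm{tr}(\g)\geqslant 3$, avec \'egalit\'e si et seulement si $\g$ est parabolique (ou l'identit\'e).
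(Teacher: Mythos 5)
Votre d\'emonstration suit exactement la voie que l'article sous-entend --- le texte se borne \`a d\'eclarer que cette proposition est une \og cons\'equence \'evidente \fg{} de la classification du paragraphe \ref{classi_auto} --- : examen des formes normales cas par cas et in\'egalit\'e arithm\'etico-g\'eom\'etrique, avec les bonnes conclusions pour les cas hyperbolique, planaire, quasi-hyperbolique, parabolique et pour l'identit\'e. Le seul point qui appelle un commentaire est celui que vous isolez vous-m\^eme : le cas elliptique. La fa\c{c}on dont vous l'\'ecartez --- un elliptique d'ordre infini engendre un sous-groupe non discret, or $\g$ vivrait dans un groupe de surface discret et sans torsion --- fait intervenir une hypoth\`ese de discr\'etude qui ne figure pas dans l'\'enonc\'e : tel qu'il est \'ecrit, celui-ci porte sur un \'el\'ement quelconque de $\Aut(\Omega)$ d'ordre infini, et une rotation d'angle irrationnel d'un disque est bel et bien un automorphisme elliptique d'ordre infini d'un ouvert proprement convexe, de trace $1+2\cos\theta<3$. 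L'\'enonc\'e est donc, pris au pied de la lettre, mis en d\'efaut sans hypoth\`ese suppl\'ementaire ; l'ajout que vous faites est pr\'ecis\'ement celui qu'il faut, et il est v\'erifi\'e dans toutes les utilisations que l'article fait de cette proposition (notamment le lemme \ref{trsup}, appliqu\'e aux images d'\'el\'ements non triviaux par des repr\'esentations fid\`eles et discr\`etes d'un groupe libre). En r\'esum\'e : preuve correcte, m\^eme approche que celle de l'article, et vous avez en outre rep\'er\'e l'hypoth\`ese implicite dont la proposition a r\'eellement besoin.
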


On introduit les sous-ensembles de $\R^2$ suivants:

$$
\begin{array}{lcl}
\mathfrak{R} & = & \{ (\lambda,\tau) \in \R^2 \,|\,  0 < \lambda < 1, \, \frac{2}{\sqrt{\lambda}} < \tau < \lambda + \lambda^{-2} \}\\

\mathfrak{R}_{QH\,non\, C^1} & = & \{ (\lambda,\tau) \in \R^2 \,|\, 0 < \lambda < 1, \, \tau = \lambda + \lambda^{-2} \}\\

\mathfrak{R}_{QH \, C^1} & = & \{ (\lambda,\tau) \in \R^2 \,|\, 0 < \lambda < 1, \, \tau = \frac{2}{\sqrt{\lambda}} \}\\

\mathfrak{R}_{P} & = & \{ ( 1,2) \}\\

\widehat{\mathfrak{R}} & = & \mathfrak{R} \cup \mathfrak{R}_{QH\,non\, C^1} \cup \mathfrak{R}_{QH \, C^1}
\end{array}
$$

\par{
On pourra remarquer que $\mathfrak{R}$ est un ouvert de $\R^2$ homéomorphe à $\R^2$ dont le bord est la réunion des 2 sous-variétés $\mathfrak{R}_{QH\,non\, C^1}$ et $\mathfrak{R}_{QH \, C^1}$ homéomorphes à $\R$ et du point $\mathfrak{R}_{P}$.
}
\\
\par{
Enfin, l'espace $\widehat{\mathfrak{R}}$ est une variété à bord homéomorphe à $[0,1] \times \R$.
}

\begin{defi}
Soit $\g \in \s$ dont le spectre est réel et strictement positif, on note $\lambda_1 \leqslant \lambda_2 \leqslant \lambda_3$ les valeurs propres de $\g$. On notera $\lambda(\g) = \lambda_1$ la plus petite valeur propre de $\g$ et $\tau(\g) = \lambda_2 + \lambda_3$ la somme des deux autres.
\end{defi}

La proposition suivante est alors une simple conséquence de la classification des automorphismes des ouverts proprement convexes.

\begin{prop}\label{consclassi}
Soient $\Omega$ un ouvert proprement convexe de $\P$ et $\g \in \Aut(\Omega)$, on suppose que $\g$ possède un point fixe $p \in \partial \Omega$. On suppose de plus que si $\g$ est quasi-hyperbolique ou hyperbolique alors $p$ est un point fixe répulsif. Alors le spectre de $\g$ est réel, strictement positif et on a:
\begin{itemize}
\item L'élément $\g$ est hyperbolique si et seulement si $(\lambda(\g),\tau(\g)) \in \mathfrak{R}$

\item L'élément $\g$ est quasi-hyperbolique et $p$ n'est pas un point $C^1$ de $\partial \Omega$ si et seulement si $(\lambda(\g),\tau(\g)) \in \mathfrak{R}_{QH\, non\, C^1}$

\item L'élément $\g$ est quasi-hyperbolique et $p$ est un point $C^1$ de $\partial \Omega$ si et seulement si $(\lambda(\g),\tau(\g)) \in \mathfrak{R}_{QH\, C^1}$

\item L'élément $\g$ est parabolique si et seulement si $(\lambda(\g),\tau(\g)) \in \mathfrak{R}_P$
\end{itemize}
\end{prop}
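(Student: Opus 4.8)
The plan is to run through the six families of Proposition~\ref{classi2}, discard those incompatible with the hypotheses, and in each remaining case read off $(\lambda(\g),\tau(\g))$ from a normal form while keeping track of $p$ by means of Propositions~\ref{hyp2}, \ref{quasihyp2} and \ref{ppara2}. First I would discard the non-admissible types. An elliptic element induces, in an affine chart containing $\overline{\O}$, a non-trivial linear rotation of $\R^2$; a compact convex set invariant under such a rotation contains its centre, and the moment it contains one further point it contains an entire disc around the centre, so the centre --- the only fixed point of $\g$ in $\P$ --- lies in $\O$, not on $\partial\O$. Hence $\g$ is not elliptic, and the identity and the planar elements are set aside as well (the identity has finite order, and a planar element makes $\O$ a triangle by Proposition~\ref{planaire2}, cases excluded in the applications). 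For each of the three remaining families the normal form of Proposition~\ref{classi2} has real, strictly positive spectrum, which already gives the first assertion.

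Next I would treat the hyperbolic case. Conjugating, $\g=\mathrm{diag}(\lambda^+,\lambda^0,\lambda^-)$ with $\lambda^+>\lambda^0>\lambda^->0$ and $\lambda^+\lambda^0\lambda^-=1$; the repulsive fixed point is $p^-$, so $\lambda(\g)=\lambda^-$ and $\tau(\g)=\lambda^0+\lambda^+$. Setting $\lambda=\lambda^-$, so that $\lambda^0\lambda^+=\lambda^{-1}$ and $\lambda<\lambda^0<\lambda^+$: arithmetic--geometric mean gives $\lambda^0+\lambda^+>2\sqrt{\lambda^0\lambda^+}=2/\sqrt{\lambda}$, strictly because $\lambda^0\neq\lambda^+$; and since $(\lambda^0)^2<\lambda^0\lambda^+=\lambda^{-1}$ shows $\lambda<\lambda^0<\sqrt{\lambda^{-1}}$ while $t\mapsto t+\lambda^{-1}t^{-1}$ is strictly decreasing on $(0,\sqrt{\lambda^{-1}})$, one gets $\lambda^0+\lambda^+<\lambda+\lambda^{-2}$. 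Thus $(\lambda(\g),\tau(\g))\in\mathfrak{R}$. In the parabolic case the spectrum is $\{1,1,1\}$, whence $(\lambda(\g),\tau(\g))=(1,2)\in\mathfrak{R}_{P}$. (In the hyperbolic case the hypothesis on $p$ is in fact superfluous, since $(\lambda(\g),\tau(\g))$ depends only on the spectrum; it becomes essential in the next case.)

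For the quasi-hyperbolic case, conjugate $\g$ to its normal form with repeated eigenvalue $\alpha$ (single Jordan block) and simple eigenvalue $\beta$, $\alpha^2\beta=1$, $\alpha,\beta\neq 1$. The element $\g$ has exactly the two fixed points $p^1,p^2$ in $\P$, and the restriction of $\g$ to the invariant line $(p^1p^2)$ is hyperbolic with eigenvalue $\beta$ at $p^1$ and $\alpha$ at $p^2$; the repulsive fixed point of $\g$ is therefore the one carrying the smaller of $\alpha,\beta$: it is $p^2$ when $\alpha<1$ (then $\beta=\alpha^{-2}>1$) and $p^1$ when $\alpha>1$. In the first case $\lambda(\g)=\alpha$, $\tau(\g)=\alpha+\alpha^{-2}$, so $(\lambda(\g),\tau(\g))\in\mathfrak{R}_{QH\,non\,C^1}$, and $\partial\O$ is not $\Cc^1$ at $p^2=p$ by Proposition~\ref{quasihyp2}. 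In the second case $\lambda(\g)=\beta=\alpha^{-2}$, $\tau(\g)=2\alpha=2/\sqrt{\lambda(\g)}$, so $(\lambda(\g),\tau(\g))\in\mathfrak{R}_{QH\,C^1}$, and $\partial\O$ is $\Cc^1$ at $p^1=p$ by Proposition~\ref{quasihyp2}.

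Finally I would upgrade these forward implications to equivalences. The four model sets are pairwise disjoint: over $0<\lambda<1$ one has $2/\sqrt{\lambda}<\lambda+\lambda^{-2}$, which after the substitution $s=\sqrt{\lambda}$ reads $(s^{3}-1)^{2}>0$, so the two boundary curves of $\mathfrak{R}$ do not meet over $(0,1)$, and $\mathfrak{R}_{P}=\{(1,2)\}$ lies over $\lambda=1$. Hence, if $(\lambda(\g),\tau(\g))$ falls in one of the four model sets, then $\g$ --- being hyperbolic, quasi-hyperbolic or parabolic --- cannot be of a type whose model set is one of the other three, so $\g$ is of the asserted type; and in the two quasi-hyperbolic alternatives the hypothesis that $p$ is repulsive, together with Proposition~\ref{quasihyp2}, pins down whether $p$ is a $\Cc^1$ point. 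This yields all four equivalences. I expect the only genuinely delicate point to be the identification, in the quasi-hyperbolic case, of the repulsive fixed point: the trap is that the Jordan-block eigendirection, though fixed, is \emph{attracting} precisely when its eigenvalue is the largest, so ``repulsive'' must be read as ``eigendirection of the strictly smallest eigenvalue'' rather than ``the non-$\Cc^1$ fixed point''; once that is settled, Proposition~\ref{quasihyp2} supplies the regularity statement and what remains is the bookkeeping of two one-variable inequalities.
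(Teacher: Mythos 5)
Your proof is correct and is essentially the argument the paper intends: the paper offers no proof at all, presenting the proposition as a simple consequence of the classification of Proposition~\ref{classi2}, and you carry out exactly that case-by-case reading of the normal forms, including the one genuinely delicate point (identifying the repulsive fixed point of a quasi-hyperbolic element with the eigendirection of the strictly smallest eigenvalue on the axis, so that Proposition~\ref{quasihyp2} assigns the correct regularity to $p$), together with the pairwise disjointness of $\mathfrak{R}$, $\mathfrak{R}_{QH\,non\,C^1}$, $\mathfrak{R}_{QH\,C^1}$ and $\mathfrak{R}_{P}$ needed to upgrade the forward implications to equivalences. Your remark that planar elements and the identity must be tacitly excluded is well taken --- as literally stated the proposition would fail for them, since their invariants also land in $\mathfrak{R}_{QH\,non\,C^1}$, $\mathfrak{R}_{QH\,C^1}$ or $\mathfrak{R}_{P}$ --- and this matches how the paper actually uses the result, namely for infinite-order holonomies in a surface group of negative Euler characteristic, where the corollary to Proposition~\ref{planaire2} and torsion-freeness rule those cases out.
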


\subsection{Définition des surfaces projectives proprement convexes}

\par{
Tout au long de ce texte une \emph{surface} sera une variété connexe orientable de dimension 2 à bord. On note $\mathbb{E}$ un demi-espace affine fermé de $\P$.
}
\\
\par{
Une \emph{structure projective réelle à bord géodésique} sur une surface $S$ est la donnée d'un atlas maximal $\varphi_{\mathcal{U}}:\mathcal{U} \rightarrow \mathbb{E}$ sur $S$ tel que les fonctions de transitions $\varphi_{\mathcal{U}} \circ \varphi_{\mathcal{V}}^{-1}$ sont des éléments de $\s$, pour tous
ouverts $\mathcal{U}$ et $\mathcal{V}$ de l'atlas de $S$ tel que
$\mathcal{U} \cap \mathcal{V} \neq \varnothing$.
}

\begin{rem}
Pour simplifier la rédaction on dira \og structure projective\fg $\,$ à la place de \og structure projective réelle à bord géodésique \fg.
\end{rem}

\par{
Un \emph{isomorphisme} entre deux surfaces munies de structures projectives est un homéomorphisme qui, lu dans les cartes, est
donné par des éléments de $\s$.
}
\\
\par{
La donnée d'une structure projective sur une surface $S$ est équivalente à la donnée:
 \begin{enumerate}
 \item d'un homéomorphisme local $dev:\widetilde{S}\rightarrow \P$ appelée \emph{développante}, où $\widetilde{S}$ est le revêtement universel de $S$,

 \item d'une représentation $hol:\pi_1(S) \rightarrow \s$ appelée \emph{holonomie} tel que la développante est $\pi_1(S)$-équivariante ( i.e pour tout $x \in \widetilde{S}$, et pour tout $\g \in \pi_1(S)$ on a $dev(\g \, x) = hol(\g) dev(x)$).
\end{enumerate}
De plus, deux structures projectives données par les couples $(dev,hol)$ et $(dev',hol')$ sont isomorphes si et seulement si il existe un élément $g \in \s$ tel que $dev' = g \circ dev$ et $hol' = g \circ hol \circ g^{-1}$.
}
\\
\par{
Lorsqu'on s'intéresse à l'ensemble des structures sur une surface à équivalence près, il faut faire attention à introduire la bonne relation d'équivalence sur l'ensemble des structures en question. A première vue, on pourrait penser que la bonne relation d'équivalence est tout simplement l'existence d'un isomorphisme. Mais, si on introduit cette relation, alors l'espace quotient n'est en général pas une variété. C'est pourquoi on introduit la notion de surface marquée et la notion d'isotopie.
}

\begin{defi}
Soit $S$ une surface, une \emph{structure projective marquée sur $S$} est la donnée d'un homéomorphisme $\varphi:S \rightarrow S'$ où
$S'$ est une surface projective. On note $\PP'(S)$ \emph{l'ensemble des structures projectives marquées
sur $S$}.

Deux structures projectives marquées sur $S$, $\varphi_1:S \rightarrow S_1$ et $\varphi_2:S \rightarrow S_2$ sont dites
\emph{isotopiques} lorsqu'il existe un isomorphisme $h:S_1 \rightarrow S_2$ tel que $\varphi_2^{-1} \circ h \circ \varphi_1 : S
\rightarrow S$ est un homéomorphisme isotope à l'identité. On
note $\PP(S)$ \emph{l'ensemble des structures projectives marquées
sur $S$ modulo isotopie}.
\end{defi}

On peut à présent définir une topologie sur l'ensemble des structures projectives marquées sur la surface $S$. On introduit l'espace:

$$
\mathcal{D'}(S)=
\left\{
\begin{array}{l|l}
          & dev:\widetilde{S}\rightarrow \P  \textrm{ est un homéomorphisme local}\\
(dev,hol) & hol:\pi_1(S) \rightarrow \s\\
          & dev \textrm{ est } \pi_1(S)-\textrm{équivariante}
\end{array}
\right\}
$$

\par{
Les espaces $\widetilde{S}$, $\P$, $\pi_1(S)$ et $\s$ sont des espaces topologiques localement compacts. On munit l'ensemble des applications continues entre deux espaces localement compacts de la topologie compact-ouvert. Ainsi, l'espace $\mathcal{D}'(S)$ est munie d'une topologie. Le groupe $Homeo_0(S)$ des homéomorphismes isotopes à l'identité agit naturellement sur $\mathcal{D}'(S)$. Le groupe $\s$ agit aussi naturellement sur $\mathcal{D}'(S)$. Ces deux actions commutent. L'espace quotient est l'espace $\PP(S)$ des structures projectives marquées sur $S$ à isotopie près. On le munit de la topologie quotient.
}
\\
\par{
On ne s'intéresse qu'à un certain type de structure projective: les structures projectives proprement convexes.
}
\begin{defi}\label{def}
Soit $S$ une surface, une structure projective est dite
\emph{proprement convexe sur $S$} lorsque la développante est un
homéomorphisme sur une partie $\C$ proprement convexe de $\P$. On note
$\beta(S)$ l'ensemble des structures projectives proprement
convexes sur $S$ modulo isotopie.
\end{defi}

\par{
Il est important de remarquer que la définition que l'on a choisi de surfaces projectives fournit une contrainte sur la géométrie du bord d'une surface projective. En effet, on a supposé que l'atlas d'une surface projective était à valeur dans un demi-espace affine fermée $\mathbb{E}$. Par conséquent, pour tout ouvert $\mathcal{U}$ de l'atlas tel que $\mathcal{U} \cap \partial S \neq \varnothing$ et pour toute composante connexe $B$ de $\mathcal{U} \cap \partial S$, $\varphi_{\mathcal{U}}(B)$ est inclus dans une droite de $\P$. Ainsi, tout relevé d'une composante connexe $\mathcal{B}$ du bord de $S$ est un segment ouvert $s$ de $\P$ préservé par un élément $\g \in \s$, qui est l'holonomie d'un lacet librement homotope à la composante connexe $\mathcal{B}$ du bord de $S$. Ceci explique la terminologie de structure projective \og à bord géodésique \fg. Nous étudierons une classe légèrement plus restrictive de structure projective, on dira que le bord $\mathcal{B}$ est \emph{principal} lorsque $\g$ est un élément hyperbolique ou quasi-hyperbolique de $\s$ et l'une des extrémités de $s$ est un point attractif de $\g$ et l'autre un point répulsif. Une structure projective sera dite \emph{à bord géodésique principal} lorsque tout ces bords sont principaux.
}
\\
\par{
Soit $S$ une surface projective proprement convexe, l'application développante  permet d'identifier le revêtement universel $\widetilde{S}$ de $S$ à une partie $\C$ proprement convexe de $\P$. On notera $\pi:\C \rightarrow S$ le revêtement universel de $S$. L'intérieur $\Omega$ de $\C$ est naturellement muni d'une mesure $\mu_{\Omega}$ invariante sous l'action du groupe fondamental $\pi_1(S)$ de $S$. Par conséquent, il existe une unique mesure $\mu_S$ sur l'intérieur $\overset{\circ}{S}$ de $S$ telle que pour tout borélien $\mathcal{A}$ de $\Omega$, si $\pi:\Omega \rightarrow \overset{\circ}{S}$ restreinte à $\mathcal{A}$ est injective alors $\mu_S(\pi(\mathcal{A}))=\mu_{\Omega}(\mathcal{A})$.
}
\begin{defi}
Soit $S$ une surface, on dit qu'une structure projective
proprement convexe sur $S$ est de \emph{volume fini} lorsque pour
tout fermé $F$ de l'intérieur $\overset{\circ}{S}$ de $S$, on a
$\mu_S(F) < \infty$. On note $\beta_f(S)$ l'espace des modules des structures projectives marquées proprement convexes à bord géodésique principal de volume fini sur $S$.
\end{defi}

\section{Paramétrisation de l'espace des modules}

Dans ce paragraphe, nous allons expliquer la méthode que nous allons suivre pour montrer le théorème \ref{principal1}, ainsi que sa version dans le cadre des surfaces à bord. On termine en donnant les théorèmes de paramétrisation de l'espace des modules des structures projectives proprement convexes de volume fini sur une surface sans bord et à bord. Nous allons à présent rappeler quelques définitions élémentaires de topologie des surfaces.

\begin{defi}\label{elem}
Soit $S$ une surface, on dit qu'un lacet tracé sur $S$, $c:\S^1
\rightarrow S$ est \emph{simple} s'il est injectif. On dit qu'un
lacet simple $c$ tracé sur $S$ est \emph{élémentaire} si $S-c$
possède deux composantes connexes et l'une d'elles est un
cylindre. Lorsque $S$ n'est pas un cylindre, on appellera l'adhérence de la composante homéomorphe à un cylindre
\emph{la composante élémentaire associée à $c$}.
\end{defi}

\begin{rem}
Soient $S$ une surface et $c$ un lacet élémentaire tracé sur $S$,
alors on a l'alternative suivante:
\begin{itemize}
\item La composante élémentaire associée à $c$ est un cylindre
avec un seul bord. On dira alors que $c$ \emph{fait le tour d'un bout}.

\item La composante élémentaire associée à $c$ est un cylindre
avec deux bords. Dans ce cas $c$ est librement homotope à une
composante connexe du bord de $S$.
\end{itemize}
\end{rem}

Tout au long de ce texte, on notera $\Sigma_{g,b,p}$ la surface obtenue en retirant $b$ disques ouverts disjoints et $p$ points distincts à la surface de genre $g$ (on suppose bien sûr aussi que les points n'appartiennent pas à l'adhérence des disques que l'on retirent).


Le théorème suivant est le point de départ de notre étude. Il caractérise le fait qu'une structure projective est de volume fini en terme d'holonomie. Ce théorème est le resultat principal de l'article \cite{ludo}.

\begin{theo}[M]\label{base}
Une structure projective proprement convexe à bord géodésique principal sur la surface $\Sigma_{g,b,p}$ est de volume fini si et seulement si l'holonomie des lacets qui font le tour d'un bout est parabolique et l'holonomie des lacets élémentaires homotopes à une composante connexe du bord est hyperbolique ou quasi-hyperbolique.
\end{theo}

On rappelle la proposition suivante qui est démontré dans \cite{ludo}.

\begin{prop}[M]\label{nonelem}
Soit $S$ une surface, on munit $S$ d'une structure projective convexe et on note $\Omega$ l'intérieur de la partie proprement convexe donné par la développante de la structure projective de $S$. L'holonomie de tout lacet non élémentaire est un élément hyperbolique qui vérifie que $p^0_{\g} \notin \partial \Omega$.
\end{prop}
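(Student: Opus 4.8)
Le plan est d'appliquer la classification des automorphismes d'un ouvert proprement convexe (Proposition \ref{classi2}) à l'élément $\g := hol([c])$ de $\G := hol(\pi_1(S))$, d'écarter toutes les familles sauf celle des éléments hyperboliques, puis d'établir $p^0_{\g} \notin \partial \O$. Quitte à remplacer $c$ par le lacet primitif dont il est une puissance (qui est lui aussi non élémentaire), on peut supposer $\g$ primitif dans $\G$. Les éliminations faciles d'abord : $\G$ est discret et sans torsion, étant le groupe du revêtement $\pi : \C \to S$, et $c$ étant essentiel on a $\g \neq \mathrm{Id}$ ; un élément elliptique d'un groupe discret est d'ordre fini, donc exclu ; un élément planaire force $\O$ à être un triangle (Proposition \ref{planaire2}), ce qui est exclu dès que $\Quo$ est de caractéristique d'Euler strictement négative (corollaire de la Proposition \ref{planaire2}). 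Il reste alors les cas parabolique, quasi-hyperbolique et hyperbolique.

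Le cœur de la preuve serait de montrer que si $\g$ est parabolique ou quasi-hyperbolique, alors $c$ est librement homotope à un lacet élémentaire, contredisant l'hypothèse. Dans le cas parabolique, la Proposition \ref{ppara2} fournit l'unique point fixe $p \in \partial \O$, la régularité $\Cc^1$ de $\partial \O$ en $p$ et sa tangente $D = T_p \partial \O$. J'en déduirais, via un lemme de type Margulis en géométrie de Hilbert, l'existence d'une famille d'« horoboules » convexes $\g$-invariantes $H_t \subset \O$ basées en $p$ et se rétrécissant vers $p$, telles que pour $t$ assez grand la restriction de $\pi$ à $H_t$ soit injective : d'un côté tout élément de $\G$ déplaçant suffisamment peu un point assez proche de $p$ appartient à $\langle \g \rangle$, de l'autre l'action de $\G$ est propre. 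Le quotient $H_t / \langle \g \rangle$ est alors un cylindre à un seul bord plongé dans $S$, dont l'âme est librement homotope à $c$ : le lacet $c$ fait le tour d'un bout, il est donc élémentaire. Dans le cas quasi-hyperbolique, la Proposition \ref{quasihyp2} donne $\Ax(\g) \subset \partial \O$, de sorte que la borne inférieure de la fonction déplacement $x \mapsto d_{\O}(x,\g x)$ sur $\O$ n'est pas atteinte ; je remplacerais les horoboules par les demi-collets $C_\varepsilon = \{\, x \in \O \,:\, d_{\O}(x,\g x) < \varepsilon + \inf_{y \in \O} d_{\O}(y,\g y) \,\}$, qui sont convexes et $\g$-invariants, et le même argument montrerait que pour $\varepsilon$ petit $C_\varepsilon / \langle \g \rangle$ est un cylindre plongé dans $S$ dont un bord est soit une composante du bord, soit un bout de $S$ : à nouveau $c$ est élémentaire.

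Il resterait, dans le cas hyperbolique, à établir $p^0_{\g} \notin \partial \O$. On sait déjà que $p^0 \notin \O$. Supposons $p^0 \in \partial \O$ et appliquons la Proposition \ref{hyp2} : ou bien $\Ax(\g) \subset \partial \O$, auquel cas $\O$ est un triangle, ce qui est exclu ; ou bien $\Ax(\g) \subset \O$, auquel cas les segments non triviaux $[p^+,p^0]$ et $[p^-,p^0]$, qui sont $\g$-invariants, sont contenus dans $\partial \O$. Dans ce dernier cas, la composante connexe de $\O$ privé de $\Ax(\g)$ située du côté de $p^0$ est un convexe $\g$-invariant dont le quotient par $\langle \g \rangle$ est un cylindre plongé dans $S$ (toujours par propreté de l'action), bordé d'un côté par l'image de $\Ax(\g)$ et de l'autre par une composante du bord ou un bout de $S$ : $c$ serait à nouveau élémentaire, contradiction. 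Donc $p^0 \notin \partial \O$, et $\g$ est hyperbolique avec $p^0_{\g} \notin \partial \O$.

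L'obstacle principal est le lemme de type Margulis pour la géométrie de Hilbert en dimension $2$ garantissant que les domaines invariants construits (horoboules, demi-collets, coins) se plongent effectivement dans $S$ ; c'est lui qui traduit la discrétude et la liberté de l'action en l'information topologique voulue — le lacet $c$ borde un bout ou est parallèle au bord — et, une fois ce lemme acquis, les trois éliminations restantes suivent le même schéma, les Propositions \ref{ppara2}, \ref{quasihyp2} et \ref{hyp2} fournissant à chaque fois la géométrie locale de $\partial \O$ nécessaire à la construction du domaine invariant.
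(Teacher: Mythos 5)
Votre architecture g\'en\'erale --- appliquer la classification de la proposition \ref{classi2} \`a $\g=\Hol(c)$ puis \'eliminer les familles une \`a une --- est la bonne, et les \'eliminations faciles (identit\'e, elliptique, planaire) sont correctes. Notez toutefois que le pr\'esent article ne d\'emontre pas cette proposition : elle est rappel\'ee de \cite{ludo} sans preuve, si bien qu'il n'y a pas ici d'argument de r\'ef\'erence auquel confronter le v\^otre ; je l'\'evalue donc pour lui-m\^eme.

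La difficult\'e est que toute la partie non triviale de votre argument repose sur un \'enonc\'e que vous ne d\'emontrez pas et que vous d\'esignez vous-m\^eme comme \og l'obstacle principal \fg : le lemme de type Margulis, c'est-\`a-dire l'invariance pr\'ecise des domaines convexes $\g$-invariants que vous construisez (horoboules, demi-collets, r\'egion comprise entre $\Ax(\g)$ et les segments $[p^+,p^0]\cup[p^-,p^0]$). La propret\'e de l'action de $\G$ sur $\O$ ne donne pas cela : elle ne contr\^ole que les compacts, alors que ces domaines sont non compacts et s'accumulent sur $\partial\O$, r\'egion o\`u un petit d\'eplacement pour $d_{\O}$ n'entra\^{\i}ne nullement la proximit\'e de l'identit\'e dans $\s$. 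Un lemme de Margulis en g\'eom\'etrie de Hilbert est un th\'eor\`eme substantiel qui ne figure nulle part dans cet article, et sans lui chacune des trois \'eliminations restantes (parabolique, quasi-hyperbolique, hyperbolique avec $p^0_{\g}\in\partial\O$) reste en suspens : l'existence d'un convexe $\g$-invariant ne dit rien tant que l'on ne sait pas que son image dans $S$ est un cylindre plong\'e qui constitue un voisinage complet d'un bout ou d'une composante du bord. Deux points secondaires. D'une part, la r\'eduction \og quitte \`a remplacer $c$ par sa racine primitive \fg n'est pas l\'egitime telle quelle : si $c=c_0^k$ avec $c_0$ \'el\'ementaire et $k\geqslant 2$, alors $c$ n'est pas simple, donc pas \'el\'ementaire au sens de la d\'efinition \ref{elem}, et son holonomie peut parfaitement \^etre parabolique ; l'\'enonc\'e doit donc \^etre compris pour les lacets simples, et votre parenth\`ese \og qui est lui aussi non \'el\'ementaire \fg est fausse en g\'en\'eral. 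D'autre part, dans les cas quasi-hyperbolique et $p^0_{\g}\in\partial\O$, m\^eme en admettant un collet plong\'e, il faut encore v\'erifier que toute la r\'egion comprise entre la courbe de niveau et $\partial\O$ est pr\'ecis\'ement invariante sous $\langle\g\rangle$ pour conclure qu'elle forme une composante enti\`ere de $S$ priv\'ee de $c$ --- c'est exactement la m\^eme difficult\'e, simplement d\'eplac\'ee.
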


On rappelle brièvement la définition d'une décomposition en pantalon.

\begin{defi}
Soit $S$ une surface, une famille de lacets $(c_i)_{i \in I}$ de $S$ est une \emph{décomposition en pantalon de $S$} lorsque les $(c_i)_{i \in I}$ sont des lacets simples, deux à deux disjoints et les composantes connexes de $S$ privée de la réunion des $(c_i)_{i \in I}$ sont des pantalons ouverts.
\end{defi}

\par{
Pour paramétrer l'espace des modules des structures projectives de volume fini sur une surface $S$, nous allons utiliser une décomposition en pantalon de la surface à l'aide d'un ensemble fini de lacets $(c_i)_{i \in I}$. Dans le cas d'une surface de Riemann, on voit apparaître deux types de paramètres: la classe de conjugaison de l'holonomie des lacets $(c_i)_{i \in I}$ et un paramètre de twist le long de chacun des $(c_i)_{i \in I}$. Dans le cas projective convexe, les choses se compliquent un peu. Tout d'abord la classe de conjugaison de l'holonomie des lacets $(c_i)_{i \in I}$ et le paramètre de twist le long de chacun des $(c_i)_{i \in I}$ ne sont plus mesurés à l'aide d'un nombre réel mais à l'aide d'un élément de $\R^2$. Et aussi, alors qu'il existe un et un seul pantalon hyperbolique dont les bords sont de longueur fixée, l'espace des modules des pantalons projectifs proprement convexes dont les classes de conjugaison de l'holonomie des bords est fixée est homéomorphe à $\R^2$.
}
\\
\par{
Lorsque qu'on découpe une surface à bord $S$ en pantalon, on obtient trois types d'holonomie pour les lacets élémentaires des pantalons:

\begin{itemize}
\item Les lacets qui viennent d'un lacet non élémentaire dans $S$,
l'holonomie de ces lacets est hyperbolique.

\item Les lacets qui font le tour d'un bout de $S$, l'holonomie de ces lacets est parabolique.

\item Les lacets homotopes à une composante connexe du bord de $S$, l'holonomie de ces lacets est hyperbolique ou quasi-hyperbolique.
\end{itemize}
}
\begin{defi}
\emph{Un pantalon avec mémoire} est une surface $\Sigma_{0,b,p}$, avec $b+p =3 $, ainsi que la donnée pour chacun des bords d'une orientation et de l'un des mots suivants: $\{$coupure, bord$\}$. On note $P_{i,j,k}^*$ le pantalon avec mémoire qui possède $i$ bords marqués "coupure", $j$ bords marqués "bord" et $k$ pointes. On a une définition naturelle de $\beta_f(P_{i,j,k}^*)$: il s'agit de l'espace des modules des structures projectives convexes marquées de volume fini sur la surface $\Sigma_{0,i+j,k}$ telle que l'holonomie des bords marqués "coupure" est hyperbolique.
\end{defi}

Soient $\Sigma_{g,b,p}$ une surface de caractéristique d'Euler strictement négative et une famille de lacets $(c_i)_{i \in I}$ de $S$ qui définit une décomposition en pantalon de $S$, on note $(P_j)_{j \in J}$ l'ensemble des pantalons obtenus. On se donne une famille $(b_k)_{k=1,...,b}$ de lacets élémentaires homotopes à une composante connexe du bord de $S$ telle que toutes les composantes connexes du bord sont représentées une et une seule fois. On notera $P_j^*$ le pantalon avec mémoire associé à $P_j$ de la façon suivante:
\begin{itemize}
\item Les bords de $P_j$ qui viennent d'un lacet non élémentaire de $S$ sont marqués "coupures".

\item Les bords de $P_j$ qui viennent d'un lacet élémentaire de $S$ sont marqués "bords".
\end{itemize}
L'orientation des bords de $P_j$ est l'orientation donnée par les lacets $(c_i)_{i \in I}$ et $(b_k)_{k=1,...,b}$.

On peut à présent énoncer notre paramétrisation de l'espace des modules. Commençons par énoncer le théorème dans le cas sans bord.

\begin{theo}\label{modulesansbord}
Soient $\Sigma_{g,0,p}$ une surface sans bord de caractéristique d'Euler strictement négative et une famille de lacets $(c_i)_{i \in I}$ de $S$ qui définit une décomposition en pantalon de $S$, on note $(P_j^*)_{j \in J}$ l'ensemble des pantalons avec mémoire obtenus. Alors,
\begin{enumerate}
\item Les lacets $(c_i)_{i \in I}$ ne sont pas élémentaires.

\item $|I| = 3g-3+p$ et $|J|=-\chi(\Sigma_{g,b,p})=2g-2+p$.

\item L'holonomie des $c_i$ pour $i \in I$ est hyperbolique.

\item L'application naturelle $\beta_f(\Sigma_{g,0,p}) \rightarrow
\underset{j \in J}{\prod} \beta_f(P_j^*)$ est une fibration qui admet
une action simplement transitive préservant les fibres du groupe
$(\R^2)^I$.

\item On note $(c_j^r)_{r=1...l_j}$ la sous-famille de $(c_i)_{i \in I}$ des $l_j$ lacets élémentaires du pantalon $P_j$ homotopes à une composante connexe du bord du pantalon $P_j$. L'application:
$$
\begin{array}{ccc}
\beta_f(P_j^*) & \rightarrow & \mathfrak{R}^{l_j}\\
\mathcal{P}             & \mapsto     & (\lambda(Hol(c_j^r)),\tau(Hol(c_j^r)))_{r=1...l_j}
\end{array}
$$
est une fibration dont les fibres sont homéomorphes à $\R^2$.

\item En particulier, $\beta_f(\Sigma_{g,0,p})$ est homéomorphe à $\R^{16g-16+6p}$.

\end{enumerate}
\end{theo}



On donne à présent l'énoncé dans le cas à bord.

\begin{theo}\label{module}
Soient $\Sigma_{g,b,p}$ une surface à bord de caractéristique d'Euler strictement négative et une famille de lacets $(c_i)_{i \in I}$ de $S$ qui définit une décomposition en pantalon de $S$, on note $(b_k)_{k=1,...,b}$ une famille de lacets élémentaires homotopes à une composante connexe du bord de $S$ telle que toutes les composantes connexes du bord sont représentées une et une seule fois. On note $(P_j^*)_{j \in J}$ l'ensemble des pantalons avec mémoire obtenus. Alors,
\begin{enumerate}
\item Les lacets $(c_i)_{i \in I}$ ne sont pas élémentaires.

\item $|I| = 3g-3+p+b$ et $|J|=-\chi(\Sigma_{g,b,p})=2g-2+p+b$.

\item L'holonomie des $c_i$ pour $i \in I$ est hyperbolique.

\item L'application naturelle $\beta_f(\Sigma_{g,b,p}) \rightarrow
\underset{j \in J}{\prod} \beta_f(P_j^*)$ est une fibration qui admet
une action simplement transitive préservant les fibres du groupe
$(\R^2)^I$.

\item On note $(c_j^r)_{r=1...l_j}$ la sous-famille de $(c_i)_{i \in I}$ des $l_j$ lacets élémentaires du pantalon $P_j$ qui sont homotopes à une composante connexe du bord du pantalon $P_j$ marqué "coupure". On note $(b_j^s)_{s=1...m_j}$ la sous-famille de $(b_k)_{k =1,...,b}$ des $m_j$ lacets élémentaires du pantalon $P_j$ qui sont homotopes à une composante connexe du bord du pantalon $P_j$ marqué "bord". L'application:
$$
\begin{array}{ccc}
\beta_f(P_j^*) & \rightarrow & \mathfrak{R}^{l_j} \times \widehat{\mathfrak{R}}^{m_j}\\
\mathcal{P}              & \mapsto     & (\lambda(Hol(c_j^r)),\tau(Hol(c_j^r)))_{r=1...l_j} \times (\lambda(Hol(b_j^s)),\tau(Hol(b_j^s)))_{s=1...m_j}
\end{array}
$$
est une fibration dont les fibres sont homéomorphes à $\R^2$.

\item En particulier, $\beta_f(\Sigma_{g,b,p})$ est une variété topologique à bord de dimension $16g-16+6p+8b$ homéomorphe à $\R^{16g-16+6p+7b} \times [0,1]^b$.
\end{enumerate}
\end{theo}

La démonstration de ce théorème va nécessiter plusieurs lemmes. Mais commençons plutôt par donner les étapes de la démonstration. Les deux premiers points sont des résultats classiques de topologie des surfaces. Le troisième point est une conséquence du premier point et de la proposition \ref{nonelem}. Le quatrième point est un résultat de Goldman (\cite{Gold1}) dont nous donnons une démonstration au paragraphe \ref{point4} pour la commodité du lecteur. Le cinquième point est le point le plus technique, l'idée vient de Goldman (\cite{Gold1}) mais cette fois-ci, nous aurons besoin d'un réel travail pour pouvoir utiliser ces idées, on verra cela au paragraphe \ref{point5}. Le sixième et dernier point est une simple conséquence des points 4. et 5. et du calcul suivant.

\begin{itemize}
\item L'holonomie des $(c_i)_{i \in I}$  donne $|I|$ éléments de $\mathfrak{R}$, soit $2(3g-3+p)$ paramètres.

\item Dans le cas avec bord, les bords  fournissent $b$ éléments de $\widehat{\mathfrak{R}}$.

\item Les pantalons $(P_j)_{j \in J}$  donnent deux paramètres chacun, soit $2(2g-2+p)$ paramètres.

\item Le recollement de deux pantalons le long d'un $c_i$  fournit deux paramètres, soit $2(3g-3+p)$ paramètres.

\item Bilan: l'espace des modules est homéomorphe à $\mathfrak{R}^I \times \widehat{\mathfrak{R}}^b \times (\R^2)^J \times (\R^2)^I$ qui est une variété topologique à bord de dimension $2(3g-3+p)+ 2b + 2(2g-2+p) + 2(3g-3+p)= 16g-16+8b+6p$.
\end{itemize}

\begin{rem}
Décrivons un peu la structure de $\beta_f(\Sigma_{g,b,p})$. L'intérieur de $\beta_f(\Sigma_{g,b,p})$ est homéomorphe à $\R^{16g-16+8b+6p}$. Il s'agit des structures pour lesquels l'holonomie des lacets homotopes à une composante connexe du bord est hyperbolique.

Plus généralement, on peut décomposer $\beta_f(\Sigma_{g,b,p})$ en une réunion de variétés lisses (strates) toutes homéomorphes à des boules. Et, si $d \leqslant b$ l'espace $\beta_f(\Sigma_{g,b,p})$ possède $2^d \times C^d_{b}$ strates de codimension $d$. En effet, la réunion des strates de codimension $d$ est l'ensemble des structures dont l'holonomie de $d$ lacets homotopes à une composante connexe du bord est quasi-hyperbolique. Choisir une strate de dimension $d$, c'est donc choisir $d$ lacets parmi les lacets $(b_k)_{k=1,...,b}$ et ensuite pour chacun d'eux choisir si le point fixe répulsif de l'holonomie est $C^1$ ou non. Il n'y a pas de strate de codimension $d$, si $d > b$. Enfin, on peut remarquer que l'adhérence des strates de codimension $d$ est l'ensemble des strates de codimension $d' \geqslant d$.
\end{rem}

\subsection{Démonstration du quatrième point du théorème \ref{module}}\label{point4}

Le but de ce paragraphe est de montrer le quatrième point du théorème \ref{module}. Commençons par rappeler quelques propositions classiques de géométrie hyperbolique qui sont encore vrai dans pour les surfaces projectives proprement convexes.

\begin{prop}\label{lacethomoo}
Soit $S$ une surface, on munit $S$ d'une structure projective proprement convexe. Alors,
tout lacet simple non élémentaire tracé sur $S$ est librement homotope à une géodésique simple.
\end{prop}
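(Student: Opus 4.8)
The plan is to mimic the classical Fenchel–Nielsen / hyperbolic argument, replacing the Riemannian geodesic flow by the Hilbert (Finsler) structure $\|\cdot\|_\cdot$ on $\Omega$ introduced above, and using Proposition \ref{nonelem} to identify the holonomy. Let $c$ be a simple non-elementary loop on $S$, let $\gamma = Hol(c) \in \pi_1(S) \subset \Aut(\Omega)$ be its holonomy, where $\Omega$ is the interior of the properly convex set given by the developing map. By Proposition \ref{nonelem}, $\gamma$ is hyperbolic with $p^0_\gamma \notin \partial\Omega$; in particular, by Proposition \ref{hyp2} the axis $\Ax(\gamma)$ is a genuine geodesic segment joining $p^+_\gamma$ and $p^-_\gamma$, and it lies either in $\Omega$ or in $\partial\Omega$. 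Since $S$ is a \emph{surface} with properly convex developing image and $S-c$ has two components neither of which is a cylinder (non-elementary hypothesis), $\gamma$ cannot be the holonomy of a boundary loop, so one checks $\Ax(\gamma) \subset \Omega$: indeed if $\Ax(\gamma)\subset\partial\Omega$ then by Proposition \ref{hyp2} (second case) $\partial\Omega$ fails to be $\Cc^1$ along a whole segment near $p^\pm$, which together with properness forces $c$ to be freely homotopic to a boundary component — contradicting non-elementarity.

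First I would form the quotient picture: $\gamma$ acts on $\Omega$ with $\langle\gamma\rangle$ discrete (it is cyclic, generated by a hyperbolic element), so $\Omega/\langle\gamma\rangle$ is an annulus carrying a Hilbert metric, and the image of $\Ax(\gamma)$ is a closed geodesic $\delta_0$ of that annulus; it is simple because a straight segment in $\PP^2$ invariant under a single hyperbolic $\gamma$ projects injectively (two points of $\Ax(\gamma)$ in the same $\langle\gamma\rangle$-orbit would be $\gamma^k$-related, but $\gamma^k$ translates along the axis without fixed point for $k\neq 0$). The loop $c$ lifts to a loop $\bar c$ in $\Omega/\langle\gamma\rangle$ representing the generator of $\pi_1$; since $\Omega/\langle\gamma\rangle$ is homotopy equivalent to $\delta_0$ and is an annulus, $\bar c$ is homotopic to $\delta_0$. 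Pushing this homotopy down through the covering $\Omega/\langle\gamma\rangle \to S$ — which is injective on the annular neighborhood of $c$ since $c$ is simple — gives a free homotopy in $S$ from $c$ to the simple closed geodesic which is the image of $\Ax(\gamma)$.

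Then I would record that this image geodesic in $S$ is itself simple: self-intersections of the projected geodesic would lift to two distinct $\pi_1(S)$-translates $g_1\Ax(\gamma), g_2\Ax(\gamma)$ of the axis crossing transversally in $\Omega$; but each is a straight segment of $\PP^2$ with both endpoints on $\partial\Omega$, and here one invokes that $c$ is simple to see the corresponding subgroup configuration forbids such a crossing (two axes of conjugate-incompatible hyperbolics crossing would produce an essential self-intersection of $c$ after homotopy, or else force $\Omega$ to be a triangle, excluded by the corollary following Proposition \ref{planaire2} since $\chi(S)<0$). The main obstacle is precisely this last simplicity/embeddedness step: in the hyperbolic case it is standard (geodesics in distinct free homotopy classes, or a geodesic homotopic to a simple loop, are embedded), and the proof carried over uses only that distinct axes are straight chords of $\Omega$ meeting $\partial\Omega$ at their ends plus the non-triangle hypothesis — but making the ``an intersection of axes yields a self-intersection of $c$'' implication rigorous requires a careful homotopy/minimal-position argument on the surface, which is where the real work lies. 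Everything else is a direct translation of the hyperbolic proof using the completeness of $(\Omega,d_\Omega)$ and the dynamics of hyperbolic elements recalled in Proposition \ref{hyp2}.
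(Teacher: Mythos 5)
The paper gives no proof of this proposition: it states that the arguments are identical to the hyperbolic case and refers to \cite{Gold1,ludo}. So your attempt can only be judged on its own terms. Your skeleton is the right one and matches the intended hyperbolic-style argument: Proposition \ref{nonelem} gives that $\gamma=\Hol(c)$ is hyperbolic with $p^0_\gamma\notin\partial\Omega$ (hence $p^0_\gamma\notin\overline{\Omega}$), one checks $\Ax(\gamma)\subset\Omega$, and the free homotopy from $c$ to the projection of the axis is produced in the intermediate annular cover $\Omega/\langle\gamma\rangle$ and pushed down. (For that last step you do not need injectivity of the covering on a neighbourhood of $c$; composing the homotopy with the covering projection suffices.)

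The genuine gap is the simplicity of the projected geodesic, which you explicitly leave as ``where the real work lies'', and the mechanism you sketch for it (minimal position, ``an intersection of axes yields an essential self-intersection of $c$'') is not the right one: it is essentially the converse implication, and making it rigorous would require knowing that geodesic representatives realize minimal self-intersection, which is circular here. The correct argument is a pure convexity statement needing no minimal-position machinery. Since $c$ is simple, its distinct lifts $g\widetilde{c}$, $g\in\G$, are pairwise disjoint properly embedded lines in $\Omega$; each $g\widetilde{c}$ is invariant under $g\gamma g^{-1}$ and, by the hyperbolic dynamics with $p^0\notin\overline{\Omega}$, accumulates exactly on the endpoints $g\,p^{\pm}_{\gamma}$ of the chord $g\Ax(\gamma)$ on the Jordan curve $\partial\Omega$. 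Disjoint properly embedded arcs in the disc $\Omega$ have unlinked endpoint pairs on $\partial\Omega$, and two straight chords of a convex domain with unlinked endpoint pairs cannot cross; one must also rule out two distinct conjugates of $\gamma$ sharing a fixed point, which contradicts discreteness or forces $\Omega$ to be a triangle, excluded by the corollary to Proposition \ref{planaire2} since $\chi(S)<0$. Hence distinct translates of the axis are disjoint and the projection is simple. With that replacement the proof closes; your justification that $\Ax(\gamma)\subset\Omega$ (ruling out the second case of Proposition \ref{hyp2}) is likewise only sketched --- it needs the observation that both sides of $\widetilde{c}$ in $\Omega$ carry translates of a fundamental domain because $c$ is non-elementary --- but the idea there is correct.
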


\begin{prop}\label{lacethomoo2}
Soit $S$ une surface, on munit $S$ d'une structure projective proprement convexe. Soient $c_1$ et $c_2$ deux lacets simples et non élémentaires tracés sur $S$, on note $\lambda_1$ (resp. $\lambda_2$) l'unique géodésique librement homotope à $c_1$ (resp. $c_2$). Si $c_1$ est simple alors $\lambda_1$ est simple. Si les lacets $c_1$ et $c_2$ ne s'intersectent pas, alors les géodésiques $\lambda_1$ et $\lambda_2$ ne s'intersectent pas.
\end{prop}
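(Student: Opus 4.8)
The plan is to carry over the classical hyperbolic argument to the Hilbert setting, working in the universal cover and invoking the dynamics of hyperbolic automorphisms recalled in Section \ref{classi_auto}. Fix the universal covering $\pi : \Omega \to \overset{\circ}{S}$ furnished by the developing map, with $\G = \pi_1(S)$ acting on the properly convex open set $\Omega$; recall that $\overline{\Omega}$ is a compact convex subset (with non-empty interior) of an affine chart of $\P$, hence homeomorphic to a closed disc, and that $\partial \Omega$ is homeomorphic to a circle. Let $\g_i$ denote the holonomy of $c_i$. By Proposition \ref{nonelem}, $\g_i$ is hyperbolic and $p^0_{\g_i} \notin \overline{\Omega}$. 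Since $c_i$ is non-elementary it is not freely homotopic to a boundary component; hence, combining Proposition \ref{lacethomoo} with the classification of hyperbolic dynamics (Proposition \ref{hyp2}), the geodesic $\lambda_i \subset \overset{\circ}{S}$ is the image $\langle \g_i \rangle \backslash \Ax(\g_i)$, where $\Ax(\g_i) \subset \Omega$ is a chord of $\overline{\Omega}$ joining the repelling fixed point $p^-_{\g_i} \in \partial\Omega$ to the attracting one $p^+_{\g_i} \in \partial\Omega$. More generally, the lifts of $\lambda_i$ to $\Omega$ are exactly the axes $\Ax(\delta)$ of the $\G$-conjugates $\delta$ of $\g_i$: these are the only $\delta$-invariant projective lines meeting $\Omega$, since the two other eigenlines of $\delta$ meet $\overline{\Omega}$ only at $p^+_\delta$ and at $p^-_\delta$ respectively (otherwise $p^0_\delta$ would be a limit of points of the closed $\delta$-invariant set $\overline{\Omega}$).

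The first key step is to show that every connected component $\tilde c_i$ of $\pi^{-1}(c_i)$ is a properly embedded line in $\Omega$ having the same two endpoints on $\partial\Omega$ as the corresponding axis. That $\tilde c_i$ is a properly embedded line is immediate from $c_i$ being a simple loop, $\pi$ a covering map, and $\Omega$ simply connected; moreover $\tilde c_i$ is invariant under a conjugate $\delta_i = g\g_i g^{-1}$ of $\g_i$. One then uses the North--South dynamics of $\delta_i$ on $\overline{\Omega}$: because $p^0_{\delta_i} \notin \overline{\Omega}$, the two $\delta_i$-eigenlines through $p^0_{\delta_i}$ meet $\overline{\Omega}$ only at $p^+_{\delta_i}$, resp. at $p^-_{\delta_i}$, so $\delta_i^n x \to p^+_{\delta_i}$ and $\delta_i^{-n} x \to p^-_{\delta_i}$ locally uniformly on $\overline{\Omega} \setminus \{p^-_{\delta_i}, p^+_{\delta_i}\}$. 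Applied to a fundamental segment of the $\delta_i$-invariant line $\tilde c_i$, this shows that its forward end accumulates exactly on $p^+_{\delta_i}$ and its backward end exactly on $p^-_{\delta_i}$, so $\overline{\tilde c_i} = \tilde c_i \cup \{p^-_{\delta_i}, p^+_{\delta_i}\}$ --- the same endpoints as $\Ax(\delta_i)$. Observe also that two distinct components of $\pi^{-1}(c_i)$ are disjoint, and that a chord of the disc is determined by its endpoints, so distinct components have distinct endpoint pairs.

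The conclusion then follows from the elementary topology of chords in a disc. Suppose $\lambda_1$ had a transverse self-intersection at a point $q \in \overset{\circ}{S}$; lifting it, we would obtain two distinct chords $\Ax(\delta_1)$ and $\Ax(\delta_1')$, with $\delta_1, \delta_1'$ conjugate to $\g_1$, crossing transversally at a point of $\Omega$. But two chords of the disc $\overline{\Omega}$ cross transversally if and only if their endpoint pairs link on $\partial\Omega$; by the previous step, the corresponding lifts $\tilde c_1$ and $\tilde c_1'$ of $c_1$ would then have linking endpoint pairs, and two arcs in a disc with interiors in the open disc, endpoints on the boundary circle, and linking endpoint pairs necessarily meet (Jordan curve theorem). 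This contradicts the disjointness of the distinct components $\tilde c_1, \tilde c_1'$ of $\pi^{-1}(c_1)$; hence $\lambda_1$ is simple. (This also follows at once from Proposition \ref{lacethomoo} together with the uniqueness of $\lambda_1$.) For the second assertion: if $c_1 \cap c_2 = \varnothing$, then every lift of $c_1$ is disjoint from every lift of $c_2$, so no pair of chords $\Ax(\delta_1), \Ax(\delta_2)$ (with $\delta_i$ conjugate to $\g_i$) has a linking endpoint pair; and two chords of a disc with non-linking endpoint pairs are either disjoint, or meet only at a common boundary point, or are carried by a common projective line --- the last case forcing $\lambda_1 = \lambda_2$. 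In every other case no lift of $\lambda_1$ meets a lift of $\lambda_2$ inside $\Omega$, so $\lambda_1$ and $\lambda_2$ do not intersect in $\overset{\circ}{S}$.

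The routine parts are exactly those of hyperbolic geometry; what requires care --- and where the convex projective setting genuinely differs --- is, first, the identification of the ends of $\tilde c_i$ with the fixed points of $\delta_i$, which must rest on the facts that $\overline{\Omega}$ is topologically a disc and that a hyperbolic automorphism acts on it with North--South dynamics, and not on any smoothness or strict convexity of $\partial\Omega$; and second, the final bookkeeping, where, unlike in the hyperbolic case, two distinct axes may share an endpoint on $\partial\Omega$ or lie on a common projective line, the latter producing the degenerate coincidence $\lambda_1 = \lambda_2$. I expect the endpoint identification to be the main obstacle.
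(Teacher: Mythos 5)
Your proof is correct and follows exactly the route the paper itself intends: the paper gives no argument here, stating only that the proofs are identical to the hyperbolic case (citing Goldman and \cite{ludo}), and your lift-to-$\Omega$ argument --- identifying the ends of each lift of $c_i$ with the fixed points $p^{\pm}$ of the corresponding hyperbolic conjugate via North--South dynamics on the disc $\overline{\Omega}$ (using $p^{0}\notin\overline{\Omega}$ from Proposition \ref{nonelem}), then comparing linking of endpoint pairs of chords --- is precisely that classical argument transposed to the properly convex setting. Your closing remark about the degenerate case where two disjoint lifts lie on a common projective line (forcing $\lambda_1=\lambda_2$, as for disjoint freely homotopic curves in the hyperbolic case) is a gap in the statement rather than in your proof.
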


\par{
Les démonstrations sont identiques à celle du monde hyperbolique. On peut les trouver dans \cite{Gold1,ludo}.
}
\\

\par{
On considère une surface $S$ et $c$ un lacet simple et non élémentaire tracé sur $S$. On note $S|c$ la surface topologique obtenue en retirant $c$ à $S$ et en ajoutant deux bords correspondants à $c$. C'est possible car $S$ est orientable. On note $\beta_f(S|c^*)$ l'espace des modules des structures projectives de volume fini sur la surface $S|c$ tel que les holonomies des deux bords correspondants à $c$ sont hyperboliques et conjuguées.
}
\\
\par{
Il y a une application naturelle de $\varphi_{|c}: \beta_f(S) \rightarrow \beta_f(S|c^*)$. Expliquons sa construction de façon succinte. On considère une surface projective proprement convexe $\mathcal{S} \in \beta_f(S)$, elle est donnée par un homéomorphisme de $S$ vers le quotient d'une partie proprement convexe $\C$ de $\P$ par un sous-groupe discret $\G$ de $\s$ qui préserve $\C$. Le lacet $c$ est librement homotope à une géodésique de $\mathcal{S}$, on peut donc supposer que tout relevé de $c$ à $\C$ est un segment ouvert de $\C$ dont les extrémités sont sur le bord de $\C$. On choisit un relevé $\lambda$ de $c$, comme le lacet $c$ est simple, les images de $\lambda$ par $\G$ ne s'intersectent pas.}
\\
\par{
Pour construire $\varphi_{|c}$, il faut distinguer deux cas: la surface $S|c$ est connexe ou bien la surface $S|c$ possède deux composantes connexes. Dans le premier cas, on note $\C'_{|c}$ une composante connexe de $\C - \underset{\g \in \G}{\bigcup}\g \lambda$. On note $\G_{|c}$ le stabilisateur dans $\G$ de $\C'_{|c}$. On note $\C_{|c}$ le convexe obtenu en ajoutant à $\C'_{|c}$ les relevés de $c$ qui bordent $\C'_{|c}$. Le couple $(\C_{|c},\G_{|c})$ fournit la structure voulue sur $S|c$, celle-ci est clairement proprement convexe et elle est de volume fini d'après le théorème \ref{base}.
}
\\
\par{
Dans le second cas, on note $\C'^1_{|c}$ et $\C'^2_{|c}$ les composantes connexes de $\C - \underset{\g \in \G}{\bigcup} \g \lambda$ qui bordent $\lambda$. On note $\G^1_{|c}$ (resp. $\G^2_{|c}$) le stabilisateur dans $\G$ de $\C'^1_{|c}$ (resp. $\C'^2_{|c}$). On note $\C^1_{|c}$ (resp. $\C^2_{|c}$) le convexe obtenu en ajoutant à $\C'^1_{|c}$ (resp. $\C'^2_{|c}$) les relevés de $c$ qui bordent $\C'^1_{|c}$ (resp. $\C'^2_{|c}$). Les couples $(\C^1_{|c},\G^1_{|c})$ et $(\C^2_{|c},\G^2_{|c})$  fournissent la structure voulue sur $S|c$, celle-ci est clairement proprement convexe et elle est de volume fini d'après le théorème \ref{base}.
}
\\
\par{
On peut à présent énoncer la proposition \ref{collag} dont le point 4. est une conséquence évidente.
}
\begin{prop}\label{collag}
Soient $S$ une surface et $c$ un lacet simple et non élémentaire de $S$, l'application naturelle de $\varphi_{|c}:\beta_f(S) \rightarrow \beta_f(S|c^*)$ est une fibration qui admet une action simplement transitive du groupe $\R^2$ sur ces fibres.
\end{prop}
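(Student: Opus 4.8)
The strategy is to reduce to the well-understood hyperbolic picture. First I would fix a properly convex structure $\mathcal{S} \in \beta_f(S)$, represented by $(\C,\G)$, and pass to the geodesic representative of $c$ as described above, so that every lift of $c$ to $\C$ is an open segment whose endpoints lie on $\partial\C$, and these lifts are pairwise disjoint by simplicity of $c$. The holonomy $\g_0$ of $c$ is hyperbolic by Proposition \ref{nonelem}, with $p^0_{\g_0} \notin \partial\C$; its axis is exactly the chosen lift $\lambda$. To build the $\R^2$-action on the fiber of $\varphi_{|c}$ over a fixed class $\mathcal{T} \in \beta_f(S|c^*)$, I would observe that a point of this fiber is precisely the data of how to re-glue the one or two convex pieces $\C_{|c}$ (or $\C^1_{|c}, \C^2_{|c}$) coming from $\mathcal{T}$ along their distinguished boundary segments (the lifts of $c$), in a way that is compatible with the common hyperbolic holonomy $\g_0$ of the two boundary curves. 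The re-gluing is governed by the centralizer of $\g_0$ in $\s$ modulo the ambiguity already present: since $\g_0$ is hyperbolic with three distinct real positive eigenvalues, its centralizer in $\s$ is the $2$-dimensional group $Z(\g_0)$ of matrices diagonal in the eigenbasis of $\g_0$ with positive entries and determinant $1$, which is isomorphic to $\R^2$.

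The key steps, in order: (1) Show that gluing the pieces along the $c$-boundary using an element $z \in Z(\g_0)$ produces again a properly convex open set $\O_z$ preserved by the amalgamated (or HNN) group $\G_z$, and that the quotient is homeomorphic to $S$. Here the point $p^0_{\g_0} \notin \partial\C$ from Proposition \ref{nonelem}, together with the $\Cc^1$-smoothness of $\partial\C$ at $p^\pm_{\g_0}$ furnished by Proposition \ref{hyp2}, is what guarantees that the two convex pieces can be glued across their common geodesic segment without creating a non-convex corner or losing proper convexity; one checks that the union stays inside a common affine chart. (2) Show that the resulting structure is of finite volume: this is immediate from Theorem \ref{base}, since gluing does not change the conjugacy class (in particular the dynamical type) of the holonomy of any elementary loop, nor create new elementary loops with non-parabolic/non-hyperbolic holonomy. (3) Check that $z \mapsto [\O_z,\G_z]$ descends to a well-defined action of $Z(\g_0)\cong\R^2$ on the fiber: two elements $z, z'$ give isotopic marked structures iff $z' z^{-1}$ is realized by an element of $\s$ conjugating the whole picture and acting trivially up to isotopy, which forces $z = z'$ because such an element would have to centralize $\G_z$, and $\G_z$ is Zariski-dense enough (it is a non-elementary surface group) that its centralizer is trivial. (4) Prove the action is transitive on the fiber: given any two structures in $\beta_f(S)$ mapping to the same class in $\beta_f(S|c^*)$, identify the two corresponding pairs of convex pieces via the marking, and then the only remaining freedom in re-assembling them is an element of $Z(\g_0)$, so one such element carries one structure to the other. (5) Finally, check that $\varphi_{|c}$ is a fibration, i.e. locally trivial: the local triviality follows by performing the above construction continuously in a family, using that the eigendata of $\g_0$, hence the identification $Z(\g_0)\cong\R^2$, vary continuously over $\beta_f(S|c^*)$, and that $\beta_f(S|c^*)$ already carries the quotient topology making such families continuous.

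The main obstacle I expect is step (1): verifying that the glued object $\O_z$ is genuinely properly convex for \emph{every} $z\in Z(\g_0)$, not just for $z$ near the identity. One must rule out that, after a large shear-type deformation, the boundary of the reassembled region fails to be convex near the glued segment or escapes every affine chart. The way through is to use the $\Cc^1$ and tangency information in Proposition \ref{hyp2}: the supporting lines of the two pieces along the glued axis are controlled by $p^0_{\g_0}$, and one shows the two families of supporting lines never cross, which is exactly convexity; properness then follows because the whole union still avoids the fixed hyperplane dual to $p^0_{\g_0}$. A secondary subtlety is the connected-versus-disconnected case split for $S|c$: in the connected (HNN) case one must also track that the stabilizer $\G_{|c}$ together with the gluing element generate a \emph{discrete} subgroup of $\s$, which one obtains from a ping-pong argument on the disjoint translates of the lifts of $c$, valid uniformly in $z$.
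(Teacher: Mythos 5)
Your proposal is correct and follows essentially the same route as the paper: the fibre carries an action of the identity component of the centralizer of the hyperbolic holonomy of $c$ (the group $D=\{T^uU^v\}\cong\R^2$ in the text), freeness is immediate, and transitivity is obtained exactly as in your step (4) --- the paper makes this precise by observing that any projective identification of the second pieces preserving $\Ax(\g)$ must fix $p^0_{\g}$, since $p^0_{\g}$ is the intersection of the half-tangents at $p^{\pm}_{\g}$, hence lies in the centralizer. The convexity of the re-glued object, which you single out as the main obstacle, is indeed the point the paper leaves to Goldman, and your proposed use of Proposition \ref{hyp2} is the right way to settle it.
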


Dans le paragraphe \ref{constr1} nous allons construire cette action et dans le paragraphe \ref{simpltrans} nous montrerons que cette action est simplement transitive.

\subsubsection{Construction de l'action de $\R^2$}\label{constr1}


On reprend les notations du paragraphe précédent. On sait que l'holonomie $\g$ de $c$ est hyperbolique avec $p^0_{\g} \notin \C$. On peut supposer que l'élément $\g$ est donné par une matrice diagonale à diagonale strictement positive et que les entrées de la diagonale de $\g$ sont ordonnées par ordre croissant.

Soient $(u,v) \in \R^2$, on définit les deux matrices suivantes:

$$
\begin{array}{cccc}
T^u= &
\left(
\begin{array}{ccc}
e^{-u} & 0 & 0\\
0    & 1 & 0\\
0    & 0 & e^u\\
\end{array}
\right)
&
U^v= &
\left(
\begin{array}{ccc}
e^{-v} & 0      & 0\\
0    & e^{2v} & 0\\
0    & 0      & e^{-v}\\
\end{array}
\right)
\end{array}
$$

\par{
La composante neutre du centralisateur de $\g$ dans $\s$ est le groupe $D$ isomomorphe à $\R^2$ donné par $D=\{ T^u U^v \,|\, (u,v) \in \R^2 \}$. Nous allons définir une action de $D$ sur $\beta_f(S)$ qui préserve la fibration $\varphi_{|c}: \beta_f(S) \rightarrow \beta_f(S|c^*)$.
}
\\
\par{
On se donne deux réels $u$ et $v$ et on va construire une nouvelle structure projective proprement convexe sur $S$. On note $dev_0$ la développante de $\mathcal{S}$ et $\rho_0$ son holonomie. Il faut distinguer deux cas: la surface $S|c$ est connexe ou bien la surface $S|c$ possède deux composantes connexes.
}
\\
\par{
On commence par le second cas car il est plus facile à visualiser. On note $\C'_1$ et $\C'_2$ les deux composantes connexes de $\C - \underset{\g \in \G}{\bigcup} \g \lambda$ qui bordent $\lambda$. On note $\G_1$ (resp. $\G_2$) le stabilisateur dans $\G$ de $\C'_1$ (resp. $\C'_2$). Le groupe $\G$ est le produit amalgamé du groupe  $\G_1$ et $\G_2$ le long du groupe engendré par $\g$. La nouvelle holonomie $\rho_{u,v}$ est définie de la façon suivante:
}

\begin{itemize}
\item Si $\delta \in \G_1$ alors on pose $\rho_{u,v}(\delta)= \rho_0(\delta)$.

\item Si $\delta \in \G_2$ alors on pose $\rho_{u,v}(\delta)= T^u U^v \rho_0(\delta) U^{-v} T^{-u}$.

\item Cette définition n'est pas ambiguë car $T^u U^v$ commute avec $\g$.
\end{itemize}

La nouvelle développante est l'unique homéomorphisme $\rho_{u,v}$-équivariant qui prolonge l'application $dev_{u,v}:\C'_1 \cup \C'_2 \rightarrow \P$ suivante:

\begin{itemize}
\item Si $x \in \C'_1$ alors on pose $dev_{u,v}(x)= dev_0(x)$.

\item Si $x \in \C'_2$ alors on pose $dev_{u,v}(x)= T^u U^v dev_0(x)$.

\item L'existence et l'unicité du prolongement de $dev_{u,v}$ à $\C=\widetilde{S}$ est évidente.
\end{itemize}
\vspace{.5cm}
\par{
Dans le premier cas, on note $\C'$ une composante connexe de $\C - \underset{\g \in \G}{\bigcup}\g \lambda$. On note $\G'$ le stabilisateur dans $\G$ de $\C'$. Enfin, on note $\alpha$ un élément de $\G$ qui correspond à un chemin simple tracé sur $S|c$ reliant les deux bords de $S|c$ correspondant à $c$. Nous allons modifier la développante $dev_0$ de $S$ ainsi que son holonomie $\rho_0$. Le groupe $\G$ est l'HNN-extension de $\G'$ relativement à $\alpha$. La nouvelle holonomie $\rho_{u,v}$ est définie de la façon suivante:
}

\begin{itemize}
\item Si $\delta \in \G'$ alors on pose $\rho_{u,v}(\delta)= \rho_0(\delta)$.

\item On pose $\rho_{u,v}(\alpha)= T^u U^v \rho_0(\alpha)$.

\item Cette définition ne dépend pas du choix de l'élément $\alpha$ car $T^u U^v$ commute avec l'élément $\g$.

\end{itemize}
\par{
La nouvelle développante est l'unique homéomorphisme $\rho_{u,v}$-équivariant qui prolonge l'application $dev_{u,v}|_{\C'}= dev_0|_{\C'}$.
}
\\
\par{
On a ainsi défini pour tout lacet simple non élémentaire $c$ tracé sur la surface projective proprement convexe $\mathcal{S} \in \beta_f(S)$ une action du centralisateur $D$ de $\Hol(c)$ sur $\mathcal{S}$.
}
\\
\par{
Comme $D$ est isomorphe à $\R^2$, nous venons de construire pour tout lacet simple non élémentaire $c$ tracé sur la surface topologique $S$ une action de $\R^2$ sur $\beta_f(S)$ qui préserve la fibration $\varphi_{|c}: \beta_f(S) \rightarrow \beta_f(S|c^*)$. Ainsi, pour tout couple $(u,v) \in \R^2$, tout lacet simple tracé sur $S$ et pour tout surface projective proprement convexe $\mathcal{S} \in \beta_f(S)$, on notera $\mathcal{S}^{(u,v)}$ la surface projective proprement convexe de volume fini obtenue par l'action de $(u,v)$ sur $\mathcal{S}$ que l'on vient de définir. Cette action de $\R^2$ sur $\beta_f(S)$ est libre. En effet, il est clair que $\rho_{u,v} \neq \rho_0$ pour tout couple $(u,v) \in \R^2$ (Il suffit de distinguer les cas $S|c$ connexe et $S|c$ non connexe).
}

\subsubsection{L'action de $\R^2$ est simplement transitive}\label{simpltrans}

Nous allons montrer le lemme suivant qui conclut la démonstration de la proposition \ref{collag}.

\begin{lemm}
Soient $S$ une surface, $c$ un lacet simple non élémentaire de $S$ et $\mathcal{S}$ et $\mathcal{T}$ deux surfaces projectives proprement convexes de $\beta_f(S)$ telles que $\varphi_{|c}(\mathcal{S})=\varphi_{|c}(\mathcal{T})$, alors il existe un unique couple $(u,v) \in \R^2$ tel que $\mathcal{T}= \mathcal{S}^{(u,v)}$.
\end{lemm}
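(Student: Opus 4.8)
The plan is to reconstruct $\mathcal{T}$ from the data of $\varphi_{|c}(\mathcal{T})$ together with the way the two boundary copies of $c$ are glued, and to show that this gluing datum is exactly a point of the centralizer $D\cong\R^2$ acting as in paragraph \ref{constr1}. I will treat the two cases ($S|c$ connected / $S|c$ with two components) in parallel, since in both the argument reduces to the same uniqueness statement about how one glues a convex along a common geodesic boundary segment. Fix once and for all a lift $\lambda$ of $c$ and normalize so that $\Hol(c)=\g$ is diagonal with strictly increasing positive diagonal entries, exactly as in paragraph \ref{constr1}; then $D=\{T^uU^v\mid(u,v)\in\R^2\}$ is the identity component of the centralizer of $\g$ in $\s$, and by Proposition \ref{nonelem} we know $p^0_\g\notin\partial\C$, so the endpoints $p^+_\g,p^-_\g$ of $\lambda$ are genuine $\Cc^1$ points of the relevant boundaries.

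\textbf{Step 1 (normalization).} Since $\varphi_{|c}(\mathcal{S})=\varphi_{|c}(\mathcal{T})$, the cut surfaces are isomorphic as marked convex projective structures. Take developing maps and holonomies $(dev^{\mathcal S},\rho^{\mathcal S})$, $(dev^{\mathcal T},\rho^{\mathcal T})$ for $\mathcal S,\mathcal T$ on $\widetilde S=\C$, and choose the representatives so that on the fixed component $\C'$ (resp.\ the fixed pair $\C'_1,\C'_2$ bordering $\lambda$) the two developing maps and the restricted holonomies literally coincide, not merely up to conjugacy: this uses that the cut structures agree and that the isotopy in the definition of the marking can be lifted to an equivariant isotopy of $\C$ fixing $\lambda$ and the bordering components. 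After this normalization, in the two-component case $\rho^{\mathcal S}$ and $\rho^{\mathcal T}$ agree on $\G_1$; in the HNN case they agree on $\G'$.

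\textbf{Step 2 (the gluing element).} In the HNN case, set $g=\rho^{\mathcal T}(\alpha)\,\rho^{\mathcal S}(\alpha)^{-1}\in\s$ (in the two-component case, $g$ is the unique element with $\rho^{\mathcal T}=g\rho^{\mathcal S}g^{-1}$ on $\G_2$ and $=\rho^{\mathcal S}$ on $\G_1$, obtained from equivariance of the extended developing maps on $\C_2'$). Because $\g=\Hol(c)$ lies in both $\G'$ (resp.\ $\G_1\cap\G_2$) and in the image of $\rho^{\mathcal S}$ and $\rho^{\mathcal T}$ with the \emph{same} matrix (Step 1), the relation $\alpha\g\alpha^{-1}$ (or the amalgamation relation) forces $g$ to centralize $\g$. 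Now I must pin $g$ down to lie in the identity component $D$ rather than in the full centralizer: here I use that both $\mathcal S$ and $\mathcal T$ are \emph{convex}, orientation-preserving, and that $g$ carries the developed copy of $\C'$ (resp.\ $\C_2'$) on one side of $\lambda$ to a convex set again lying on the correct side with matching boundary $\Cc^1$-tangency at $p^\pm_\g$ — the components of the centralizer that reverse the two ends of the axis or act by $-\mathrm{Id}$ on a plane are excluded by these positivity/orientation constraints. Hence $g=T^uU^v$ for a unique $(u,v)\in\R^2$.

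\textbf{Step 3 (identification with the action).} Comparing the construction of $\rho_{u,v}$ and $dev_{u,v}$ in paragraph \ref{constr1} with the $(u,v)$ produced in Step 2, one checks directly that $\mathcal S^{(u,v)}$ and $\mathcal T$ have equal developing maps on $\C'$ (resp.\ $\C_1'\cup\C_2'$) and equal holonomies on the generators, hence equal extended developing maps by the uniqueness of the equivariant extension; therefore $\mathcal T=\mathcal S^{(u,v)}$ as marked structures. Uniqueness of $(u,v)$ is immediate since $(u,v)\mapsto T^uU^v$ is injective and, by the freeness already noted at the end of paragraph \ref{constr1}, distinct parameters give distinct structures. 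The main obstacle is Step 2: making rigorous that the gluing ambiguity is \emph{exactly} the connected centralizer $D$ and not a larger group — i.e.\ combining discreteness, convexity, the principal/hyperbolic boundary hypothesis, and the $\Cc^1$-regularity at $p^\pm_\g$ from Proposition \ref{hyp2} to rule out the other components of $Z_{\s}(\g)$ — together with the care needed in Step 1 to upgrade "isomorphic cut structures" to "genuinely equal developing data on the fixed piece," which is where the isotopy in the definition of the marking has to be used honestly.
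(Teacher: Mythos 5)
Your proposal is correct and follows essentially the same route as the paper: normalize so that the two structures coincide on one side of the cut, observe that the remaining gluing ambiguity is an element of the centralizer of $\Hol(c)$, and use convexity and the position of the convex pieces relative to the fixed points $p^{+}_{\g}$, $p^{-}_{\g}$, $p^{0}_{\g}$ to force that element into the identity component $D\cong\R^2$. The only cosmetic difference is that you obtain the centralizing property algebraically from the amalgam/HNN relations, whereas the paper derives it geometrically by showing that the projective map identifying the two convex pieces must fix $p^{0}_{\g}$ as the intersection of the half-tangents at $p^{\pm}_{\g}$; both arguments are sound.
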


\begin{proof}
Comme l'action de $\R^2$ sur $\beta_f(S)$ est libre, il n'y a que l'existence à montrer. On ne fait que le cas où $S|c$ possède deux composantes connexes, l'autre cas est analogue. On note $\C^1_{\mathcal{S}}$ et $\C^2_{\mathcal{S}}$ (resp. $\C^1_{\mathcal{T}}$ et $\C^2_{\mathcal{T}}$) les parties proprement convexes associées aux deux composantes connexes de $\mathcal{S}$ (resp. $\mathcal{T}$) données par la développante de sa structure projective. On suppose bien sûr que $\C^1_{\mathcal{S}}$ et $\C^1_{\mathcal{T}}$ (resp. $\C^2_{\mathcal{S}}$ et $\C^2_{\mathcal{T}}$) correspondent à la même composante connexe de $S|c$.

L'hypothèse $\varphi_{|c}(\mathcal{S})=\varphi_{|c}(\mathcal{T})$ entraine que les parties proprement convexes $\C^1_{\mathcal{S}}$ et $\C^1_{\mathcal{T}}$ (resp. $\C^2_{\mathcal{S}}$ et $\C^2_{\mathcal{T}}$) sont projectivement équivalentes.

On peut supposer que l'holonomie $\g$ de $c$ pour ces 4 structures projectives est donnée par une matrice diagonale à diagonale strictement positive, et que les entrées de la diagonale de $\g$ sont rangées par ordre croissant. Les points $p^+_{\g},p^-_{\g},p^0_{\g}$ sont donc les mêmes pour ces 4 structures, ils définissent un pavage de $\P$ par 4 triangles fermés. Deux de ces triangles bordent l'axe de $\g$, on les note $\Delta_1$ et $\Delta_2$. La dynamique des éléments hyperboliques montre que, quitte à renuméroter, on peut supposer que $\C^1_{\mathcal{S}}$ et $\C^1_{\mathcal{T}}$ (resp. $\C^2_{\mathcal{S}}$ et $\C^2_{\mathcal{T}}$) sont inclus dans $\Delta_1$ (resp. $\Delta_2$).

Les parties proprement convexes $\C^1_{\mathcal{S}}$ et $\C^1_{\mathcal{T}}$ sont projectivement équivalentes. On peut donc supposer que $\C^1_{\mathcal{S}} = \C^1_{\mathcal{T}}$. De même, les parties proprement convexes $\C^2_{\mathcal{S}}$ et $\C^2_{\mathcal{T}}$ sont projectivement équivalentes. Par conséquent, il existe une transformation projective $f$ qui identifie $\C^2_{\mathcal{S}}$ sur $\C^2_{\mathcal{T}}$. On peut supposer que celle-ci préserve l'axe de $\g$. Par conséquent, elle doit aussi préserver le point $p^0_{\g}$ puisque c'est l'intersection des demi-tangentes (différentes de l'axe de $\g$) à $\C^2_{\mathcal{S}}$ et $\C^2_{\mathcal{T}}$ aux points $p^+_{\g}$ et $p^-_{\g}$. Par conséquent, la transformation projective $f$ fixe les points $p^+_{\g}$, $p^-_{\g}$ et $p^0_{\g}$, c'est donc un élément du centralisateur de $\g$. Par suite, l'élément $f$ appartient au groupe $D$, et la structure $\mathcal{T}$ sur la surface topologique $S$ est obtenue par l'action de $f$ sur $\mathcal{S}$. C'est ce qu'il fallait montrer.
\end{proof}

\subsection{Démonstration du cinquième point du théorème \ref{module}}\label{point5}

\subsubsection{Les pantalons et l'objet combinatoire}
\par{
Pour éviter les confusions sur l'objet désigné par le mot triangle, on utilisera la convention suivante: un \emph{triangle fermé} est un fermé d'une carte affine qui est l'intersection de trois demi-plans fermés en position générique. Un \emph{triangle ouvert} est l'intérieur (qui est toujours non vide) d'un triangle fermé. Un \emph{triangle épointé} est un triangle fermé sans ses sommets. Un \emph{triangle topologique} est l'image d'un triangle épointé par un homéomorphisme.
}
\\
\par{
L'idée de Goldman est la suivante: nous allons associer à tout pantalon projectif proprement convexe $\mathcal{P}$ un objet combinatoire, à savoir quatre triangles dont la réunion est un hexagone, muni de trois applications projectives qui identifient certains triangles entre eux (Voir figure \ref{hexa}). Le groupe $\G$ engendré par ces trois applications est isomorphe au groupe libre à 2 générateurs. La réunion des quatre triangles épointés et de leurs images sous $\G$ forment un ouvert $\Omega$ proprement convexe. Le quotient $\Quo$ est le pantalon $\mathcal{P}$.
}

\begin{figure}[!h]
\begin{center}
\includegraphics[trim=-7cm 14cm 0cm 0cm, clip=true, width=12cm]{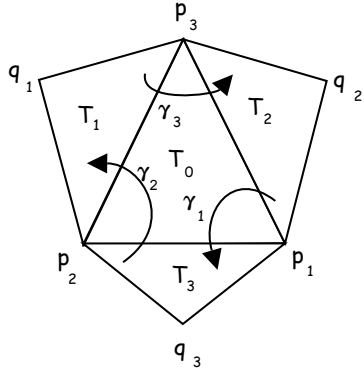}
\caption{$L'hexagone$} \label{hexa}
\end{center}
\end{figure}

\par{
La démonstration se fait en trois parties. On commence par construire à partir d'un pantalon projectif proprement convexe l'objet combinatoire (paragraphe \ref{constr}). Ensuite, on montre que, si l'on part de l'objet combinatoire, alors on obtient bien un pantalon projectif proprement convexe (paragraphe \ref{convexe}). Enfin, on calcule l'espace des modules de cet objet combinatoire (paragraphe \ref{computation}).
}
\\
\par{
On souhaite comprendre l'espace des modules des pantalons projectifs proprement convexes dont les classes de conjugaison des holonomies des lacets élémentaires sont fixées. On introduit donc l'espace suivant, on se donne $\delta_1,\delta_2,\delta_3 \in \s$ et on pose:
}

$$
\Q'_{\delta_1,\delta_2,\delta_3} =
\left\{
\begin{array}{l}
(T_0,T_1,T_2,T_3,\g_1,\g_2,\g_3) \textrm{ tel que:}\\
\\
1.\, T_0,T_1,T_2,T_3 \textrm{ sont des triangles épointés et le triangle } T_0\\
\textrm{intersecte } T_1,T_2,T_3 \textrm{ le long de ses 3 arêtes}\\
\\
2.\, \underset{i=0,...,3}{\bigcup} \overline{T_i} \textrm{ est un hexagone }\\
\textrm{ convexe qui possède exactement 6 côtés}\\
\\
3.\, \g_3\g_2\g_1=1\\
\\
4.\, \g_1(T_2) = T_3, \, \g_2(T_3) = T_1, \, \g_3(T_1) = T_2\\
\\
5.\, \textrm{L'élément } \g_i \textrm{ est conjugué à } \delta_i\textrm{ dans } \s, \textrm{ pour } i=1,...,3.\\
\end{array}
\right\}
$$
\par{
L'action naturelle de $\s$ sur $\Q'_{\delta_1,\delta_2,\delta_3}$ est libre et propre: on s'intéresse à l'espace quotient $\Q_{\delta_1,\delta_2,\delta_3}=\Q'_{\delta_1,\delta_2,\delta_3}/_{\s}$.
}
\\
\par{
Soient $\delta_1,\delta_2,\delta_3$ des éléments de $\s$ qui sont hyperboliques, quasi-hyperboliques ou paraboliques. Soit $P$ la surface $\Sigma_{0,3,0}$, on numérote les composantes connexes du bord de $P$ de 1 à 3. On choisit aussi une orientation de chacun des bords. On note $P_{\delta_1,\delta_2,\delta_3}$ le pantalon à bord obtenue à partir de $P$ en retirant le bord $i$ lorsque l'élément $\delta_i$ est parabolique. On notera $\beta_f(P_{\delta_1,\delta_2,\delta_3})$ l'espace des modules des structures projectives proprement convexes sur le pantalon $P_{\delta_1,\delta_2,\delta_3}$ tel que l'holonomie du lacet élémentaire orienté numéroté $i$ est conjuguée à $\delta_i$, pour $i=1,...,3$.
}
\\
\par{
Les deux prochains paragraphes sont consacrés à la démonstration de cette proposition.
}
\begin{prop}\label{panta-combi}
Soient $\delta_1,\delta_2,\delta_3$ trois éléments hyperboliques, quasi-hyperboliques ou paraboliques de $\s$, l'espace $\beta_f(P_{\delta_1,\delta_2,\delta_3})$ est homéomorphe à $\Q_{\delta_1,\delta_2,\delta_3}$.
\end{prop}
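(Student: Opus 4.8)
The strategy is to construct two mutually inverse maps between $\beta_f(P_{\delta_1,\delta_2,\delta_3})$ and $\Q_{\delta_1,\delta_2,\delta_3}$ and check they are continuous. First I would define the map $\beta_f(P_{\delta_1,\delta_2,\delta_3}) \to \Q_{\delta_1,\delta_2,\delta_3}$. Given a convex projective pantalon $\mathcal{P}$, presented as $\O/_\G$ with $\G = \langle \g_1,\g_2\rangle$ free of rank $2$ preserving a properly convex open set $\O$, I cut $\mathcal{P}$ along two disjoint simple arcs joining the boundary components so as to obtain an hexagonal fundamental domain; lifting, this decomposes $\O$ into $\G$-translates of four pointed triangles $T_0,T_1,T_2,T_3$ meeting as in the definition of $\Q'$. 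The condition $\g_3\g_2\g_1 = 1$ records the relation coming from a loop around the three boundary components, and by Theorem \ref{base} together with Propositions \ref{hyp2}, \ref{quasihyp2}, \ref{ppara2}, the holonomies $\g_i$ are exactly hyperbolic, quasi-hyperbolic or parabolic, hence conjugate to $\delta_i$ after the appropriate normalisation. The key geometric input here is that the edges of the triangles lie on the faces (tangent lines / boundary segments) at the fixed points of the $\g_i$ — this is where I must argue that the cut arcs can be chosen so that their lifts are honest projective segments with endpoints on $\partial\O$, spiralling towards the fixed points as Goldman's picture prescribes; Proposition \ref{nonelem} guarantees $p^0_\g \notin \partial\O$ for the loops that survive as hyperbolic, and Proposition \ref{ppara2} handles the cusps. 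The resulting $7$-tuple is well-defined up to the $\s$-action (choice of development), so it gives a point of $\Q_{\delta_1,\delta_2,\delta_3}$, independent of the chosen arcs up to isotopy.

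For the reverse map $\Q_{\delta_1,\delta_2,\delta_3} \to \beta_f(P_{\delta_1,\delta_2,\delta_3})$, I take a representative $(T_0,\dots,T_3,\g_1,\g_2,\g_3)$ and let $\G = \langle \g_1,\g_2\rangle$. Condition $3$ plus condition $4$ shows $\G$ is free on $\g_1,\g_2$ and that $\G$ acts on the union $\widetilde\O$ of all $\G$-translates of the four pointed triangles. The content of paragraph \ref{convexe} (which I am allowed to invoke, as it is announced as part of the proof) is that $\widetilde\O$ is a properly convex open set and that $\G$ acts on it properly discontinuously and freely with quotient a pantalon; the orientations and the markings "coupure/bord" on the ends are read off from conditions on the $\g_i$, and by Theorem \ref{base} the quotient structure has finite volume precisely because the ends are parabolic (cusps) or have hyperbolic / quasi-hyperbolic holonomy (principal geodesic boundary). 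Finally the holonomy of the boundary loop numbered $i$ is $\g_i$, conjugate to $\delta_i$ by condition $5$, so we land in $\beta_f(P_{\delta_1,\delta_2,\delta_3})$.

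The two maps are inverse to each other essentially by construction: starting from a pantalon, decomposing, and reglueing recovers the same convex set; starting from the combinatorial datum, forming $\O/_\G$ and then re-cutting along the images of the arcs dual to the hexagon recovers the same tuple up to $\s$. Continuity in both directions follows from the fact that the hexagonal decomposition depends continuously (in the compact-open topology on developing maps) on the structure, and conversely the developing map of $\O/_\G$ depends continuously on $(T_0,\dots,T_3,\g_1,\g_2,\g_3)$. I expect the main obstacle to be the first direction in the non-compact case: showing that one can choose the two cutting arcs whose lifts are genuine projective segments landing correctly on $\partial\O$ even when a boundary component degenerates to a cusp — Goldman's original argument uses compactness of the pantalon, and here one must control the geometry near the parabolic fixed points using Proposition \ref{ppara2} (that $p$ is a $\Cc^1$ point not lying on any nontrivial boundary segment) to guarantee that the "spiralling triangles" still close up to a finite-sided hexagon. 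The remaining subtlety is checking that when $\delta_i$ is quasi-hyperbolic the hexagon still has exactly six sides (condition $2$), which uses Proposition \ref{quasihyp2}.
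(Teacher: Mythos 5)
Your proposal follows the paper's proof of this proposition: realize an ideal triangulation of the pantalon by projective segments whose endpoints are the fixed points of the boundary holonomies (via Propositions \ref{hyp2}, \ref{quasihyp2}, \ref{ppara2}), read off the hexagon of four pointed triangles together with $\g_1,\g_2,\g_3$, and invoke the convexity lemma of paragraph \ref{convexe} for the inverse direction, exactly as the paper does. The only slip is in the description of the cut: the paper uses \emph{three} ideal arcs $l_1,l_2,l_3$ cutting the pantalon into \emph{two} ideal triangles, so that the hexagon $T_0\cup T_1\cup T_2\cup T_3$ is one lift of the first triangle together with its three adjacent lifts of the second (it is not a fundamental domain), rather than two arcs yielding a hexagonal fundamental domain.
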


\subsubsection{Construction de l'objet combinatoire}\label{constr}

Soit $\mathcal{P} \in \beta_f(P_{\delta_1,\delta_2,\delta_3})$, nous allons découper l'intérieur de $\mathcal{P}$ à l'aide de deux triangles idéaux épointés. Pour cela, on munit le pantalon à bord $P_{\delta_1,\delta_2,\delta_3}$ d'une structure hyperbolique de volume fini. L'holonomie du lacet correspondant au bord $i$ est donc hyperbolique si et seulement si $\delta_i$ est hyperbolique ou quasi-hyperbolique. Sinon, elle est parabolique.

On triangule le pantalon $P_{\delta_1,\delta_2,\delta_3}$ à l'aide de 3 géodésiques $l_i$ pour la structure hyperbolique que l'on a choisi. Si les éléments $\delta_{i+1}$ et $\delta_{i+2}$ sont hyperboliques ou quasi-hyperboliques alors la géodésique $l_i$ est l'unique géodésique qui s'accummule sur les bords $i+1 \, mod \, 3 $ et $i+2 \, mod \, 3 $. Si les éléments $\delta_{i+1}$ et $\delta_{i+2}$ sont paraboliques alors la géodésique $l_i(t)$ est l'unique géodésique qui est incluse dans tout voisinage du bout $i+1 \, mod \, 3 $ (lorsque $t \rightarrow -\infty$ ) et dans tout voisinage du bout $i+2 \, mod \, 3 $ (lorsque $t \rightarrow +\infty$). Si l'élément $\delta_{i+1}$ est parabolique et l'élément $\delta_{i+2}$ est hyperbolique ou quasi-hyperbolique alors $l_i(t)$ est l'unique géodésique qui est incluse dans tout voisinage du bout $i+1 \, mod \, 3 $ (lorsque $t \rightarrow -\infty$ ) et s'accumule le bord $i+2 \, mod \, 3 $. Le dernier cas se traite de la même façon.

Les dessins de la figure \ref{pantalon} résument la construction de la triangulation de $P_{\delta_1,\delta_2,\delta_3}$. La figure \ref{haut} résume la construction de l'objet combinatoire.

\begin{figure}[!h]
\centerline{\psfig{figure=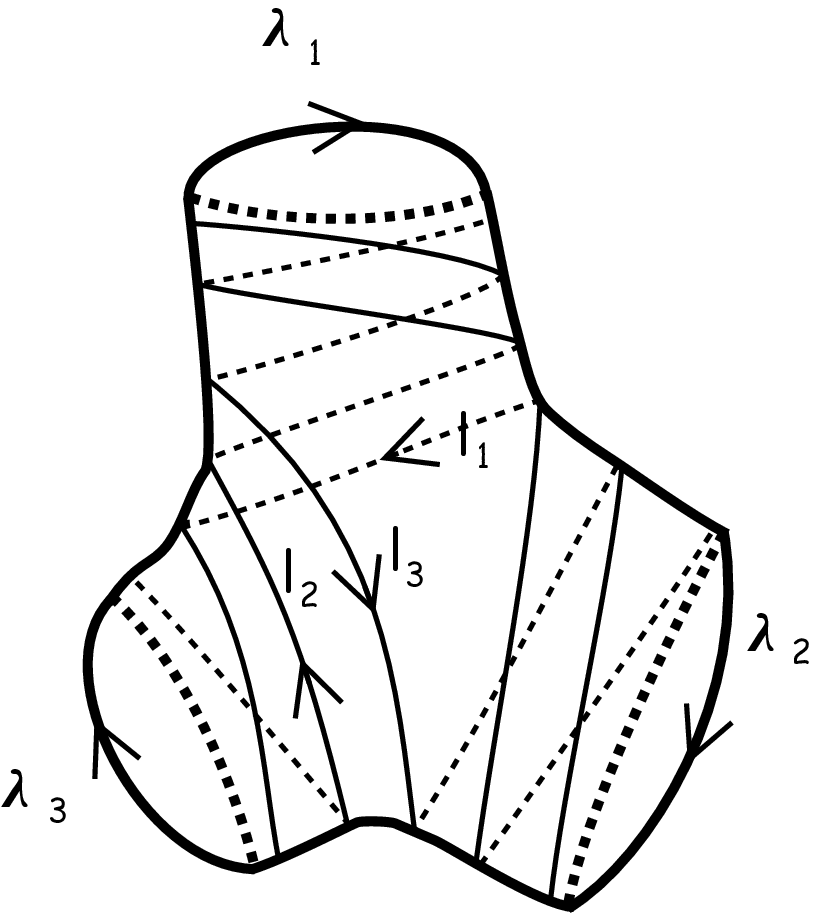, trim=-6cm 13cm 5cm 0cm, width=10cm}
 \hspace{3em}
\psfig{figure=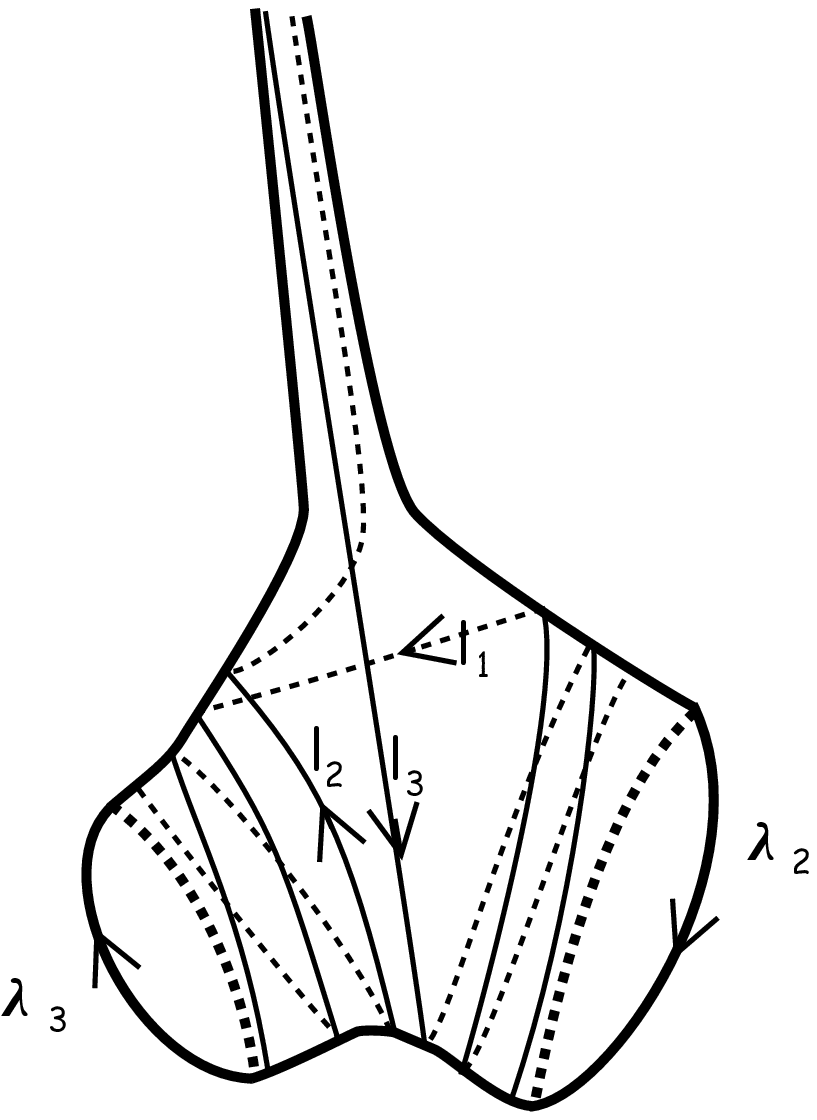, trim=0cm 13cm 0cm 0cm, width=9cm}}
 \vspace{3em}
\centerline{\psfig{figure=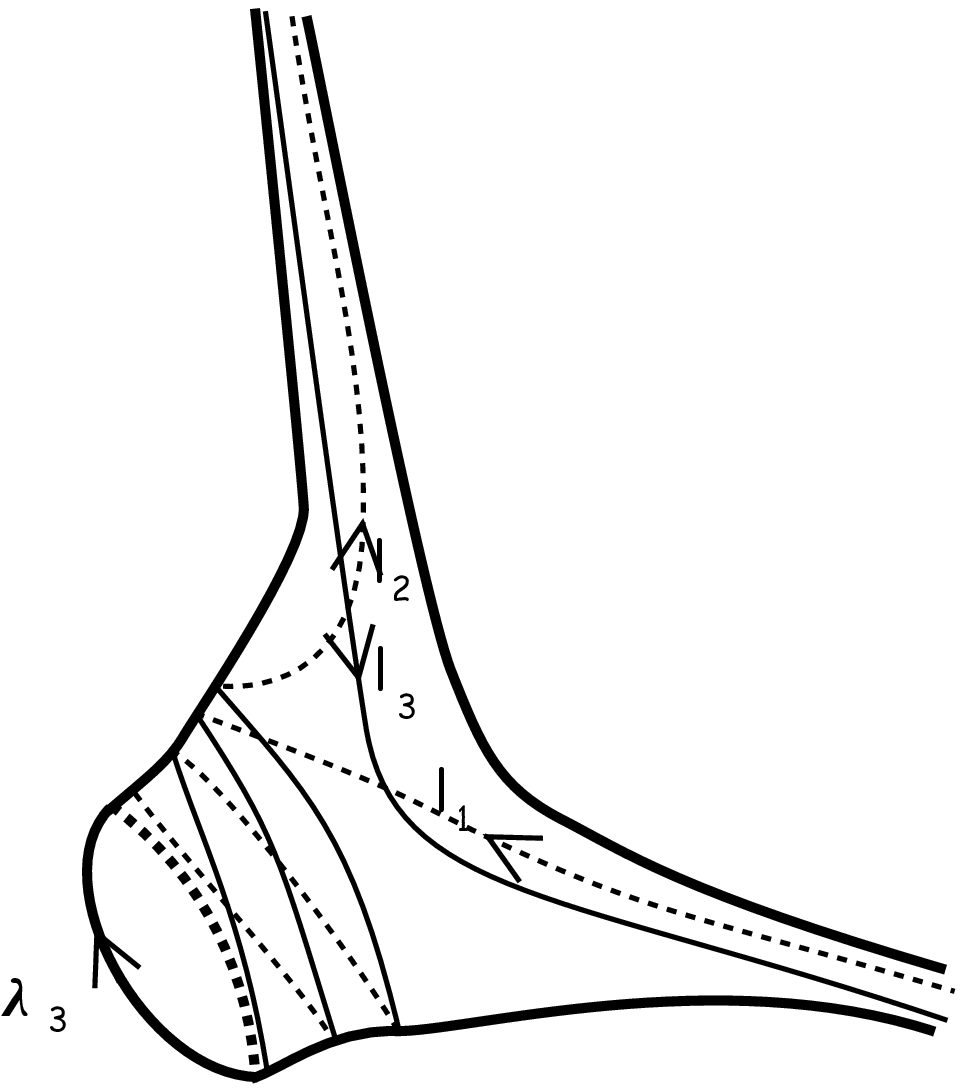, trim=-6cm 13cm 5cm 0cm, width=9cm}
 \hspace{3em}
\psfig{figure=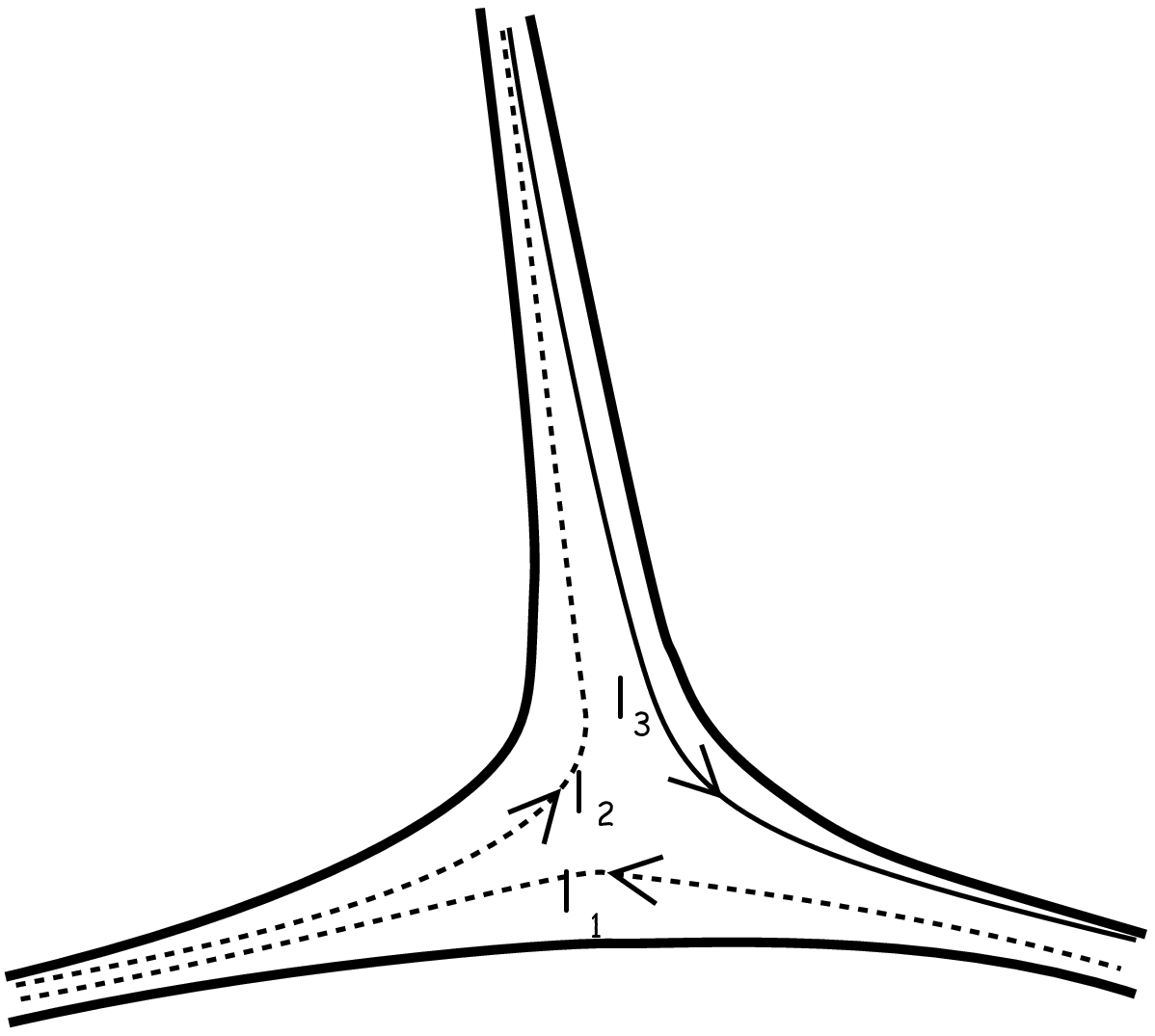, trim=0cm 13cm 0cm 0cm, width=9cm}
}
\caption{Pantalons avec zéro, une, deux et enfin trois pointes} \label{pantalon}
\end{figure}

\begin{figure}[!h]
\centerline{\psfig{figure=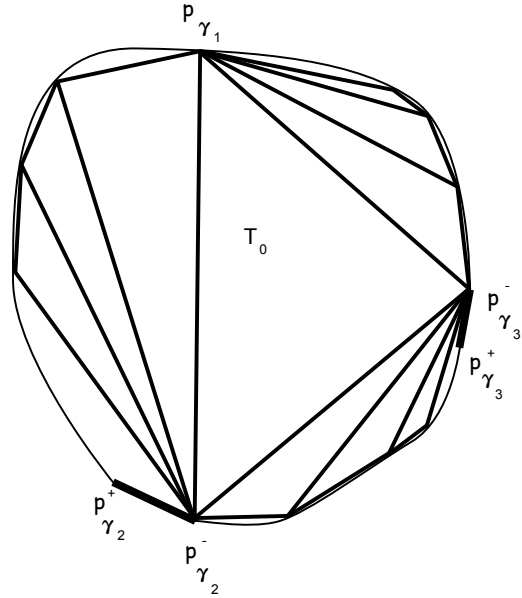, trim=-4cm 9cm 0cm 0cm, width=10cm}}
\caption{Revêtement universel d'un pantalon projectif proprement convexe} \label{haut}
\end{figure}

La forme des chemins $l_1,l_2,l_3$ dépend de l'holonomie du bout. Nous allons montrer que l'on peut supposer que ces chemins sont des géodésiques simples et qu'elles ne s'intersectent pas.

Commençons par analyser ces dessins d'un point de vue topologique, on note $S$ l'intérieur de $P_{\delta_1,\delta_2,\delta_3}$. Les chemins $l_1$, $l_2$ et $l_3$ se relèvent en des chemins simples $\widetilde{l_1}$, $\widetilde{l_2}$ et $\widetilde{l_3}$ tracés sur le revêtement universel $\widetilde{S}$ de $S$. Chacun des chemins $\widetilde{l_i}$ vérifie que $\widetilde{S}-\widetilde{l_i}$ possède deux composantes connexes. Ces chemins et leurs images par le groupe fondamental $\G$ de $S$ définissent une triangulation de $\widetilde{S}$. Deux triangles topologiques adjacents définissent un domaine fondamental pour l'action de $\G$ sur $\widetilde{S}$. La triangulation ainsi obtenue de $\widetilde{S}$ est la triangulation de Faray.

Nous allons montrer que chacun de ces chemins peut être supposés géodésique. Pour simplifier la discussion, on ne fait que le cas où $\delta_1$ est parabolique et $\delta_2$, $\delta_3$ hyperbolique. Les autres cas se traitent de façon analogue. On note $\C$ le revêtement universel de $\mathcal{P}$ donné par la développante de $\mathcal{P}$. L'ensemble $\C$ est un convexe proprement convexe de $\P$. On note $\g_1$ (resp. $\g_2$ resp. $\g_3$) les représentants de l'holonomie des lacets $\lambda_1$ (resp. $\lambda_2$ resp. $\lambda_3$) donnés par le choix du point base $x_0$ sur $\mathcal{P}$ comme sur le dessin \ref{lacet}. Ils vérifient la relation $\g_3\g_2\g_1=1$.

\begin{figure}[!h]
\centerline{\psfig{figure=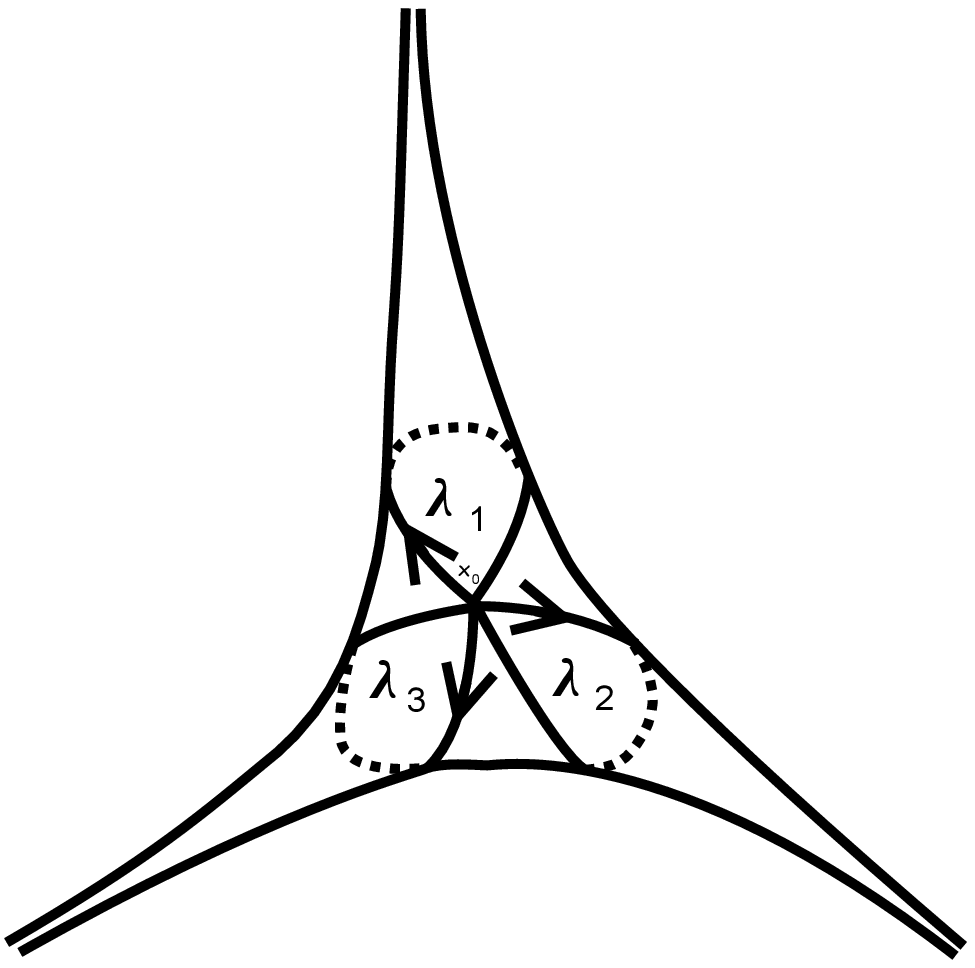, trim=-4cm 13cm 0cm 0cm, width=10cm}}
\caption{} \label{lacet}
\end{figure}

Les axes de $\g_2$ et $\g_3$ sont inclus dans le bord de $\C$. L'un des relevés  $\widetilde{l_3}$ du chemin $l_3$ converge en $-\infty$ vers le point $p_{\g_1}$ et en $+\infty$ vers le point $p^-_{\g_2}$. On peut donc supposer que le chemin $\widetilde{l_3}$ est le segment ouvert d'extrémité $p_{\g_1}$ et $p^-_{\g_2}$ qui est inclus dans $\C$. De la même façon, on peut supposer que $\widetilde{l_1}$ est le segment ouvert d'extrémités $p^-_{\g_2}$ et $p^-_{\g_3}$ inclus dans $\C$ et $\widetilde{l_2}$ le segment ouvert d'extrémités $p^-_{\g_3}$ et $p_{\g_1}$ inclus dans $\C$. La triangulation topologique de $\widetilde{S}$ peut donc être réalisée dans $\Omega$ l'intérieur de $\C$ par des géodésiques qui ne s'intersectent pas. On note $T_0$ l'unique triangle épointé inclus dans $\C$ défini par les points $p_{\g_1} , p^-_{\g_2} , p^-_{\g_3}$. Les composantes connexes de $\Omega$ privé des images des segments $\widetilde{l_1},\widetilde{l_2},\widetilde{l_3}$ sous l'action du groupe fondamental $\G$ de $\mathcal{P}$ sont des triangles ouverts. On note $T_i$ le triangle épointé qui borde $\widetilde{l_i}$, pour $i=1,...,3$.
\par{
L'action de $\G$ sur $\Omega$ admet pour domaine fondamental la réunion de $T_0$ et de n'importe quel $T_i$ pour $i=1,...,3$. Les identifications sont données par les éléments $\g_1,\g_2$ et $\g_3$.
}
\par{
La réunion des triangles épointés $T_0,T_1,T_2,T_3$ est un hexagone convexe car les triangles $T_0,T_1,T_2,T_3$ ont leurs sommets sur le bord de l'ouvert proprement convexe $\Omega$. Cet hexagone possède six côtés car, s'il possédait deux côtés consécutifs sur la même droite, alors la réunion de ces deux cotés serait incluse dans le bord de $\C$; or elle est incluse dans son intérieur puisqu'il s'agit de relevés des chemins $l_1,l_2,l_3$.
}
\\
\par{
On vient donc de construire une application de $\beta_f(P_{\delta_1,\delta_2,\delta_3})$ vers $\Q_{\delta_1,\delta_2,\delta_3}$. Le paragraphe suivant a pour but de construire l'application réciproque.
}
\\
\par{
La seule difficulté est de montrer que si l'on se donne un point de $\Q_{\delta_1,\delta_2,\delta_3}$, alors l'ensemble obtenu en prenant la réunion des images des triangles épointés $T_0,T_1,T_2,T_3$ sous l'action du groupe engendré par les éléments $\g_1,\g_2,\g_3$ est un ouvert proprement convexe $\Omega$.
}
\subsubsection{Un lemme de convexité}\label{convexe}

L'objet de ce paragraphe est de montrer la proposition suivante:

\begin{prop}\label{conv}
Soient $\delta_1,\delta_2,\delta_3 \in \s$ des éléments hyperboliques, quasi-hyperboliques ou paraboliques, et $(T_0,T_1,T_2,T_3,\g_1,\g_2,\g_3) \in \Q_{\delta_1,\delta_2,\delta_3}$, on note $\G$ le groupe engendré par $\g_1,\g_2,\g_3$ et $\Omega$ la réunion des triangles $\g T_0, \g T_1,\g T_2,\g T_3$ pour $\g \in \G$. L'ensemble $\Omega$ est un ouvert proprement convexe, le groupe $\G$ est isomorphe au groupe libre à deux générateurs et le quotient $\Quo$ est un pantalon projectif proprement convexe dont l'holonomie des lacets élémentaires est conjuguée à $\delta_1^{\pm},\delta_2^{\pm},\delta_3^{\pm}$.
\end{prop}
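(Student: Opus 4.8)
Le plan est de munir le rev\^etement universel combinatoire du pantalon d'une application d\'eveloppante \`a valeurs dans $\P$, de montrer qu'elle r\'ealise un hom\'eomorphisme sur un ouvert proprement convexe $\Omega$, et d'en d\'eduire que $\G$ est discret, libre de rang $2$, et que $\Quo$ est le pantalon cherch\'e. On pose $\G_{\textrm{abs}} = \langle \g_1,\g_2,\g_3 \mid \g_3\g_2\g_1 = 1 \rangle$, qui est libre de rang $2$ (engendr\'e par $\g_1$ et $\g_2$) ; la relation du point $3$ assure que $\rho : \G_{\textrm{abs}} \to \s$, $\g_i \mapsto \g_i$, est un morphisme, d'image $\G$. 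On forme la surface $\widetilde{P}$ en recollant une copie de $\overline{T_0}\sqcup\overline{T_1}\sqcup\overline{T_2}\sqcup\overline{T_3}$ par \'el\'ement de $\G_{\textrm{abs}}$, chacune des trois ar\^etes de $T_0$ \'etant recoll\'ee \`a une ar\^ete de $T_1$, $T_2$ ou $T_3$ dans un m\^eme facteur et les ar\^etes ext\'erieures de l'hexagone \'etant recoll\'ees d'un facteur \`a l'autre via les identifications $\g_1(T_2)=T_3$, $\g_2(T_3)=T_1$, $\g_3(T_1)=T_2$ du point $4$ ; le graphe dual de la triangulation ainsi obtenue sur $\widetilde{P}$ est l'arbre trivalent, de sorte que $\widetilde{P}$ est simplement connexe, que $\G_{\textrm{abs}}$ y agit librement, et que chaque ar\^ete y borde exactement deux triangles. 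Le point $4$ rend bien d\'efinie l'application $D : \widetilde{P} \to \P$ qui envoie la copie index\'ee par $g$ par l'action de $\rho(g)$ ; comme $\bigcup_i\overline{T_i}$ est un hexagone convexe non d\'eg\'en\'er\'e (point $2$ : exactement $6$ c\^ot\'es), $D$ envoie deux triangles adjacents sur deux triangles port\'es par des droites distinctes et situ\'es de part et d'autre de leur ar\^ete commune ; donc $D$ est un hom\'eomorphisme local et $\G$-\'equivariant.

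L'essentiel est de montrer que $D$ est injective, d'image un ouvert proprement convexe. On note $\widetilde{\Omega}_n \subseteq \widetilde{P}$ la r\'eunion des triangles situ\'es \`a distance combinatoire au plus $n$ de $\overline{T_0}$ dans l'arbre dual, et $\Omega_n = D(\widetilde{\Omega}_n)$ : ainsi $\Omega_0 = \overline{T_0}$, $\Omega_1$ est l'hexagone $\bigcup_i\overline{T_i}$, et $\Omega = D(\widetilde{P}) = \bigcup_n\Omega_n$. On d\'emontre par r\'ecurrence sur $n$ que $\Omega_n$ est convexe et contenu dans un domaine proprement convexe fixe $\C_\infty$, ind\'ependant de $n$, construit \`a partir des demi-tangentes aux sommets id\'eaux de la triangulation : chaque sommet id\'eal est point fixe d'un conjugu\'e de l'un des $\delta_i$, et les propositions \ref{hyp2}, \ref{quasihyp2} et \ref{ppara2} en donnent la position dans l'adh\'erence de l'ouvert convexe ainsi que la ou les demi-tangente(s), ce qui fournit les demi-plans d\'efinissant $\C_\infty$ et leur compatibilit\'e autour de chaque sommet. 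L'initialisation est le point $2$ ; pour l'h\'er\'edit\'e, on attache \`a $\Omega_n$ les nouveaux triangles le long de ses ar\^etes de bord : puisque $D$ est un hom\'eomorphisme local, chacun se trouve dans le demi-plan ferm\'e oppos\'e \`a celui contenant $\Omega_n$, d'o\`u la convexit\'e de $\Omega_{n+1}$, et le confinement dans $\C_\infty$ emp\^eche un triangle de \og revenir\fg{} du mauvais c\^ot\'e d'une ar\^ete d\'ej\`a trait\'ee. La difficult\'e principale, et c'est l\`a que la preuve d\'epasse celle de Goldman dans le cas compact, est le contr\^ole de ce processus autour des sommets id\'eaux paraboliques ou quasi-hyperboliques : les triangles s'y accumulent en se \og pin\c{c}ant\fg, et ce sont les propositions \ref{ppara2} et \ref{quasihyp2} qui garantissent que l'\'eventail infini de triangles ayant un tel point pour sommet reste contenu dans le secteur convexe bord\'e par les demi-tangentes en ce point.

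Une fois ceci \'etabli, $\Omega$ est convexe comme r\'eunion croissante de convexes et contenu dans le proprement convexe $\C_\infty$, donc proprement convexe ; il est ouvert, car un voisinage d'un point de $\Omega$ int\'erieur \`a un triangle, ou int\'erieur relatif \`a une ar\^ete, ou \'egal \`a un sommet id\'eal $p$, est contenu dans $\Omega$ --- pour le dernier cas, l'\'eventail de triangles de sommet $p$ fournit un voisinage de $p$ dans $\overline{\Omega}$, le point $p$ \'etant sur $\partial\Omega$ d'apr\`es les propositions cit\'ees. Le pavage de $\Omega$ par les translat\'es de $\overline{T_0}\cup\overline{T_1}$ montre alors que $D$ est injective (donc un hom\'eomorphisme de $\widetilde{P}$ sur $\Omega$) et que $\G$ agit librement et de mani\`ere proprement discontinue sur $\Omega$, donc est discret dans $\s$ (rappelons que $\Aut(\Omega)$ est ferm\'e dans $\s$ et agit proprement sur $\Omega$, d'apr\`es le Fait du paragraphe \ref{geohil2}) ; la $\rho$-\'equivariance de $D$ entra\^{\i}ne que $\rho : \G_{\textrm{abs}} \to \G$ est un isomorphisme, si bien que $\G$ est libre de rang $2$. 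Enfin $\Quo \cong \widetilde{P}/\G_{\textrm{abs}}$ est une surface projective proprement convexe obtenue en recollant deux triangles id\'eaux le long de leurs ar\^etes, c'est-\`a-dire un pantalon : les sommets id\'eaux paraboliques y donnent des pointes et les sommets hyperboliques ou quasi-hyperboliques des composantes de bord g\'eod\'esique, et l'holonomie du lacet \'el\'ementaire orient\'e bordant l'extr\'emit\'e fix\'ee par $\g_i$ est $\g_i$, conjugu\'ee \`a $\delta_i$ ou \`a $\delta_i^{-1}$ selon l'orientation choisie, soit $\delta_i^{\pm}$ ; cette structure est de volume fini par le th\'eor\`eme \ref{base}, l'holonomie de ses lacets \'el\'ementaires \'etant hyperbolique, quasi-hyperbolique ou parabolique par hypoth\`ese.
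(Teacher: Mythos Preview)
Votre plan g\'en\'eral est raisonnable et proche dans l'esprit de ce que fait l'article, mais l'argument central --- la r\'ecurrence sur $n$ montrant que $\Omega_n$ est convexe et contenu dans un proprement convexe fixe $\C_\infty$ --- pr\'esente deux lacunes s\'erieuses.

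La premi\`ere est une circularit\'e : vous invoquez les propositions \ref{hyp2}, \ref{quasihyp2} et \ref{ppara2} pour construire $\C_\infty$ \`a partir des demi-tangentes aux sommets id\'eaux. Or ces propositions d\'ecrivent le comportement d'un \'el\'ement $\g \in \Aut(\Omega)$ \emph{en supposant qu'un ouvert proprement convexe $\Omega$ pr\'eserv\'e par $\g$ existe d\'ej\`a}. Ici, c'est pr\'ecis\'ement ce que vous cherchez \`a \'etablir ; vous ne pouvez donc pas les invoquer pour construire $\C_\infty$ a priori. Il faudrait soit d\'eduire les demi-plans directement de la dynamique des \'el\'ements $\g_i$ dans $\P$ (sans r\'ef\'erence \`a $\Omega$), soit renoncer \`a $\C_\infty$. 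De plus, $\C_\infty$ serait une intersection d'une infinit\'e de demi-plans (un ou deux par sommet id\'eal, et il y en a une infinit\'e) : rien ne garantit a priori que cette intersection soit d'int\'erieur non vide.

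La seconde lacune est le pas de r\'ecurrence lui-m\^eme. Dire que chaque nouveau triangle est \og du bon c\^ot\'e\fg{} de l'ar\^ete sur laquelle on l'attache ne suffit pas \`a assurer que $\Omega_{n+1}$ est convexe : il faut contr\^oler ce qui se passe aux sommets de $\Omega_n$, o\`u deux nouveaux triangles voisins pourraient cr\'eer un angle rentrant ou se chevaucher. Votre phrase \og le confinement dans $\C_\infty$ emp\^eche un triangle de revenir du mauvais c\^ot\'e\fg{} ne constitue pas une d\'emonstration, et c'est pr\'ecis\'ement l\`a que r\'eside la difficult\'e (notamment aux sommets paraboliques o\`u l'\'eventail est infini).

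L'article proc\`ede tout autrement : il construit un espace abstrait $X$ (r\'eunion disjointe des $\g T_i$ recoll\'ee le long des murs), d\'efinit une notion de segment et de convexit\'e dans $X$, puis montre que pour toute paire de cellules $C_1, C_2$, l'ensemble des couples $(x,y) \in C_1 \times C_2$ reliables par un segment est \`a la fois ouvert et ferm\'e dans $C_1 \times C_2$. L'ouverture est facile ; la fermeture (lemme \ref{ferme}) repose sur un lemme de contr\^ole (lemme \ref{infini}) qui exploite directement la dynamique d'un \'el\'ement hyperbolique, quasi-hyperbolique ou parabolique pour montrer que si les points d'intersection avec les murs divergent, c'est que l'on est enferm\'e dans un secteur $\bigcup_n \g^n(C_1 \cup C')$. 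Une fois $X$ convexe, la projection $p : X \to \P$ est injective et d'image convexe ; la propret\'e de la convexit\'e est ensuite obtenue non pas par confinement dans un $\C_\infty$ donn\'e a priori, mais en montrant que $\g_1, \g_2, \g_3$ n'ont aucun point ni aucune droite fixe commune. Cette approche \'evite \`a la fois la circularit\'e et la r\'ecurrence d\'elicate que vous proposez.
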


\begin{rem}
C'est cette partie de la démonstration du théorème \ref{module} qui diffère radicalement de la démonstration de Goldman pour le cas compact. Nous allons montrer cette proposition "à la main", alors que dans le cas compact, Goldman réussit à utiliser un résultat de Kozsul pour montrer cette proposition.
\end{rem}

Pour montrer cette proposition nous allons introduire un espace abstrait $X$ obtenu de la façon suivante: on considère $\widetilde{X}$ la réunion \underline{disjointe} des triangles $\g T_0, \g T_1,\g T_2,\g T_3$ pour $\g \in \G$.

On introduit une relation d'équivalence $\sim$ sur $\widetilde{X}$. Commençons par la définir sur les réunions disjointes $T_{i+2} \amalg \g_i T_{i+1}$, pour $i=1,...,3$ (les indices étant calculés modulo 3). Soient $x,y \in T_{i+2} \amalg \g_i T_{i+1}$, on a $x \sim y$ lorsque:

\begin{center}
($x\in T_{i+2}$ et $y \in \g_i T_{i+1}$ et $y =x$)
\end{center}

Soient $x,y \in \widetilde{X}$ on a $x \sim y$ lorsqu'il existe un $\g \in \G$ tel que $\g x \sim \g y$. On note $p$ l'application naturelle $p:X \rightarrow \P$, elle  permet de définir la notion de segment dans $X$.

\begin{defi}
Un \emph{segment dans $X$} est une application continue $s:[0,1] \rightarrow X$ telle que $p \circ s: [0,1] \rightarrow \P$ est une application injective dont l'image est un segment de longueur strictement inférieure à $\pi$, pour la métrique euclidienne canonique de $\P$.
\end{defi}

Les sous-ensembles de $X$ de la forme $\g T_0, \g T_1,\g T_2,\g T_3$ pour $\g \in \G$ seront appelés les \emph{cellules} de $X$. Leurs intérieurs seront appelés les \emph{cellules ouvertes} de $X$.

\begin{defi}
Une partie $C$ de $X$ est dite \emph{convexe} lorsque pour tout $x,y \in C$, il existe un segment d'extrémités $x$ et $y$ inclus dans $C$.
\end{defi}

Nous allons montrer le lemme suivant:

\begin{lemm}\label{coeur}
L'ensemble $X$ est convexe.
\end{lemm}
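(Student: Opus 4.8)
L'idée est de montrer que $X$ est convexe en analysant comment un segment de $\P$ (au sens de la définition ci-dessus) rencontre les cellules de $X$, puis en relevant ce segment cellule par cellule. Je commencerais par remarquer que la structure combinatoire impose une propriété clé de "développabilité locale": au voisinage de chaque point de $X$, l'application $p$ est un homéomorphisme local sur son image sauf éventuellement aux sommets idéaux des triangles $\g T_i$, et l'hexagone $\bigcup \overline{T_i}$ étant convexe à six côtés, toute cellule est collée à ses voisines le long d'arêtes de façon "convexe vue de l'intérieur". Je formaliserais d'abord que, pour deux cellules adjacentes $\g T_i$ et $\g' T_j$ partageant une arête $a$, leur réunion $\g T_i \cup \g' T_j$ est convexe dans $X$: cela découle du fait que $p(\g T_i) \cup p(\g' T_j)$ est un quadrilatère convexe de $\P$ (puisque ces deux triangles sont des images par $\G$ de deux triangles adjacents de l'hexagone, qui forment un domaine fondamental dont le bord est "convexe"), et que $p$ restreinte à cette réunion est injective, ce qui permet de relever tout segment euclidien.

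Le cœur de la preuve sera un argument d'induction sur le nombre de cellules traversées. Étant donnés $x, y \in X$, je considérerais le segment euclidien $[p(x),p(y)]$ dans $\P$ (de longueur $< \pi$, ce qui est licite car $\O$ est proprement convexe, donc contenu dans un ouvert affine où les triangles sont de "petit diamètre"), et je suivrais ce segment en partant de $x$. Comme les cellules ouvertes recouvrent $X$ à un ensemble d'arêtes et de sommets près, et que le graphe dual de la cellulation est un arbre (car $\G$ est libre et la cellulation est la triangulation de Farey), le segment $[p(x),p(y)]$ détermine une suite finie $\g^{(0)}T_{i_0}, \g^{(1)}T_{i_1}, \ldots, \g^{(N)}T_{i_N}$ de cellules consécutives adjacentes. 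Le point délicat est de montrer que cette suite est bien finie et que le relèvement ne "retourne pas en arrière": c'est ici qu'intervient de façon cruciale la convexité de l'hexagone et le fait qu'il possède exactement six côtés (condition 2 de $\Q'$), car cela garantit que lorsqu'un segment sort d'une cellule par une arête, il entre dans la cellule adjacente en "avançant" vers $p(y)$, et ne peut ressortir par la même arête. Formellement, je munirais le segment d'une fonction "cellule courante" localement constante sur le complémentaire des points où il traverse une arête, et je montrerais qu'à chaque traversée on passe à une cellule strictement plus proche de $y$ dans l'arbre dual.

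**Principal obstacle.** La vraie difficulté — celle que la remarque suivant la proposition \ref{conv} signale comme le point qui diffère radicalement du cas compact de Goldman — est de contrôler le comportement du segment près des sommets idéaux des triangles, c'est-à-dire près des points fixes des éléments $\g_i$ (paraboliques ou attractifs/répulsifs d'éléments hyperboliques ou quasi-hyperboliques). En effet, une infinité de cellules s'accumulent sur chacun de ces sommets, et il faut vérifier qu'un segment $[p(x),p(y)]$ ne peut traverser qu'un nombre fini de ces cellules: c'est là qu'on utilise la classification dynamique du paragraphe \ref{classi_auto} (propositions \ref{hyp2}, \ref{quasihyp2}, \ref{ppara2}) qui décrit précisément la régularité $\Cc^1$ ou non du bord $\partial \O$ en ces points et la position des demi-tangentes. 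Concrètement, il faudra montrer que le sommet idéal n'est jamais dans l'intérieur du segment (sauf cas dégénéré exclu), en s'appuyant sur le fait qu'un tel sommet appartient à $\partial\O$ et ne peut donc être un point intérieur de $X$, combiné à l'estimée que les cellules s'accumulant sur ce sommet sont "de plus en plus fines" dans une direction transverse. Une fois cette finitude acquise, la concaténation des relèvements sur chaque cellule (chacune fournissant un segment par le lemme d'adjacence) donne un segment de $x$ à $y$ dans $X$, et la convexité en découle; le caractère proprement convexe de $\O$, la liberté de $\G$ et l'identification du quotient comme pantalon (proposition \ref{conv}) s'en déduiront ensuite par des arguments standard.
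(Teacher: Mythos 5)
Your plan correctly isolates the two relevant ingredients --- the convexity of the union of two adjacent cells (the paper's Lemma \ref{local}) and the danger of a segment crossing infinitely many walls near an ideal vertex --- but as written it has two genuine gaps, and the second one is the whole content of the lemma. The first gap is a monodromy problem in your continuation argument: lifting the straight segment $[p(x),p(y)]$ cell after cell starting from $x$ produces, at best, a path in $X$ ending at \emph{some} point $z$ with $p(z)=p(y)$; nothing forces $z=y$, because injectivity of $p$ on $X$ is only available \emph{after} convexity is proved (Lemma \ref{inj}). Your key claim that ``at each crossing one moves strictly closer to $y$ in the dual tree'' is precisely the assertion that the itinerary of the lift is the tree geodesic from the cell of $x$ to the cell of $y$. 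What one can show a priori is only that the itinerary never backtracks (a straight segment meets the line carrying a given wall at most once), i.e.\ that it follows \emph{some} geodesic of the tree --- not that it heads towards the cell of $y$; proving that it does is essentially equivalent to the convexity you are trying to establish. (There is also a preliminary ambiguity: before knowing that the union of the images of the cells is properly convex, you cannot say which of the two projective segments joining $p(x)$ to $p(y)$ should be followed.)

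The second gap is that the finiteness of the number of walls crossed is announced as the ``principal obstacle'' and then only motivated heuristically (``the cells get thinner''). This is exactly where the dynamics must be used, and it needs a real proof: the paper's Lemma \ref{infini} shows that if the crossing points accumulate at an ideal vertex, the segment is eventually trapped in the set $\underset{n \in \N}{\bigcup}\g^{n}(C_1\cup C')$ for a single $\g\in\G$, which is convex and accumulates only at the fixed point of $\g$ (here the parabolic/quasi-hyperbolic/hyperbolic trichotomy intervenes), forcing the far endpoint of the segment to converge to that ideal point --- a contradiction with its convergence inside a cell. The paper then sidesteps your first gap entirely by replacing the continuation argument with a connectedness argument: the set $\C_{C_1,C_2}$ of pairs joinable by a segment is shown to be open (Lemma \ref{ouvert}) and closed (Lemma \ref{ferme}, where Lemma \ref{infini} is used) in $C_1\times C_2$, hence equal to $C_1\times C_2$ as soon as it is nonempty, and one concludes by induction on the distance in the trivalent dual tree. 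I would encourage you either to adopt that scheme or to supply complete proofs of the two missing steps; in their current state neither is established.
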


La démonstration de ce lemme est le coeur de la démonstration de la proposition \ref{conv}. On lui consacrera donc le paragraphe suivant. Mais commençons par montrer pourquoi ce lemme permet de conclure. Le lemme suivant est évident.

\begin{lemm}\label{local}
La restriction de $p$ à la réunion de deux cellules adjacentes est un homéomorphisme sur son image.
\end{lemm}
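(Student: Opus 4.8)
The plan is to verify the two claims in parallel: that $p$ restricted to the union of two adjacent cells is injective, and that it is open onto its image (a local homeomorphism being automatic since $p$ is already a local homeomorphism on each open cell and on the relative interior of each shared edge). First I would reduce to the case of a single adjacent pair by the definition of the equivalence relation $\sim$: two cells of $X$ are adjacent precisely when, after translating by a suitable $\g\in\G$, they are of the form $\overline{T_i}$ and $\overline{\g_j T_{j+1}}$ glued along a common edge, i.e. after translation we are looking at two of the four triangles $\overline{T_0},\overline{T_1},\overline{T_2},\overline{T_3}$ sharing one side of the hexagon $\bigcup_{i=0}^3\overline{T_i}$. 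Since $\G$ acts on $X$ by maps covering the projective action on $\P$, it suffices to prove the statement for such a pair inside the hexagon.

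Next I would invoke condition 2 in the definition of $\Q_{\delta_1,\delta_2,\delta_3}$: the union $\bigcup_{i=0}^3\overline{T_i}$ is a convex hexagon with exactly six sides. Two adjacent triangles among $\overline{T_0},\dots,\overline{T_3}$ therefore form a convex quadrilateral $Q$ in a single affine chart (convexity of the hexagon guarantees it lies in an affine chart, and a convex subset of it does too). On $X$, the union of these two cells is obtained from the disjoint union of the two triangles by gluing them along the common edge via the identity in the chart — this is exactly how $\sim$ was defined on $T_{i+2}\amalg\g_iT_{i+1}$ — so $p$ maps this glued union bijectively and continuously onto $Q$. Injectivity is then immediate: a point of $Q$ in the interior of one triangle has a unique preimage in that cell, a point on the shared open edge has a single preimage (the two edge-points being identified by $\sim$), and the vertices are excluded since the $T_i$ are pointed triangles.

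For the homeomorphism statement, I would note that $p$ restricted to the union of the two cells is a continuous bijection onto $Q$; since both are locally compact Hausdorff (the two glued pointed triangles form a locally compact space, $Q$ sits inside an affine chart) and $p$ is a local homeomorphism in the interior of each cell and across the shared edge, it is an open map, hence a homeomorphism onto $Q$. I do not expect any genuine obstacle here: the only point requiring care is checking that the gluing prescribed by $\sim$ really agrees with the affine identification of the shared edge, which is exactly what was built into the definition of $\sim$ on each $T_{i+2}\amalg\g_iT_{i+1}$; once this bookkeeping is in place, the lemma is immediate, which is why the text calls it \emph{�vident}. The real work is deferred to Lemme \ref{coeur}, which uses this local picture as a building block.
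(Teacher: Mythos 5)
The paper offers no argument for this lemma beyond declaring it \emph{�vident}, so there is no written proof to match yours against; what matters is whether your reconstruction is sound, and in substance it is. Your two key inputs are the right ones. First, the equivariant reduction: every wall of $X$ is a $\G$-translate of one of the three edges of $T_0$ (each cell $[T_j]$, $j\neq 0$, has representatives $T_j$, and translates of the other two $T_k$'s, and each representative $\g T_k$ shares its wall with $\g T_0$), so every adjacent pair is a translate of some $(T_0,T_i)$ inside the hexagon. Second, conditions 1 and 2 in the definition of $\Q'_{\delta_1,\delta_2,\delta_3}$ guarantee that $\overline{T_0}\cup\overline{T_i}$ lies in an affine chart and that $T_0\cap T_i$ is exactly one edge, which is what gives injectivity of $p$ on the union. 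The passage from continuous bijection to homeomorphism is also fine; the cleanest way to say it is that the map from the compact space $\overline{T_0}\sqcup\overline{T_i}$ onto $\overline{T_0}\cup\overline{T_i}\subset\P$ is closed, hence a quotient map, and restricting to the pointed triangles (open subsets) does no harm -- your appeal to local compactness amounts to the same thing, though you should avoid phrasing the openness step as "$p$ is a local homeomorphism across the shared edge", since that is precisely what is being proved. One small inaccuracy: the displayed generating relations of $\sim$ identify the \emph{whole} triangle $T_{i+2}$ with $\g_i T_{i+1}$ (these coincide as subsets of $\P$ and become a single cell of $X$); they do not glue two distinct cells along an edge. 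The gluing of $T_0$ to $T_i$ along their common edge is left implicit by the paper, so you should not attribute it to the definition of $\sim$ on $T_{i+2}\amalg\g_i T_{i+1}$. This does not affect the validity of your argument -- the relevant fact is simply that two points of $[T_0]\cup[T_i]$ with the same image under $p$ must lie over the common edge and represent the same point of $X$ -- but the bookkeeping is attached to the wrong piece of the definition.
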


\begin{lemm}\label{inj}
La restriction de $p$ à toute partie convexe $C$ de $X$ est injective et l'image $p(C)$ est un convexe de $\P$.
\end{lemm}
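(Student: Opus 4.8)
The restriction of $p$ to any convex subset $C$ of $X$ is injective, and its image $p(C)$ is a convex subset of $\P$.

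\textbf{Proof proposal.} The plan is to read both assertions off the definitions, since the only genuine difficulty has been isolated in Lemma~\ref{coeur}.

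First I would treat injectivity. Given $x,y\in C$ with $x\neq y$, convexity of $C$ provides a segment $s\colon[0,1]\to X$ with image in $C$ and $\{s(0),s(1)\}=\{x,y\}$; as $p\circ s$ is injective by definition of a segment and $s(0)\neq s(1)$, this forces $p(x)\neq p(y)$. Hence $p|_{C}$ is injective.

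Next I would show that $p(C)$ is convex in $\P$, i.e. that $p(C)\cap\ell$ is connected for every projective line $\ell$, and I would do this by checking it is path-connected (which is vacuous when this set is empty or a singleton). Pick $a,b\in p(C)\cap\ell$ with $a\neq b$, write $a=p(x)$, $b=p(y)$ with $x,y\in C$ (note $x\neq y$, since $p(x)\neq p(y)$), and choose a segment $s$ from $x$ to $y$ inside $C$. By the definition of a segment of $X$, the set $p(s([0,1]))$ is a segment of $\P$, that is an arc contained in some projective line; since it contains the two distinct points $a$ and $b$, that line must be $(ab)=\ell$, and moreover $p(s([0,1]))\subset p(C)$ because $s([0,1])\subset C$. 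Hence $p\circ s$ is a path from $a$ to $b$ lying in $p(C)\cap\ell$, so this set is path-connected, hence connected, and therefore $p(C)$ is convex.

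I do not expect a real obstacle here. The one thing to be careful about is the step identifying the carrying line of $p(s([0,1]))$ with $(ab)$, which is immediate because that image is, by definition of a segment of $X$, contained in a projective line. All the geometric content — deciding which subsets of $X$ are convex, and in particular the fact that $X$ itself is convex — lives in Lemma~\ref{coeur}; the present lemma merely transports that information to $\P$ through $p$.
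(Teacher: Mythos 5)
Your proposal is correct and takes essentially the same route as the paper: injectivity follows from the injectivity of $p\circ s$ for a segment $s$ joining the two given points, and convexity of $p(C)$ follows because the image of such a segment is a projective arc joining the two image points inside $p(C)$. You merely spell out the check that $p(C)\cap\ell$ is connected for every line $\ell$, which the paper leaves implicit.
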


\begin{proof}
Soient $x,y \in C$, il existe un segment dans $X$ d'extrémités $x$ et $y$; par conséquent l'image $p(C)$ est un convexe de $\P$. Si les points $x$ et $y$ vérifient $p(x) = p(y)$, alors $x=y$ puisque par définition un segment est une application injective.
\end{proof}

\begin{proof}[Démonstration de la proposition \ref{conv}]
Les lemmes \ref{coeur} et \ref{inj} montrent que l'application $p$ est injective. De plus, c'est un homéomorphisme local d'après le lemme \ref{local}. Par conséquent, c'est un homéomorphisme sur son image $\Omega$. L'ensemble $\Omega$ est donc un ouvert convexe.

Le groupe $\G$ préserve l'ouvert $\Omega$. Un domaine fondamental pour cette action est la réunion de deux cellules adjacentes. Et les identifications faites par $\G$ montrent que le quotient est un pantalon. Par conséquent, le groupe $\G$ est un groupe libre à 2 générateurs. Et l'holonomie des lacets élémentaires est conjuguées à $\delta_1^{\pm},\delta_2^{\pm},\delta_3^{\pm}$.

Il  reste à montrer que l'ouvert $\Omega$ est proprement convexe. Si l'ouvert $\Omega$ n'était pas proprement convexe alors le groupe $\G$ préserverait un point ou une droite de $\P$. Nous allons montrer que les droites et les points fixes de $\g_1$ ne sont pas préservés par $\g_2$ et $\g_3$. Pour simplifier la discussion on ne fait que le cas où les $\g_i$ sont hyperboliques. Les autres cas sont des "dégénérescences" de ce cas. Les droites des côtés de $T_0$ définissent un pavage en 4 triangles fermés de $\P$. Le point $p^+_{\g_i}$ pour $i=1,...,3$ ne peut pas appartenir au triangle fermé dont l'intersection avec le triangle fermé $\overline{T_0}$ est le point $p^-_{\g_i}$ et le côté opposé au point $p^-_{\g_i}$, car sinon $p^-_{\g_i} \in \Omega$. Par conséquent, les points $(p^+_{\g_i})_{i=1,...,3}$ sont dans les intérieurs des triangles de ce pavage. De plus, il est facile de voir que si $i \neq j$, alors $p^+_{\g_i}$ et $p^+_{\g_j}$ ne sont pas dans le même triangle. On note $T_{\g_i}$ le triangle ouvert avec $p^+_{\g_i} \in T_{\g_i}$, pour $i=1,...,3$. Le point $p^0_{\g_i}$ est sur la droite tangente au bord $\partial \Omega$ de $\Omega$ au point $p^-_{\g_i}$, par conséquent le point $p^0_{\g_i}$ appartient aussi au triangle $T_{\g_i}$. Ainsi, il n'y a aucun point et aucune droite de $\P$ qui sont fixés par les trois éléments $\g_1,\g_2,\g_3$.
\end{proof}

\subsubsection{Démonstration du lemme \ref{coeur}}

Les composantes connexes du bord des cellules de $X$ seront appelées les \emph{murs} de $X$. Soient $C_1, C_2$ deux cellules de $X$, on note $\C_{C_1,C_2} = \{ (x,y) \in C_1 \times C_2 \,|\, \textrm{ il existe un segment dans } $X$ \textrm{ d'extrémités } x \textrm{ et } y \}$. Nous allons montrer que $\C_{C_1,C_2}$ est ouvert et fermé dans $C_1 \times C_2$. Le point difficile étant la fermeture.

\begin{lemm}\label{ouvert}
 $\C_{C_1,C_2}$ est ouvert dans $C_1 \times C_2$.
\end{lemm}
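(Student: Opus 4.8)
Proof proposal for Lemma \ref{ouvert} ($\C_{C_1,C_2}$ est ouvert dans $C_1 \times C_2$).

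The plan is to fix a pair $(x,y) \in \C_{C_1,C_2}$ together with a segment $s \colon [0,1] \to X$ joining them, and to produce a neighbourhood of $(x,y)$ in $C_1 \times C_2$ all of whose pairs are still joinable by a segment. First I would use the compactness of $[0,1]$ to cover the image $s([0,1])$ by finitely many cells $D_0, D_1, \dots, D_N$ of $X$ with $D_0 = C_1$, $D_N = C_2$, chosen so that consecutive cells $D_k$, $D_{k+1}$ are adjacent (share a mur); this is possible because the cells are closed with nonempty interior and the segment $p \circ s$ has image of euclidean length $< \pi$, so only finitely many cells are met and they form a "chain" along $s$. Pick parameters $0 = t_0 < t_1 < \cdots < t_N = 1$ with $s(t_k) \in D_k \cap D_{k+1}$ for the interior indices.

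Next I would transport everything to $\PP^2$ via $p$. By Lemma \ref{local}, the restriction of $p$ to the union $D_k \cup D_{k+1}$ of two adjacent cells is a homeomorphism onto its image, which is a (topological) quadrilateral region in an affine chart; so within each such pair "being joined by a segment of $X$" is literally "being joined by a projective segment of length $< \pi$ lying in $p(D_k \cup D_{k+1})$". The key elementary observation is then that, for a point $z$ near $s(t_k)$ in $D_k$ and a point $z'$ near $s(t_{k+1})$ in $D_{k+1} \cap D_{k+2}$, one can still find such a short projective segment crossing the common mur of $D_k$ and $D_{k+1}$: the condition that the segment $p(z)p(z')$ meet the interior of that mur is an open condition on $(p(z), p(z'))$, and it is satisfied by $(s(t_k), s(t_{k+1}))$ because $s$ itself crosses that mur transversally there (one may perturb $s$ at the crossing times to make the crossings transverse, or simply observe that the crossing point lies in the relative interior of the mur, which is an open subset of the mur's supporting line). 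Concatenating these short crossing segments along the chain $D_0, \dots, D_N$ and lifting back to $X$ via the local homeomorphisms of Lemma \ref{local} produces, for every $(x', y')$ in a suitable product neighbourhood of $(x,y)$ in $C_1 \times C_2$, a segment in $X$ from $x'$ to $y'$ — provided the total euclidean length stays below $\pi$, which holds by continuity since it does for $s$. Hence $(x', y') \in \C_{C_1,C_2}$, proving openness.

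The main subtlety — though it is genuinely minor here, the real difficulty being reserved for the closedness statement that follows — is handling the case where the segment $s$ passes through a \emph{vertex} of the tessellation, i.e. through a point $p^{\pm}_{\g}$ or $p^0_{\g}$ on $\partial\Omega$, rather than crossing murs one at a time in their relative interiors. In that degenerate situation several murs of $X$ converge to a single point of $\PP^2$, and the chain of cells is not unique; I would deal with this by first noting that such vertices lie on $\partial\Omega$ (by Propositions \ref{hyp2}, \ref{quasihyp2}, \ref{ppara2}), hence cannot be an endpoint of a segment that stays in $X$, and that one can always replace $s$ by a nearby segment with the same endpoints whose image avoids the (locally finite) set of vertices — using that the endpoints $x,y$, being in $C_1, C_2$, can themselves be perturbed freely. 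Once $s$ is chosen generic in this sense the chain of cells is unambiguous and the argument above applies verbatim.
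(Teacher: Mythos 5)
Votre preuve est correcte et suit essentiellement la m\^eme d\'emarche que celle du texte : par compacit\'e le segment traverse une cha\^ine finie de cellules, et la convexit\'e des cellules (et de la r\'eunion de deux cellules adjacentes, via le lemme \ref{local}) fournit un voisinage convexe du segment dans lequel tout couple de points voisins de $(x,y)$ reste reliable, la travers\'ee de chaque mur \'etant une condition ouverte. La discussion finale sur les sommets est superflue : les cellules \'etant des triangles \'epoint\'es, leurs sommets n'appartiennent pas \`a $X$ et un segment de $X$ ne peut donc jamais les rencontrer.
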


\begin{proof}
Soit $(x,y) \in \C_{C_1,C_2}$, il existe un segment $s$ reliant $x$ à $y$. Comme l'image d'un segment est compact elle est incluse dans un nombre fini de cellules de $X$. On note $(x^i)_{i=1...N}$ les points d'intersections du segment $[x,y]$ avec les murs de $X$ numérotés via la paramétrisation de $[x,y]$. On pose $x^0=x$ et $x^{N+1}=y$. Comme chaque cellule est convexe, il existe des voisinages $V_{x^i}$ de $x^i$ (pour $i=1,...,N+1$) dans $X$ tels que l'enveloppe convexe dans chaque cellule des couples $(V_{x^i},V_{x^{i+1}})$ contienne un voisinage convexe du segment $[x^i,x^{i+1}]$ inclus dans la cellule contenant $[x^i,x^{i+1}]$. Comme la réunion de deux cellules adjacentes est convexe, la réunion de ces voisinages contient un voisinage convexe de $[x,y]$.
\end{proof}

On souhaite à présent montrer le lemme suivant.

\begin{lemm}\label{ferme}
 $\C_{C_1,C_2}$ est fermé dans $C_1 \times C_2$.
\end{lemm}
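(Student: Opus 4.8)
The statement to prove is that $\C_{C_1,C_2}$ is closed in $C_1 \times C_2$, where $\C_{C_1,C_2}$ is the set of pairs $(x,y)$ joined by a segment in $X$. My plan is to take a sequence $(x_n,y_n) \in \C_{C_1,C_2}$ converging to $(x,y) \in C_1 \times C_2$, with connecting segments $s_n$, and to extract from the $s_n$ a limiting segment joining $x$ to $y$. The first move is to control the combinatorial complexity of the segments $s_n$: each $s_n$ is a compact segment of $\PP^2$ of euclidean length $< \pi$, and I would show that it crosses only a bounded number of cells of $X$, i.e.\ the number of walls it meets is bounded independently of $n$. This is the crux of the whole argument and I expect it to be the main obstacle. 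The bound should come from the geometry of the convex hexagon $\bigcup \overline{T_i}$ together with the dynamics of $\g_1,\g_2,\g_3$ (Propositions \ref{hyp2}, \ref{quasihyp2}, \ref{ppara2}): because the images $\g \overline{T_i}$ accumulate only on $\partial \O$ and the convex sets $\C^j$ are contained in the triangles $\Delta_j$ cut out by the fixed points, a segment of bounded euclidean length projecting into $\PP^2$ can pass through only finitely many of the nested families of triangles, and a uniform bound follows by a compactness argument on $C_1 \times C_2$ (or by a direct estimate using the Hilbert metric on $\O$, along which $p\circ s_n$ is a geodesic of length controlled by $d_{C_1}$ and $d_{C_2}$).

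Once the number $N$ of walls crossed is uniformly bounded, I pass to a subsequence so that all $s_n$ cross walls of the same combinatorial type in the same order, say walls $M_1,\dots,M_N$ (as abstract walls of $X$, i.e.\ identified pairs of cell-boundary components). Let $x_n^k = s_n \cap M_k$; after a further subsequence each $p(x_n^k)$ converges in $\PP^2$ to some point $p(x^k)$, and since each wall $M_k$ is a closed subset of a cell on which $p$ is a homeomorphism onto its image (Lemma \ref{local}), the limit $x^k$ is a well-defined point of $M_k \subset X$, with $x^0 := x$ and $x^{N+1} := y$ as the endpoints. Concatenating the segments $[x^k,x^{k+1}]$ inside the cell containing them (each such limiting sub-segment lies in a single closed cell, on which $p$ is injective, so it genuinely is a segment of $X$ in the sense of the definition — here I use that the euclidean length is $<\pi$, which passes to the limit, and that the image stays in the convex image $p(\overline{T_i})$), I obtain a path $s$ from $x$ to $y$ in $X$ whose projection $p\circ s$ is the concatenation of the limiting euclidean segments.

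The last point is to check that $s$ is actually a \emph{segment} of $X$, i.e.\ that $p\circ s$ is injective with image a projective segment of length $<\pi$. Injectivity of $p\circ s$ is the delicate part: a priori the concatenated euclidean pieces could overlap or turn. I would rule this out by noting that for each $n$ the broken line $p(x_n^0),\dots,p(x_n^{N+1})$ is in fact a single projective segment (it equals $p\circ s_n$), hence the successive directions agree; this is a closed condition, so it survives in the limit, forcing $p(x^0),\dots,p(x^{N+1})$ to be collinear and monotonically ordered, hence $p\circ s$ is an honest injective segment (the possibility of a degenerate piece $x^k = x^{k+1}$ is harmless: one simply drops it). Finally the total euclidean length is $\lim$ of lengths $<\pi$, hence $\le \pi$; the case of length exactly $\pi$ is excluded because then $p\circ s$ would exit every affine chart, contradicting that its image lies in $p(\O)$ with $\O$ properly convex. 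Therefore $s$ is a segment of $X$ joining $x$ to $y$, so $(x,y) \in \C_{C_1,C_2}$, which proves closedness.
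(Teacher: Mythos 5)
Your proposal has a genuine gap at the step where you claim that the limit $x^k$ of the wall-crossing points $x_n^k$ is ``a well-defined point of $M_k \subset X$'' because each wall is ``a closed subset of a cell on which $p$ is a homeomorphism''. The cells of $X$ are triangles \'epoint\'es (closed triangles with their vertices removed), so the walls are \emph{open} segments: they are closed in the (non-compact) cell but they are not compact. A sequence $x_n^k$ lying on the wall $M_k$ can therefore converge in $\PP^2$ to one of the removed ideal vertices, i.e.\ diverge in $X$, and in that case your limiting broken path degenerates and no longer joins $x$ to $y$ inside $X$. Ruling this out is precisely the hard point of the lemma, and it is what the paper's Lemma \ref{infini} is for: if the first divergent crossing point accumulates on an ideal vertex $x^1_\infty$, that vertex is fixed by an element $\g\in\G$ which is hyperbolic, quasi-hyperbolic or parabolic; the dynamics of $\g$ then forces the tail of $s_n$ to be trapped in the convex fan $\underset{m}{\bigcup}\, \g^m(C_1\cup C')$ around $x^1_\infty$, so that the far endpoint $y_n$ also converges to $x^1_\infty$, contradicting $y_n\to y\in C_2$. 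Your plan never invokes the dynamics of the gluing maps at this point, and without it the argument does not close.

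Secondarily, you identify the ``crux'' as bounding the number $N_n$ of walls crossed, but this is essentially automatic: the dual graph of the cell decomposition is a tree, each wall separates $X$ into two components, and a projective segment of length $<\pi$ meets the line supporting a wall in at most one point, so $s_n$ crosses exactly the walls on the tree-geodesic from $C_1$ to $C_2$ and $N_n$ is constant. The effort you propose to spend there is misdirected, while the genuinely delicate point --- non-escape of the crossing points to the ideal vertices --- is asserted rather than proved. The rest of your plan (extracting convergent subsequences, concatenating the limiting sub-segments and checking that the concatenation, being a segment on each union of two adjacent cells, is again a segment) does match the paper's conclusion of the proof.
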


Les lemmes \ref{inj} et \ref{local}  montrent que $p$ est un homéomorphisme local. On peut donc munir $X$ de la métrique riemanienne induite par la métrique riemanienne canonique de $\P$. Le complété de $X$ pour cette distance sera noté $\overline{X}$.

\begin{lemm}\label{infini}
Soit $(s_n)_{n \in \N}$ une suite de segments d'extrémités $x_n \in C_1$ et $y_n \in C_2$, le segment $s_n$ traverse $N_n$ murs $M_1^n,...,M^n_{N_n}$ de $X$ ( ordonnés par la paramétrisation de $s_n$). On note $(x_n^{i})_{i=1...N_n}$ les points d'intersection de $s_n$ avec les murs de $X$ ( ordonnés via la paramétrisation de $s_n$). Si la suite $(x_n)_{n \in \N}$ converge dans $X$ et la suite $(x_n^1)_{n \in \N}$ diverge dans X et converge dans $\overline{X}$ vers $x^1_{\infty}$, alors, si $n$ est assez grand, la suite $N_n$ est constante égale à $N$, les suites $M^n_1,...,M^n_{N}$ sont constantes et il existe un $\g \in \G$ et une cellule $C'$ adjacente à $C_1$ tels que $s_n \subset \underset{n \in \N}{\bigcup} \g^n(C_1 \cup C')$ et $y_n$ tend vers $x_{\infty}^1$.
\end{lemm}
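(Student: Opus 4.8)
The plan is to show that the stated hypothesis forces the two endpoints of $s_n$ to converge, in $\overline X$, to the \emph{same} ideal vertex $v$, and that near such a vertex the combinatorics of $X$ is rigid enough to pin down the crossed walls. First, locating $v$: the point $x_n^1$ lies on one of the three walls of the pointed triangle $C_1$, each of which is, together with its two endpoints, the image under $p$ of a closed segment of $\mathbb{P}^2$ with its two endpoints removed; since $x_n^1$ converges in $\overline X$ to a point outside $X$, that limit $x_\infty^1$ is necessarily one of the three vertices of $\overline{C_1}$ — call it $v$ — and $p(x_n^1)\to p(v)$. By the construction of the Farey triangulation, $p(v)$ is a fixed point of a hyperbolic, quasi-hyperbolic or parabolic element, $\mathrm{Stab}_\Gamma(p(v))$ is infinite cyclic, and, reading off the dynamics recalled in \S\ref{classi_auto} (Propositions~\ref{hyp2}, \ref{quasihyp2}, \ref{ppara2}), the cells of $X$ incident to $v$ form an embedded \emph{fan}: one has a cell $C'$ adjacent to $C_1$ across a wall $M_1$ of $C_1$ with $v\in\overline{C'}$, and a generator $\gamma$ of $\mathrm{Stab}_\Gamma(p(v))$, such that the cells incident to $v$ are exactly the $\gamma^k(C_1\cup C')$ ($k\in\Z$), $p$ is injective on their union, and for a small ball $B$ around $p(v)$ the set $\Omega:=p(X)$ satisfies: $\Omega\cap B$ is a plane wedge of angle $\leqslant\pi$ with apex $p(v)$, onto which $p$ maps that union homeomorphically.

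The key step is that $y_n\to v$. Before meeting any wall, $s_n$ stays in $C_1$, so its subarc from $x_n$ to $x_n^1$ lies in $C_1$ and its image $[p(x_n),p(x_n^1)]$ tends to $[p(x_\infty),p(v)]$, a segment of $\overline{p(C_1)}$ joining the interior point $p(x_\infty)$ to the apex $p(v)$. Hence the direction of the straight segment $p(s_n)$, oriented from $p(x_n)$ to $p(y_n)$, tends to the unit vector $\hat u$ pointing from $p(x_\infty)$ towards $p(v)$. Since $p(x_\infty)$ lies in the interior of the wedge and $p(v)$ is its apex, the direction $-\hat u$ is interior to the wedge, so $\hat u$ does \emph{not} belong to the closed wedge (of angle $\leqslant\pi$); thus, working in a fixed affine chart around $p(v)$, there is $\varepsilon_0>0$ with $p(v)+t\hat u\notin\Omega$ for $0<t\leqslant\varepsilon_0$. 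Now the subarc of $s_n$ past $x_n^1$ has image $[p(x_n^1),p(y_n)]\subset p(s_n)\subset\Omega$; if its length did not tend to $0$ then, using $p(x_n^1)\to p(v)$ and the convergence of directions, along a subsequence this segment would, for large $n$, contain points arbitrarily close to $p(v)+(\varepsilon_0/2)\,\hat u$, which has a neighbourhood disjoint from $\Omega$ — a contradiction. Therefore $p(y_n)-p(x_n^1)\to 0$, hence $p(y_n)\to p(v)$, and since $p$ is a homeomorphism on $\overline{C_2}$ this yields $y_n\to v=x_\infty^1$ in $\overline X$ (the two ideal points agree because $p$ is injective on the fan).

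It remains to deduce rigidity. From $y_n\to v$ the vertex $v$ lies in $\overline{C_2}$, so $C_2$ is one of the fan cells, on the side of $M_1$; choosing the indexing so that $C_1$ and $C'$ are consecutive on that side we get $C_2\in\bigcup_{k\geqslant0}\gamma^k(C_1\cup C')$ (and $C_2\neq C_1$, since $s_n$ crosses a wall and a straight segment cannot cross the same ray issuing from $p(v)$ twice). The segments $p(s_n)=[p(x_n),p(y_n)]$ tend to $[p(x_\infty),p(v)]\subset\overline{p(C_1)}$, a segment a neighbourhood of which meets $\Omega$ only inside the fan; since $p$ is injective on the finitely many fan cells concerned, for $n$ large $s_n$ lies in the fan, and because the angular position of $p(s_n)$ seen from $p(v)$ varies monotonically, it passes exactly through the consecutive fan cells from $C_1$ to $C_2$, crossing each of the finitely many separating walls (rays issuing from $p(v)$) exactly once, with no overshoot. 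Consequently, for all $n$ large, $s_n\subset\bigcup_{k\geqslant0}\gamma^k(C_1\cup C')$, the integer $N_n$ equals the fixed number of those walls, and $M^n_1,\ldots,M^n_N$ are constant.

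The main obstacle is the local statement asserted in the first paragraph: that, \emph{prior} to Proposition~\ref{conv}, one can show $\Omega$ is a plane wedge of angle $\leqslant\pi$ near the ideal vertex $v$ and that the fan is embedded. This is where the combinatorial conditions defining the object, together with the classification of \S\ref{classi_auto}, must be exploited, and it is exactly what makes the ``limiting direction points backwards out of the wedge'' argument of the key step go through; once the limit point is identified, the remaining verifications are routine.
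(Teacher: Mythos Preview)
Your argument has a genuine circularity that you yourself flag but do not resolve. The ``key step'' needs the assertion that, for a small ball $B$ around $p(v)$, the set $\Omega\cap B$ is a plane wedge with apex $p(v)$ --- in particular that $p(v)+t\hat u\notin\Omega$ for small $t>0$. But $\Omega=p(X)$ and at this point nothing is known about $\Omega$ globally: the convexity of $\Omega$ is precisely what Lemme~\ref{coeur} (via Lemme~\ref{ferme}, which uses the present lemma) is meant to establish. The combinatorial hypotheses and the classification of \S\ref{classi_auto} let you control the \emph{fan} $K$ around $v$, not $\Omega$; there is no way, prior to Proposition~\ref{conv}, to rule out that other cells of $X$ project near $p(v)$. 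So the implication ``$\hat u$ points out of the wedge $\Rightarrow$ $[p(x_n^1),p(y_n)]$ must be short'' is not available as stated.

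The paper avoids this by never invoking $\Omega$. After extracting so that $M_1^n,M_2^n$ are constant, it observes that the cell $C''$ across $M_2$ from $C'$ is $\gamma C_1$ with $\gamma$ conjugate to one of the $\gamma_i^{\pm1}$, hence hyperbolic, quasi-hyperbolic or parabolic; the dynamics of such a $\gamma$ then show directly that the half-fan $K=\bigcup_{n\geqslant0}\gamma^n(C_1\cup C')$ is convex in $X$ (this is the content of Figure~\ref{lemp}). Once $K$ is convex, one argues inside $K$: the crossing points $x_n^i$ must lie on the successive fan walls $M^i=\gamma^{\lfloor(i-1)/2\rfloor}M_{1\text{ or }2}$ and all tend to $v$, because a straight segment entering a triangle of the fan at a point close to the apex $v$ and with direction tending to $\hat u$ (which, by convexity of the hexagon at $v$, exits $p(K)$ at $v$) is forced to leave that triangle through the other fan wall at $v$, again near $v$. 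This is an induction along the fan, carried out entirely in the convex set $p(K)$; it yields $s_n\subset K$, the constancy of $N_n$ and of the walls, and $y_n\to v$. Your direct approach via $\Omega$ cannot be salvaged as written; replacing $\Omega$ by $p(K)$ and supplying this inductive step brings you exactly to the paper's proof.
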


\begin{proof}
Quitte à extraire on peut supposer que les suites $M_1^n$ et $M_2^n$ sont constantes car les cellules de $X$ ont un nombre fini de côtés. On note $C'$ la cellule adjacente à $C_1$ et telle que $C_1 \cap C' = M_1^n$ et $C"$ la cellule adjacente à $C'$ et telle que $C' \cap C" = M_2^n$. Il existe un élément $\g \in \G$ tel que $\g C_1 = C"$.

Comme $\g$ est hyperbolique ou quasi-hyperbolique ou parabolique, l'ensemble $K=\underset{n \in \N}{\bigcup} \g^n(C_1 \cup C')$ est un convexe de $X$ (voir la figure \ref{lemp}).

\begin{figure}[!h]
\centerline{\psfig{figure=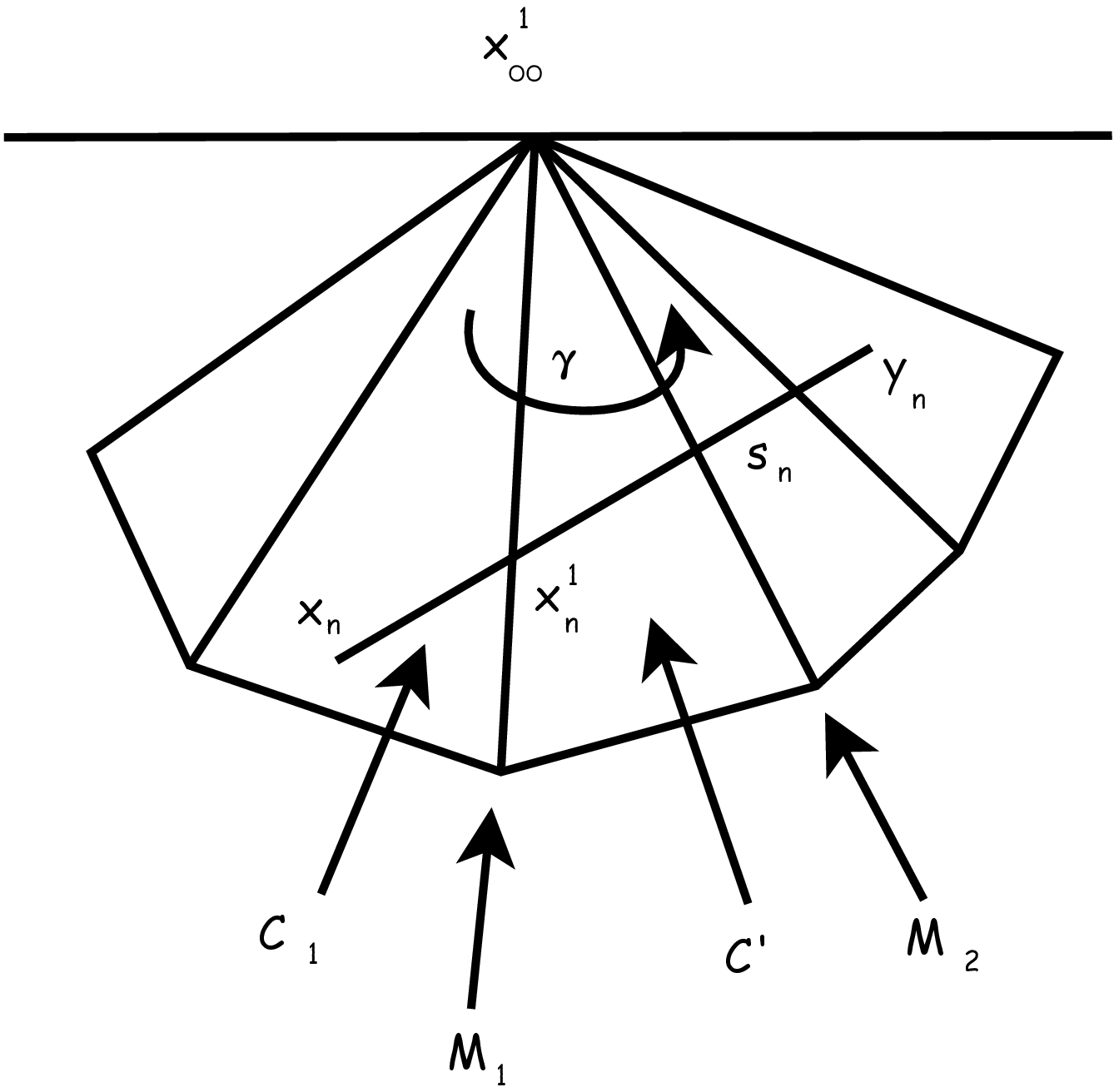, trim=0cm 11cm 0cm 0cm, width=6cm}
\hspace{3em}
\psfig{figure=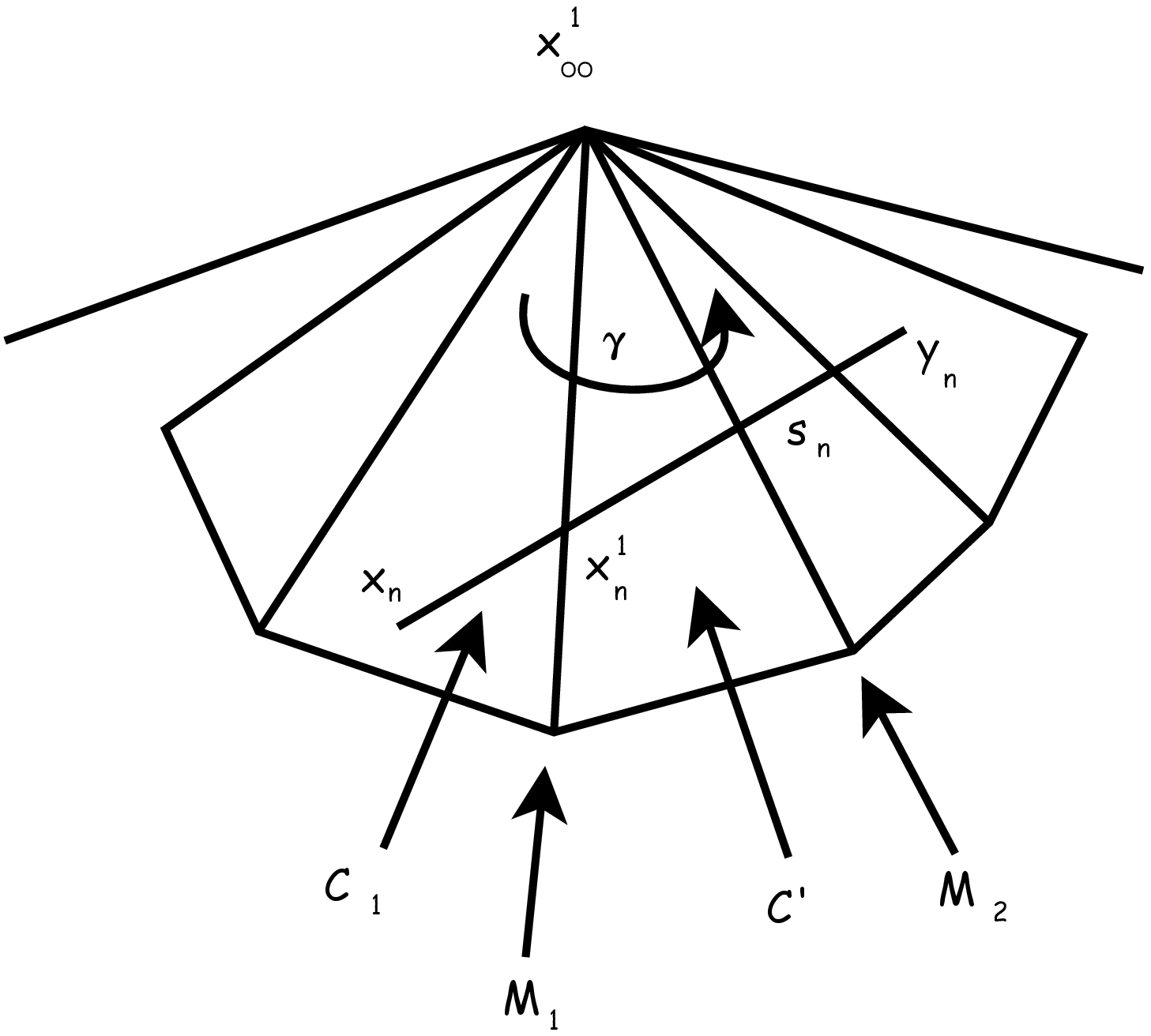, trim=0cm 11cm 0cm 0cm, width=6cm}}
\caption{Le cas parabolique et le cas hyperbolique ou quasi-hyperbolique} \label{lemp}
\end{figure}

On note $M^{2i-1} = \g^{i-1} M^1$ et $M^{2i}=\g^{i-1} M^2$, pour $i \geqslant 1$.  Il est clair que si $n$ est assez grand alors $x_n^i \in M^i$, pour $i \geqslant 1$ et $x_n^i \in M^i$ converge vers $x^1_{\infty}$. Donc la cellule $C_2$ est de la forme $\g^n C_1$ ou $\g^n C'$ pour un $n\in \N$. La suite $N_n$ est donc constante égale à un certain $N$ et les suites $M^n_1,...,M^n_{N}$ sont constantes et $y_n$ tend vers $x_{\infty}^1$.
\end{proof}

On peut à présent montrer le lemme \ref{ferme}.

\begin{proof}
[Démonstration du lemme \ref{ferme}]

Soit $(x_n,y_n) \in \C_{C_1,C_2}$ une suite qui converge dans $C_1 \times C_2$ vers le couple $(x,y)$. Il existe un segment $s_n$ dans $X$ reliant $x_n$ à $y_n$. On note $(x_n^{i})_{i=1...N_n}$ les points d'intersection de $s_n$ avec les murs de $X$ ( numérotés via la paramétrisation de $s_n$ ). Nous allons montrer par l'absurde que les suites $(x_n^{i})_{n \in \N}$ convergent dans $X$.

On a deux cas à distinguer:
\begin{itemize}
\item Toutes les suites $(x_n^{i})_{n \in \N}$ divergent. En particulier, la suite $(x_n^1)_{n \in \N}$ diverge dans $X$, quitte à extraire, on peut supposer qu'elle converge dans $\overline{X}$; et la suite $(x_n)_{n \in \N}$ converge dans $X$.

\item Il existe un $i_0=2,...,N$ tel que la suite $(x_n^{i_0})_{n \in \N}$ diverge dans $X$ mais sous-converge dans $\overline{X}$ et la suite $(x_n^{i_0-1})_{n \in \N}$ converge dans $X$.
\end{itemize}

Dans le premier cas, le lemme \ref{infini}  montre que la suite $(y_n)_{n \in \N}$ diverge, ce qui est absurde. Dans le second cas, le lemme \ref{infini} appliqué aux segments $[x_n^{i_0-1},y_n]$ montre que la  suite $(y_n)_{n \in \N}$ diverge. Ce qui est absurde. Donc, les suites $(x_n^{i})_{n \in \N}$ convergent. Par conséquent, les points $x$ et $y$ sont reliés par une réunion finie de segments qui vérifie de plus que sa restriction à la réunion de deux cellules adjacentes est un segment. Ce chemin est donc un segment.
\end{proof}

On peut à présent montrer le lemme \ref{coeur}

\begin{proof}
[Démonstration du lemme \ref{coeur}]
Les lemmes \ref{ouvert} et \ref{ferme}  montrent que si $\C_{C_1,C_2} \neq \varnothing$ alors  $\C_{C_1,C_2} = C_1 \times C_2$. On peut définir le graphe $\A$ dont les sommets sont les cellules de $X$ et deux sommets de $\A$ sont reliés si les cellules correspondantes bordent un même mur. Une simple récurrence sur la distance dans le graphe $\A$ à une cellule de départ $C_0$ montre que, pour tout $n$, la réunion des cellules à distance inférieure ou égale à $n$ de $C_0$ est une partie convexe. On pourra remarquer que le graphe $\A$ est l'arbre infini de valence 3.
\end{proof}

\subsubsection{L'espace $\Q_{\delta_1,\delta_2,\delta_3}$ est homéomorphe à $\R^2$}\label{computation}

\begin{prop}\label{modQ}
Soient $\delta_1,\delta_2,\delta_3$ trois éléments hyperboliques, quasi-hyperboliques ou paraboliques de $\s$, l'espace $\Q_{\delta_1,\delta_2,\delta_3}$ est homéomorphe à $\R^2$.
\end{prop}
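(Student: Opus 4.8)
The plan is to make explicit the combinatorial object $\Q_{\delta_1,\delta_2,\delta_3}$ and to exhibit an explicit parametrization by $\R^2$. Fix the element $\delta_1$ (say) to be one of the standard normal forms of Proposition~\ref{classi2}, so that the fixed points and fixed lines of $\g_1$ can be taken in standard position; this uses up the $\s$-action (or most of it). The triangle $T_0$ has its three vertices on $\partial\Omega$ and its three sides are the relevant invariant segments $\widetilde{l_1},\widetilde{l_2},\widetilde{l_3}$; for an element $\g_i$ of the prescribed type, the condition that $\g_i$ carries $T_{i+1}$ to $T_{i+2}$ while $\overline{T_0}\cup\overline{T_{i+2}}$ is genuinely a hexagon (six distinct sides) forces $\g_i$ to fix the two vertices of $T_0$ lying on the common side $\widetilde{l_i}$ and to send the third vertex of $T_0$ into the interior of the complementary triangle $T_{\g_i}$ (this is exactly the analysis already carried out at the end of the proof of Proposition~\ref{conv}). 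So the data of a point of $\Q_{\delta_1,\delta_2,\delta_3}$ reduces to the choice of the three vertices of $T_0$ on three fixed lines (two of which are pinned by the normalization) together with the conjugacy-class constraint on each $\g_i$.

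**Next** I would turn this into coordinates. After normalizing $\g_1$, the line $D^{+,-}_{\g_1}$ (the side $\widetilde{l_1}$ of $T_0$) is fixed, and $\g_1$ acting on that line is hyperbolic/parabolic with prescribed translation length determined by $\delta_1$; this pins two vertices of $T_0$, call them $p^-_{\g_2}$ and $p^-_{\g_3}$ in the notation of paragraph~\ref{constr}. The remaining freedom is the position of the third vertex $p_{\g_1}$ of $T_0$: it must lie on $\partial\Omega$ above $\widetilde{l_1}$, but a priori it ranges over a two-dimensional family (a plane minus the line $D^{+,-}_{\g_1}$, projectively). The conjugacy constraints "$\g_i\sim\delta_i$ for $i=2,3$" are then two equations. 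The claim I want is that the solution set of these two constraints inside the two-dimensional space of positions of $p_{\g_1}$, after the residual normalization is used up, is itself homeomorphic to $\R^2$; this is a direct computation in coordinates using the eigenvalue functions $\lambda(\cdot),\tau(\cdot)$ of Proposition~\ref{consclassi}, together with the observation from that proposition that the admissible region $\mathfrak{R}$ (and its boundary pieces $\mathfrak{R}_{QH\,non\,C^1},\mathfrak{R}_{QH\,C^1},\mathfrak{R}_P$) is itself a manifold. The map that records $(\lambda(\g_2),\tau(\g_2))$ and $(\lambda(\g_3),\tau(\g_3))$ is constrained to be constant equal to the pair coming from $(\delta_2,\delta_3)$, and what is left is a genuinely free two-parameter "twisting/bulging" degree of freedom along $\widetilde{l_1}$, which furnishes the coordinates.

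**An alternative** and cleaner route, which I would actually pursue in parallel, is to use Proposition~\ref{panta-combi} — already proved at this point in the text — to transport the question to $\beta_f(P_{\delta_1,\delta_2,\delta_3})$, and then combine with the fibration statement of Proposition~\ref{collag} / point 4 of Theorem~\ref{module}: cutting a pair of pants along a simple non-elementary loop, or gluing, shows that the moduli space of a pair of pants with fixed boundary conjugacy classes carries a free and transitive $\R^2$-action over a point, hence is homeomorphic to $\R^2$ once one knows it is nonempty and connected. Nonemptiness is exactly the content of Proposition~\ref{conv} (the combinatorial object does produce a convex pair of pants), so the residual content is showing the action is transitive, and this is the same rigidity argument as in the proof of the lemma in paragraph~\ref{simpltrans}: two points of $\Q_{\delta_1,\delta_2,\delta_3}$ with the same boundary data have projectively equivalent $T_0$ and matching eigendata along each side, and the projective transformation realizing the equivalence must centralize each $\g_i$, hence lies in the appropriate two-dimensional centralizer group.

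**The main obstacle** I anticipate is not the transitivity but showing that no degeneration occurs — that the two-parameter family of candidate positions for the free vertex $p_{\g_1}$ stays inside the locus where $\bigcup\overline{T_i}$ remains a strictly convex hexagon with six sides and where $\Omega$ stays properly convex, so that the parameter space is all of $\R^2$ and not a proper open subset or a space with boundary. This is where I would lean hardest on Proposition~\ref{conv}: it guarantees that for every point of $\Q_{\delta_1,\delta_2,\delta_3}$ the resulting $\Omega$ is automatically properly convex, so the only thing to check is that the constraints cut out a connected, boundaryless two-manifold, and that the "bulge" coordinate (the position of $p_{\g_1}$ transverse to $\widetilde{l_1}$) together with the "twist" coordinate (its position along a horocycle/segment) range without obstruction over $\R\times\R$. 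The quasi-hyperbolic and parabolic cases of $\delta_i$ are handled, as the text says, as degenerations of the hyperbolic one, using Propositions~\ref{hyp2}, \ref{quasihyp2} and \ref{ppara2} to locate the relevant tangent lines.
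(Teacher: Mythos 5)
Your proposal does not reach the actual content of the proposition, and both of the routes you sketch have a concrete defect.

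Your first route undercounts the data and never performs the computation that \emph{is} the proof. A point of $\Q_{\delta_1,\delta_2,\delta_3}$ is not determined by the three vertices of $T_0$: it also involves the three outer vertices $q_1,q_2,q_3$ of $T_1,T_2,T_3$ (in the paper these carry the six positive coordinates $b_1,c_1,a_2,c_2,a_3,b_3$, i.e.\ the five invariants $\rho_1,\rho_2,\rho_3,\sigma_1,\sigma_2$ with one relation, so the hexagon space $\H$ alone is already $\R^4$), and the maps $\g_i$ are not determined by the triangles --- each carries three scaling parameters $\kappa_i,\mu_i,\nu_i$. The paper's proof consists precisely in writing the matrices of $\g_1,\g_2,\g_3$ in these coordinates and translating $\det\g_i=1$, $\g_3\g_2\g_1=1$ and the conjugacy conditions $\kappa_i=\lambda(\delta_i)$, $-\mu_i+\nu_i(\rho_i-1)=\tau(\delta_i)$ into an explicit system of nine equations on fourteen unknowns, then solving it to exhibit exactly two free positive parameters $s$ and $t$. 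Your reduction to ``the position of the third vertex of $T_0$, a two-dimensional family, cut by two equations'' is dimensionally incoherent (each conjugacy constraint is already two real equations, and a $2$-dimensional family cut by nontrivial equations is not $2$-dimensional), and the phrase ``a direct computation in coordinates'' is standing in for the entire argument.

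Your second route is circular. Proposition~\ref{collag} and the $\R^2$-action of paragraph~\ref{constr1} require a \emph{simple non-elementary} loop $c$ on the surface, and a pair of pants has none: every simple closed curve on $\Sigma_{0,3,0}$ is null-homotopic or isotopic to a boundary component, hence elementary. The moduli space of the pair of pants with fixed boundary conjugacy classes is exactly the base case that the cut-and-glue fibration cannot reach, which is why the paper must compute it by hand; deriving it from the gluing action inverts the logical order of the theorem. What you do get right is peripheral: Proposition~\ref{conv} indeed guarantees proper convexity of $\Omega$ for every point of $\Q_{\delta_1,\delta_2,\delta_3}$, Lemma~\ref{elimination} and Proposition~\ref{consclassi} are indeed how one recognizes the conjugacy class of $\g_i$ from $(\lambda,\tau)$, and the degenerate (quasi-hyperbolic, parabolic) cases are handled uniformly by the same formulas --- but none of this substitutes for solving the system.
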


La démonstration de cette proposition est l'objet des deux prochains paragraphes. Cette démonstration est une légère généralisation de celle de Goldman (\cite{Gold1}) qui ne traite pas les cas où les $\delta_i$ sont paraboliques ou quasi-hyperboliques.

\paragraph{Paramétrisation de l'hexagone}

On commence par introduire l'espace suivant:

$$
\mathcal{H'}=
\left\{
\begin{array}{l}
(T_0,T_1,T_2,T_3) \textrm{ tel que:}\\
\\
1.\, T_0,T_1,T_2,T_3 \textrm{ sont des triangles épointés et le triangle } T_0\\
\textrm{intersecte } T_1,T_2,T_3 \textrm{ le long de ces 3 arêtes}\\
\\
2. \underset{i=0,...,3}{\bigcup} \overline{T_i} \textrm{ est un hexagone }\\
\textrm{ convexe a exactement 6 côtés}\\
\end{array}
\right\}
$$

Le groupe $\s$ agit proprement et librement sur $\mathcal{H'}$, on note $\mathcal{H}$ le quotient $\mathcal{H'}/_{\s}$. Nous allons montrer le lemme suivant.

\begin{lemm}
L'espace $\H$ est homéomorphe à $\R^4$.
\end{lemm}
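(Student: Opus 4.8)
The plan is to exhaust the action of $\s$ on $\mathcal{H}'$ by normalising a projective frame, and then to read off an explicit semi-algebraic model of $\H$ inside $\R^4$. First I would unwind conditions~1 and~2: a point of $\mathcal{H}'$ amounts to a triangle $T_0$ together with, for each of its three edges, one further point $X_i$ (the apex of $T_i$) lying strictly on the far side of that edge from $T_0$, in such a way that $\overline{T_0}\cup\overline{T_1}\cup\overline{T_2}\cup\overline{T_3}$ is a properly convex hexagon with six distinct sides; equivalently it is the datum of that hexagon with its vertices cyclically ordered and labelled, one of the two alternating triples being distinguished as the vertices $A_1,A_2,A_3$ of $T_0$. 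In particular $A_1,A_2,A_3$ are never collinear and no $X_i$ lies on a line $A_jA_k$ (otherwise the hexagon would have fewer than six sides), so $(A_1,A_2,A_3,X_1)$ is always a projective frame.

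Since $\s\cong\mathrm{PGL}_3(\R)$ acts simply transitively on projective frames, there is a unique $g\in\s$ sending $(A_1,A_2,A_3,X_1)$ to $([1{:}0{:}0],[0{:}1{:}0],[0{:}0{:}1],[-1{:}1{:}1])$; it depends rationally, in particular continuously, on the configuration, and it carries $\overline{T_0}$ onto the standard triangle $\{x_0,x_1,x_2\ge 0\}$ and $X_2,X_3$ into the regions $\{[v{:}{-1}{:}w]\mid v,w>0\}$ and $\{[p{:}q{:}{-1}]\mid p,q>0\}$. As the $\s$-action on $\mathcal{H}'$ is free the resulting slice meets each orbit exactly once, and since $g$ is continuous the map $x\mapsto g(x)\cdot x$ descends to a continuous inverse of the projection from the slice to $\H$. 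Hence $\H$ is homeomorphic to the set of $(v,w,p,q)\in(\R_{>0})^4$ for which the hexagon with vertices $[1{:}0{:}0],\,[v{:}{-1}{:}w],\,[0{:}0{:}1],\,[-1{:}1{:}1],\,[0{:}1{:}0],\,[p{:}q{:}{-1}]$ is convex with six sides.

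Then I would turn convexity into inequalities. Six points given in cyclic order are the vertices, in that order, of a strictly convex polygon of an affine chart exactly when, for each edge, the other four points lie strictly on one side of the line carrying that edge. Evaluating for each of the six edges the corresponding linear form at the remaining four vertices, one finds that in each case three of the four signs are automatic, being values of sums of positive quantities (this also pins down the affine chart $\{x_0+x_1+x_2>0\}$), while the fourth gives an inequality; these six inequalities collapse to the three conditions $v>1$, $p>1$, $wq>1$, one per vertex of $T_0$. Consequently $\H$ is homeomorphic to
$$U=\bigl\{(v,w,p,q)\in\R^4\ :\ v>1,\ p>1,\ w>0,\ q>0,\ wq>1\bigr\}.$$
Now $U=(1,+\infty)\times(1,+\infty)\times\bigl\{(w,q):w>0,\ q>0,\ wq>1\bigr\}$: the first two factors are homeomorphic to $\R$, and the third is the open region lying above the graph of the convex function $w\mapsto 1/w$ on $(0,+\infty)$, hence a non-empty open convex subset of $\R^2$ and so homeomorphic to $\R^2$. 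Therefore $U$, and with it $\H$, is homeomorphic to $\R^4$.

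The step I expect to take the most care — although it is elementary rather than deep — is the dictionary of the third paragraph: one must check that the inequalities $v>1$, $p>1$, $wq>1$ together with positivity exactly characterise the quadruples for which $\overline{T_0}\cup\overline{T_1}\cup\overline{T_2}\cup\overline{T_3}$ is a properly convex hexagon with exactly six sides, both that convexity forces them and, conversely, that they suffice, the union then automatically lying in a single affine chart and having six pairwise distinct sides. The remaining ingredients — the simply transitive action of $\s$ on frames, the slice argument for the free action, and the fact that $U$ is a product of cells — are routine.
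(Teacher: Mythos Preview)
Your argument is correct and follows the same strategy as the paper: kill the $\s$-action by normalising coordinates, then identify the convexity-with-six-sides condition as explicit polynomial inequalities. The one difference is in how much you normalise. You fix a full projective frame $(A_1,A_2,A_3,X_1)$, which exhausts the $\s$-action in one step and leaves four free parameters $(v,w,p,q)$; the paper instead fixes only the three vertices of $T_0$, leaving a residual action of the diagonal torus $D$, and then passes to the $D$-invariants $\rho_1=b_3c_2$, $\rho_2=c_1a_3$, $\rho_3=a_2b_1$, $\sigma_1=a_2b_3c_1$, $\sigma_2=a_3b_1c_2$ subject to $\sigma_1\sigma_2=\rho_1\rho_2\rho_3$. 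Your coordinates are the specialisation $b_1=c_1=1$, and your inequalities $v>1$, $p>1$, $wq>1$ are exactly $\rho_3>1$, $\rho_2>1$, $\rho_1>1$.

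Your route is slightly more direct for this lemma alone. The paper's route, however, produces the invariants $\rho_i,\sigma_j$ as named quantities with intrinsic projective meaning (the $\rho_i$ are cross-ratios at the vertices $p_i$), and these are reused verbatim in the next proposition when parametrising the elements $\gamma_i$: the trace condition reads $-\mu_i+\nu_i(\rho_i-1)=\tau(\delta_i)$, and the relation $\sigma_1\sigma_2=\rho_1\rho_2\rho_3$ is what introduces the free parameter $t$ at the end. So if you continue with your normalisation, be prepared to translate back into $(\rho_i,\sigma_j)$ when you reach that step.
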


\begin{proof}
Les notations sont les mêmes que celles de la figure \ref{hexa}. Commençons par nommer les sommets du triangle $T_0$. On notera $p_1$ l'intersection de $\overline{T_0},\overline{T_2}$ et $\overline{T_3}$, $p_2$ l'intersection de $\overline{T_0},\overline{T_3}$ et $\overline{T_1}$ et $p_3$ l'intersection de $\overline{T_0},\overline{T_1}$ et $\overline{T_2}$. On peut supposer que leurs coordonnées sont données par:

$$
\left\{
\begin{array}{ccc}
p_1 & = & [1:0:0]\\
p_2 & = & [0:1:0]\\
p_3 & = & [0:0:1]\\
\end{array}
\right.
$$

On notera $q_1$ le sommet de $T_1$ qui n'intersecte pas $\overline{T_0}$, $q_2$ le sommet de $T_2$ qui n'intersecte pas $\overline{T_0}$ et $q_3$ le sommet de $T_3$ qui n'intersecte pas $\overline{T_0}$.

$$
\left\{
\begin{array}{ccc}
q_1 & = & [-1:b_1:c_1]\\
q_2 & = & [a_2:-1:c_2]\\
q_3 & = & [a_3:b_3:-1]\\
\end{array}
\right.
$$

Où, les quantités $b_1,c_1,a_2,c_2,a_3,b_3$ sont strictement positives. Le stabilisateur de $p_1,p_2,p_3$ dans $\s$ est le groupe $D$ des matrices diagonales à diagonale strictement positive. On considère l'élément $g$ donné par la matrice suivante:

$$
g=\left(
\begin{array}{ccc}
\lambda & 0   & 0\\
0       & \mu & 0\\
0       & 0   & \nu\\
\end{array}
\right)
$$

Où, les quantités $\lambda,\mu,\nu$ sont strictement positives. L'action de $g$ sur $\H'$ fixe les points $(p_i)_{i=1,...,3}$. Son action sur les points $(q_i)_{i=1,...,3}$ s'écrit de la façon suivante:

$$
\left\{
\begin{array}{ccc}
b_1 & \mapsto & \frac{\mu}{\lambda} b_1\\
c_1 & \mapsto & \frac{\nu}{\lambda} c_1\\
a_2 & \mapsto & \frac{\lambda}{\mu} a_2\\
c_2 & \mapsto & \frac{\nu}{\mu} c_2\\
a_3 & \mapsto & \frac{\lambda}{\nu} a_3\\
b_3 & \mapsto & \frac{\mu}{\nu} b_3
\end{array}
\right.
$$

On notera $\rho_i$ le birapport des 4 droites concourantes en $p_i$ définies par les triangles $T_0,T_1,T_2,T_3$. On a:

$$
\left\{
\begin{array}{ccc}
\rho_1 & = & b_3 c_2\\
\rho_2 & = & c_1 a_3\\
\rho_3 & = & a_2 b_1\\
\end{array}
\right.
$$

L'hexagone $\underset{i=0,...,3}{\bigcup} \overline{T_i}$ est convexe qui possède exactement six côtés si et seulement si pour tout $i=1,...,3, \, \rho_i > 1$. On définit en plus les quantités suivantes qui sont $D$-invariantes:

\begin{figure}
\begin{center}
\includegraphics[width=8cm]{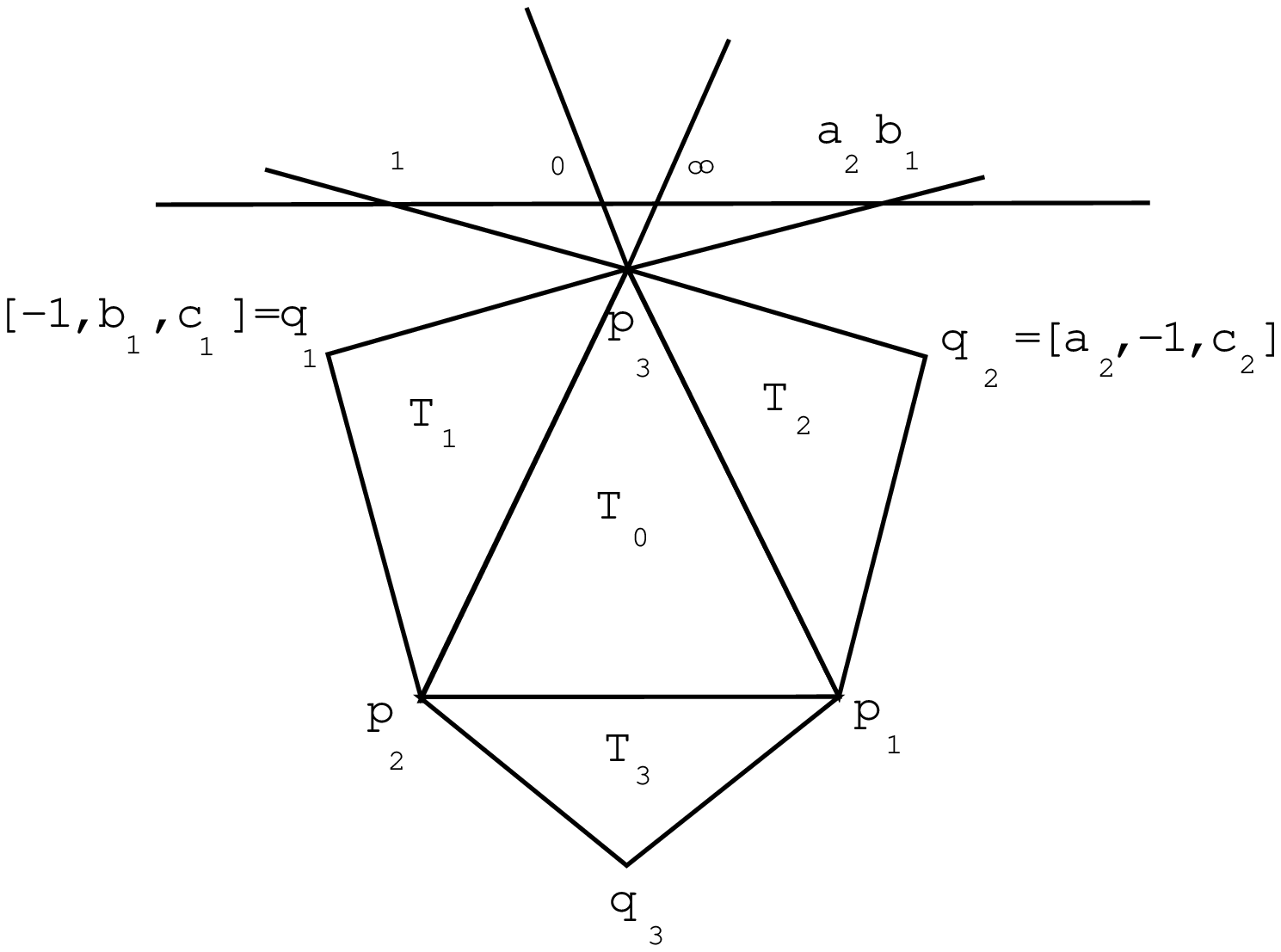}
\end{center}
\caption{} \label{hexastrict}
\end{figure}

$$
\left\{
\begin{array}{ccc}
\sigma_1 & = & a_2 b_3 c_1\\
\sigma_2 & = & a_3 b_1 c_2\\
\end{array}
\right.
$$

Elles vérifient: $\sigma_1 \sigma_2 = \rho_1 \rho_2 \rho_3$. C'est ce qu'il fallait montrer. On a même le lemme plus précis suivant.
\end{proof}

\begin{lemm}
Il existe des applications continues $\rho_1, \rho_2, \rho_3,\sigma_1, \sigma_2$ de l'espace $\H$ vers $\R$ telles que l'application $(\rho_1, \rho_2, \rho_3, \sigma_1, \sigma_2) : \H \rightarrow \mathcal{I}$ est un homéomorphisme, où $\mathcal{I} = \{ (\rho_1, \rho_2, \rho_3, \sigma_1,$ \\ $\sigma_2) \in \R^5 \, |\,  \rho_1, \rho_2, \rho_3 > 1, \, \sigma_1, \sigma_2 > 0, \, \sigma_1 \sigma_2 = \rho_1 \rho_2 \rho_3 \}$
\end{lemm}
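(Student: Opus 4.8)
The plan is to reuse the normalisation already set up in the proof of the preceding lemma and then simply exhibit a continuous inverse to $(\rho_1,\rho_2,\rho_3,\sigma_1,\sigma_2)$. Recall that every class of $\H = \H'/\s$ has a representative for which the vertices $p_1,p_2,p_3$ of $T_0$ are the standard coordinate points $[1:0:0],[0:1:0],[0:0:1]$, and that the residual freedom is then exactly the group $D$ of diagonal matrices with strictly positive entries. Thus $\H$ is identified with the quotient by $D$ of the set of sextuples $(a_2,a_3,b_1,b_3,c_1,c_2)$ of strictly positive reals satisfying $\rho_1,\rho_2,\rho_3>1$, where $\rho_1 = b_3c_2$, $\rho_2 = c_1a_3$, $\rho_3 = a_2b_1$ (the ``hexagon convex with exactly six sides'' condition being exactly $\rho_i>1$, as recorded in that proof), $D$ acting by the displayed scaling formulas. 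The quantities $\rho_1,\rho_2,\rho_3$ together with $\sigma_1 = a_2b_3c_1$ and $\sigma_2 = a_3b_1c_2$ are $D$-invariant --- immediate from those scaling formulas --- hence descend to continuous functions on $\H$; they satisfy $\sigma_1\sigma_2 = \rho_1\rho_2\rho_3$ and $\rho_i>1$, so $(\rho_1,\rho_2,\rho_3,\sigma_1,\sigma_2)$ does map $\H$ continuously into $\mathcal{I}$.

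To invert this map I would first observe that $\{b_1 = c_2 = 1\}$ is a \emph{global} slice for the $D$-action: the action on the pair $(b_1,c_2)$ is $(b_1,c_2)\mapsto(\frac{\mu}{\lambda}b_1,\frac{\nu}{\mu}c_2)$, and solving $\frac{\mu}{\lambda}b_1 = \frac{\nu}{\mu}c_2 = 1$ under $\lambda\mu\nu = 1$ forces $\lambda = (b_1^2c_2)^{1/3}$ and then $\mu,\nu$, all strictly positive and depending continuously on $(b_1,c_2)$; so each $D$-orbit meets the slice in exactly one point, continuously. On the slice the invariants read $a_2 = \rho_3$, $b_3 = \rho_1$, $a_3 = \sigma_2$, $c_1 = \rho_2/\sigma_2$, the remaining identity $\sigma_1 = a_2b_3c_1 = \rho_1\rho_2\rho_3/\sigma_2$ being automatic on $\mathcal{I}$. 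Hence, given a point of $\mathcal{I}$, the assignment $b_1 = c_2 = 1$, $a_2 = \rho_3$, $b_3 = \rho_1$, $a_3 = \sigma_2$, $c_1 = \rho_2/\sigma_2$ produces a sextuple with strictly positive entries (since $\rho_i>1>0$ and $\sigma_i>0$) satisfying $\rho_1,\rho_2,\rho_3>1$, i.e. a well-defined element of $\H$; it is visibly continuous in $(\rho_1,\rho_2,\rho_3,\sigma_1,\sigma_2)$ and, the slice being global, it is a two-sided inverse. In particular the same computation yields injectivity on $\H$: two representatives with standard $p_i$ and equal invariants are each brought onto the slice by a unique element of $D$, after which all six coordinates are forced.

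I do not expect any serious obstacle: the content is the torus-quotient bookkeeping of the preceding lemma. The only two points that genuinely need care are (i) that the section really lands in $\H$ and not in some larger space --- which comes down to the convexity-with-six-sides condition being precisely $\rho_1,\rho_2,\rho_3>1$, already established --- and (ii) the nonvanishing of the single $2\times 2$ determinant making $\{b_1=c_2=1\}$ a global slice (here it equals $3$); both are settled by the short computations indicated above. Continuity of the inverse is then automatic, and $(\rho_1,\rho_2,\rho_3,\sigma_1,\sigma_2):\H\to\mathcal{I}$ is a homeomorphism.
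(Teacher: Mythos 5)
Your argument is correct and follows the paper's own route: the same normalisation of $p_1,p_2,p_3$, the same $D$-action formulas, and the same five $D$-invariants $\rho_i,\sigma_j$ with the single relation $\sigma_1\sigma_2=\rho_1\rho_2\rho_3$ and the convexity condition $\rho_i>1$. The only addition is your explicit global slice $\{b_1=c_2=1\}$ giving the continuous inverse, which the paper leaves implicit (it stops at ``c'est ce qu'il fallait montrer''); your computations there ($\lambda=(b_1^2c_2)^{1/3}$, determinant $3$, and the section $a_2=\rho_3$, $b_3=\rho_1$, $a_3=\sigma_2$, $c_1=\rho_2/\sigma_2$) check out.
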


\paragraph{Paramétrisation des éléments $\g_i$}

Nous allons avoir besoin du lemme suivant.

\begin{lemm}\label{elimination}
Soient $T$ et $T'$ deux triangles ouverts disjoints de $\P$ qui ont un sommet $p$ en commun et un élément $\g$ de $\s$ qui fixe $p$, tel que $\g T = T'$. Si le spectre de $\g$ est réel positif alors $\g$ est hyperbolique ou quasi-hyperbolique ou parabolique.
\end{lemm}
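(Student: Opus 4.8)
Lemma \ref{elimination} asks us to show: if $T, T'$ are disjoint open triangles in $\P$ sharing a vertex $p$, and $\g \in \s$ fixes $p$ with $\g T = T'$, and the spectrum of $\g$ is real and positive, then $\g$ is hyperbolic, quasi-hyperbolic, or parabolic. Let me sketch a proof.

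\medskip

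The plan is to rule out, among the six conjugacy types of \ref{classi2}, the three forbidden possibilities: \emph{planar}, \emph{elliptic}, and \emph{the identity}. The hypotheses immediately dispose of two of them. An elliptic element has a non-real pair of eigenvalues (it is conjugate to a rotation block), so the assumption that the spectrum is real rules it out directly. Likewise $\g = \mathrm{Id}$ is impossible, since the identity cannot map the open triangle $T$ onto an open triangle $T'$ \emph{disjoint} from $T$: a set cannot be disjoint from itself. So the only real work is to exclude the planar case.

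\medskip

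So suppose, for contradiction, that $\g$ is planar. By the form given in \ref{classi2}, $\g$ is conjugate to $\mathrm{diag}(\alpha, \alpha, \beta)$ with $\alpha, \beta > 0$, $\alpha^2\beta = 1$, $\alpha \neq \beta$. The key geometric feature is that such a $\g$ fixes pointwise a projective line $D_\g$ (the eigenline for $\alpha$, which is a $\PP^1$) and fixes an isolated point $p_\g$ (the eigenpoint for $\beta$) not on $D_\g$. Now I would argue that a planar $\g$ cannot carry any open triangle $T$ to a \emph{disjoint} open triangle. The vertices of $T$ are permuted-or-fixed among themselves only if $p$ is one of them; more to the point, each vertex $v$ of $T$ must either lie on $D_\g$ (hence be fixed) or else its $\g$-orbit $\{\g^n v\}$ accumulates on $D_\g$ and on $p_\g$ as $n \to \pm\infty$. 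The hypothesis is only that $\g T = T'$ \emph{once}, not that $T$ is $\g$-invariant, so I cannot directly say $T$ and $T'$ share vertices. Instead I would use the shared vertex $p$: since $\g$ fixes $p$ and $p$ is a vertex of $T$, it is a vertex of $T' = \g T$ as well. Thus $p \in \{p_\g\} \cup D_\g$. I would then analyze the two subcases. If $p = p_\g$: then locally near $p$, $\g$ acts on the tangent directions (a $\PP^1$) as the projectivization of $\mathrm{diag}(\alpha,\alpha)$ composed against the $\beta$-direction — concretely, in an affine chart centered at $p_\g$ with $D_\g$ at infinity, $\g$ is the homothety $x \mapsto (\alpha/\beta) x$ with $\alpha/\beta \neq 1$; a homothety with positive ratio $\neq 1$ centered at a vertex of $T$ sends the two edges of $T$ at $p$ to themselves (as lines through $p$) and dilates, so $\g T$ and $T$ overlap in a neighborhood of $p$ inside the angular sector cut out at $p$ — contradicting disjointness. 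If $p \in D_\g$: then $\g$ fixes $p$ and fixes the whole line $D_\g$; the two edges of $T$ emanating from $p$ are sent to two lines through $p$, and since one eigendirection at $p$ is along $D_\g$ and the transverse behavior is a dilation toward $p_\g$, again one checks the image sector at $p$ meets the original, contradicting $T \cap T' = \varnothing$.

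\medskip

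The main obstacle, and the step that needs care, is precisely this last local computation: showing that a planar map fixing a vertex of an open triangle cannot push that triangle entirely off itself. The cleanest way is to observe that a planar $\g$ preserves the pencil of lines through $p$ (it acts projectively on this $\PP^1$ of directions, with at most two fixed directions, and its action there is \emph{not} free of a fixed point with a nearby attracting/repelling pair unless degenerate) and simultaneously preserves, on each such line $\ell$ through $p$, a dilation structure; one then checks that the open angular sector $S$ at $p$ containing $T$ satisfies $\g S \cap S \neq \varnothing$ whenever $p$ is a fixed point of $\g$ of planar type, because the derivative of $\g$ at $p$ has a positive real eigenvalue (the ratio $\alpha/\beta$ or $\beta/\alpha$) which forces at least one ray from $p$ into $S$ to map to a ray still inside $S$. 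Since $T \subset S$ and $\g T = T' \subset \g S$, this contradicts $T \cap T' = \varnothing$. Having excluded the planar, elliptic, and identity cases, \ref{classi2} leaves only hyperbolic, quasi-hyperbolic, and parabolic, which is the assertion. $\qed$
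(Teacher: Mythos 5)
Your case analysis starts from the wrong list. Proposition \ref{classi2} classifies elements of $\Aut(\O)$ for a properly convex open set $\O$; in Lemma \ref{elimination} the element $\g$ is an \emph{arbitrary} element of $\s$ with real positive spectrum, and the lemma is invoked in the proof of Proposition \ref{objmod} precisely \emph{before} one knows that $\g_i$ preserves any properly convex open set (that is only established afterwards, via Proposition \ref{conv}). So the cases to exclude are all Jordan forms in $\s$ with real positive spectrum, and this list contains one conjugacy class absent from the six families of \ref{classi2}: the unipotent conjugate to
$$
\left(\begin{array}{ccc} 1 & 1 & 0\\ 0 & 1 & 0\\ 0 & 0 & 1\end{array}\right),
$$
a $2\times 2$ Jordan block together with an extra fixed eigenvector (all eigenvalues equal to $1$, so it is neither quasi-hyperbolic, which requires $\alpha\neq\beta$, nor parabolic, which is a single $3\times 3$ block). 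This is exactly the matrix $B$ that the paper's proof excludes alongside the planar matrix $A$; your argument never mentions it, so even if your planar exclusion were complete the proof would not close.

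Second, the sub-case you yourself flag as delicate --- a planar $\g$ fixing $p$ with $p\in D_\g$ --- does not go through as you describe. On the pencil of lines through such a $p$, $\g$ induces the homography of $\PP^1$ given by $\mathrm{diag}(\alpha,\beta)$ on $\R^3/\R p$, which is a \emph{hyperbolic} M\"obius map; it can carry an open interval of directions entirely off itself, so no ray of the sector at $p$ need return into that sector. Concretely, reading $\g=\mathrm{diag}(\alpha,\alpha,\alpha^{-2})$ in the chart $\{x\neq 0\}$ as $(u,v)\mapsto(u,\alpha^{-3}v)$ with $p=[1:0:0]$ the origin, the open triangle with vertices $(0,0)$, $(1,\varepsilon)$, $(1,2\varepsilon)$ is sent to the one with vertices $(0,0)$, $(1,c\varepsilon)$, $(1,2c\varepsilon)$ where $c=\alpha^{-3}$, and for $c>2$ these two triangles are disjoint and share only the vertex $p$. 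So the deduction ``the derivative at $p$ has a positive eigenvalue, hence some ray of $S$ maps back into $S$'' is not valid. (To be fair, this same difficulty is glossed over by the paper's own one-line proof: ruling out the planar case genuinely requires more than the bare hypotheses ``disjoint triangles sharing a fixed vertex'', e.g.\ the extra structure of the hexagon configuration in which the lemma is applied.) In short: the unipotent case $B$ is missing entirely, and the planar case needs an argument that the sector heuristic cannot supply.
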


\begin{proof}
Il suffit de montrer que $\g$ ne peut pas être conjugué à l'une des deux matrices suivantes:
$$
A=
\begin{array}{ccc}
\left(
\begin{array}{ccc}
\alpha &    0      & 0\\
0      & \alpha    & 0\\
0      &  0        & \alpha^{-2}
\end{array}
\right)

&
B=
\left(
\begin{array}{ccc}
1      & 1   & 0\\
0      & 1   & 0\\
0      & 0   & 1
\end{array}
\right)
\end{array}
$$
où $\alpha > 1$.

\begin{figure}
\begin{center}
\includegraphics[trim=5.5cm 14cm 0cm 0cm, width=8cm]{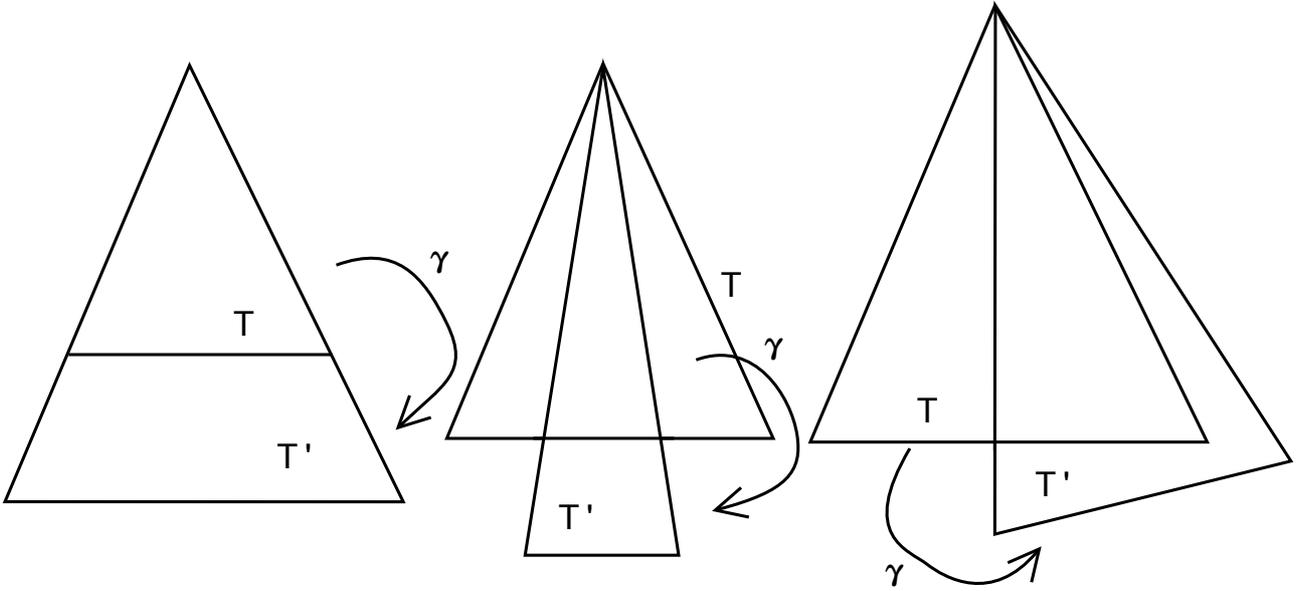}
\end{center}
\caption{Toutes les situations possibles si $\g$ est conjugué à $A$ ou $B$} \label{hexastrict}
\end{figure}

Il est facile de voir que l'image de tout triangle ouvert $T$ par un élément conjugué à l'une de ces deux matrices qui fixe un sommet de $T$ est un triangle $T'$ tel que l'intersection $T \cap T'$ est non vide.
\end{proof}

On peut à présent montrer la proposition suivante:

\begin{prop}\label{objmod}
Il existe des applications continues $\rho_1, \rho_2, \rho_3, \sigma_1, \sigma_2,\kappa_1,\mu_1,\nu_1,\kappa_2,\mu_2,\nu_2,$ \\ $\kappa_3,\mu_3,\nu_3$ de l'espace $\Q_{\delta_1,\delta_2,\delta_3}$ vers $\R$ telles que l'application $(\rho_1, \rho_2, \rho_3, \sigma_1, \sigma_2, \kappa_1,\mu_1,\nu_1,\kappa_2,\mu_2,\nu_2,$ \\$\kappa_3,\mu_3,\nu_3) : \Q_{\delta_1,\delta_2,\delta_3} \rightarrow \mathcal{E}$ est un homéomorphisme, où
$$
\mathcal{E} =
\left\{
\begin{array}{l}
(\rho_1, \rho_2, \rho_3, \sigma_1, \sigma_2, \kappa_1,\mu_1,\nu_1,\kappa_2,\mu_2,\nu_2,\kappa_3,\mu_3,\nu_3) \in \R^{14} \textrm{ tel que:}\\
\\
\rho_1, \rho_2, \rho_3 > 1,\, \sigma_1, \sigma_2 > 0\\
\kappa_1,\mu_1,\nu_1,\kappa_2,\mu_2,\nu_2,\kappa_3,\mu_3,\nu_3 > 0\\
\\
1.\, \sigma_1 \sigma_2 = \rho_1 \rho_2 \rho_3\\
2.\, \kappa_1 \mu_1 \nu_1 = \kappa_2 \mu_2 \nu_2 = \kappa_3 \mu_3 \nu_3 = 1\\
3.\, \kappa_1 \nu_2 \mu_3 = \kappa_2 \nu_3 \mu_1 = \kappa_3 \nu_1 \mu_2 = 1\\
\\
4.\, \kappa_i = \lambda(\delta_i), \textrm{ pour } i=1,...,3.\\
\\
5.\, -\mu_i+\nu_i(\rho_i -1) = \tau(\delta_i), \textrm{ pour } i=1,...,3.\\
\end{array}
\right\}.
$$
\end{prop}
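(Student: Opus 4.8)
On se propose de construire des coordonnées sur $\Q_{\delta_1,\delta_2,\delta_3}$ en ajoutant aux coordonnées de l'hexagone déjà obtenues trois familles de paramètres provenant des applications de recollement $\g_i$. On fixe un représentant $(T_0,T_1,T_2,T_3,\g_1,\g_2,\g_3)$ d'une classe de $\Q_{\delta_1,\delta_2,\delta_3}$ et l'on utilise l'action de $\s$ pour normaliser les trois sommets de $T_0$, à savoir $p_i=\overline{T_0}\cap\overline{T_{i+1}}\cap\overline{T_{i+2}}$ (indices modulo $3$), en $[1:0:0],[0:1:0],[0:0:1]$, plus une normalisation supplémentaire fixant le groupe diagonal résiduel $D$. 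Le lemme sur l'hexagone fournit alors les fonctions continues et $D$-invariantes $\rho_1,\rho_2,\rho_3,\sigma_1,\sigma_2$, avec $\rho_i>1$, $\sigma_j>0$ et $\sigma_1\sigma_2=\rho_1\rho_2\rho_3$, et l'homéomorphisme $\H\to\mathcal I$ qu'il donne servira à reconstruire $T_0,\dots,T_3$ à partir des $(\rho_i,\sigma_j)$.

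La première étape substantielle consiste à montrer que $\g_i$ fixe le sommet $p_i$ de $T_0$ et permute les sommets restants de $T_{i+1}$ et $T_{i+2}$ de l'unique manière compatible avec la combinatoire cyclique de l'hexagone. En effet $\g_1(T_2)=T_3$ et, d'après la description de l'hexagone donnée au paragraphe \ref{constr}, $\overline{T_2}\cap\overline{T_3}=\{p_1\}$; comme $\g_1$ envoie sommet sur sommet, soit $\g_1(p_1)=p_1$, soit $\g_1(p_1)$ est l'un des deux autres sommets de $T_3$, et ces dernières éventualités s'excluent à l'aide de la relation $\g_3\g_2\g_1=1$, de la convexité de l'hexagone, du lemme \ref{elimination} et du fait que $\g_i$, étant conjugué à $\delta_i$, n'est ni elliptique ni l'identité. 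Par conséquent, dans la base normalisée, $\g_1$ fixe $[1:0:0]$, envoie le deuxième sommet de $T_2$ commun à $\overline{T_0}$ sur celui de $T_3$, et envoie le sommet de $T_2$ n'appartenant pas à $\overline{T_0}$ sur celui de $T_3$; joint à $\det=1$, ceci détermine complètement une matrice $3\times3$ pour $\g_1$ dont les entrées non nulles sont trois réels strictement positifs $\kappa_1,\mu_1,\nu_1$ et des quantités déjà lues sur l'hexagone (les $q_j$, c'est-à-dire les $\rho_i$). Le même raisonnement appliqué cycliquement fournit $\kappa_2,\mu_2,\nu_2$ et $\kappa_3,\mu_3,\nu_3$.

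La lecture des contraintes est alors un calcul direct. La condition $\det\g_i=1$ donne $\kappa_i\mu_i\nu_i=1$. L'écriture de $\g_3\g_2\g_1=1$ à l'aide des trois matrices explicites donne $\kappa_1\nu_2\mu_3=\kappa_2\nu_3\mu_1=\kappa_3\nu_1\mu_2=1$, et l'on remarque que le produit des trois membres de gauche vaut automatiquement $1$ en vertu de $\kappa_i\mu_i\nu_i=1$, de sorte que seules deux de ces relations sont indépendantes. Comme $p_i$ est le point fixe répulsif (ou le point fixe parabolique) de $\g_i$ utilisé dans la construction du paragraphe \ref{constr}, la valeur propre de $\g_i$ en $p_i$ est la plus petite, d'où $\kappa_i=\lambda(\delta_i)$; et la trace du bloc $2\times2$ complémentaire de $\g_i$ est la somme des deux autres valeurs propres de $\delta_i$, ce qui, après substitution des entrées, est exactement $-\mu_i+\nu_i(\rho_i-1)=\tau(\delta_i)$. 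Ainsi le $14$-uplet $(\rho_1,\rho_2,\rho_3,\sigma_1,\sigma_2,\kappa_1,\mu_1,\nu_1,\kappa_2,\mu_2,\nu_2,\kappa_3,\mu_3,\nu_3)$ prend ses valeurs dans $\mathcal E$ et dépend continûment du point de $\Q_{\delta_1,\delta_2,\delta_3}$.

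Il reste à inverser cette application. Étant donné un point de $\mathcal E$, le lemme sur l'hexagone reconstruit $T_0,\dots,T_3$ en position normalisée à partir de $(\rho_i,\sigma_j)$, et les formules explicites ci-dessus reconstruisent les matrices $\g_i$ à partir de $(\kappa_i,\mu_i,\nu_i,\rho_i)$; on vérifie alors que ces $\g_i$ satisfont $\g_1(T_2)=T_3$, $\g_2(T_3)=T_1$, $\g_3(T_1)=T_2$ (elles fixent le bon sommet et déplacent correctement les sommets libres), que $\g_3\g_2\g_1=1$ (par les relations $\kappa_1\nu_2\mu_3=\dots=1$), et que $\g_i$ est conjugué à $\delta_i$ — on utilise ici le lemme \ref{elimination} pour voir que le spectre $\kappa_i,\dots$ ne peut être que du type hyperbolique, quasi-hyperbolique ou parabolique, puis la proposition \ref{consclassi} pour voir qu'un tel élément fixant un point du bord d'un ouvert proprement convexe est déterminé à conjugaison près par le couple $(\lambda,\tau)$, lequel vaut ici $(\lambda(\delta_i),\tau(\delta_i))$ d'après les conditions $4$ et $5$. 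Ceci fournit un inverse continu, et l'application $\Q_{\delta_1,\delta_2,\delta_3}\to\mathcal E$ est donc un homéomorphisme. Le point délicat est l'étape combinatoire identifiant le point fixe de $\g_i$ au sommet $p_i$, ainsi que le calcul matriciel explicite qui produit la relation précise $-\mu_i+\nu_i(\rho_i-1)=\tau(\delta_i)$; tout le reste est de la vérification de routine avec les coordonnées de l'hexagone.
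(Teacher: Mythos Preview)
Your approach is essentially the same as the paper's: normalize the vertices of $T_0$, express each $\g_i$ by its action on the lifted vertices (introducing the positive scalars $\kappa_i,\mu_i,\nu_i$), read off conditions (2.) and (3.) from $\det\g_i=1$ and $\g_3\g_2\g_1=1$, and identify (4.) and (5.) with the conjugacy class of $\delta_i$ via Lemma~\ref{elimination} and Proposition~\ref{consclassi}. You are even a bit more scrupulous than the paper in flagging the combinatorial step that $\g_i$ must fix the vertex $p_i$ and send $(p_{i+2},q_{i+1})$ to $(q_{i+2},p_{i+1})$; the paper simply writes down $\g_1(e_1)=\kappa_1 e_1$, $\g_1(f_2)=\mu_1 e_2$, $\g_1(e_3)=\nu_1 f_3$ without comment.

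There is one genuine omission in your inverse direction. To apply Proposition~\ref{consclassi} you need $\g_i\in\Aut(\Omega)$ for some properly convex open $\Omega$ with $p_i\in\partial\Omega$; the paper obtains this from Proposition~\ref{conv}, which you never invoke. Without it, Lemma~\ref{elimination} only tells you $\g_i$ is hyperbolic, quasi-hyperbolic or parabolic, but two distinct conjugacy classes (e.g.\ quasi-hyperbolic with $\alpha>\beta$ versus $\alpha<\beta$, or the two quasi-hyperbolic boundary curves of $\overline{\mathfrak R}$) can share the same pair $(\lambda,\tau)$ unless you know which fixed point sits on the boundary of a preserved convex set and is repelling. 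The paper's proof explicitly inserts Proposition~\ref{conv} between Lemma~\ref{elimination} and Proposition~\ref{consclassi}; you should do the same. A second, smaller point: before applying Lemma~\ref{elimination} the paper notes that ``un calcul facile montre que le spectre de $\g_i$ est r\'eel positif'' --- this follows from the explicit $2\times2$ block of $\g_i$ together with $\kappa_i\mu_i\nu_i=1$, and you should at least mention that this verification is required.
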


\begin{proof}
On se donne $(T_0,T_1,T_2,T_3) \in \mathcal{H}$ et on cherche $\g_1$ (resp. $\g_2$ resp. $\g_3$) conjuguée à $\delta_1$ (resp. $\delta_2$ resp. $\delta_3$) telle que $\g_1(T_2) = T_3, \, \g_2(T_3) = T_1, \, \g_3(T_1) = T_2$ et tel que $\g_3 \g_2 \g_1 =1$. Nous allons avoir besoin de travailler dans $\R^3$ plutôt que dans $\P$. On notera $e_i$ un relevé de $p_i$ et $f_i$ un relevé de $q_i$, pour $i=1,...,3$. On peut supposer que l'on a:

$$
\begin{array}{cc}
\left\{
\begin{array}{ccc}
e_1 & = & (1,0,0)\\
e_2 & = & (0,1,0)\\
e_3 & = & (0,0,1)\\
\end{array}
\right.

&

\left\{
\begin{array}{ccc}
f_1 & = & (-1,b_1,c_1)\\
f_2 & = & (a_2,-1,c_2)\\
f_3 & = & (a_3,b_3,-1)\\
\end{array}
\right.
\end{array}
$$

Il existe donc des réels $\kappa_1,\mu_1,\nu_1,\kappa_2,\mu_2,\nu_2,\kappa_3,\mu_3,\nu_3 > 0$ tels que:

$$
\begin{array}{cc cc c}
\left\{
\begin{array}{ccc}
\g_1(e_1) & = & \kappa_1 e_1\\
\g_1(f_2) & = & \mu_1 e_2\\
\g_1(e_3) & = & \nu_1 f_3\\
\end{array}
\right.
&
\,\,\,
&
\left\{
\begin{array}{ccc}
\g_2(e_2) & = & \kappa_2 e_2\\
\g_2(f_3) & = & \mu_2 e_3\\
\g_2(e_1) & = & \nu_2 f_1\\
\end{array}
\right.
&
\,\,\,
&
\left\{
\begin{array}{ccc}
\g_3(e_3) & = & \kappa_3 e_3\\
\g_3(f_1) & = & \mu_3 e_1\\
\g_3(e_2) & = & \nu_3 f_2\\
\end{array}
\right.
\end{array}
$$

On peut donc écrire les matrices de $\g_1,\g_2,\g_3$ dans la base $(e_1,e_2,e_3)$:

$$
\begin{array}{ccc ccc}
\g_1 & = &
\left(
\begin{array}{ccc}
\kappa_1 &  \nu_1 c_2 a_3 + \kappa_1 a_2  &  \nu_1 a_3 \\
 0       &  \nu_1 c_2 b_3 - \mu_1         &  \nu_1 b_3 \\
 0       & -\nu_1 c_2                     & -\nu_1     \\
\end{array}
\right)
&
\g_2 & = &
\left(
\begin{array}{ccc}
-\nu_2     & 0        & -\nu_2 a_3                    \\
 \nu_2 b_1 & \kappa_2 &  \nu_2 a_3 b_1 + \kappa_2  b_3 \\
 \nu_2 c_1 & 0        &  \nu_2 a_3 c_1 - \mu_2       \\
\end{array}
\right)
\end{array}
$$

$$
\begin{array}{ccc}
\g_3 & = &
\left(
\begin{array}{ccc}
 \nu_3 b_1 a_2  - \mu_3         &  \nu_3 a_2   & 0 \\
-\nu_3 b_1                      & -\nu_3       & 0 \\
 \nu_3 b_1 c_2  + \kappa_3 c_1  &  \nu_3 c_2   & \kappa_3 \\
\end{array}
\right)
\end{array}
$$

On peut calculer le déterminant de $\g_1,\g_2,\g_3$, on trouve:
$$\det(\g_1)= \kappa_1 \mu_1 \nu_1,\, \det(\g_2)= \kappa_2 \mu_2 \nu_2 ,\, \det(\g_3)= \kappa_3 \mu_3 \nu_3$$

De plus, il est clair que le produit $\g_3\g_2\g_1$ vérifie:

$$\g_3\g_2\g_1(e_1) = (\kappa_1 \nu_2 \mu_3)\, e_1,\, \g_3\g_2\g_1(f_2) = (\kappa_2 \nu_3 \mu_1)\, f_2 ,\, \g_3\g_2\g_1(e_3) = (\kappa_3 \nu_1 \mu_2)\, e_3$$

Il vient que le produit $\g_3\g_2\g_1$ vérifie $\g_3\g_2\g_1=1$ si et seulement si $\kappa_1 \nu_2 \mu_3 = \kappa_2 \nu_3 \mu_1 = \kappa_3 \nu_1 \mu_2 = 1$.

Il  reste à comprendre la condition "$\g_i$ est conjuguée à $\delta_i$" pour $i=1,...,3$. Nous allons montrer que l'élément $\g_i$ est conjugué à $\delta_i$ pour $i=1,...,3$ si et seulement si $\kappa_i = \lambda(\delta_i)$  et $-\mu_i+\nu_i(\rho_i -1) = \tau(\delta_i)$ pour $i=1,...,3$.

Supposons que $\kappa_i = \lambda(\delta_i)$  et $-\mu_i+\nu_i(\rho_i -1) = \tau(\delta_i)$ pour $i=1,...,3$. Alors un calcul facile montre que le spectre de $\g_i$ est réel positif. Par conséquent, le lemme \ref{elimination}  montre que $\g_i$ est hyperbolique ou quasi-hyperbolique ou parabolique. La proposition \ref{conv}  montre que les éléments $\g_1$, $\g_2$ et $\g_3$ préservent un ouvert proprement convexe. La proposition \ref{consclassi} montre que l'élément $\g_i$ est conjugué à l'élément $\delta_i$ pour $i=1,...,3$ puisque $\lambda(\gamma_i) = \kappa_i = \lambda(\delta_i)$ et $\tau(\g_i)=-\mu_i+\nu_i(\rho_i -1) = \tau(\delta_i)$.

Supposons à présent que $\g_i$ est conjuguée à $\delta_i$ pour $i=1,...,3$ alors clairement $\lambda(\gamma_i) = \kappa_i = \lambda(\delta_i)$ et $\tau(\g_i)=-\mu_i+\nu_i(\rho_i -1) = \tau(\delta_i)$ pour $i=1,...,3$.
\end{proof}

\begin{proof}[Démonstration de la proposition \ref{modQ}]
La proposition \ref{objmod} montre qu'il ne  reste plus qu'à montrer que $\mathcal{E}$ est homéomorphe à $\R^2$. On pose $\lambda_i = \lambda(\delta_i)$ et $\tau_i = \tau(\delta_i)$, pour i=1,...,3. On commence par résoudre les équations (2.), (3.), (4.), elles forment un système de 6 équations qui se ramène facilement à un système d'équations linéaire de rang 5, on obtient:

$$
\begin{array}{lll}
\kappa_1 = \lambda_1                                         & \mu_1 = \sqrt{\frac{\lambda_3}{\lambda_1 \lambda_2}} s          & \nu_1 = \sqrt{\frac{\lambda_2}{\lambda_1 \lambda_3}} s^{-1} \\

\nu_2  = \sqrt{\frac{\lambda_3}{\lambda_2 \lambda_1}} s^{-1} & \kappa_2 = \lambda_2                                            & \mu_2 = \sqrt{\frac{\lambda_1}{\lambda_2 \lambda_3}} s \\

\mu_3 = \sqrt{\frac{\lambda_2}{\lambda_3 \lambda_1}} s       & \nu_3  = \sqrt{\frac{\lambda_1}{\lambda_3 \lambda_2}} s^{-1}    & \kappa_3 = \lambda_3\\
\end{array}
$$

Avec un paramètre $s$ qui vérifie $s>0$. On peut à présent résoudre les équations (5.), on obtient pour $i=1...3$:

$$
\rho_i = 1+ \frac{\tau_i+\gamma_i}{\beta_i} = 1 + \frac{\tau_i + \sqrt{\frac{\lambda_{i+2}}{\lambda_{i}\lambda_{i+1}}}s}{\sqrt{\frac{\lambda_{i+1}}{\lambda_{i}\lambda_{i+2}}}s^{-1}}= 1+\sqrt{\frac{\lambda_{i}\lambda_{i+2}}{\lambda_{i+1}}} \tau_i s + \frac{\lambda_{i+2}}{\lambda_{i+1}} s^2
$$

Pour finir, il faut résoudre (1.), on obtient:

$$\sigma_1= t \rho_2 \textrm{ et } \sigma_2 = \frac{\rho_1 \rho_3}{t}$$

Avec un paramètre $t$ qui vérifie $t>0$.
\end{proof}

La proposition \ref{panta-combi}  montre que $\beta_f(P_{\delta_1,\delta_2,\delta_3})$ est homéomorphe à $\Q_{\delta_1,\delta_2,\delta_3}$, on a donc le corollaire suivant:

\begin{coro}\label{modpant}
Soient $\delta_1,\delta_2,\delta_3 \in \s$ des éléments hyperboliques, quasi-hyperboliques ou paraboliques alors l'espace $\beta_f(P_{\delta_1,\delta_2,\delta_3})$ est homéomorphe à $\R^2$.
\end{coro}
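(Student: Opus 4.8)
The statement is an immediate consequence of the two main results of this section, so the plan is simply to chain together the homeomorphisms already established. First I would invoke Proposition \ref{panta-combi}, which identifies $\beta_f(P_{\delta_1,\delta_2,\delta_3})$ with the moduli space $\Q_{\delta_1,\delta_2,\delta_3}$ of combinatorial data: four épointés triangles $T_0,T_1,T_2,T_3$ whose union is a convex hexagon with exactly six sides, together with the gluing maps $\g_1,\g_2,\g_3$ conjugate to $\delta_1,\delta_2,\delta_3$, satisfying $\g_3\g_2\g_1=1$ and $\g_1(T_2)=T_3$, $\g_2(T_3)=T_1$, $\g_3(T_1)=T_2$, all taken modulo the free and proper action of $\s$. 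The real content is in the construction of this bijection — the "spiraling triangles" decomposition in one direction, and the convexity Lemma \ref{coeur} together with Proposition \ref{conv} in the other — but that work is done, so here I would only cite it.

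Next I would invoke Proposition \ref{modQ}, asserting $\Q_{\delta_1,\delta_2,\delta_3}\cong\R^2$. Concretely, Proposition \ref{objmod} provides an explicit homeomorphism of $\Q_{\delta_1,\delta_2,\delta_3}$ onto the semialgebraic set $\mathcal{E}\subset\R^{14}$ defined by the listed relations, and the proof of Proposition \ref{modQ} solves that system: relations $(2),(3),(4)$ determine $\kappa_i,\mu_i,\nu_i$ up to one positive parameter $s$, relations $(5)$ express each $\rho_i$ as an explicit function of $s$, and relation $(1)$ introduces a second positive parameter $t$ via $\sigma_1=t\rho_2$, $\sigma_2=\rho_1\rho_3/t$. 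Thus $(s,t)\in(0,\infty)^2\cong\R^2$ parametrizes $\mathcal{E}$, hence $\Q_{\delta_1,\delta_2,\delta_3}$. Composing the two homeomorphisms $\beta_f(P_{\delta_1,\delta_2,\delta_3})\cong\Q_{\delta_1,\delta_2,\delta_3}\cong\R^2$ gives the corollary.

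The one verification worth flagging is that the hexagon constraint $\rho_i>1$ is automatically satisfied by the closed-form expressions for $\rho_i(s)$: since each $\lambda(\delta_i)>0$, each $\tau(\delta_i)=\lambda_2+\lambda_3>0$ (the spectrum of an element in $\widehat{\mathfrak{R}}$ being real and positive), and $s>0$, the formula $\rho_i=1+(\,\cdot\,)s+(\,\cdot\,)s^2$ has both added terms positive; similarly $\kappa_i,\mu_i,\nu_i>0$ and $\sigma_1,\sigma_2>0$ for all $(s,t)$. So the parameter domain is genuinely all of $(0,\infty)^2$, with no further restriction, and this is exactly the place where the hypothesis that $\delta_i$ be hyperbolic, quasi-hyperbolic or parabolic is used. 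There is no substantive obstacle at the level of the corollary itself: all the difficulty was already absorbed into Propositions \ref{panta-combi} and \ref{modQ}, the single hardest ingredient being the convexity Lemma \ref{coeur}, which replaces "à la main" the appeal to Koszul's theorem used by Goldman in the compact case.
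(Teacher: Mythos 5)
Your proposal is correct and follows exactly the paper's route: the corollary is obtained by composing the homeomorphism $\beta_f(P_{\delta_1,\delta_2,\delta_3})\cong\Q_{\delta_1,\delta_2,\delta_3}$ of Proposition \ref{panta-combi} with the homeomorphism $\Q_{\delta_1,\delta_2,\delta_3}\cong\R^2$ of Proposition \ref{modQ}. Your additional check that the explicit formulas for $\rho_i(s)$ automatically satisfy $\rho_i>1$ on all of $(0,\infty)^2$ is a correct reading of what the proof of Proposition \ref{modQ} already establishes.
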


Ceci conclut la démonstration du cinquième point du théorème \ref{module} et par conséquent sa démonstration aussi.

\section{Composantes connexes d'espace de représentations}

\subsection{Préliminaires}

\subsubsection{Le cas compact}

Le but de cette partie est de montrer que les structures projectives proprement convexes sont en fait des objets très naturels. Précisons notre pensée en donnant le théorème suivant qui montre que les structures projectives proprement convexes sur les surfaces compacts sont des objets naturels.

\begin{theo}[Koszul-Choi-Goldman]
L'espace des modules des structures projectives proprement convexes sur une surface compacte $S$ est une composante connexe de l'espace des représentations, à conjugaison près, du groupe fondamental de $S$ dans $\s$.
\end{theo}

L'ouverture de l'espace des modules des structures projectives proprement convexes sur une surface compacte a été démontrée par Koszul dans \cite{Kos}. Choi et Goldman ont montré dans \cite{ChGo} que cet espace est fermé. Goldman a montré dans \cite{Gold1} que cet espace est connexe.

\subsubsection{Espaces de représentations}

Nous allons montrer un résultat analogue dans le cas non compact. Comme le groupe fondamental d'une surface de type fini non compacte est un groupe libre, on ne peut pas espérer avoir ce genre de résultat en prenant l'ensemble des représentations en entier.

\begin{figure}[!h]
\centerline{\psfig{figure=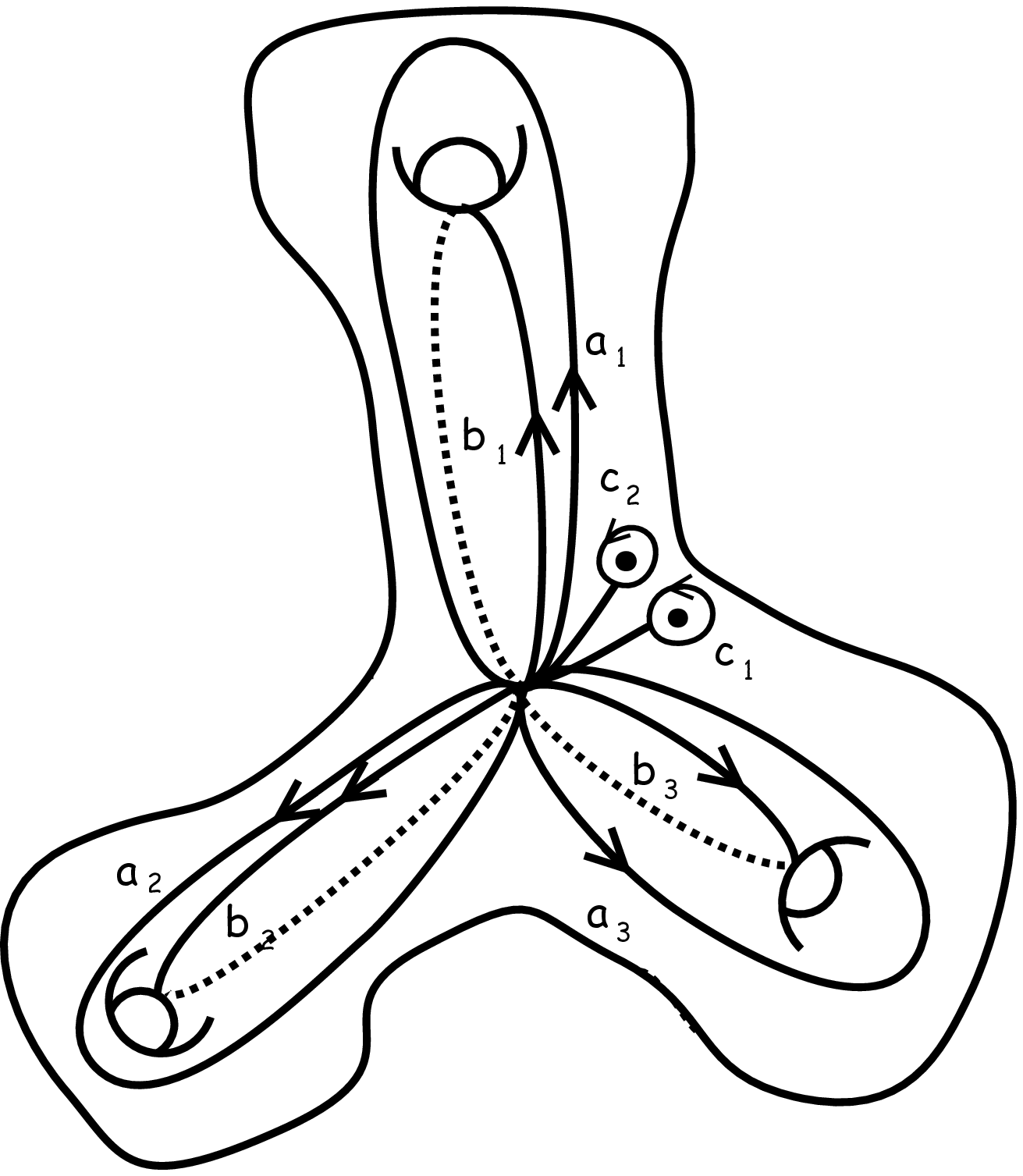, trim=0cm 9cm 0cm 0cm, width=6cm}}
\caption{} \label{presentation}
\end{figure}

Pour avoir une présentation du groupe fondamental de la surface $\Sigma_{g,p}$, on se donne un point base $x_0$ de $\Sigma_{g,p}$ et on peut choisir $2g+p$ lacets $(a_i)_{i=1...g}$, $(b_i)_{i=1...g}$ et $(c_j)_{j=1...p}$ comme sur la figure \ref{presentation}, de cet façon les lacets $(a_i)_{i=1...g}$ et $(b_i)_{i=1...g}$ font le tour des anses de $\Sigma_{g,p}$, alors que les lacets $(c_j)_{j=1..p}$ font le tour des pointes de $\Sigma_{g,p}$, ainsi, le groupe fondamental $\pi_1(\Sigma_{g,p})$ de $\Sigma_{g,p}$ admet alors la présentation:

$$\pi_1(\Sigma_{g,p}) = <a_1 ,... ,\, a_g ,\, b_1 ,... ,\, b_g ,\, c_1 ,... ,\, c_p \,|\, [a_1 ,\, b_1] \cdot \cdot \cdot [a_g ,\, b_g] c_1 \cdot \cdot \cdot c_p =1>$$

\underline{Dans la suite du texte on suppose que $p$ est un entier supérieur ou égal à 1.} Le groupe fondamental $\pi_1(\Sigma_{g,p})$ de $\Sigma_{g,p}$ est alors un groupe libre à $2g+p-1$ générateurs. Les lacets élémentaires (à orientation près) de $\Sigma_{g,p}$ sont donnés par les éléments $(c_j)_{j=1,..., p}$ de $\pi_1(\Sigma_{g,p})$, on gardera cette notation tout au long du texte.

Le lemme suivant qui est très classique  sera utile.

\begin{lemm}\label{varpara}
Soit $P=\{ \g \in \s \,|\, \g \textrm{ est parabolique} \}$, l'ensemble $P$ des éléments paraboliques de $\s$ est une sous-variété de $\s$ de dimension 6.
\end{lemm}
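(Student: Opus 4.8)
The plan is to recognise $P$ as a single conjugacy class of $\s$ and to compute its dimension by the orbit--stabiliser principle. By Proposition~\ref{classi2}, an element $\g \in \s$ is parabolic exactly when it is conjugate to the regular unipotent element $u_0$ (the matrix with $1$'s on the diagonal and on the first superdiagonal and $0$'s elsewhere). Hence $P$ is precisely the orbit of $u_0$ under the conjugation action of $\s$ on itself, a smooth (indeed algebraic) action.

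First I would record an intrinsic description of $P$. Putting $N = \g - I$, one checks that $\g \in P$ if and only if $N$ is nilpotent with a single Jordan block of size $3$, that is $N^3 = 0$ and $N^2 \neq 0$ (and any $\g$ with $(\g-I)^3 = 0$ is automatically unipotent, hence of determinant $1$). Therefore $P = V \setminus W$, where $V = \{ \g \in \s \mid (\g - I)^3 = 0 \}$ and $W = \{ \g \in \s \mid (\g - I)^2 = 0 \}$ are closed subsets of $\s$ (each cut out by polynomial equations in the entries) and $W \subseteq V$. Consequently $P$ is open in the closed set $V$, hence locally closed in $\s$.

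Now I would invoke the standard fact that an orbit of a smooth Lie group action which is locally closed is an embedded submanifold, $\s$-equivariantly diffeomorphic to $\s / Z(u_0)$, where $Z(u_0)$ denotes the centraliser of $u_0$ in $\s$; equivalently, the orbit map $g \mapsto g u_0 g^{-1}$ is $\s$-equivariant, hence of constant rank, and the constant-rank theorem combined with the local closedness just established shows that $P$ is an embedded submanifold of $\s$, of dimension $\dim \s - \dim Z(u_0) = 8 - \dim Z(u_0)$. It remains to compute $\dim Z(u_0)$, the only genuine computation: writing $u_0 = I + N_0$ with $N_0$ the standard regular nilpotent, an element commutes with $u_0$ iff it commutes with $N_0$, and the centraliser of $N_0$ in $M_3(\R)$ is the algebra $\R[N_0] = \{ aI + bN_0 + cN_0^2 \}$ of dimension $3$; such a matrix is upper triangular with all diagonal entries equal to $a$, so its determinant is $a^3$, and it lies in $\s$ iff $a = 1$. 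Hence $Z(u_0) = \{ I + bN_0 + cN_0^2 \mid b, c \in \R \}$ has dimension $2$, and $\dim P = 8 - 2 = 6$.

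The main obstacle is not any single deep step but getting the embeddedness right: a group orbit is a priori only an \emph{immersed} submanifold, so one genuinely needs the local-closedness observation $P = V \setminus W$ (or, alternatively, the general fact that orbits of algebraic group actions are locally closed) in order to conclude that $P$ is a bona fide submanifold of $\s$; once that is in hand the dimension count is routine.
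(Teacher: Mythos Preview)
Your argument is correct and follows essentially the same route as the paper: both identify $P$ with the locally closed set $\{\,\g : (\g-I)^3=0,\ (\g-I)^2\neq 0\,\}$, observe that it is the single conjugacy class of the regular unipotent, and compute its dimension as $\dim\s - \dim Z(u_0) = 8-2 = 6$ after checking that the centraliser consists of the matrices $I + aN_0 + bN_0^2$. Your version is in fact a bit more explicit about the embeddedness issue (immersed versus embedded orbit), which the paper handles more tersely via the algebraic-variety viewpoint.
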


\begin{proof}
Commençons par remarquer que $P=\{ \g \in \s \,|\, (\g-1)^3 =0 \textrm{ et } (\g-1)^2 \neq 0 \}$. L'ensemble $P$ est donc un ouvert de Zariski du fermé de Zariski $\{ \g \in \s \,|\, (\g-1)^3 =0 \}$. Par conséquent $P$ est une variété algébrique. Le groupe $\s$ agit par conjugaison sur $P$ et cette action est transitive. Il vient que $P$ est une variété algébrique lisse. Le stabilisateur $Stab$ de la matrice:

$$
g=
\left(
\begin{array}{ccc}
1 & 1 & 0\\
0 & 1 & 1\\
0 & 0 & 1
\end{array}
\right)
\in P
$$

est le groupe:

$$
Stab=
\left\{
\begin{array}{c|l}
\left(
\begin{array}{ccc}
1 & a & b\\
0 & 1 & a\\
0 & 0 & 1
\end{array}
\right)
&
a,b \in \R
\end{array}
\right\}.
$$

L'ensemble $P$ est donc une sous-variété de $\s$ de dimension 8-2=6.
\end{proof}

On considère les quatre espaces suivants:

\begin{tabular}{lcl}
$H$   & $=$ & $\s^{2g+p}$\\

$H_P$ & $= $& $\s^{2g} \times P^p$\\
\end{tabular}

$H^{irr}=$
$
\left\{
\begin{tabular}{l|p{6cm}}
$(A_1 ,... ,\, A_g ,\, B_1 ,... ,\, B_g ,\, C_1 ,... ,\, C_p) \in H$ & Le groupe engendré par $(A_1 ,... ,\, A_g ,\, B_1 ,... ,\, B_g ,\, C_1 ,... ,\, C_p)$ est irréductible\\
\end{tabular}
\right\}
$

$H^{irr}_P =$
$
\left\{
\begin{tabular}{l|p{6cm}}
$(A_1 ,... ,\, A_g ,\, B_1 ,... ,\, B_g ,\, C_1 ,... ,\, C_p) \in H_P$ & Le groupe engendré par $(A_1 ,... ,\, A_g ,\, B_1 ,... ,\, B_g ,\, C_1 ,... ,\, C_p)$ est irréductible \\
\end{tabular}
\right\}
$

\par{
Ces espaces sont des sous-variétés. Pour le premier, ce fait est évident. Pour le second, le lemme \ref{varpara} rend ce fait évident. L'espace $H^{irr}$ (resp. $H^{irr}_P$) est un ouvert de $H$ (resp. $H_P$) par conséquent c'est une variété.
}
\\
\par{
On défini sur $H$ l'application différentiable $\mathrm{R} :H \rightarrow \s$ donné par:
$$R: (a_1 ,... ,\, a_g ,\, b_1 ,... ,\, b_g ,\, c_1 ,... ,\, c_p) \mapsto [a_1 ,\, b_1] \cdot \cdot \cdot [a_g ,\, b_g] c_1 \cdot \cdot \cdot c_p$$.

L'image réciproque de $\{ 1 \}$ par $R$ dans $H$ s'identifie naturellement avec l'espace des représentations du groupe fondamental de $\Sigma_{g,p}$ dans $\s$. L'identification est obtenue par l'évaluation d'une représentation $\rho:\pi_1(\Sigma_{g,p})\rightarrow \s$ sur les éléments $a_1 ,... ,\, a_g ,\, b_1 ,... ,\, b_g ,\, c_1 ,... ,\, c_p$.
}
\\
\par{
On dira qu'une représentation de $\Sigma_{g,p}$ dans $\s$ \emph{conserve les paraboliques} lorsque l'image des lacets $(c_j)_{j=1..p}$ est parabolique.
}
\\
\par{
Bien entendu, l'image réciproque de $\{ 1 \}$ par $R$ dans $H_P$ (resp. $H^{irr}$ resp. $H_P^{irr}$) s'identifie naturellement avec l'espace des représentations qui conservent les paraboliques (resp. irréductibles resp. irréductibles et qui conservent les paraboliques) du groupe fondamental de $\Sigma_{g,p}$.
}
\\
\par{
On notera $Hom_P= R_{|H_P}^{-1}\{1\} $ (resp. $Hom_P^{irr}= R_{|H_P^{irr}}^{-1}\{1\}$) l'ensemble des représentations (resp. irréductibles) du groupe fondamental de $\Sigma_{g,p}$ dans $\s$ qui conservent les paraboliques.
}
\\
\par{
Goldman a montré le lemme suivant grâce au calcul de Fox dans \cite{Gold3}.
}

\begin{lemm}\label{rang}
L'application $\mathrm{R} :H_P \rightarrow \s$ donné par:

$$R: (a_1 ,... ,\, a_g ,\, b_1 ,... ,\, b_g ,\, c_1 ,... ,\, c_p) \mapsto [a_1 ,\, b_1] \cdot \cdot \cdot [a_g ,\, b_g] c_1 \cdot \cdot \cdot c_p$$

\noindent  est une application différentiable dont le rang au point $(a_1 ,... ,\, a_g ,\, b_1 ,... ,\, b_g ,\, c_1 ,... ,\, c_p)$ est égale à la codimension du centralisateur du groupe engendré par $a_1 ,... ,\, a_g ,\, b_1 ,... ,\, b_g ,\, c_1 ,... ,\, c_p$ dans $\s$.
\end{lemm}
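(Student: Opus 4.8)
The plan is to reduce the statement to computing the rank of the differential of $R$ at a point $\rho=(a_1,\dots,c_p)$ of $H_P$. Differentiability is routine, $R$ being the restriction to the submanifold $H_P=\s^{2g}\times P^p$ (a submanifold by Lemma~\ref{varpara}) of the polynomial map $\s^{2g+p}\to\s$. Write $\G$ for the subgroup of $\s$ generated by $a_1,\dots,c_p$ and let $\s$ act on $\mathfrak g=\mathfrak{sl}_3(\R)$ by $\mathrm{Ad}$. Trivializing $T\s$ by right translations, one identifies $T_\rho H_P$ with the space $Z$ of cocycles $u$ of the free group $F=\langle a_1,\dots,c_p\rangle$ (so $u(\alpha\beta)=u(\alpha)+\mathrm{Ad}(\rho(\alpha))u(\beta)$) that satisfy $u(c_j)\in\mathrm{Im}(\mathrm{Id}-\mathrm{Ad}(\rho(c_j)))$ for every $j$; this subspace is exactly the tangent space to the conjugacy class $P$ at $\rho(c_j)$. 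Fox's calculus --- concretely the fundamental identity $w-1=\sum_i\frac{\partial w}{\partial x_i}(x_i-1)$ in $\Z[F]$, which is what Goldman exploits in \cite{Gold3} --- then shows that, in this trivialization, $dR_\rho$ is the linear map $Z\to\mathfrak g$ sending $u$ to $u(r)$, where $r=[a_1,b_1]\cdots[a_g,b_g]c_1\cdots c_p$.

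The next step is to identify $\ker(dR_\rho)$. Since $\rho(r)=1$, the cocycle relation forces any $u$ with $u(r)=0$ to vanish on the whole normal closure of $r$, hence to descend to a cocycle of $\pi_1(\Sigma_{g,p})=F/\langle\langle r\rangle\rangle$; the condition $u(c_j)\in\mathrm{Im}(\mathrm{Id}-\mathrm{Ad}(\rho(c_j)))$ then becomes the requirement that the descended cocycle be a coboundary in restriction to each peripheral subgroup $\langle c_j\rangle$. So $\ker(dR_\rho)$ is precisely the space $Z^1_{\mathrm{par}}(\pi_1(\Sigma_{g,p}),\mathfrak g)$ of parabolic cocycles, and the proof is reduced to a dimension count.

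On one side, $\dim Z=2g\dim\s+p\dim P=16g+6p$. On the other side, put $h=\dim\mathfrak g^{\G}=\dim H^0(\pi_1(\Sigma_{g,p}),\mathfrak g)$. As $\pi_1(\Sigma_{g,p})$ is free of rank $2g+p-1$ and $\chi(\Sigma_{g,p})=2-2g-p$, one gets $\dim H^1(\pi_1(\Sigma_{g,p}),\mathfrak g)=h+8(2g+p-2)$; since each $\rho(c_j)$ is parabolic its centralizer in $\mathfrak g$ is $2$-dimensional, so $\dim H^1(\langle c_j\rangle,\mathfrak g)=2$. Fixing a compact surface $S$ of genus $g$ with $p$ boundary circles homotopy equivalent to $\Sigma_{g,p}$, with $\pi_1(\partial S)$ carried by the $c_j$, the long exact sequence of $(S,\partial S)$ with coefficients in $\mathfrak g$, together with Poincar\'e--Lefschetz duality and the self-duality of $\mathfrak g$ as a $\G$-module (via the $\mathrm{Ad}$-invariant nondegenerate Killing form), gives that the cokernel of $H^1(S,\mathfrak g)\to\bigoplus_j H^1(\langle c_j\rangle,\mathfrak g)$ has dimension $\dim H^2(S,\partial S;\mathfrak g)=\dim H_0(S;\mathfrak g)=h$. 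Hence $\dim H^1_{\mathrm{par}}=2h+16g+6p-16$, and adding $\dim B^1(\pi_1(\Sigma_{g,p}),\mathfrak g)=8-h$ yields $\dim Z^1_{\mathrm{par}}(\pi_1(\Sigma_{g,p}),\mathfrak g)=h+16g+6p-8$. Therefore the rank of $dR_\rho$ is $\dim Z-\dim\ker(dR_\rho)=8-h$; as $\mathfrak g^{\G}$ is the Lie algebra of the centralizer of $\G$ in $\s$, this is the codimension of that centralizer, as claimed.

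The main obstacle is the dimension count of the third paragraph: it relies on Poincar\'e--Lefschetz duality for the surface with boundary with the local system $\mathfrak g$ and on the self-duality of the adjoint representation of $\s$. Goldman's argument in \cite{Gold3} is organized differently, reading the rank directly off the matrix of Fox derivatives of $r$, but the key input is the same duality. As a sanity check one can also prove directly that $\mathrm{Im}(dR_\rho)$ is the linear span of the vectors $(\mathrm{Id}-\mathrm{Ad}(\gamma))X$ with $\gamma\in\G$ and $X\in\mathfrak g$: the inclusion $\subseteq$ is elementary, since $r$ lies in the subgroup of $F$ generated by the commutators $[a_i,b_i]$ and the $c_j$ while each $u(c_j)$ is constrained to lie in $\mathrm{Im}(\mathrm{Id}-\mathrm{Ad}(\rho(c_j)))$; the reverse inclusion again comes down to the dimension count, and the span in question has codimension $h$ by nondegeneracy of the Killing form.
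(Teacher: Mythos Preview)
The paper does not actually prove this lemma; it simply attributes it to Goldman's Fox--calculus computation in \cite{Gold3}. Your sketch therefore goes well beyond what the paper supplies, and the cohomological dimension count you give is correct at points of $R^{-1}(1)$, which is the only case the paper ever uses (to deduce that $\mathrm{Hom}_P^{irr}$ is a manifold of the right dimension).

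Your route through parabolic group cohomology and Poincar\'e--Lefschetz duality for the pair $(S,\partial S)$ is a clean, modern repackaging of what Goldman does by direct manipulation of the Fox Jacobian of the relator; the two computations are equivalent, and yours has the virtue of making the role of the parabolic constraints transparent --- the condition that each $c_j$ stay in its conjugacy class becomes precisely the peripheral--coboundary condition defining $Z^1_{\mathrm{par}}$. The numerical count ($\dim Z=16g+6p$, $\dim Z^1_{\mathrm{par}}=h+16g+6p-8$, hence rank $8-h$) is correct.

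One caveat worth flagging: the lemma as stated in the paper concerns an arbitrary point of $H_P$, whereas your second and third paragraphs explicitly assume $\rho(r)=1$ (this is what lets the cocycle descend to $\pi_1(\Sigma_{g,p})$ and what makes the Poincar\'e--Lefschetz argument available). Your closing remark about identifying $\mathrm{Im}(dR_\rho)$ with the span $V=\sum_{\gamma\in\G}\mathrm{Im}(\mathrm{Id}-\mathrm{Ad}(\rho(\gamma)))$ still falls back on the same dimension count for the reverse inclusion, so it does not remove the hypothesis. This is harmless for the application in the paper, but if you want to prove the lemma exactly as stated you would need to establish $\mathrm{Im}(dR_\rho)=V$ directly from the Fox expansion of $u(r)$, without passing through the cohomology of the quotient group.
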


%
%
%

\begin{coro}
L'espace $Hom_P^{irr}$ est une variété de dimension $16g+6p-8$.
\end{coro}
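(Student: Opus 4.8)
The plan is to obtain $Hom_P^{irr}$ as a regular fibre of the smooth map $R$ restricted to the open set $H^{irr}_P$, and to read off the dimension by the submersion theorem. First I would compute the dimension of $H_P$: since $\dim\s=8$ and since Lemma~\ref{varpara} asserts that the submanifold $P\subset\s$ of parabolic elements has dimension $6$, we get $\dim H_P=\dim(\s^{2g}\times P^p)=8\cdot 2g+6p=16g+6p$. As $H^{irr}_P$ is an open subset of $H_P$, it is a submanifold of the same dimension $16g+6p$, and $R_{|H^{irr}_P}$ is smooth by Lemma~\ref{rang}.

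Next I would control the rank of $R_{|H^{irr}_P}$. By Lemma~\ref{rang}, at a point of $H^{irr}_P$ this rank equals the codimension in $\s$ of the centralizer of the subgroup $\G$ generated by the coordinates of that point. The key observation is that, $\G$ acting irreducibly on $\R^3$, its centralizer in $\s$ is trivial: by Schur's lemma the commutant $\mathrm{End}_\G(\R^3)$ is a finite-dimensional division algebra over $\R$, hence $\R$, $\mathbb{C}$ or $\mathbb{H}$; since $\R^3$ is a free module over this algebra and $3$ is odd, only $\R$ can occur, so $\mathrm{End}_\G(\R^3)=\R\,\mathrm{Id}$. The centralizer of $\G$ in $\mathrm{GL}_3(\R)$ thus consists of homotheties, and a homothety of ratio $\lambda$ lies in $\s$ only if $\lambda^3=1$, i.e.\ $\lambda=1$ in $\R$. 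Hence the centralizer is $\{\mathrm{Id}\}$, of dimension $0$, and the rank of $R_{|H^{irr}_P}$ at that point equals $8=\dim\s$.

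Finally I would conclude: having constant rank $8=\dim\s$ everywhere on $H^{irr}_P$, the map $R_{|H^{irr}_P}$ is a submersion, so $1$ is a regular value and $Hom_P^{irr}=R_{|H^{irr}_P}^{-1}(\{1\})$ is a submanifold of $H^{irr}_P$ of codimension $8$; whence $\dim Hom_P^{irr}=16g+6p-8$. The only non-formal ingredient is the identification of the centralizer of an irreducible subgroup of $\mathrm{SL}_3(\R)$ with the trivial group, which rests essentially on the parity of $3$ (for an even-dimensional representation irreducibility alone would not suffice, since a non-split commutant could appear); everything else is the dimension count, the openness of $H^{irr}_P$, and the constant-rank/submersion theorem.
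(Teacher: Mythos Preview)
Your proof is correct and follows essentially the same route as the paper's: compute $\dim H_P=16g+6p$ via Lemma~\ref{varpara}, pass to the open set $H^{irr}_P$, and use Lemma~\ref{rang} together with the submersion theorem to get a codimension-$8$ submanifold. The one difference is that the paper's proof leaves the triviality of the centralizer implicit, whereas you justify it explicitly via Schur's lemma and the oddness of $3$; this is a useful clarification rather than a different argument.
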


\begin{proof}
Le lemme \ref{varpara}  montre l'espace $H_P$ est une variété de dimension $8 \times 2g + 6 \times p$. Le sous-ensemble $H^{irr}_P$ de $H_P$ est un ouvert et donc une sous-variété de $H_P$ de même dimension. Le lemme \ref{rang}  montre que le sous-ensemble $Hom_P^{irr} = R_{|H_P^{irr}}^{-1}\{1\}$ est une sous-variété de $H^{irr}_P$ de dimension $8 \times 2g + 6 \times p - 8 = 16g+6p-8$.
\end{proof}

On peut trouver une démonstration de Goldman de la proposition suivante dans \cite{Gold1}.

\begin{prop}
Le groupe $\s$ agit proprement et librement sur l'espace $H^{irr}$.
\end{prop}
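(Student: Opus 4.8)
The plan is to treat freeness and properness of the conjugation action $g\cdot(M_1,\dots,M_{2g+p})=(gM_1g^{-1},\dots,gM_{2g+p}g^{-1})$ separately, the common ingredient being that, for a $3$-dimensional real representation, irreducibility over $\R$ already forces \emph{absolute} irreducibility. Indeed, let $\G$ be the subgroup generated by a point of $H^{irr}$; by Schur's lemma over $\R$ its commutant in $M_3(\R)$ is a finite-dimensional division $\R$-algebra acting on $\R^3$, hence isomorphic to $\R$, $\mathbb{C}$ or $\mathbb{H}$. Since $\dim_\R\R^3=3$ is divisible neither by $2$ nor by $4$, the cases $\mathbb{C}$ and $\mathbb{H}$ cannot occur, so the commutant is $\R\cdot\mathrm{Id}$; this is precisely absolute irreducibility, and getting this point straight is the only genuine subtlety.

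\emph{Freeness.} If $g\in\s$ fixes $x=(M_1,\dots,M_{2g+p})\in H^{irr}$, then $g$ commutes with every $M_i$ and hence with all of $\G$; by the above, $g=\lambda\,\mathrm{Id}$ for some $\lambda\in\R$, and $\det g=\lambda^{3}=1$ with $\lambda$ real forces $\lambda=1$, i.e. $g=\mathrm{Id}$.

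\emph{Properness.} I would check it sequentially: given $g_n\in\s$, $x_n\to x$ in $H^{irr}$ and $g_n\cdot x_n\to y$ in $H^{irr}$, I must find a subsequence of $(g_n)$ converging in $\s$. Since $\G$ (generated by $x$) is absolutely irreducible, Burnside's theorem furnishes nine words $w_1,\dots,w_9$ in the letters $M_1^{\pm1},\dots,M_{2g+p}^{\pm1}$ whose values $v_j(x):=w_j(x)\in M_3(\R)$ form a basis of $M_3(\R)$; each map $v_j\colon H\to M_3(\R)$ is continuous and satisfies $v_j(g\cdot z)=g\,v_j(z)\,g^{-1}$. For $n$ large, $(v_j(x_n))_j$ and $(v_j(y))_j$ remain bases of $M_3(\R)$, and the linear automorphism $\mathrm{Ad}(g_n)\colon A\mapsto g_nAg_n^{-1}$ of $M_3(\R)$ is determined by $\mathrm{Ad}(g_n)(v_j(x_n))=v_j(g_n\cdot x_n)$. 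Writing $B_n$ (resp. $C_n$) for the matrix whose columns are the coordinates of $(v_j(x_n))_j$ (resp. of $(v_j(g_n\cdot x_n))_j$) in a fixed basis of $M_3(\R)$, one has $\mathrm{Ad}(g_n)=C_nB_n^{-1}$; since $B_n$ and $C_n$ converge to invertible matrices, $\mathrm{Ad}(g_n)$ converges to an invertible operator $L$.

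Finally, the map $\mathrm{Ad}\colon\s\to\mathrm{GL}(M_3(\R))$ is proper: writing a Cartan decomposition $g=k\,\mathrm{diag}(e^{t_1},e^{t_2},e^{t_3})\,k'$ with $k,k'\in\mathrm{SO}(3)$, $t_1\geqslant t_2\geqslant t_3$ and $t_1+t_2+t_3=0$, the operator $\mathrm{Ad}(\mathrm{diag}(e^{t_i}))$ is diagonal in the orthonormal basis of matrix units, with entries $e^{t_i-t_j}$, hence of operator norm $e^{t_1-t_3}$ for the norm induced by the Frobenius inner product; since that inner product is $\mathrm{SO}(3)$-invariant, $\|\mathrm{Ad}(g)\|=e^{t_1-t_3}$, so $(\mathrm{Ad}(g_n))$ bounded forces $(g_n)$ to stay in a compact subset of $\s$. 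Extracting $g_n\to g\in\s$ and using continuity of the action gives $g\cdot x=y$, finishing the argument. As already said, the step I expect to demand the most care is the initial reduction to absolute irreducibility (the parity obstruction ruling out $\mathbb{C}$- and $\mathbb{H}$-module structures on $\R^3$), since without it Schur returns a larger commutant and Burnside's theorem no longer applies.
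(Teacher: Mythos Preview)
Your proof is correct. The paper does not actually prove this proposition: it simply refers the reader to Goldman \cite{Gold1}, so there is no in-text argument to compare against. Your route---observing that a $3$-dimensional real irreducible representation is automatically absolutely irreducible (the commutant being a real division algebra whose $\R$-dimension must divide $3$), then invoking Schur for freeness and Burnside/Jacobson density together with properness of $\mathrm{Ad}:\s\to\mathrm{End}(M_3(\R))$ for properness---is a standard and self-contained way to establish this type of statement.

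One small slip, harmless for the proof: you assert that $(v_j(y))_j$ is a basis of $M_3(\R)$, but the words $w_j$ were chosen so that $(w_j(x))_j$ spans at $x$, and irreducibility at $y$ does not guarantee that those \emph{same} nine words remain linearly independent when evaluated at $y$. Fortunately you never use this. What you actually need is only that $\mathrm{Ad}(g_n)=C_nB_n^{-1}$ converges (hence stays bounded), and for that it suffices that $B_n\to B$ with $B$ invertible and $C_n\to C$; invertibility of $L$ is recovered \emph{a posteriori} once you extract $g_n\to g\in\s$ and identify $L=\mathrm{Ad}(g)$. You might simply drop the invertibility claim at that stage.
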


Le corollaire suivant est à présent clair.

\begin{coro}\label{repvariette}
Le quotient $Rep_P^{irr}=Hom_P^{irr}/_{\s}$ est une variété de dimension $16g-16+6p$.
\end{coro}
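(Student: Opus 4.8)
Le plan est d'obtenir ce corollaire en combinant trois ingr�dients d�j� en place : la structure de vari�t� de $Hom_P^{irr}$ (corollaire pr�c�dent), la propret� et la libert� de l'action de $\s$ par conjugaison (proposition pr�c�dente), et le th�or�me classique affirmant que le quotient d'une vari�t� par un groupe de Lie agissant librement et proprement est encore une vari�t�, de dimension �gale � la diff�rence des dimensions.

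D'abord je v�rifierais que $Hom_P^{irr}$ est une sous-vari�t� $\s$-invariante de $H^{irr}$. L'invariance est imm�diate : la conjugaison pr�serve l'ensemble $P$ des �l�ments paraboliques (Lemme \ref{varpara}), pr�serve l'irr�ductibilit� du groupe engendr� par un $(2g+p)$-uplet, et commute avec l'�valuation du mot $R$, donc pr�serve l'�quation $R=1$. D'apr�s le corollaire pr�c�dent, $Hom_P^{irr}$ est une vari�t� de dimension $16g+6p-8$ ; comme $\s$ est de dimension $8$, la dimension attendue pour le quotient est bien $16g+6p-8-8 = 16g-16+6p$.

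Ensuite, la proposition pr�c�dente donne que $\s$ agit librement et proprement sur $H^{irr}$. La libert� de l'action restreinte � $Hom_P^{irr}$ est alors automatique. Pour la propret�, on utilise le crit�re : pour tout compact $K$ de $H^{irr}$, l'ensemble $\{ g \in \s \,|\, gK \cap K \neq \varnothing \}$ est compact. Comme $Hom_P^{irr}$ est une vari�t�, donc localement compacte, tout compact de $Hom_P^{irr}$ est un compact de $H^{irr}$, et ce crit�re reste donc valable sur $Hom_P^{irr}$ : l'action de $\s$ y est libre et propre. Le th�or�me du quotient d'une vari�t� par une action libre et propre d'un groupe de Lie s'applique et donne que $Rep_P^{irr} = Hom_P^{irr}/_{\s}$ est une vari�t� de dimension $16g-16+6p$.

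Il n'y a pas r�ellement d'obstacle ici : le travail substantiel a �t� fait dans les �nonc�s invoqu�s, notamment le calcul de Fox donnant le rang de $R$ (et donc la dimension de $Hom_P^{irr}$) et l'�tude de la propret� de l'action sur $H^{irr}$. Le seul point l�g�rement d�licat � r�diger est le passage de la propret� sur $H^{irr}$ � la propret� sur la sous-vari�t� invariante $Hom_P^{irr}$, qui n'est pas ferm�e dans $H^{irr}$ ; il suffit pour cela d'invoquer sa compacit� locale comme ci-dessus.
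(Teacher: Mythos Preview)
Votre d�monstration est correcte et suit exactement la m�me approche que l'article, qui se contente d'indiquer que le corollaire est \og � pr�sent clair \fg{} au vu des deux r�sultats pr�c�dents. Vous explicitez simplement davantage le passage de la propret� sur $H^{irr}$ � la propret� sur la sous-vari�t� invariante $Hom_P^{irr}$, point que l'article laisse implicite.
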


On s'intéresse à l'espace suivant:

$$
\beta'_{g,p}=
\left\{
\begin{array}{l|l}
                                        & \rho \textrm{ est fidèle et discrète},\\
\rho:\pi_1(\Sigma_{g,p}) \rightarrow \s & \rho \textrm{ préserve un ouvert proprement convexe } \Omega_{\rho},\\
                                        & \forall j = 1,...,p, \,\, \rho(c_j) \textrm{ est parabolique},\\
                                        & \Omega_{\rho}/_{\rho(\pi_1(\Sigma_{g,p}))} \textrm{ est homéomorphe à la surface } \Sigma_{g,p}.
\end{array}
\right\}
$$

\par{
L'holonomie d'une structure projective proprement convexe  fournit une application de $\beta_f(\Sigma_{g,p}) \rightarrow \beta_{g,p} = \beta'_{g,p}/_{\s}$. Il est clair que cette application est continue et l'auteur a montré qu'elle était injective dans \cite{ludo}. Elle est même sujective. En effet, l'identification $\varphi: \Sigma_{g,p} \rightarrow \Omega_{\rho}/_{\rho(\pi_1(\Sigma_{g,p}))}$ fournit une représentation $\rho'$ de $\pi_1(\Sigma_{g,p})$ vers $\s$ qui préserve l'ouvert $\Omega_{\rho}$. Mais, l'automorphisme
$\rho^{-1} \circ \rho' : \pi_1(\Sigma_{g,p}) \rightarrow \pi_1(\Sigma_{g,p})$ n'a aucune raison d'être intérieur. Le théorème de Nielsen montre que quitte à composer $\varphi$ à la source par un homéomorphisme on peut supposer que $\rho^{-1} \circ \rho : \pi_1(\Sigma_{g,p}) \rightarrow \pi_1(\Sigma_{g,p})$ est intérieur.
}
\\
\par{
De plus, l'auteur a montré dans \cite{ludo} que $\beta_{g,p} \subset Rep_P^{irr}$. On note $Rep_P$ l'espace topologique quotient $Hom_P/_{\s}$. Il faut faire attention que contrairement à $Rep_P^{irr}$, la topologie de cet espace n'a rien d'évidente, en particulier elle n'a aucune raison d'être séparée. Le théorème suivant  montre que les structures projectives proprement convexe de volume fini sont des objets naturels.
}

\begin{theo}\label{comptheo}
L'application holonomie de  $\beta_f(\Sigma_{g,p})$ vers $Rep_P$ est un homéomorphisme sur son image $\beta_{g,p}$, en particulier $\beta_{g,p}$ est une composante connexe de $Rep_P$.
\end{theo}

La démonstration de ce théorème se déroule en deux parties. On commence par montrer que $\beta_{g,p}$ est un fermé de $Rep_P$. Ensuite, comme la topologie de $\beta_f(\Sigma_{g,p})$ est connue nous utiliserons le théorème d'invariance du domaine pour montrer que $\beta_{g,p}$ est un ouvert de  $Rep_P^{irr}$. Enfin, comme $R_P^{irr}$ est un ouvert de $Rep_P$, on obtiendra l'ouverture de $\beta_{g,p}$ dans $Rep_P$.

\subsection{Fermeture de $\beta'_{g,p}$}

Nous consacrons cette partie à la démonstration de la fermeture. Cet exposé est inspiré de l'article \cite{ChGo} qui traite le cas compact. Leur démonstration contient beaucoup d'idées mais ne gère pas le cas non compact.

\begin{prop}\label{ferm}
L'espace $\beta'_{g,p}$ est un fermé de $\textrm{Hom}(\pi_1(\Sigma_{g,p}), \s)$.
\end{prop}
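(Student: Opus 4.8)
L'argument reprend la strat\'egie de Choi et Goldman \cite{ChGo} pour les surfaces compactes, en y ajoutant le contr\^ole du comportement au voisinage des pointes. Donnons-nous une suite $(\rho_n)_{n\in\N}$ de $\beta'_{g,p}$ convergeant vers $\rho_\infty \in \Hom(\pi_1(\Sigma_{g,p}),\s)$. Pour tout $n$, notons $\G_n = \rho_n(\pi_1(\Sigma_{g,p}))$ et choisissons un ouvert proprement convexe $\Omega_n$ de $\P$ pr\'eserv\'e par $\G_n$ tel que $\Omega_n/_{\G_n}$ soit hom\'eomorphe \`a $\Sigma_{g,p}$. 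Comme chaque $\rho_n(c_j)$ est parabolique, le th\'eor\`eme \ref{base} assure que la structure projective proprement convexe correspondante est de volume fini. Il s'agit de montrer que $\rho_\infty$ est fid\`ele et discr\`ete, pr\'eserve un ouvert proprement convexe, envoie chaque $c_j$ sur un \'el\'ement parabolique et admet un quotient hom\'eomorphe \`a $\Sigma_{g,p}$ ; toutes ces conditions \'etant invariantes par conjugaison, on peut travailler \`a conjugaison pr\`es.

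\emph{Extraction d'un convexe limite.} Pour chaque $n$ on choisit un point $x_n \in \Omega_n$ se projetant dans la partie \'epaisse de la surface $\Omega_n/_{\G_n}$, par exemple un point o\`u $x \mapsto \max_\g d_{\Omega_n}(x,\rho_n(\g)x)$ est minimal ($\g$ parcourant un syst\`eme g\'en\'erateur fix\'e de $\pi_1(\Sigma_{g,p})$) ; la convergence de $(\rho_n)$ borne ce minimum. Quitte \`a conjuguer chaque $\rho_n$ par un \'el\'ement $g_n \in \s$ et \`a extraire une sous-suite, le th\'eor\`eme de compacit\'e de Benz\'ecri fournit une limite au sens de Hausdorff $(\Omega_n,x_n) \to (\Omega_\infty,x_\infty)$ o\`u $\Omega_\infty$ est un ouvert proprement convexe de $\P$. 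Le point d\'elicat de cette \'etape, d\'ej\`a pr\'esent dans le cas compact, est de v\'erifier que les $g_n$ restent dans un compact de $\s$ (autrement dit que $\Omega_n$ ne d\'eg\'en\`ere pas) : on l'obtient en utilisant que $\Aut(\Omega_n)$ agit proprement sur $\Omega_n$ et que les d\'eplacements $d_{\Omega_n}(x_n,\rho_n(\g)x_n)$ sont born\'es. On en d\'eduit, par passage \`a la limite, que $\rho_\infty$ pr\'eserve un conjugu\'e de $\Omega_\infty$.

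\emph{Fid\'elit\'e, discr\'etude et paraboliques.} On montre alors, comme dans \cite{ChGo}, que $\rho_\infty$ est fid\`ele et discr\`ete : les applications d\'eveloppantes des structures convergent vers un hom\'eomorphisme local $\pi_1(\Sigma_{g,p})$-\'equivariant \`a valeurs dans $\Omega_\infty$, de sorte que $\rho_\infty(\pi_1(\Sigma_{g,p}))$ est un sous-groupe discret de $\Aut(\Omega_\infty)$ isomorphe \`a $\pi_1(\Sigma_{g,p})$. En particulier $\rho_\infty(c_j)\neq 1$ pour tout $j$ ; comme $\textrm{tr}(\rho_\infty(c_j)) = \lim_n \textrm{tr}(\rho_n(c_j)) = 3$, les propositions \ref{classi2} et \ref{tra} montrent que $\rho_\infty(c_j)$ est parabolique.

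\emph{Identification du quotient.} \`A ce stade, $\Omega_\infty/_{\rho_\infty(\pi_1(\Sigma_{g,p}))}$ est une surface projective proprement convexe dont le groupe fondamental est libre de rang $2g+p-1$ et dont l'holonomie de chaque $c_j$ est parabolique ; le th\'eor\`eme \ref{base} (dans le sens r\'eciproque) montre qu'elle est de volume fini. Il reste \`a v\'erifier qu'elle est hom\'eomorphe \`a $\Sigma_{g,p}$, c'est-\`a-dire qu'on ne perd ni ne gagne de pointe \`a la limite : c'est la difficult\'e sp\'ecifique au cas non compact, que l'on traite en comparant les convexes $\Omega_n$ au voisinage de chaque pointe, en s'appuyant sur le contr\^ole de la dynamique parabolique donn\'e par la proposition \ref{ppara2} et sur le fait que l'holonomie des lacets non \'el\'ementaires reste hyperbolique (proposition \ref{nonelem}). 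On conclut que $\rho_\infty \in \beta'_{g,p}$. Les deux obstacles principaux sont ainsi la non-d\'eg\'en\'erescence du convexe limite (d\'ej\`a dans \cite{ChGo}) et la stabilit\'e de la structure des pointes, propre au cadre de volume fini.
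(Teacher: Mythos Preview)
Your outline identifies the right three issues (convex limit, discreteness/fidelity, topology of the quotient), but each step contains a genuine gap, and the paper's actual argument is organised rather differently.

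\textbf{Convex limit.} Your use of Benz\'ecri compactness requires conjugating $\rho_n$ by $g_n\in\s$ and then arguing that the $g_n$ stay in a compact. Your justification (``$\Aut(\Omega_n)$ agit proprement et les d\'eplacements sont born\'es'') is circular: the bound on $d_{\Omega_n}(x_n,\rho_n(\g)x_n)$ is not a consequence of the convergence of $\rho_n$ alone, since the Hilbert metric depends on $\Omega_n$, which is precisely what you are trying to control. The paper avoids this entirely. It first proves, independently of any convex, that $\rho_\infty$ is irr\'eductible: by Lemma \ref{trsup} every $\rho_n(\g)$ has trace $\geqslant 3$, hence so does $\rho_\infty(\g)$, and Lemma \ref{trinf} forces irreducibility. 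Only then does it take the Hausdorff limit of $\overline{\Omega_n}$ in $\S^2$ \emph{without any conjugation}; the four possible degenerations of the limit (point, segment, non proprement convexe) are ruled out because each would produce an invariant line or plane, contradicting the irreducibility just established (this is Lemma \ref{lastlast}).

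\textbf{Fid\'elit\'e et discr\'etude.} You invoke convergence of developing maps, but this presupposes that the limit structure exists and is a local homeomorphism, which is part of what has to be shown. The paper instead applies the Goldman--Millson lemma (Lemma \ref{nil}): since $\pi_1(\Sigma_{g,p})$ has no infinite normal nilpotent subgroup, the set of faithful discrete representations is closed, and $\rho_\infty$ is automatically faithful and discrete.

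\textbf{Topologie du quotient.} This is where your proposal is weakest: you merely restate the difficulty (``on traite en comparant les convexes au voisinage de chaque pointe'') without giving an argument. The paper's mechanism is concrete: fix an ideal triangulation $\mathcal{T}$ of $\Sigma_{g,p}$; for each $n$ it induces a $\rho_n$-equivariant geodesic triangulation $\widetilde{\mathcal{T}_n}$ of $\Omega_n$ whose vertices are parabolic fixed points on $\partial\Omega_n$. One shows the edges of $\widetilde{\mathcal{T}_n}$ converge to nondegenerate segments of $\Omega_\infty$: if some edge collapsed to a point $p_\infty\in\partial\Omega_\infty$, a propagation argument along the dual tree of the triangulation forces all triangles to collapse to $p_\infty$, so $\rho_\infty$ would fix $p_\infty$, contradicting irreducibility. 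The limiting triangulation then exhibits $\Omega_\infty/\rho_\infty(\pi_1(\Sigma_{g,p}))$ as $\Sigma_{g,p}$.

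In short, the unifying idea you are missing is that \emph{irreducibility of $\rho_\infty$ is proved first and used repeatedly}: once to prevent degeneration of the convex, once more to prevent degeneration of the triangulation. Your Benz\'ecri route tries to get the convex limit before irreducibility, which is why the compactness of $(g_n)$ cannot be justified.
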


\subsubsection{Lemmes préliminaires}

On aura besoin du lemme suivant dû à Goldman et Millson \cite{GoMi}:

\begin{lemm}[Goldman-Millson]\label{nil}
Soient $\G$ un groupe de type fini n'admettant pas de sous-groupe distingué nilpotent infini et $G$ un groupe de Lie, alors l'ensemble des morphismes fidèles et discrets de $\G$ vers $G$ est fermé dans l'ensemble des morphismes de $\G$ vers $G$.
\end{lemm}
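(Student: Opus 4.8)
The plan is to combine the Zassenhaus--Kazhdan--Margulis theorem with the hypothesis on $\Gamma$, in the spirit of \cite{GoMi}. Since the space of morphisms $\Gamma\to G$ embeds in a power of $G$, hence is metrizable, it suffices to show that if a sequence $(\rho_n)$ of faithful discrete morphisms $\Gamma\to G$ converges to a morphism $\rho$, then $\rho$ is again faithful and discrete. First I would fix, by the Zassenhaus--Kazhdan--Margulis theorem, an open symmetric neighbourhood $\Omega_0$ of $1$ in $G$ and an integer $c=c(G)$ such that for every discrete subgroup $\Delta\leqslant G$ the group $\langle \Delta\cap\Omega_0\rangle$ generated by the elements of $\Delta$ lying in $\Omega_0$ is nilpotent of class at most $c$.

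The basic closedness input I would use is this: whenever $\gamma_1,\dots,\gamma_m\in\Gamma$ satisfy $\rho(\gamma_i)\in\Omega_0$, the group $\langle \rho(\gamma_1),\dots,\rho(\gamma_m)\rangle$ is nilpotent of class at most $c$. Indeed, $\Omega_0$ being open, $\rho_n(\gamma_i)\in\Omega_0$ for $n$ large, so $\langle \rho_n(\gamma_1),\dots,\rho_n(\gamma_m)\rangle\leqslant\langle \rho_n(\Gamma)\cap\Omega_0\rangle$ is nilpotent of class at most $c$, $\rho_n(\Gamma)$ being discrete; but ``nilpotent of class at most $c$'' means the vanishing of all $(c+1)$-fold iterated commutators, a closed condition on $m$-tuples of $G$, so it survives the limit $\rho_n(\gamma_i)\to\rho(\gamma_i)$. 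In particular, letting $m$ and the $\gamma_i$ vary, $\langle \rho(\Gamma)\cap\Omega_0\rangle$ is nilpotent of class at most $c$.

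\textbf{Faithfulness.} I would show that $K=\ker\rho$ is nilpotent and torsion-free, and then invoke the hypothesis. It is nilpotent because any $\gamma_1,\dots,\gamma_m\in K$ satisfy $\rho_n(\gamma_i)\in\Omega_0$ for $n$ large, whence $\langle \rho_n(\gamma_1),\dots,\rho_n(\gamma_m)\rangle$, and therefore (by injectivity of $\rho_n$) also $\langle \gamma_1,\dots,\gamma_m\rangle$, is nilpotent of class at most $c$; since nilpotency of bounded class is detected on finitely generated subgroups, $K$ is nilpotent. It is torsion-free because a nontrivial torsion element of $K$ would yield, via $\rho_n$, a nontrivial torsion element of $G$ converging to $1$, which is impossible ($\exp$ being a local diffeomorphism at the origin). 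Hence a nontrivial $K$ would be an infinite normal nilpotent subgroup of $\Gamma$, contradicting the hypothesis; so $\rho$ is faithful.

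\textbf{Discreteness.} Assume for contradiction that $\rho(\Gamma)$ is not discrete, so $1$ is an accumulation point of $\rho(\Gamma)$. Fixing a decreasing neighbourhood basis $(W_i)_{i\in\N}$ of $1$ with each $W_i\subseteq\Omega_0$ open and symmetric, the groups $\Lambda_i=\langle \rho(\Gamma)\cap W_i\rangle$ are nilpotent of class at most $c$ by the step above and non-discrete (they contain nontrivial elements arbitrarily close to $1$), so $M_i:=(\overline{\Lambda_i})^{0}$, the identity component of the closure, is a connected nilpotent Lie subgroup of positive dimension. The chain $M_0\supseteq M_1\supseteq\cdots$ of closed connected subgroups has decreasing dimensions, hence stabilizes (a closed subgroup of full dimension in a connected Lie group is the whole group), say $M_i=M_{\ast}$ for $i$ large. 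The point I expect to be the real obstacle, once the Zassenhaus theorem is granted, is to see that $M_{\ast}$ is \emph{canonical}, hence normalized by all of $\rho(\Gamma)$: for $\delta\in\Gamma$ one has $\rho(\delta)\Lambda_i\rho(\delta)^{-1}=\langle \rho(\Gamma)\cap\rho(\delta)W_i\rho(\delta)^{-1}\rangle$, and since $W\mapsto(\overline{\langle \rho(\Gamma)\cap W\rangle})^{0}$ is monotone and $\rho(\delta)W_i\rho(\delta)^{-1}$ runs through a neighbourhood basis of $1$ as well, taking $i$ large forces $\rho(\delta)M_{\ast}\rho(\delta)^{-1}=M_{\ast}$. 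Then $\rho(\Gamma)\cap M_{\ast}$ is dense in $M_{\ast}$---it contains $\Lambda_i\cap M_{\ast}$, which is dense because $\Lambda_i$ is dense in $\overline{\Lambda_i}$ and $M_{\ast}$ is open in $\overline{\Lambda_i}$---hence infinite, it is normal in $\rho(\Gamma)$, and it is nilpotent, being a subgroup of the connected nilpotent group $M_{\ast}$. By faithfulness, $\rho^{-1}(\rho(\Gamma)\cap M_{\ast})$ is then an infinite normal nilpotent subgroup of $\Gamma$, contradicting the hypothesis; so $\rho(\Gamma)$ is discrete. Throughout, the only deep external ingredient, the Zassenhaus--Kazhdan--Margulis theorem, I would simply quote.
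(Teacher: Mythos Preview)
The paper does not prove this lemma; it simply attributes it to Goldman and Millson and cites \cite{GoMi}, so there is no proof in the text to compare against. Your argument is correct and is essentially the standard proof (the one in \cite{GoMi} and in Raghunathan's book): use a Zassenhaus--Kazhdan--Margulis neighbourhood $\Omega_0$ with a uniform bound $c$ on the nilpotency class, pass that bound through the limit via the closedness of the iterated-commutator relations, deduce that $\ker\rho$ is nilpotent and torsion-free (hence trivial by the hypothesis on $\Gamma$), and for discreteness produce a canonical connected nilpotent subgroup $M_\ast$ normalized by $\rho(\Gamma)$ whose intersection with $\rho(\Gamma)$ pulls back to an infinite normal nilpotent subgroup. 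The one point worth making explicit is that the Zassenhaus lemma does give a \emph{uniform} bound $c=c(G)$ on the class (e.g.\ via containment in a connected nilpotent Lie subgroup, whose class is bounded by $\dim G$); you use this implicitly when passing to the limit, and it is exactly what makes the argument work.
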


On rappelle le lemme suivant que l'on a énoncé au début de ce texte.

\begin{lemm}\label{trsup}
Soit $\Omega$ un ouvert proprement convexe et $\g \in \Aut(\Omega)$ d'ordre infini alors $\textrm{Tr}(\g) \geqslant 3$.
\end{lemm}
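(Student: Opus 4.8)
The plan is to deduce this directly from the classification of elements of $\Aut(\Omega)$ recalled in Proposition \ref{classi2}: an element of $\Aut(\Omega)$ falls into one of six families, and it suffices to write down the trace of the normal form in each case and apply an elementary inequality.

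First I would dispose of the degenerate cases. The identity is excluded since $\g$ has infinite order. An elliptic element of infinite order does not occur in the situations where this lemma is used: there $\g$ always lies in a discrete torsion-free subgroup of $\s$ (a surface group), and such a group contains no elliptic element — an infinite-order elliptic element generates a non-discrete subgroup, while a finite-order one is excluded by torsion-freeness. For a parabolic element, the normal form of Proposition \ref{classi2} has all its diagonal entries equal to $1$, so $\textrm{Tr}(\g) = 3$; this also records the equality case.

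The three remaining families are then handled uniformly by the arithmetic--geometric mean inequality. If $\g$ is hyperbolic it is conjugate to $\mathrm{diag}(\lambda^+,\lambda^0,\lambda^-)$ with $\lambda^+ > \lambda^0 > \lambda^- > 0$ and $\lambda^+\lambda^0\lambda^- = 1$, whence $\textrm{Tr}(\g) = \lambda^+ + \lambda^0 + \lambda^- \geqslant 3(\lambda^+\lambda^0\lambda^-)^{1/3} = 3$; equality would force $\lambda^+ = \lambda^0 = \lambda^-$, contradicting $\lambda^+ > \lambda^-$, so in fact $\textrm{Tr}(\g) > 3$. If $\g$ is planaire or quasi-hyperbolic its trace equals $2\alpha + \beta$ with $\alpha,\beta > 0$, $\alpha^2\beta = 1$ and $\alpha \neq 1$ (the single off-diagonal entry of the quasi-hyperbolic normal form does not affect the trace), so again $2\alpha + \beta \geqslant 3(\alpha^2\beta)^{1/3} = 3$, with equality forcing $\alpha = \beta$, hence $\alpha^3 = 1$, i.e. $\alpha = 1$ — a contradiction; thus $\textrm{Tr}(\g) > 3$. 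Collecting the cases gives $\textrm{Tr}(\g) \geqslant 3$, with equality precisely for parabolic elements (and for the identity, if one drops the infinite-order hypothesis).

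There is no genuine obstacle in this argument: once Proposition \ref{classi2} is granted, everything reduces to a one-line inequality. The only point deserving care is the elliptic family — the sole normal form whose trace can be smaller than $3$ — which must be ruled out by the discreteness (or torsion-freeness) hypothesis available in every application of the lemma.
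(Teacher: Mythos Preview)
Your argument is precisely what the paper intends: it records the lemma (as Proposition~\ref{tra}) simply as an evident consequence of the classification in Proposition~\ref{classi2}, and your case analysis via the AM--GM inequality is the natural way to make that explicit. Your caution about the elliptic family is warranted and in fact necessary --- an irrational rotation preserving a disk has trace $1+2\cos\theta<3$, so the lemma as literally stated does need the discreteness (equivalently torsion-freeness) hypothesis you invoke, and the paper relies on the same implicit restriction in every application.
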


\begin{lemm}\label{trinf}
Soit $\rho$ une représentation fidèle et discrète d'un groupe libre non abélien $\G$ dans $\s$, si $\rho$ n'est pas irréductible alors il existe un $\g \in \G$ tel que $\textrm{Tr}(\g) < 1$.
\end{lemm}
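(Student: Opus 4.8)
The plan is to use the reducibility of $\rho$ to put it in block upper‑triangular form whose lower $2\times2$ block is a representation into $\mathrm{SL}_2(\R)$, and then to exhibit an element of negative trace in that block.

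Since $\rho$ is reducible, $\R^{3}$ carries a $\rho$‑invariant line or plane. Replacing $\rho$ by its contragredient $\g\mapsto{}^{t}\rho(\g)^{-1}$ if necessary — which is again faithful and discrete, and satisfies $\textrm{Tr}\big({}^{t}\rho(\g)^{-1}\big)=\textrm{Tr}\big(\rho(\g^{-1})\big)$, so that an element of trace $<1$ for it produces one for $\rho$ — I may assume there is a $\rho$‑invariant line $L$. In a basis adapted to $L$, $\rho(\g)$ is block upper‑triangular with a $1\times1$ block $\chi(\g)$ (where $\chi\colon\G\to\R^{\times}$ is a character) and a $2\times2$ block $A(\g)$, with $A\colon\G\to\mathrm{GL}_{2}(\R)$ the induced action on $\R^{3}/L$ and $\chi\cdot\det A\equiv1$; in particular $\textrm{Tr}(\rho(\g))=\chi(\g)+\textrm{Tr}(A(\g))$. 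Put $\G_{0}=\ker\chi$: if $\chi$ is trivial then $\G_{0}=\G$, and otherwise $\G_{0}\supseteq[\G,\G]$, which is non‑abelian; in either case $\G_{0}$ is a non‑abelian free group (a subgroup of $\G$), and on $\G_{0}$ one has $\det A\equiv1$, so $B:=A|_{\G_{0}}$ takes values in $\mathrm{SL}_{2}(\R)$ and $\textrm{Tr}(\rho(\g))=1+\textrm{Tr}(B(\g))$ for all $\g\in\G_{0}$. Hence it suffices to find $\g\in\G_{0}$ with $\textrm{Tr}(B(\g))<0$.

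Next I would check that $B$ is injective. If $\g\in\ker B$ then $\rho(\g)$ has identity $2\times2$ block, scalar block $1$, and a top‑right column $v(\g)$; such matrices multiply by adding their top‑right columns, so $\g\mapsto v(\g)$ is a homomorphism $\ker B\to(\R^{2},+)$, injective since $\rho$ is faithful. Thus $\ker B$ is torsion‑free abelian; being also free (a subgroup of $\G_{0}$), it is cyclic. But a non‑trivial cyclic normal subgroup would force $\G_{0}$ to be virtually cyclic, contradicting that $\G_{0}$ is non‑abelian free; so $\ker B=1$, and $B(\G_{0})\cong\G_{0}$ is a non‑abelian free, hence non‑elementary, subgroup of $\mathrm{SL}_{2}(\R)$.

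Finally comes the $\mathrm{SL}_{2}$‑estimate. It is classical that a non‑elementary subgroup of $\mathrm{SL}_{2}(\R)$ contains two hyperbolic elements $g_{0},h_{0}\in B(\G_{0})$ with disjoint axes. If $\textrm{Tr}(g_{0})<0$ or $\textrm{Tr}(h_{0})<0$ we are done, so assume both traces exceed $2$; conjugating $B(\G_{0})$ inside $\mathrm{GL}_{2}(\R)$ — which does not change traces — I may assume $g_{0}=\mathrm{diag}(\lambda,\lambda^{-1})$ with $\lambda>1$ and, using disjointness of the axes (and, if needed, a further conjugation by $\mathrm{diag}(1,-1)$ and a replacement of $h_{0}$ by $h_{0}^{-1}$), that $h_{0}$ has repelling fixed point $\alpha$ and attracting fixed point $\beta$ with $0<\alpha<\beta$. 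Writing $\nu>1$ for the larger eigenvalue of $h_{0}$, a direct diagonalization gives
\[
(h_{0}^{m})_{22}=\frac{\beta\nu^{-m}-\alpha\nu^{m}}{\beta-\alpha}\qquad(m\ge1),
\]
which is $<0$ once $m$ is large, since $\alpha,\ \beta-\alpha,\ \nu>0$; fix such an $m$. Then
\[
\textrm{Tr}(g_{0}^{-n}h_{0}^{m})=\lambda^{-n}(h_{0}^{m})_{11}+\lambda^{n}(h_{0}^{m})_{22}\longrightarrow-\infty\quad(n\to+\infty),
\]
so $\textrm{Tr}(g_{0}^{-n}h_{0}^{m})<0$ for $n$ large. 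Choosing $\g\in\G_{0}$ with $B(\g)=g_{0}^{-n}h_{0}^{m}$ (possible because $g_{0},h_{0}\in B(\G_{0})$) yields $\textrm{Tr}(\rho(\g))=1+\textrm{Tr}(B(\g))<1$, as wanted. The substance is entirely in this last paragraph, and its only non‑computational ingredient is the classical fact that a non‑abelian free subgroup of $\mathrm{SL}_{2}(\R)$ contains two hyperbolic elements with disjoint axes; the two reductions are routine, the only mildly delicate points being that $\G_{0}$ stays non‑abelian free when $\chi\ne1$ and that $B$ is faithful, both handled by elementary facts about subgroups of free groups (and, incidentally, the discreteness of $\rho$ is never used).
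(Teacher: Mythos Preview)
Your argument is correct and follows the same skeleton as the paper's: block-triangularise using the invariant subspace, pass to a subgroup on which the $1\times1$ block is trivial so that $\textrm{Tr}(\rho(\g))=1+\textrm{Tr}(\text{lower block})$, check that the induced $\mathrm{SL}_2(\R)$-representation is faithful on a non-abelian free subgroup, and then produce an element of negative trace there. The paper treats the line and plane cases in parallel (you reduce to one via the contragredient), restricts to $[\G,\G]$ (you to the slightly larger $\ker\chi$, which makes no difference), and for the final step simply cites Lemma~\ref{trlib}, stated for \emph{discrete} non-abelian free subgroups of $\mathrm{SL}_2(\R)$ and taken from \cite{ChGo}. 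Your explicit computation with two hyperbolic elements having disjoint (hence unlinked) axes is a self-contained proof of that lemma --- indeed of a strengthening, since it uses only that the image is non-elementary, not that it is discrete --- so your closing remark that the discreteness of $\rho$ is never needed is a genuine sharpening of the statement, even if the extra generality is not required downstream. One cosmetic point: a non-trivial cyclic normal subgroup of a free group would in fact force the ambient free group to be cyclic (not merely virtually cyclic), since in a free group the normaliser of a non-trivial cyclic subgroup equals its centraliser; your conclusion is unaffected.
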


\begin{proof}
Supposons que la représentation $\rho$ n'est pas irréductible alors il existe une droite $D$ ou un plan $\Pi$ préservé par $\rho$. On note $G$ le stabilisateur dans $\s$ de cette droite ou de ce plan. Le groupe $G$ est de l'une des deux formes suivantes:
$$
\begin{array}{ccc}
\left(
\begin{array}{ccc}
* & * & *\\
0 & * & *\\
0 & * & *
\end{array}
\right)
&
\textrm{  ou  }
&
\left(
\begin{array}{ccc}
* & 0 & 0\\
* & * & *\\
* & * & *
\end{array}
\right)
\end{array}
$$

Dans les deux cas, on a un morphisme $\varphi:G \rightarrow \mathrm{SL_2(\mathbb{R})}$ tel que pour tout $\g \in [G,G]$ on a $\textrm{Tr}(\g) = 1 + \textrm{Tr}(\varphi(\g))$. Le sous-groupe $\rho([\G,\G])$ de $\s$ est un sous-groupe libre et discret d'un conjugué du groupe spécial affine de $\R^2$ ou $(\R^2)^*$. Par conséquent, la représentation $\varphi \circ \rho:\G\rightarrow \mathrm{SL_2(\mathbb{R})}$ restreinte à $[\G,\G]$ est fidèle et discrète. Le groupe $\varphi \circ \rho([\G,\G])$ est donc un sous-groupe libre non abélien et discret de $\mathrm{SL_2(\mathbb{R})}$. Le lemme \ref{trlib} qui suit conclut la démonstration.
\end{proof}

Ce lemme est démontré dans \cite{ChGo}.

\begin{lemm}\label{trlib}
Tout groupe libre non abélien et discret de $\mathrm{SL_2(\mathbb{R})}$ possède des éléments dont la trace est strictement négative.
\end{lemm}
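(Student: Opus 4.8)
The plan is to reduce to a carefully chosen two-generator subgroup and then read off the sign of the trace from the asymptotics of long words.

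First I would record the elementary reductions. Since $\Gamma$ is free it is torsion-free, so $-I\notin\Gamma$ and $\Gamma$ maps isomorphically onto its image $\overline{\Gamma}$ in $\mathrm{PSL}_2(\mathbb{R})$, which is a torsion-free Fuchsian group containing no elliptic element; hence every nontrivial $\gamma\in\Gamma$ satisfies $|\mathrm{Tr}(\gamma)|\geq 2$. I argue by contradiction: suppose $\mathrm{Tr}(\gamma)\geq 2$ for every $\gamma\in\Gamma$. Being free and non-abelian, $\overline{\Gamma}$ is non-elementary (an elementary torsion-free Fuchsian group is trivial or infinite cyclic, hence abelian), so I may pick two hyperbolic elements $\overline{A},\overline{B}\in\overline{\Gamma}$ whose axes in $\mathbb{H}^2$ are \emph{disjoint}: if $\overline{A}$ is hyperbolic with axis $\alpha$ and $\overline{g}\in\overline{\Gamma}$ is hyperbolic with fixed points disjoint from those of $\overline{A}$, then for $n$ large the axis of $\overline{B}:=\overline{g}^{\,n}\overline{A}\,\overline{g}^{-n}$ is a small geodesic near the attracting fixed point of $\overline{g}$, hence disjoint from $\alpha$. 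Let $A,B\in\Gamma\subset\mathrm{SL}_2(\mathbb{R})$ be the corresponding lifts; they are conjugate and $|\mathrm{Tr}(A)|>2$, so by the standing hypothesis $\mathrm{Tr}(A)=\mathrm{Tr}(B)>2$ and both $A$ and $B$ have two real positive eigenvalues $\lambda,\lambda^{-1}$ with $\lambda>1$.

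Next I would use the spectral decompositions $A^{n}=\lambda^{n}P_{a^+}+\lambda^{-n}P_{a^-}$ and $B^{\pm n}=\lambda^{\pm n}P_{b^+}+\lambda^{\mp n}P_{b^-}$, where $P_{a^\pm},P_{b^\pm}$ are the rank-one spectral projectors (e.g. $P_{a^+}$ projects onto the attracting eigenline of $A$ along the repelling one). Multiplying out, for each sign $\varepsilon$ the trace $\mathrm{Tr}(A^{n}B^{\varepsilon n})$ equals $\lambda^{2n}\,\mathrm{Tr}(P_{a^+}P_{b^{\varepsilon}})$ plus terms that are exponentially smaller. Working in an affine chart of the boundary circle in which the two fixed points of $\overline{A}$ are $0$ and $\infty$, so that $P_{a^+}$ is the projection onto the first coordinate axis and the fixed points of $\overline{B}$ sit at real numbers $\beta^+,\beta^-$, one computes
$$\mathrm{Tr}(P_{a^+}P_{b^+})=\frac{\beta^+}{\beta^+-\beta^-},\qquad \mathrm{Tr}(P_{a^+}P_{b^-})=\frac{-\beta^-}{\beta^+-\beta^-},$$
both nonzero since the four fixed points are distinct. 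Their product equals $-\beta^+\beta^-/(\beta^+-\beta^-)^{2}$, and since the axes of $\overline{A}$ and $\overline{B}$ are disjoint the pairs $\{0,\infty\}$ and $\{\beta^+,\beta^-\}$ do not link on $\mathbb{R}\cup\{\infty\}$, i.e. $\beta^+$ and $\beta^-$ have the same sign; hence the product is strictly negative. Therefore exactly one of $\mathrm{Tr}(P_{a^+}P_{b^+})$, $\mathrm{Tr}(P_{a^+}P_{b^-})$ is negative.

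Choosing $\varepsilon$ so that $\mathrm{Tr}(P_{a^+}P_{b^{\varepsilon}})<0$, the leading-term estimate gives $\mathrm{Tr}(A^{n}B^{\varepsilon n})<0$ for all sufficiently large $n$, while $A^{n}B^{\varepsilon n}\in\Gamma$ — contradicting the standing hypothesis. This proves the lemma. I expect the main obstacle to be the very first geometric step: producing hyperbolic elements with \emph{disjoint} axes, not merely with four distinct fixed points. This is exactly where the hypotheses ``free, non-abelian and discrete'' are used in an essential way — for an arbitrary two-generator subgroup of $\mathrm{SL}_2(\mathbb{R})$ with all traces $\geq 2$ the conclusion is false — and it is the place where one must invoke (and, if wanted, reprove) the fact that a non-elementary Fuchsian group contains a rank-two Schottky subgroup. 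The nonvanishing of the leading coefficients and the $2\times2$ linear algebra behind the displayed identities are routine.
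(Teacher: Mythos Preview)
The paper does not actually prove this lemma: it merely records that a proof can be found in \cite{ChGo}. Your argument is correct and self-contained, and is in fact close in spirit to the one in \cite{ChGo}, which also reduces to a two-generator Schottky configuration and then extracts the sign from an explicit trace computation.

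Two minor remarks. First, ``exponentially smaller'' is a slight overstatement: two of the four cross-terms in the expansion of $\mathrm{Tr}(A^{n}B^{\varepsilon n})$ are $O(1)$, not $O(\lambda^{-2n})$. This is harmless, since a term $\lambda^{2n}c$ with $c\neq 0$ still dominates any bounded remainder, and your conclusion for large $n$ stands. Second, you use without comment the existence of a hyperbolic $\overline g\in\overline\Gamma$ whose fixed-point pair is disjoint from that of $\overline A$. This is standard but worth one line: in a discrete subgroup of $\mathrm{PSL}_2(\mathbb R)$ two hyperbolic elements share either both fixed points or none, since sharing exactly one gives a nontrivial parabolic commutator $[A,g]$ and then $A^{-n}[A,g]A^{n}\to 1$, contradicting discreteness; as $\overline\Gamma$ is non-elementary and torsion-free it contains a hyperbolic $\overline A$, and any $\overline h\in\overline\Gamma$ not commuting with $\overline A$ gives the hyperbolic $\overline g:=\overline h\,\overline A\,\overline h^{-1}$ with the required property. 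With these clarifications your proof goes through as written.
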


Le lemme suivant est aussi démontré dans \cite{ChGo}.

\begin{lemm}\label{preserve}
Soient $(X,d)$ un espace métrique compact et une suite $(\g_n)_{n \in \N}$ d'homéomorphismes uniformément lipschitziens de X, on suppose que pour tout $n \in \N$, l'élément $\g_n$ préserve un fermé $F_n$. On suppose que les fermés $F_n$ convergent vers un fermé $F$ de $X$ pour la topologie de Hausdorff. Enfin, on suppose que la suite $(\g_n)_{n \in \N}$ converge uniformément vers un homéomorphisme $\g$ de $X$. Alors, $\g$ préserve $F$.
\end{lemm}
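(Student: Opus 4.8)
The plan is to prove the (a priori stronger) statement $\g(F)=F$ by establishing the two inclusions separately; if one prefers to read ``$\g_n$ preserves $F_n$'' in the weaker sense $\g_n(F_n)\subseteq F_n$, then only the first inclusion below survives, and it already says that $\g$ preserves $F$ in that weaker sense. Write $\varepsilon_n=\sup_{x\in X}d(\g_n(x),\g(x))$, so that $\varepsilon_n\to 0$ by hypothesis. I would first record two elementary facts about Hausdorff convergence of closed subsets of a compact metric space, both obtained by merely unwinding the definition of the Hausdorff distance: (a) if $z_n\in F_n$ for all $n$ and $z_n\to z$ in $X$, then $z\in F$; (b) for every $x\in F$ there is a sequence $x_n\in F_n$ with $x_n\to x$ — take for $x_n$ a nearest point of $F_n$ to $x$, which exists since $F_n$ is compact, and note $d(x,x_n)=d(x,F_n)\to 0$.

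For $\g(F)\subseteq F$: fix $x\in F$ and pick $x_n\in F_n$ with $x_n\to x$ by (b). Since $\g_n$ preserves $F_n$ we have $\g_n(x_n)\in F_n$. By the triangle inequality, $d(\g_n(x_n),\g(x))\le d(\g_n(x_n),\g(x_n))+d(\g(x_n),\g(x))\le \varepsilon_n+d(\g(x_n),\g(x))$, and the right-hand side tends to $0$: the first term by uniform convergence, the second by continuity of $\g$ at $x$ together with $x_n\to x$. Hence $\g_n(x_n)\to\g(x)$, and fact (a) gives $\g(x)\in F$. Observe that this argument does not actually use the uniform Lipschitz hypothesis: the triangle inequality above lets the continuity of the single limit map $\g$ play the role that equicontinuity of the family $(\g_n)$ would otherwise have to play.

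For $F\subseteq\g(F)$: fix $y\in F$ and pick $y_n\in F_n$ with $y_n\to y$. Since $\g_n$ is a homeomorphism of $X$ with $\g_n(F_n)=F_n$, the point $w_n:=\g_n^{-1}(y_n)$ lies in $F_n$. I cannot pass to the limit directly in $\g_n^{-1}$, as nothing is assumed about the inverse maps; instead I use compactness of $X$ to extract a subsequence along which $w_n\to z$, and fact (a) gives $z\in F$. To identify $z$: $d(\g(z),y_n)=d(\g(z),\g_n(w_n))\le d(\g(z),\g(w_n))+d(\g(w_n),\g_n(w_n))\le d(\g(z),\g(w_n))+\varepsilon_n\to 0$, using continuity of $\g$ with $w_n\to z$ for the first term and uniform convergence for the second. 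Since also $y_n\to y$, uniqueness of limits forces $\g(z)=y$, hence $y\in\g(F)$.

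The main — indeed essentially the only — obstacle is the reverse inclusion: because we have no control on $\g_n^{-1}$, we must produce the preimage point $z$ by a compactness argument rather than by a naive limit, and then pin down $\g(z)=y$ using nothing beyond the automatic uniform continuity of the homeomorphism $\g$ on the compact space $X$. Everything else reduces to facts (a) and (b) and two applications of the triangle inequality, so no serious computation is involved.
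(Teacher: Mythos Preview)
Your proof is correct. The overall shape --- pick $x_n\in F_n$ converging to $x\in F$, show $\g_n(x_n)\to\g(x)$, and conclude via Hausdorff convergence --- is the same as the paper's, but there are two genuine differences worth flagging.

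First, in the forward inclusion $\g(F)\subseteq F$, the paper splits the distance as
\[
d(\g y,\g_n y_n)\le d(\g y,\g_n y)+d(\g_n y,\g_n y_n)
\]
and controls the second term using the uniform Lipschitz constant $C$ of the family $(\g_n)$. You instead split as
\[
d(\g_n x_n,\g x)\le d(\g_n x_n,\g x_n)+d(\g x_n,\g x),
\]
so that the second term is handled by continuity of the single limit map $\g$. As you observe, this makes the uniform Lipschitz hypothesis unnecessary for this direction; the paper's choice of decomposition is what forces it to invoke that hypothesis.

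Second, the paper proves only $\g(F)\subseteq F$ and stops there; you go on to establish $F\subseteq\g(F)$ via a compactness extraction on the preimages $w_n=\g_n^{-1}(y_n)$. Your argument for this reverse inclusion is sound and, again, does not require the Lipschitz hypothesis. So your proof is in fact a mild strengthening: it yields the full equality $\g(F)=F$ under weaker assumptions than stated.
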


\begin{proof}
Soit $y \in F$, il faut montrer que $\g y \in F$. On se donne un réel $\varepsilon > 0$.

La suite $(\g_n)_{n \in \N}$ converge uniformément vers $\g$, par conséquent il existe $N_1 \in \N$ tel que:

$$\forall n > N_1,\, \forall x \in X,\, d(\g_n x, \g x)< \frac{\varepsilon}{2}$$

Les homéomorphismes $(\g_n)_{n \in \N}$ sont uniformément lipschitziens, par conséquent, il existe un $C > 0$ tel que:

$$\forall n \in \N,\, \forall x,\, y \in X,\, d(\g_n x, \g_n y)< C d(x,y)$$

La suite $F_n$ converge pour la topologie de Hausdorff vers le fermé $F$ par conséquent, il existe une suite $y_n \in F_n$ telle que $y_n$ converge vers $y$. Il existe donc $N_2 \in \N$ tel que:

$$\forall n > N_2,\, d(y_n,y)< \frac{\varepsilon}{2C}$$

On a donc:

$$
\begin{array}{ccl}
d(\g y,\g_n y_n) & \leqslant & d(\g y, \g_n y) + d(\g_n y, \g_n y_n)\\
                 & \leqslant & \frac{\varepsilon}{2} + C  \frac{\varepsilon}{2C} = \varepsilon
\end{array}
$$
Dès que $n$ est supérieur à $N_1$ et $N_2$. La suite $(\g_n y_n)_{n \in \N}$ de points de $F_n$ converge donc vers $\g y$. Le point $\g y$ est donc dans $F$.
\end{proof}

\subsubsection{Preuve de la fermeture}

\begin{lemm}\label{lastlast}
L'ensemble des représentations discrètes, fidèles, irréductibles et qui préservent un ouvert proprement convexe d'un groupe libre $\G$ non abélien dans $\s$ est fermé dans l'espace des représentations de $\G$ dans $\s$.
\end{lemm}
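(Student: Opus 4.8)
The plan is the following. Let $(\rho_n)_{n\in\N}$ be a sequence of discrete, faithful, irreducible representations of $\G$ into $\s$, each $\rho_n$ preserving a properly convex open set $\O_n$ of $\P$, and assume $\rho_n\to\rho$ in $\Hom(\G,\s)$; one must show that $\rho$ is again discrete, faithful, irreducible and preserves a properly convex open set. Discreteness and faithfulness come for free: a non-abelian free group has no nontrivial nilpotent normal subgroup (a nontrivial normal subgroup of a free group of rank $\geqslant 2$ is free and non-abelian, hence not nilpotent, and an infinite cyclic normal subgroup would make the whole group virtually cyclic), so Lemma~\ref{nil} of Goldman--Millson applies and gives that $\rho$ is faithful and discrete. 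For irreducibility I argue by contradiction: if $\rho$ were reducible, Lemma~\ref{trinf} would give $\g\in\G$, necessarily with $\g\neq 1$, such that $\textrm{Tr}(\rho(\g))<1$; but $\rho_n(\g)$ has infinite order ($\G$ free, $\rho_n$ faithful) and lies in $\Aut(\O_n)$, so $\textrm{Tr}(\rho_n(\g))\geqslant 3$ by Lemma~\ref{trsup}, and passing to the limit yields $\textrm{Tr}(\rho(\g))\geqslant 3$, a contradiction. Hence $\rho$ is irreducible.

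It remains to produce a properly convex open set preserved by $\rho$, which is the real content. Let $\O_n^*\subset\Pd$ be the properly convex open set dual to $\O_n$; it is preserved by the contragredient representation $\rho_n^*$, and $\rho_n^*\to\rho^*$ with $\rho^*$ again irreducible. Since the space of compact subsets of $\P$ (resp. of $\Pd$) with the Hausdorff topology is compact, after extracting a subsequence I may assume $\overline{\O_n}\to C$ and $\overline{\O_n^*}\to C^*$ for compact sets $C\subseteq\P$, $C^*\subseteq\Pd$. For each $\g\in\G$ the matrices $\rho_n(\g)$ eventually lie in a fixed compact subset of $\s$, hence converge uniformly to $\rho(\g)$ with uniformly bounded Lipschitz constants on $(\P,d)$, and similarly for $\rho_n^*(\g)\to\rho^*(\g)$ on $\Pd$; Lemma~\ref{preserve} then shows that $\rho(\g)$ preserves $C$ and $\rho^*(\g)$ preserves $C^*$, for all $\g\in\G$.

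Finally, I claim the interior of $C$ is a properly convex open set; it is automatically $\rho(\G)$-invariant, the $\rho(\g)$ being homeomorphisms of $\P$. As a Hausdorff limit of convex sets, $C$ is convex and connected, so if its interior were empty it would be contained in a projective line (or be a point), and $\rho(\G)$ would preserve that line or point, contradicting irreducibility of $\rho$; hence the interior of $C$ is non-empty, and the same argument applied to $\rho^*$ gives that the interior of $C^*$ is non-empty. Choosing an open ball $B\subseteq C^*$, convexity of the $\overline{\O_n^*}$ and their convergence to $C^*$ force $\overline{\O_n^*}$ to contain a fixed ball $B'$ for $n$ large, so by duality $\overline{\O_n}=(\overline{\O_n^*})^*\subseteq (B')^*$; since $(B')^*$ is a fixed properly convex compact subset of $\P$ we obtain $C\subseteq(B')^*$, so the interior of $C$ is a convex open set contained in a properly convex set, hence properly convex. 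This is the invariant properly convex open set, and the proof is complete.

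The main obstacle is the non-degeneracy of the limit $C$: Lemma~\ref{preserve} and compactness of the space of compacta by themselves only produce a $\rho(\G)$-invariant compact set, which a priori could be degenerate (contained in a line, or equal to $\P$), and it is exactly to exclude this, as well as to check that the Hausdorff limit is convex in the strong sense, that irreducibility of both $\rho$ and $\rho^*$ together with the passage to the dual are used in an essential way.
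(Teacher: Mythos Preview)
Your argument for discreteness, faithfulness and irreducibility coincides with the paper's: Lemma~\ref{nil} handles the first two (a non-abelian free group has no infinite nilpotent normal subgroup), and then Lemmas~\ref{trsup} and~\ref{trinf} combined with continuity of the trace give the third, exactly as in the text.

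Where you diverge is in producing the invariant properly convex open set. The paper lifts each $\overline{\O_{\rho_n}}$ to one of its two preimages in the double cover $\S^2$, where Hausdorff limits of spherically convex closed sets remain convex, and then runs a short case analysis on the limit $C_\infty$ (properly convex with non-empty interior, versus segment, point, or not properly convex), ruling out the three degenerate cases by exhibiting an invariant line or plane of $\R^3$ and invoking irreducibility. You instead stay in $\P$ and bring in the dual: you extract simultaneous Hausdorff limits $C$ of the $\overline{\O_n}$ and $C^*$ of the $\overline{\O_n^*}$, use irreducibility of both $\rho$ and $\rho^*$ to force each limit to have non-empty interior, and then use a ball $B'\subset C^*$ to trap all the $\O_n$, hence $C$, inside the fixed properly convex set $(B')^*$. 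This dual trick is a legitimate alternative --- it is the standard device in Benoist's work and scales better to higher dimension, replacing the case split by a single symmetry argument. One technical caution, however: the assertions that ``$C$ is convex, so empty interior forces it into a line'' and that ``the $\overline{\O_n^*}$ eventually contain a fixed ball $B'$'' are not completely free when you work directly in $\P$, because the affine chart witnessing proper convexity of $\overline{\O_n^*}$ may vary with $n$; the most painless way to make both steps rigorous is precisely the lift to $\S^2$ that the paper performs, where convexity is stable under Hausdorff limits and the ball-containment becomes a routine separation argument in a hemisphere.
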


\begin{proof}
Soit une suite $\rho_n$ de représentations de $\G$ qui converge vers une représentation $\rho_{\infty}$ de $\G$. Le lemme \ref{nil} montre que $\rho_{\infty}$  est fidèle et discrète.

Commençons par montrer que la représentation $\rho_{\infty}$ est irréductible. Le lemme \ref{trsup} montre que pour tout $\g \in \G$ et tout $n \in \N$ on a Tr($\rho_n(\g)) \geqslant 3$. Par conséquent, comme la trace est une fonction continue, on a pour tout $\g \in \G$, Tr($\rho_{\infty}(\g)) \geqslant 3$. Le lemme \ref{trinf}  montre que la représentation $\rho_{\infty}$ est irréductible.

Montrons à présent que $\rho_{\infty}$ préserve un ouvert proprement convexe $\Omega_{\infty}$. Pour tout $n$, la représentation $\rho_n$ préserve un ouvert $\Omega_{\rho_n}$ proprement convexe de $\P$. On considère la 2-sphère euclidienne $\S^2$ et on note $\pi:\S^2 \rightarrow \P$ le revêtement universel canonique de $\P$. L'ouvert $\pi^{-1}(\Omega_{\rho_n})$ possède deux composantes connexes. On note $\Omega'_{\rho_n}$ l'une d'elles.

On munit $\S^2$ de la topologie de Hausdorff via sa métrique euclidienne canonique, l'ensemble des compacts de $\S^2$ forment un espace compact. Et, l'ensemble des fermés convexes de $\S^2$ est un fermé de cet espace. On peut donc supposer, quitte à extraire, que la suite des fermés convexes $\overline{\Omega'_{\rho_n}}$ converge vers un fermé convexe $C_{\infty}$ de $\S^2$.

Pour tout élément $\g \in \G$, la suite des $\rho_n(\g)$ est uniformémént lipschitzienne puisque cette suite est convergente dans $\s$. Le lemme \ref{preserve}  montre que $C_{\infty}$ est $\rho_{\infty}$-invariant. On a quatre possibilités pour le convexe fermé $C_{\infty}$:

\begin{itemize}
\item Le convexe $C_{\infty}$ est proprement convexe et d'intérieur non vide,

\item Le convexe $C_{\infty}$ est un segment,

\item Le convexe $C_{\infty}$ est un point,

\item Le convexe $C_{\infty}$ n'est pas proprement convexe.
\end{itemize}

On va montrer que seul le premier cas est possible. Pour cela remarquons que dans les trois autres cas il existe une droite $D$ de $\R^3$ ou un plan $\Pi$ de $\R^3$ qui est invariant sous $\rho_{\infty}$. Par conséquent dans ces trois cas, la représentation $\rho_{\infty}$ n'est pas irréductible. Ce qui est absurde.
\end{proof}

Pour montrer la proposition \ref{ferm}. Il ne nous reste plus qu'à montrer que le quotient $S_{\infty} = \Omega_{\infty}/_{\rho_{\infty}(\pi_1(\Sigma_{g,p}))}$ est homéomorphe à $\Sigma_{g,p}$. C'est le point où la démonstration diffère du cas compact, en effet dans le cas compact le groupe fondamental caractérise la topologie de la surface.

\begin{proof}[Démonstration de la proposition \ref{ferm}]
Soit une suite $\rho_n \in \beta'_{g,p}$ de représentations qui converge vers une représentation $\rho_{\infty}$. Le lemme \ref{lastlast} montre que $\rho_{\infty}$  est fidèle, discrète, irréductible et préserve un ouvert proprement convexe $\Omega_{\infty}$.

Pour cela, on se donne une triangulation idéale $\mathcal{T}$ de la surface topologique $S=\Sigma_{g,p}$. Le relèvement de $\mathcal{T}$  fournit une triangulation $\widetilde{\mathcal{T}}$ de la surface topologique $\widetilde{S}$. On note $dev_n:\widetilde{S} \rightarrow \Omega_n$ la développante associée à $\rho_n$.

Comme la développante $dev_n$ est un homéomorphisme sur un ouvert proprement convexe et l'holonomie de tous les bouts de la surface $S_n = \Omega_{n}/_{\rho_{n}(\pi_1(\Sigma_{g,p}))}$ est parabolique, l'image par $dev_n$ de tout relevé du bord d'un triangle topologique de $\widetilde{\mathcal{T}}$ converge en $+ \infty$ (resp.$- \infty$) vers un point de $\partial \Omega_n$ qui est fixé par un élément parabolique de $\G_n$. Ainsi, si $\lambda$ est un côté d'un des triangles de la triangulation $\widetilde{\mathcal{T}}$, on notera $\lambda_n = ]dev_n(\lambda)(-\infty),dev_n(\lambda)(+\infty)[$. De plus, les images par $dev_n$ de deux relevés quelconques $\lambda^1$, $\lambda^2$ qui bordent le même triangle de $\widetilde{\mathcal{T}}$ ont la même limite en $+ \infty$ ou bien $- \infty$. On a donc construit, pour tout $n \in \N$, grâce à $\widetilde{\mathcal{T}}$ une triangulation idéale et géodésique $\widetilde{\mathcal{T}_n}$ de l'ouvert $\Omega_n$ préservé par $\rho_n(\G)$. Si $\widetilde{T}$ désigne un triangle de $\widetilde{\mathcal{T}}$ alors on notera $\widetilde{T_n}$ le triangle correspondant dans la triangulation $\widetilde{\mathcal{T}_n}$ de l'ouvert $\Omega_n$. Nous allons montrer que la suite de triangulation $(\widetilde{\mathcal{T}_n})_{n \in \N}$ sous-converge vers une triangulation idéale et géodésique de $\Omega_{\infty}$ préservée par $\rho_{\infty}(\G)$.

La suite des ouverts proprement convexes $\Omega_n$ converge vers l'ouvert proprement convexe $\Omega_{\infty}$. Pour tout triangle topologique $T_i$ de $\mathcal{T}$, on \underline{fixe} un relevé de $\widetilde{T_i}$. Soit $i_0=1,..., N$, on se donne $\lambda$ un des 3 côtés du triangle $\widetilde{T_{i_0}} \subset \widetilde{S}$. Nous allons montrer que, quitte à extraire, la suite $(\lambda_n)_{n \in \N}$ converge vers un segment non trivial de $\Omega_{\infty}$. Il est évident, que quitte à extraire, la suite des segments fermés $(\overline{\lambda_n})_{n \in \N}$ converge vers un segment de $C_{\infty}$. Mais il n'est pas clair que ce segment n'est pas réduit à un point. Nous allons montrer que cette possibilité est absurde.

Supposons qu'il existe un segment $\widetilde{\lambda}$ tel que la suite de segment $\overline{\lambda_n}$ converge vers un point $p_{\infty}$ de $\partial \Omega_{\infty}$. Quitte à extraire, on peut alors supposer que l'une des deux composantes connexes $\widetilde{S_{moit}}$ de la surface $\widetilde{S}-\lambda$ vérifie que, pour tout triangle $\widetilde{T}$ de la triangulation $\widetilde{\mathcal{T}}$, si $\widetilde{T}$ est inclus dans $\widetilde{S_{moit}}$ alors $\widetilde{T_n}$ converge vers le point $p_{\infty}$.

Pour tout $i=1,...,N$, l'orbite sous $\G$ du triangle topologique $\widetilde{T_i} \subset \widetilde{S}$ intersecte l'ouvert $\widetilde{S_{moit}}$. Par conséquent, pour tout triangle topologique $\widetilde{T}$ de $\widetilde{\mathcal{T}}$ la suite $(\widetilde{T_n})_{n \in \N}$ converge vers un point de $\partial \Omega_{\infty}$. On considère le triangle topologique $\widetilde{T'}$ qui bordent $\lambda$ et qui n'est pas contenu dans $\widetilde{S_{moit}}$. La suite $(\widetilde{T'_n})_{n \in \N}$ converge vers un point, et ce point est $p_{\infty}$ puisque le segment $\overline{\lambda_n}$ converge vers le point $p_{\infty}$.

Pour montrer que ce raisonnement entraine que, pour tout triangle topologique $\widetilde{T}$ de notre triangulation $\widetilde{\mathcal{T}}$, la suite $(\widetilde{T_n})_{n \in \N}$ converge vers le point $p_{\infty}$, on peut introduire le graphe dual $\mathcal{G}$ de la triangulation $\widetilde{\mathcal{T}}$ sur $\widetilde{S}$. En faisant une récurrence sur la distance de graphe d'un triangle topologique $\widetilde{T}$ de la triangulation $\widetilde{\mathcal{T}}$ à $\widetilde{T'}$, on obtient facilement le résultat.

Par conséquent, comme la représentation $\rho_{\infty}$ est la limite des représentations $\rho_n$, on a que $\rho_{\infty}$ fixe le point $p_{\infty}$, ce qui est absurde puisque cette représentation est irréductible. On vient donc de montrer que pour tout $i=1,..., N$, et pour tout côté $\lambda$ du triangle $\widetilde{T_i} \subset \widetilde{S}$, la géodésique $\lambda_n$ de $\Omega_n$ converge vers une géodésique de $\Omega_{\infty}$. Par conséquent, la triangulation $(\widetilde{\mathcal{T}_n})_{n \in \N}$ sous-converge vers une triangulation de $\Omega_{\infty}$ préservée par $\rho_{\infty}(\G)$. La surface $S_{\infty}$ est donc homéomorphe à la surface $\Sigma_{g,p}$.
\end{proof}

\subsection{Conclusion}

On est à présent en mesure de montrer le théorème \ref{comptheo}. Pour cela, on rappelle le théorème de l'invariance du domaine du à Brouwer.

\begin{theo}[Brouwer, invariance du domaine]\label{brouwer}
Soient $M,N$ deux variétés de même dimension et $i:M \rightarrow N$ une injection continue. Si $M$ est connexe et l'image de $i$ est fermée alors $i$ est un homéomorphisme de $M$ vers une composante connexe de $N$.
\end{theo}

\begin{proof}[Démonstration du théorème \ref{comptheo}]
L'application naturelle de $\beta_f(\Sigma_{g,p})$ vers $R_P^{irr}$ est une injection continue. La proposition \ref{ferm} montre que son image est fermée, le théorème \ref{modulesansbord} montre que $\beta_f(\Sigma_{g,p})$ est une variété de dimension $16g-16+6p$, le corollaire \ref{repvariette} montre que $R_P^{irr}$ est une variété de dimension $16g-16+6p$. Le théorème \ref{brouwer} nous montre que l'holonomie est un homéomorphisme de $\beta_f(\Sigma_{g,p})$ vers son image $\beta_{g,p}$ dans $R_P^{irr}$. Par conséquent, $\beta_{g,p}$ est à la fois ouvert et fermé dans $R_P^{irr}$. Mais l'irréductibilité est une condition ouverte donc $R_P^{irr}$ est ouvert dans $R_P$, par suite $\beta_{g,p}$ est ouvert dans $R_P$. La proposition \ref{ferm} montre $\beta_{g,p}$ est aussi fermé dans $R_P$. Ce qui conclut la démonstration du théorème \ref{comptheo}.
\end{proof}

\backmatter
\bibliographystyle{alpha}

\end{document}